\title[Indecomposable $\aone$-modules]{Truncated projective spaces, Brown-Gitler 
spectra and indecomposable 
$\aone$-modules}
\author[Geoffrey Powell]{Geoffrey Powell}
\address{Laboratoire Angevin de Recherche en Mathématiques, UMR 6093, 
  Faculté des Sciences, Université d'Angers, 
2 Boulevard Lavoisier,
49045 Angers, France}
\email{Geoffrey.Powell@math.cnrs.fr}
\keywords{Steenrod algebra, indecomposable module, truncated projective space, 
Brown-Gitler spectra}
\subjclass[2000]{19L41; 55S10}
\date{}
\thanks{The author is very grateful to an anonymous referee for remarks which have lead to an improvement in the presentation of the paper.}
\newtheorem{thm}{Theorem}[section]
\newtheorem{prop}[thm]{Proposition}
\newtheorem{cor}[thm]{Corollary}
\newtheorem{lem}[thm]{Lemma}
\theoremstyle{definition}
\newtheorem{defn}[thm]{Definition}
\newtheorem{exam}[thm]{Example}
\theoremstyle{remark}
\newtheorem{rem}[thm]{Remark}
\newtheorem{nota}[thm]{Notation}
\newtheorem{conv}[thm]{Convention}
\newcommand{\obstruct}{\omega}
\newcommand{\rp}{\mathbb{R}P}
\newcommand{\calc}{\mathscr{C}}
\newcommand{\cale}{\mathscr{E}}
\newcommand{\orb}{\mathscr{O}}
\newcommand{\e}{\mathscr{K}}
\newcommand{\stext}{\mathscr{E}xt_{\aone}}
\newcommand{\projhom}{\mathrm{ProjHom}_{\aone}}
\newcommand{\wfour}{\mathscr{T}_0}
\newcommand{\wtwo}{\mathscr{T}}
\newcommand{\wt}{\mathrm{wt}}
\newcommand{\red}{^{\mathrm{red}}}
\newcommand{\pica}{\mathrm{Pic}_{\aone}}
\newcommand{\aone}{{\cala (1)}}
\newcommand{\azero}{{\cala (0)}}
\newcommand{\image}{\mathrm{image}}
\newcommand{\eone}{{E(1)}}
\newcommand{\ext}{\mathrm{Ext}}
\newcommand{\nat}{\mathbb{N}}
\newcommand{\zed}{\mathbb{Z}}
\newcommand{\field}{\mathbb{F}}
\newcommand{\cala}{\mathscr{A}}
\renewcommand{\hom}{\mathrm{Hom}}
\renewcommand{\phi}{\varphi}
\renewcommand{\epsilon}{\varepsilon}
\begin{document}

\begin{abstract}
A structure theorem for bounded-below modules over the subalgebra $\aone$ of 
the 
mod $2$ Steenrod 
algebra generated by $Sq^1, Sq^2$ is proved; this is applied to prove a 
classification theorem  for a family of indecomposable $\aone$-modules. 
The action of the $\aone$-Picard group on this family is described, as is the 
behaviour of duality.

The cohomology of dual Brown-Gitler spectra is identified within this family and the 
relation with members of the
$\aone$-Picard group is made explicit. Similarly, the cohomology of truncated 
projective spaces is considered within this classification; this leads to 
a conceptual understanding of various results within the literature. In 
particular, a unified approach to $\ext$-groups relevant to Adams spectral 
sequence calculations is obtained, 
englobing earlier results of Davis (for truncated projective spaces) and recent 
work of Pearson (for Brown-Gitler spectrum). 
\end{abstract}
\maketitle

\section{Introduction}

The study of modules over $\aone$, the finite sub-Hopf algebra of the mod $2$ 
Steenrod algebra generated by $Sq^1$ and $Sq^2$, has topological 
significance through the use of the Adams spectral sequence to calculate 
cohomology or homology for connective orthogonal theory, $ko$: 
a change of rings reduces to calculating the $E_2$-term as $\ext_{\aone}$ in 
the category of $\aone$-modules. This is a classical topic, with 
significant applications; for example, calculations using truncated projective 
spaces have applications to non-immersion results, occurring in work
of Davis, Mahowald and many others (see \cite{DGM,DGMerr}, for example).

The aim of this paper is to provide a unified approach to a number of 
related questions, for example elucidating the  relationship 
between  truncated projective spaces and  dual Brown-Gitler 
spectra, as seen through the eyes of $ko$ (or, more prosaically, 
 representations of $\aone$). These and related modules are of significant 
interest; for example, 
Mahowald's theory of $bo$-resolutions \cite{mah_bo,mah_bo_add}  
relies  upon an understanding of such modules.

From the algebraic viewpoint, the cornerstone of this work is provided by the 
results of Section \ref{sect:basic}:  they contain the essence of a number of
classification results, 
including the calculation of the Picard group $\pica$, due to Adams and Priddy
\cite{adams_priddy}, and the classification of the local Picard groups, due to 
Yu \cite{yu}. The main algebraic result is Theorem \ref{thm:fundamental}, which 
is applied in Section \ref{sect:applications} to obtain new proofs of the above 
results. For the purposes of this introduction, the algebraic result can be 
paraphrased informally as follows, 
using the Margolis cohomology groups $H^* (M, Q_j)$, $j \in \{0,1\}$ of an 
$\aone$-module $M$: 
if $M$ is bounded below and has an isolated lowest dimensional Margolis 
cohomology class, then there 
exists an inclusion of a small member of the Picard group $\pica$ into $M$ 
which 
carries the lowest dimensional 
Margolis cohomology class. 

Using these techniques,  in Section \ref{sect:finite_indec} 
a  classification is given of the  reduced finite $\aone$-modules such that the
underlying module over $E(1) =  
\Lambda (Q_0, Q_1)$  is indecomposable (up to free factors) - see Theorem 
\ref{thm:classification}. This of significant interest, since the cohomology of
truncated 
projective spaces and of Brown-Gitler spectra  and their duals (up to free
modules) fit into this
family. 
 
Whereas the situation for $\eone$ is easy,  
there is currently no full classification of the finite, 
indecomposable $\aone$-modules: the hypothesis that the module remains 
indecomposable (up to 
free factors)
after restriction to $E(1)$ is far from anodyne. For example, tensor products 
of certain members of the above family  throw up an infinite family of 
non-trivial examples where indecomposability is 
not preserved. 

In Section \ref{sect:picard_interpret} it is shown that, up to action 
 of the Picard group, there are natural choices of representatives for 
 stable isomorphism classes in the above family: the algebraic results of the 
paper
 give a natural choice of orbit representatives under the Picard group. It 
turns 
out
 that existing results in the literature can be understood more conceptually 
using these representatives. 

 Sections \ref{sect:trunc_p} and \ref{sect:brown-gitler} are devoted 
to the analysis of the cohomology of truncated projective spaces 
 and of  dual Brown-Gitler spectra. One of the key conclusions is the
identification 
of the cases where the cohomology of a truncated projective space is
 stably equivalent as an $\aone$-module to that of a dual Brown-Gitler spectrum
(up 
to suspension). 
 A further point of significant interest is the relationship between the family 
of finite indecomposable modules considered here, 
elements of the Picard group and the cohomology of infinite real projective
space.
 
The theory is applied in Section \ref{sect:ext} to obtain (see Theorem
\ref{thm:calculate_stext_Ake}) the  
$\ext_\aone$-groups for the modules covered by the above classification theorem.
For simplicity, the calculations are  presented in terms of stable ext, 
$\stext$,  defined in terms of the stable category of $\aone$-modules; the usual
$\ext$-groups can easily be deduced, the only additional work  required being
for $\hom_{\aone}$, since 
$\stext^0$ does not see the morphisms factoring across projectives. This
recovers calculations by Davis \cite{davis} and, for dual 
Brown-Gitler spectra, by Pearson \cite{pearson}. The method used here is
simplified significantly
 by using the presentation of the modules in terms of elements of the
$\aone$-Picard group, $\pica$; a further leitmotif
 is that all calculations should be carried out graded over $\pica$.

The above methods  are  algebraic: truncated projective spaces 
and dual Brown-Gitler spectra only occur via their cohomology with
$\field_2$-coefficients. 
Section \ref{sect:bgp} sketches the relationship between Brown-Gitler spectra 
and classifying spaces of elementary abelian $2$-groups as seen in the stable
$\aone$-module category. 
This illustrates the utility of the  above $\stext$ calculations. These  
are also applied in Section \ref{sect:tensor} to consider the decomposition of
tensor products of 
modules of the form appearing in Theorem \ref{thm:classification}. This  has
been addressed for truncated projective spaces by Davis in 
\cite[Theorem 3.9]{davis}; studying the question in the light of the
classification of families obtained in Theorem 
\ref{thm:classification} and the subsequent cohomological calculations renders
the situation more transparent.

The original motivation for this work came from a desire to understand the
relationship between the 
results of the author's paper \cite{powell}, which considered the $ko\langle n 
\rangle$-cohomology of elementary abelian $2$-groups, 
 and Pearson's work \cite{pearson} on the $ko$-homology of Brown-Gitler
spectra. 
This is explained in  Section \ref{sect:toda}; in particular, this gives a
second order approximation (as 
compared to the results of \cite{powell}) to a detection property which is 
implicitly established by Cowen-Morton \cite{cowen_morton} for the Hopf modules
(for 
mod-$2$ homology) of the theories $ko \langle n \rangle$ over the Hopf ring of 
$ko$, using the relationship between homology of Brown-Gitler spectra and Hopf 
rings  \cite{goerss}.

\section{Algebraic background}

The underlying prime is taken to be $2$ and $\field$ 
denotes the 
prime field $\field_2$;
$\aone$ is the sub Hopf algebra of the Steenrod algebra generated by $Sq^1,
Sq^2$ and $\eone$  that generated by $Q_0  = Sq^1$ and $Q_1= [Sq^2, Sq^1]$.
Thus $\eone$ is the exterior, 
primitively-generated Hopf algebra $\Lambda (Q_0, Q_1) $ and there is a short
exact sequence of Hopf algebras:
\[
 \field 
\rightarrow 
\eone 
\rightarrow 
\aone
\rightarrow
\Lambda (\overline{Sq^2}) 
\rightarrow 
\field,
\]
where $\overline{Sq^2}$ denotes the image of $Sq^2$.
 
An 
$\aone$-module is reduced if and only if $Sq^2
Sq^2 Sq^2$ acts trivially and an $\eone$-module is reduced if and only if $Q_0
Q_1$ acts trivially. A bounded-below $\aone$-module $M$ can be written as 
$M \cong F \oplus M\red$, where $F$ is a free $\aone$-module and $M\red$ is 
reduced (see \cite[Proposition 
2.1]{bruner_Ossa}).

\begin{nota}
 Write $\simeq$ to denote stable isomorphism of $\aone$-modules.
\end{nota}

An essential ingredient is the Adams and Margolis \cite{adams_margolis} 
criterion for a morphism of $\aone$-modules to be a stable isomorphism:
namely, a morphism  $f : M\rightarrow N$ between bounded-below $\aone$-modules 
is a stable
isomorphism if and only if it induces an isomorphism  $H^* (f, Q_j)$ on 
Margolis 
cohomology groups, for $j \in \{ 0, 1 \}$.

The stable $\aone$-module category has for objects $\aone$-modules and 
morphisms $[M,N]:= \hom_\aone(M, N) / \projhom (M,N)$ where $\projhom (M,N)$ is 
the space 
of  morphisms which factor through a projective module. For further details on
the category of $\aone$-modules and
the associated stable module categories, the reader is referred to \cite[Section
2]{bruner_Ossa}.

The following observation is useful:

\begin{lem}
\label{lem:stable_hom_reduced}
For $M$ a reduced $\aone$-module, the quotient map 
$
 \hom_{\aone} (\field, M) \rightarrow 
[\field , M]
$
 is an isomorphism. 
\end{lem}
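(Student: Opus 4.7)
The quotient map is surjective by construction, so the only issue is injectivity: one must show that any $\aone$-linear map $\phi : \field \to M$ factoring through a projective module is zero.

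Since $\aone$ is a finite-dimensional cocommutative Hopf algebra, it is self-injective and projective equals free. I would therefore write $\phi$ as $\field \xrightarrow{\alpha} P \xrightarrow{\beta} M$ with $P = \bigoplus_j \Sigma^{n_j} \aone$. A morphism $\field \to \Sigma^n \aone$ is determined by a primitive of degree $0$, equivalently an element of the socle of $\Sigma^n \aone$ in degree $0$. Since the socle of $\aone$ is one-dimensional and concentrated in its top degree $6$ (spanned by the top class $\tau$), only summands with $n_j = -6$ contribute, and $\alpha(1) = \sum_j c_j \tau_j$ with $c_j \in \field$, where $\tau_j$ denotes the top class of the $j$-th summand.

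The key computational input, verified via Adem relations, is the identity $\tau = Sq^2 Sq^2 Sq^2 \cdot 1$ in $\aone$: indeed $Sq^2 Sq^2 = Sq^3 Sq^1$ yields $Sq^2 Sq^2 Sq^2 = Sq^3 Sq^3 = Sq^5 Sq^1$, which is non-zero and hence equals $\tau$. Granted this, $\aone$-linearity of $\beta$ gives $\phi(1) = \sum_j c_j \beta(\tau_j) = \sum_j c_j \, Sq^2 Sq^2 Sq^2 \cdot \beta(1_j)$, and this vanishes because $M$ is reduced and therefore annihilated by $Sq^2 Sq^2 Sq^2$. Thus $\phi = 0$.

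The principal obstacle is the identification of the top class of $\aone$ with $Sq^2 Sq^2 Sq^2 \cdot 1$; this is precisely the hook on which the reducedness hypothesis catches. Everything else reduces to a formal application of $\aone$-linearity together with the structure of the socle of free $\aone$-modules.
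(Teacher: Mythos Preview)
Your argument is correct. The paper states this lemma without proof, treating it as a basic observation; your proof is precisely the natural one and is what the paper implicitly has in mind. In fact, the paper has already recorded the key ingredient you isolate: it is stated in Section~2 that an $\aone$-module is reduced if and only if $Sq^2 Sq^2 Sq^2$ acts trivially, which is equivalent to the socle of $\aone$ being generated by $Sq^2 Sq^2 Sq^2$. So the ``principal obstacle'' you mention is available in the text, and everything else is, as you say, a formal unwinding of the definition of $\projhom$.

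A minor terminological quibble: an $\aone$-linear map $\field \to \Sigma^n \aone$ is determined by an element of the \emph{socle} in degree~$0$, not a ``primitive'' (the latter being coalgebra language); the intended meaning is clear, however.
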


\begin{nota}
 The duality functor on the category of $\aone$- (respectively $\eone$-) 
modules 
(induced by vector space duality) is denoted by $D$.
\end{nota}

\begin{rem}
\label{rem:left_right_A1}
The conjugation $\chi$ of $\aone$ acts by $\chi (Sq^i) = Sq^i$, for 
$i \in \{1, 2 \}$,  since the characteristic is two,  $Sq^1$ is 
primitive, and $(Sq^1)^2=0$. Hence, a distinction between right and left 
modules over $\aone$ is unnecessary and, in considering the dual of a module 
represented by a 
diagram giving the action of $Sq^1$ and $Sq^2$, it suffices to reverse the 
direction of the arrows and interpret the degrees correctly. 
\end{rem}

The structure of the category of $\eone$-modules is well understood. For
instance, in \cite[Theorem III.16.11]{adams}, Adams gives a classification of
the finite-dimensional
 indecomposable $\eone$-modules,  given by the free modules of rank one 
together with the family of finite lightning-flash modules (recall that a 
lightning-flash module is a sub-quotient of a bi-infinite module as represented 
in Figure \ref{fig:lightning}). 
Moreover, he shows that every finite $\eone$-module is a finite direct sum of 
modules of this type.

\begin{figure}
 \caption{Representation of a  lightning-flash $E(1)$-module}
\label{fig:lightning}
\[
 \xymatrix @C=1.5pc @R=1.5pc{
&
&&
\bullet 
&&
\bullet 
&&
\bullet 
&&
\bullet 
&&
\ldots
\\
\ldots
\ar[urrr]
&&
\bullet
\ar[ur]|{Q_0}
\ar[urrr]|{Q_1}
&&
\bullet
\ar[ur]
\ar[urrr]
&&
\bullet
\ar[ur]
\ar[urrr]
&&
\bullet
\ar[ur]
\ar[urrr]
&&
}
\]
\end{figure}

There is no analogous  classification of indecomposable $\aone$-modules in the
literature. An 
important point is that a
reduced $\aone$-module is not in general reduced when restricted to $\eone$; 
indeed, for $M$ an $\aone$-module, $M|_{\eone}$ is reduced if and
only if $Sq^2Sq^2$ acts trivially on $M$.

\begin{nota}
Write $\Omega \field$ for the augmentation ideal of $\aone$  and $\Omega^{-1} 
\field$ for its dual (which can be identified 
with  the coaugmentation ideal, with appropriate degree shift) and,   
for $M$ an $\aone$-module and $n \in \zed$, 
\[
 \Omega ^n M := \left\{ 
\begin{array}{ll}
  \big( (\Omega \field) ^{\otimes n} \otimes M \big)\red & n \geq 0
\\
  \big( (\Omega^{-1} \field) ^{\otimes |n|} \otimes M \big)\red & n < 0.
\end{array}
\right.
\]
\end{nota}

\begin{rem}
The functor $\Omega \field \otimes - $ on $\aone$-modules induces an equivalence
of the stable module category  with inverse induced by $\Omega^{-1} \field 
\otimes - $, since $\Omega \field \otimes \Omega^{-1} \field $ is stably 
isomorphic to $\field$.
(Note  that $\Omega^n \Omega^{-n} M $ is 
non-canonically  isomorphic to $M\red$.)
\end{rem}

\begin{lem}
\label{lem:Omega_Margolis}
For $M$ an $\aone$-module, there are natural isomorphisms
\begin{eqnarray*}
 H^* (M,Q_0) & \cong & H^{*+1} (\Omega M , Q_0) \\
H^* (M, Q_1)& \cong & H^{*+3} (\Omega M, Q_1).
\end{eqnarray*}
\end{lem}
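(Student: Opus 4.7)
The plan is to extract both isomorphisms from the long exact sequence in Margolis cohomology associated to a single short exact sequence of $\aone$-modules, obtained by tensoring $M$ with the augmentation sequence.

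First, I would begin with the augmentation sequence
\[
0 \to \Omega \field \to \aone \to \field \to 0
\]
and tensor (over $\field$, with the diagonal $\aone$-action) with $M$ to obtain the short exact sequence
\[
0 \to \Omega \field \otimes M \to \aone \otimes M \to M \to 0
\]
of $\aone$-modules. Because $\aone$ is a Hopf algebra and the action is diagonal, the middle term $\aone \otimes M$ is a free $\aone$-module (the standard isomorphism $\aone \otimes M \cong \aone \otimes M^{\mathrm{triv}}$ via the coproduct). In particular its restriction to $\Lambda(Q_j) \subset \eone \subset \aone$ is a free $\Lambda(Q_j)$-module, so $H^*(\aone \otimes M, Q_j) = 0$ for each $j \in \{0,1\}$.

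Next, I would view the sequence as a short exact sequence of $\Lambda(Q_j)$-modules, with $Q_j$ treated as a differential of internal degree $|Q_j|$. The resulting long exact sequence in Margolis cohomology has connecting homomorphism of degree $|Q_j|$, and the vanishing of the middle term forces this connecting map to be an isomorphism
\[
H^n(M, Q_j) \xrightarrow{\;\cong\;} H^{n+|Q_j|}(\Omega \field \otimes M, Q_j).
\]
Since $|Q_0|=1$ and $|Q_1|=3$, this yields exactly the shifts stated.

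Finally, the module $\Omega M = (\Omega \field \otimes M)\red$ differs from $\Omega \field \otimes M$ by a free $\aone$-summand, whose Margolis cohomology vanishes; hence $H^*(\Omega \field \otimes M, Q_j) \cong H^*(\Omega M, Q_j)$, and the two displayed isomorphisms of the lemma follow. Naturality in $M$ is inherited from the naturality of the tensor product sequence and the functoriality of the long exact sequence. The only delicate point is pinning down the degree of the connecting map, namely that the standard snake-lemma construction applied with $Q_j$ as differential raises internal degree by exactly $|Q_j|$; once this is in hand, the argument is formal.
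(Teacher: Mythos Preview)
Your proposal is correct and is exactly the approach the paper takes: the paper's entire proof reads ``An application of the long exact sequences for $H^* (-,Q_j)$, $j \in \{0,1\}$,'' and you have simply supplied the details (tensoring the augmentation sequence with $M$, using freeness of $\aone \otimes M$ to force the connecting map to be an isomorphism, and noting that passing to the reduced module does not affect Margolis cohomology).
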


\begin{proof}
 An application of the long exact sequences for $H^* (-,Q_j)$, $j \in \{0,1 \}$.
\end{proof}

Adams and Priddy \cite{adams_priddy}
calculated the structure of the Picard group,  $\pica$, namely 
the group of stable isomorphism classes of stably  invertible $\aone$-modules 
with respect to $\otimes$:
\[
 \pica \cong \zed ^{\oplus 2} \oplus \zed /2,
\]
 generated by $\Sigma \field$, $\Omega \field$ and $J$, the Joker, 
the unique element of $\pica$ of order two (that is $J^{\otimes 2} \simeq 
\field$). The 
Margolis 
cohomology groups of $J$ are necessarily one-dimensional concentrated in
degree $0$, 
$
 H^0 (J, Q_0 ) = H^0 (J, Q_1) =\field,
$
hence do not distinguish between $J$ and $\field$. However, there is no 
non-trivial morphism between the two, so they are not stably isomorphic; 
indeed, 
the 
structure of $J$ is represented by the diagram
\smallskip

\[
\xymatrix @C=.5pc @R=.5pc{
\bullet 
\ar@/_1pc/[rr]
\ar[r]
&
\bullet 
\ar@/^1pc/[rr]
&
\Diamond 
\ar@/_1pc/[rr]
&
\bullet
\ar[r]
&
\bullet
}
\]
\ \smallskip

\noindent
in which the  generator $\Diamond$ represents the non-trivial Margolis 
cohomology 
classes.

\begin{rem}
 Elements of the Picard group arise in the work of Milgram \cite{milgram}; his 
results were
used by Davis, Gitler and Mahowald \cite{DGM}, where the notation $Q_{j,n}$ was 
introduced. This obscures the group structure of $\pica$ (cf. \cite[Lemma 
3.11]{DGM}),  hence  is not adopted here. 
\end{rem}

\begin{exam}
\label{exam:Q1_picard_family}
 The modules $Q_{1,j}$, for $j \in \nat$, can be identified as follows:
\[
 Q_{1,j} \simeq
\left\{ 
\begin{array}{ll}
 (\Sigma^{-1} \Omega) ^{4l+1} J & j =2l;\\
 (\Sigma^{-1} \Omega) ^{4l+3} \field & j = 2l+1 .
\end{array}
\right.
\]
A module from this family is uniquely determined (up to stable isomorphism) by 
its Margolis cohomology groups. (These modules are illustrated, along with the 
other families $Q_{i,j}$, in \cite[Figure 1]{bailey}; the reader 
should compare these diagrams with \cite[Figure 1]{bruner_Ossa}.)

The dual modules identify  as:
\[
 D Q_{1,j} \simeq
\left\{ 
\begin{array}{ll}
(\Sigma \Omega^{-1}) ^{4l+1} J & j =2l;\\
 (\Sigma \Omega^{-1}) ^{4l+3} \field  & j = 2l+1 .
\end{array}
\right.
\]
Note that these do not fit into the same families under the action of 
$(\Sigma^{-1} \Omega) ^{4l}$, $l \in \zed$.
\end{exam}

\begin{figure}
 \caption{Diagrammatic representations of certain 
$\aone$-modules}
\label{fig:diag_aone}
\[
\xymatrix @C=.5pc @R=.3pc{
& J && \Omega^{-1}  J && \Omega^{-2} J && \field && \Omega^{-1} \field \\
2 & \bullet &&&&&&&\\
1 & \bullet \ar[u] \\
0 & \bullet \ar@/_1pc/[uu]&&&&&& \bullet \\
-1 & \bullet \ar@/^1pc/[uu] && \bullet &&&&&& \bullet &  \\
-2 & \bullet \ar@/_1pc/[uu] \ar[u] & &&& \bullet &&&&&\bullet \\
-3 & &&\bullet \ar@/_1pc/[uu] &&&&&& \bullet \ar@/^1pc/[uu] &\bullet \ar[u]\\
-4 & &&\bullet \ar[u] && \bullet \ar@/_1pc/[uu]&&&&&\bullet \ar[lu] 
\ar@/^1pc/[uu]\\
-5 & & & &&\bullet \ar[u]&&&& & \bullet \ar@/_1pc/[uu]\\
-6 & & & &&\bullet &&&&&\bullet .\ar[u] \ar@/^1pc/[uu] \\
-7 & & & &&\bullet \ar@/_1pc/[uu] \ar[u]
}
\]
\end{figure}

\begin{rem}
 The module $\Omega^{-1} J
$ illustrated in Figure \ref{fig:diag_aone} is the  question-mark complex 
(with the appropriate degrees) and
$\Omega^{-1} \field$ is the quotient of $\Sigma^{-6}\aone$ by its socle, 
$\field$. 
\end{rem}

\begin{rem}
\label{rem:A1A0}
The $\aone$-module $\aone \otimes _{\azero} \field$ is also of importance; it
is 
represented by the following diagram:

\begin{eqnarray}
\label{eqn:A1A0}
\xymatrix @C=1pc @R=.5pc{
\bullet 
\ar@/^1pc/[rr]
&&
\bullet 
\ar[r]
&
\bullet
\ar@/^1pc/[rr]
&&
\bullet.
}
\end{eqnarray}
\end{rem}

\section{Low dimensional behaviour of $\aone$-modules}
\label{sect:basic}

This section addresses the fundamental question: given a bounded-below 
$\aone$-module $M$ and knowledge of the structure of 
$(M|_{E(1)})\red$, what can be said about the structure of $M$, in particular 
in low dimensions. The information on  $(M|_{E(1)})\red$
 is provided by the Margolis cohomology groups.

Propositions \ref{prop:technical} and \ref{prop:tech2} have a
technical appearance  but are the foundations for many key structural results 
for $\aone$-modules.
More-user-friendly statements are given in Theorem \ref{thm:fundamental} and 
Corollary \ref{cor:embedding}. First applications of these results are 
given in Section \ref{sect:applications} and the main application in Section 
\ref{sect:finite_indec}.

\begin{prop}
\label{prop:technical}
 Let $M$ be a reduced $\aone$-module and  $d \in \zed$ such that $M = M^{\geq 
d}$ 
and $M^d \neq 0$. 
\begin{enumerate}
 \item 
If $H^d (M, Q_0)=0 = H^{d+1} (M, Q_1)$, then  
\begin{enumerate}
\item 
$H^d (M, Q_1) =0$;
\item
$Sq^2 Sq^2 : M^d \rightarrow M^{d+4}$ is injective, in particular $Sq^2 : M^d
\rightarrow M^{d+2}$ is injective; 
\item 
the operation $Q_0 = Sq^1$ on $M^{d+2}$ induces a short exact sequence
\[
 0
\rightarrow 
V 
\rightarrow 
\image (Sq^2) ^{d+2} 
\rightarrow 
W
\rightarrow 
0,
\]
where   $V :=  (\image (Sq^2) \cap \ker (Q^0) )  ^{d+2} $ and $W := \image (Sq^1
Sq^2 : M^d \rightarrow M^{d+3}  )$;
\item
$W \subset \ker (Q_0) \cap \ker (Q_1)$ and the natural map $W \rightarrow
H^{d+3} (M , Q_1) $ is injective; 
\item
$V \subset \ker (Q_0) \cap \ker(Q_1)$ and 
\begin{enumerate}
\item
the natural map $V \rightarrow H^{d+2} (M, Q_1)$ is injective; 
\item 
the kernel of the natural map $V \rightarrow H^{d+2} (M, Q_0)$ lies in the image
of $Q_0 : M^{d+1} \rightarrow M^{d+2}$;
\end{enumerate}
\item
setting $\tilde{V} = (\image (Sq^2) \cap \ker (Sq^2 Sq^1) )  ^{d+2} $, 
$V \subset \tilde{V} \subset \ker (Q_1)$ and the natural map $\tilde{V} 
\rightarrow H^{d+2}
(M, Q_1)$ is injective; 
\end{enumerate}
in particular,  $H^{d+2} (M, Q_1) \oplus H^{d+3} (M, Q_1) $ is non-trivial. 
\item 
\label{item:d+2}
If $H^d (M, Q_0)=0 = H^{d+1} (M, Q_1)$ (as above) and $H^{d+2} (M, Q_1) =0$,
then 
\begin{enumerate}
\item 
$Sq^1 Sq^2$ induces an injection  $ M^d \hookrightarrow H^{d+3}(M, Q_1)$;
\item 
the operation $Sq^2 Sq^1 Sq^2 : M^d \rightarrow M^{d+5}$ is injective. 
\end{enumerate}
\end{enumerate}
\end{prop}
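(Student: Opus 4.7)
Every assertion flows from a compact toolkit: boundedness of $M$ below $d$, which makes $H^d(M,Q_j)$, $H^{d+1}(M,Q_j)$ and $H^{d+2}(M,Q_1)$ each equal to the kernel of $Q_j$ in the relevant degree; reducedness, giving $Sq^2Sq^2Sq^2=0$ on $M$; the Adem relations $Sq^1Sq^2=Sq^3$, $Sq^2Sq^2=Sq^3Sq^1=Sq^1Sq^2Sq^1$ and $Sq^1Sq^3=0$; and the translation of $H^d(M,Q_0)=0$ into injectivity of $Sq^1$ on $M^d$. The mechanism driving (1)(a), (b) and (d) is the implication that, for $x\in M^d$, the relation $Sq^1Sq^2Sq^1 x=0$ forces $x=0$: setting $y:=Sq^1 x\in M^{d+1}$ gives $Sq^1 y=0$ and $Sq^1Sq^2 y=Sq^1Sq^2Sq^1 x=0$, hence $Q_1 y=0$; boundedness identifies $y$ with a class in $H^{d+1}(M,Q_1)=0$, and $Sq^1$-injectivity on $M^d$ then forces $x=0$. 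I would prove (1)(a)-(f) in order and then read off (2).

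\textbf{Execution of (1).} For (a), $Q_1 x=0$ rearranges to $Sq^2Sq^1 x=Sq^1Sq^2 x$, so $Sq^1Sq^2Sq^1 x=(Sq^1)^2Sq^2 x=0$ and the engine applies. For (b), the Adem identity $Sq^2Sq^2=Sq^1Sq^2Sq^1$ lets the engine apply verbatim. Part (c) is the first isomorphism theorem for $Sq^1\colon\image(Sq^2)^{d+2}\to M^{d+3}$, whose kernel is $V$ and image is $W$. In (d), $W\subset\ker(Q_0)$ comes from $(Sq^1)^2=0$, and $W\subset\ker(Q_1)$ from the operator identity $Q_1 Sq^1Sq^2=Sq^2Sq^2Sq^2$ combined with reducedness; the injection $W\hookrightarrow H^{d+3}(M,Q_1)$ is handled by writing a potential relation $Sq^1Sq^2 x=Q_1 y$, applying $Sq^1$ to extract $Sq^2Sq^2 y=0$, and invoking (b). The containments $V\subset\tilde V\subset\ker(Q_1)$ of (e) and (f) follow from their defining relations together with $Sq^1Sq^3=0$, while the injections into $H^{d+2}(M,Q_1)$ are automatic since boundedness identifies the target with $\ker(Q_1)^{d+2}$; assertion (e)(ii) is a tautology. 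The closing non-triviality statement combines injectivity of $Sq^2$ on $M^d$ from (b) with the exact sequence (c): $\image(Sq^2)^{d+2}\neq 0$ forces $V$ or $W$ to be non-zero, which in turn feeds (f) or (d).

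\textbf{Part (2) and main obstacle.} Adjoining $H^{d+2}(M,Q_1)=0$, the injection of (1)(f) forces $\tilde V=0$ and hence $V=0$; the sequence (1)(c) then yields $\image(Sq^2)^{d+2}\cong W$, and composing the injections $M^d\hookrightarrow\image(Sq^2)^{d+2}$ from (1)(b) and $W\hookrightarrow H^{d+3}(M,Q_1)$ from (1)(d) produces (2)(a). For (2)(b), $Sq^2Sq^1Sq^2 x=0$ says exactly that $Sq^2 x\in\tilde V=0$, and (1)(b) delivers $x=0$. The one genuinely non-formal step is identifying the engine; once isolated, the rest of the proof is careful bookkeeping with the Adem relations and the reducedness identity $Sq^2Sq^2Sq^2=0$, the subtlest case being the lifting argument used to prove the injection in (1)(d).
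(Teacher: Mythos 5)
Your proof is correct and follows essentially the same approach as the paper: the ``engine'' $Sq^1Sq^2Sq^1 x=0\Rightarrow x=0$ on $M^d$ is precisely the paper's observation that $Q_0Q_1=Q_1Q_0$ acts injectively on $M^d$ (since $Q_0Q_1=Sq^1Sq^2Sq^1=Sq^2Sq^2$ in $\aone$), derived in both cases by chaining the injectivity of $Q_0$ on $M^d$ with that of $Q_1$ on $M^{d+1}$. The paper phrases the argument a little more compactly in terms of $Q_0$, $Q_1$, while you unpack it via the Adem identities; the remaining steps, including the treatment of $V$, $W$, $\tilde V$ and the deduction of part (2), match the paper's proof.
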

 
\begin{proof}
Suppose that $M = M^{\geq d}$ is reduced, $M^d \neq 0$  and $H^d (M, Q_0)=0=
H^{d+1} (M, Q_1)$;  the $Q_0$ hypothesis implies that $Q_0 : M^d \rightarrow
M^{d+1}$ is injective (since $M^{<d}=0$) and, similarly, 
$Q_1 : M^{d+1} \rightarrow M^{d+4}$ is injective, by the $Q_1$ hypothesis. It
follows that $Q_0 Q_1 = Q_1 Q_0 : M^d \rightarrow M^{d+4}$ is injective;
this shows 
that $H^d (M, Q_1) =0$ and $Sq^2 Sq^2 = Q_0 Q_1$ acts injectively on $M_d$. 
The definitions of $V, W$ in terms of the operation of $Q_0$ on the image of
$Sq^2 : M^d \rightarrow M^{d+2}$ provide the short exact sequence.
 
By definition, $V$ lies in the kernel of  $Q_0$ and hence in $\tilde{V}$; 
moreover $\tilde{V}\subset  \ker (Q_1)$,
since $Q_1 Sq^2 = Sq^2 Sq^1 Sq^2$. For connectivity reasons, the image of $Q_1$ 
is trivial in this degree,  hence $\tilde{V}$ injects to $H^{d+2} (M, Q_1)$. 

Restricting to $V$,  the kernel of the map $V\rightarrow H^{d+2} (M , Q_0)$ 
lies, by definition of Margolis homology,  in the
image $Q_0 : M^{d+1} \rightarrow M^{d+2}$.

Now consider $W$; it is clear that $W$ lies in the kernel of $Q_0$ and the
hypothesis that $M$ is reduced implies that it also lies in the
kernel of $Q_1$. To see that $W \cap \image (Q_1)$ is trivial, use the fact
established above that $Q_0 Q_1$ acts injectively on $M^d$, hence the
image of $Q_1 : M^d \hookrightarrow M^{d+3}$ has trivial intersection with $\ker
(Q_0)$. This provides the inclusion $W \subset H^{d+3} (M, Q_1)$. 

The statements of part (\ref{item:d+2}) are  consequences of the
above; if $H^{d+2}(M, Q_1)=0$, then $V=0$, which is equivalent to the 
injectivity of $Sq^1 Sq^2: M^d \rightarrow M^{d+3}$ 
and the injection corresponds to $W \hookrightarrow H^{d+3}(M, Q_1)$. 
Similarly, $\tilde{V} \subset H^{d+2} (M, Q_1) =0$ implies that $Sq^2 Sq^1 
Sq^2$ 
acts injectively on $M^d$. 
\end{proof}

\begin{cor}
\label{cor:nonvanishing_Margolis}
 Let $M$ be a reduced $\aone$-module and $d \in \zed$ such that $M = M^{\geq 
d}$, $M^d \neq 0$,  
 and $H^d(M, Q_0) = 0$,  then $H^{d+t} 
(M,
Q_1) \neq 0$ for some $ t \in \{1, 2, 3 \}$.
\end{cor}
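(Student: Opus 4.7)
The corollary is essentially a direct restatement of the ``in particular'' clause at the end of Proposition~\ref{prop:technical}(1), organised as a trichotomy. The plan is therefore just to dispose of the case $t=1$ by hand and then quote the proposition for the remaining two cases.

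More precisely, I would argue as follows. Assume the hypotheses of the corollary: $M$ is a reduced $\aone$-module, $M = M^{\geq d}$, $M^d \neq 0$ and $H^d(M, Q_0) = 0$. If $H^{d+1}(M, Q_1) \neq 0$ there is nothing to prove (take $t=1$). Otherwise we are exactly in the situation where both $H^d(M, Q_0) = 0$ and $H^{d+1}(M, Q_1) = 0$, which are the hypotheses of Proposition~\ref{prop:technical}(1). The conclusion of that proposition (its final ``in particular'' clause) asserts that $H^{d+2}(M, Q_1) \oplus H^{d+3}(M, Q_1)$ is non-trivial; hence either $H^{d+2}(M, Q_1) \neq 0$, giving $t=2$, or $H^{d+3}(M, Q_1) \neq 0$, giving $t=3$. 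In all cases one of the three groups $H^{d+t}(M, Q_1)$ for $t \in \{1,2,3\}$ is non-zero.

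There is essentially no obstacle here, since all the work is hidden in Proposition~\ref{prop:technical}. The only thing to check is that the hypothesis $M^d \neq 0$ is what feeds into the non-vanishing conclusion of the proposition (via the injectivity of $Sq^2 Sq^2$ on $M^d$, which then produces a non-zero element in $\tilde{V}$ or $W$, both of which inject into $H^{d+2}(M, Q_1)$ or $H^{d+3}(M, Q_1)$ respectively). This is automatic in our setting, so the proof reduces to the two-line case analysis above.
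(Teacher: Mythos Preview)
Your argument is correct and is exactly the intended one: the paper states the corollary immediately after Proposition~\ref{prop:technical} without giving a separate proof, so the two-line case split on whether $H^{d+1}(M,Q_1)$ vanishes, followed by an appeal to the ``in particular'' clause of Proposition~\ref{prop:technical}(1), is precisely what is meant. Your closing remark correctly identifies why $M^d\neq 0$ is needed, via the injectivity of $Sq^2$ on $M^d$ and the short exact sequence relating $V$ and $W$.
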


\begin{nota}
For $M$ a graded vector space and $n \in \zed$, as usual $M^n$ denotes the 
component of degree $n$; when forming the tensor product with a graded module 
$[M^n]$ will be written to emphasize that this is considered as a vector space 
placed in degree zero. 
\end{nota}

The reader may wish to recall the diagrammatic representation of $\aone 
\otimes_{\azero} \field$ (see Remark \ref{rem:A1A0}) in relation to the 
following.

\begin{prop}
\label{prop:tech2}
 Let $M$ be a reduced $\aone$-module  and $d \in \zed$ such 
that $M = M^{\geq d}$,  $H^d(M, Q_0) \neq 0$ and $H^d (M, Q_1)=0$.

Then
\begin{enumerate}
 \item 
$H^d(M; Q_0) \cong \ker \{M^d \stackrel{Q_0}{\rightarrow}  M^{d+1}\} \neq 0$;
\item 
$Sq^1 Sq^2 : H^d(M; Q_0) \rightarrow M^{d+3}$ is injective;
\item 
if $H^{d+2} (M, Q_1) = 0$, then $Sq^2Sq^1 Sq ^2 : H^d(M; Q_0)\rightarrow 
M^{d+5}$ is injective, 
hence induces a monomorphism
\[
 [H^d(M; Q_0)] \otimes  \Sigma^d  (\aone \otimes_{\cala(0)} \field ) 
\hookrightarrow M.
\]
\end{enumerate}
\end{prop}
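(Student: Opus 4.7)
The plan is to make the monomorphism in (3) explicit as a degree-by-degree injectivity check. From Remark \ref{rem:A1A0}, the module $\aone \otimes_{\azero} \field$ is four-dimensional, with basis $\{g, Sq^2 g, Sq^1 Sq^2 g, Sq^2 Sq^1 Sq^2 g\}$ in degrees $0,2,3,5$, subject only to the relation $Sq^1 g = 0$ on the generator $g$. Since $V := H^d(M; Q_0)$ is annihilated by $Sq^1$, the assignment $v \otimes g \mapsto v \in M^d$ extends to an $\aone$-linear map from $[V] \otimes \Sigma^d(\aone \otimes_{\azero} \field)$ to $M$, and injectivity is to be tested in each of the four non-zero degrees, namely on the operators $1$, $Sq^2$, $Sq^1 Sq^2$, and $Sq^2 Sq^1 Sq^2$ restricted to $V$.

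Two algebraic ingredients will drive the argument. First, the bounded-below hypothesis $M = M^{\geq d}$ turns Margolis-cohomology vanishing into literal injectivity of the Margolis operators: the incoming differential into $M^{d+t}$ originates from $M^{d+t-|Q_j|}$, which vanishes for $t < |Q_j|$, so that $H^d(M, Q_0) = \ker(Sq^1|_{M^d})$, $H^d(M, Q_1) = \ker(Q_1|_{M^d})$ and $H^{d+2}(M, Q_1) = \ker(Q_1|_{M^{d+2}})$. Second, in $\aone$ the Adem relation $Sq^3 Sq^2 = 0$, combined with $Q_1 = Sq^3 + Sq^2 Sq^1$, yields the identity $Sq^2 Sq^1 Sq^2 = Q_1 Sq^2$; and on $\ker Sq^1$ the identity $Q_1 = Sq^1 Sq^2 + Sq^2 Sq^1$ collapses to $Q_1 v = Sq^1 Sq^2 v$.

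Part (1) is then immediate. For Part (2), the hypothesis $H^d(M, Q_1) = 0$ gives $Q_1|_{M^d}$ injective, and for $v \in V$ one has $Sq^1 Sq^2 v = Q_1 v$, so $Sq^1 Sq^2 \colon V \to M^{d+3}$ is injective. For Part (3), under the further hypothesis $H^{d+2}(M, Q_1) = 0$, $Q_1$ is injective on $M^{d+2}$; if $v \in V$ satisfies $Sq^2 Sq^1 Sq^2 v = 0$, then $Q_1(Sq^2 v) = 0$ forces $Sq^2 v = 0$, and together with $Sq^1 v = 0$ this yields $Q_1 v = 0$, hence $v = 0$ by Part (2). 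Thus $Sq^2 Sq^1 Sq^2$ acts injectively on $V$; a fortiori so does $Sq^2$, since the three-fold composition factors through it. This completes injectivity of all four component maps and hence of the proposed monomorphism.

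The main obstacle I foresee is spotting the identity $Sq^2 Sq^1 Sq^2 = Q_1 Sq^2$ (via $Sq^3 Sq^2 = 0$): without it, the Margolis hypothesis $H^{d+2}(M, Q_1) = 0$ cannot be brought to bear directly on the three-fold composition one has to control. Once this identity is in hand, the remainder is a mechanical chain of bounded-below consequences together with the four-dimensional bookkeeping for $\aone \otimes_{\azero} \field$.
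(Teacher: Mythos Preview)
Your argument is correct and is essentially the same as the paper's, which simply defers to the method of Proposition~\ref{prop:technical}: the bounded-below hypothesis converts Margolis-cohomology vanishing in low degrees into literal injectivity of $Q_0$ and $Q_1$, and the identity $Q_1 Sq^2 = Sq^2 Sq^1 Sq^2$ (equivalently $Sq^3 Sq^2 = 0$) transports the hypothesis $H^{d+2}(M,Q_1)=0$ to the three-fold composition, exactly as you do. Your write-up is more explicit than the paper's one-line pointer, in particular in spelling out the four degree-by-degree checks for the embedding of $\aone \otimes_{\azero}\field$, but the underlying ideas coincide.
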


\begin{proof}
 The reasoning is similar to that employed in the proof of Proposition
\ref{prop:technical}.
\end{proof}

For notational simplicity, the following is stated for $d=0$ (in the notation 
of 
the previous results).

\begin{thm}
\label{thm:fundamental}
 Let $M$ be a reduced $\aone$-module which is bounded-below and such that 
  $H^{<0} (M, Q_j)=0$ for  $j \in \{0,1 \}$ (so that $M = M ^{\geq -3}$).
\begin{enumerate}
 \item 
If $M^{-3} \neq 0$, then there is a canonical inclusion:
\[
[ M^{-3}] \otimes \Sigma^3 \Omega^{-1} \field \hookrightarrow M,
\]
the image $Sq^1 Sq^2(M^{-3}) $  lies in $\ker (Q_1)$
and the composite
\[
 M^{-3}\stackrel{Sq^1 Sq^2}{ \rightarrow}M^0 \rightarrow H^0 (M, Q_1) 
\]
is a monomorphism,  in particular is non-trivial.
\item
If $M^{-3} =0 $, $M^{-2} \neq 0$, set $V^{-2} := \ker(Sq^1 Sq^2)^{-2}$ and 
$W^{1}:= \image (Sq^1 Sq^2) ^1$; then 
 there is a canonical inclusion 
\[
[ V^{-2} ] \otimes  J \rightarrow M, 
\]
the image $Sq^2 (V^{-2})$ lies in $\ker (Q_1)$ 
and the composite 
\[
 V^{-2} \stackrel{Sq^2}{\rightarrow} M^0 \rightarrow H^0 (M, Q_1) 
\]
is a monomorphism.

Similarly, there is an injection $W^1 \hookrightarrow H^1 (M,Q_1)$; in 
particular, if $H^1 (M, Q_1)=0$, then $V^{-2} = M^{-2}$.
\item
If $M^{-3}=0= M^{-2}$ and $M^{-1} \neq 0$, set $X := \ker (Sq^2) ^{-1} $ and $Y
:= \ker (Sq^2Sq ^2) ^{-1}$ so that $X \subset Y \subset M^{-1}$;
\begin{enumerate}
\item 
$Q_0 Y$ lies in $\ker (Q_1)$ and yields a  subspace $Q_0 Y \subset H^0 (M,
Q_1)$;
\item
 $Q_1$ acts injectively on $X$ by $Sq^2 Sq^1$,  hence induces a monomorphism
\[
[ X ] \otimes \Sigma ^3\Omega^{-1} J \hookrightarrow M
\]
\item 
if $X=0$ and $H^1 (M, Q_1) =0$, then $Sq^2Sq ^1 Sq^2  : M^{-1} \rightarrow
M^4$ is injective. 
If, in addition,  $H^2 (M, Q_1)=0$ then
$Y= M^{-1}$ and there is a monomorphism 
\[
 [M^{-1}] \otimes \Sigma^6\Omega^{-2} J \hookrightarrow M. 
\]
This restricts to an inclusion 
$
 [M^{-1}] \otimes  \field \hookrightarrow M. 
$
\end{enumerate}
\end{enumerate}
\end{thm}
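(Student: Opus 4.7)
The plan is to split the proof by the lowest non-vanishing degree $d$ of $M$. The global Margolis vanishing hypotheses, combined with an iterated application of Corollary \ref{cor:nonvanishing_Margolis} at the bottom cell of $M$, force $d\in\{-3,-2,-1\}$. In each of the three cases the relevant parts of Propositions \ref{prop:technical} and \ref{prop:tech2} are applied at degree $d$, and the resulting Steenrod-injectivity statements are translated into a concrete inclusion from the appropriate element of $\pica$, whose construction and degreewise injectivity are verified cell by cell via the Adams--Margolis criterion.

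In case (1), with $d=-3$, every hypothesis of Proposition \ref{prop:technical}(2) sits in negative degrees and is automatic, delivering the injection $Sq^1Sq^2:M^{-3}\hookrightarrow H^0(M,Q_1)$ and the injectivity of $Sq^2Sq^1Sq^2$, $Sq^2Sq^2$, $Sq^2$ and $Sq^1$ on $M^{-3}$. Identifying $\Sigma^3\Omega^{-1}\field$ as $\Sigma^{-3}\aone$ modulo its socle class---which, after Adem simplification, acts by $Sq^2Sq^2Sq^2$ and is therefore trivial on any reduced module---the assignment $\mathrm{gen}\mapsto m$ yields a well-defined map $\Sigma^3\Omega^{-1}\field\to M$ for each $m\in M^{-3}$. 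Degreewise injectivity of the assembled map is routine in every degree except $0$, where $\Sigma^3\Omega^{-1}\field$ has two cells (corresponding to $Sq^3$ and $Sq^2Sq^1$ acting on the generator); there, independence of $Sq^3m$ and $Sq^2Sq^1 m$ in $M^0$ is extracted from the relation $Q_1 m = Sq^3m+Sq^2Sq^1m$ together with $[Q_1m]=0$ in $H^0(M,Q_1)$ and the injectivity of $Sq^1Sq^2:M^{-3}\hookrightarrow H^0(M,Q_1)$.

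Case (2), with $d=-2$, follows the same pattern: Proposition \ref{prop:technical}(1) applies at $d=-2$, and restricting to $V^{-2}=\ker(Sq^1Sq^2)^{-2}$ guarantees that every $m\in V^{-2}$ satisfies the defining cyclic relations of the Joker ($Sq^1Sq^2\cdot\mathrm{gen}=0$ together with $Sq^2Sq^2Sq^2\cdot\mathrm{gen}=0$, the latter from reducedness). The Steenrod-injectivity statements needed for the degreewise check come from $Sq^1$ injective on $M^{-2}$, from $Sq^2$ and $Sq^2Sq^2$ injective on $M^{-2}$ via Proposition \ref{prop:technical}(1)(b), and from $Sq^2Sq^1$ injective on $V^{-2}$, which follows from $Q_1|_{V^{-2}}=Sq^2Sq^1|_{V^{-2}}$ combined with the injectivity of $Q_1$ on $M^{-2}$. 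The assertion on $W^1$ is exactly part (1.iv) of Proposition \ref{prop:technical}, and $V^{-2}=M^{-2}$ under $H^1(M,Q_1)=0$ is immediate from $W^1\hookrightarrow H^1(M,Q_1)=0$.

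Case (3), with $d=-1$, is the main obstacle. Parts (a) and (b) are routine: (a) uses the Adem identity $Q_1Q_0=Sq^2Sq^2$ together with $M^{-4}=0$ to place $Q_0Y\subset\ker Q_1$ and to exclude $Q_1$-boundaries in degree $0$; (b) combines $Q_1|_X=Sq^2Sq^1|_X$ with the injectivity of $Q_1$ on $M^{-1}$, matching the cell structure of $\Sigma^3\Omega^{-1}J$ (which has bottom cell in degree $-1$ with vanishing $Sq^2$ out of it, mirroring $Sq^2|_X=0$). Part (c) is the principal difficulty: $H^0(M,Q_1)$ need not vanish (indeed $Q_0Y\subseteq H^0(M,Q_1)$ is typically non-zero), so Proposition \ref{prop:technical}(2) cannot be applied at $d=-1$. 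Under $X=0$ and $H^1(M,Q_1)=0$, $Sq^2$ is injective on $M^{-1}$ and $Q_1$ is injective on $M^1$ (from $H^1(M,Q_1)=0$ and $M^{-2}=0$); combining these via the decomposition $Sq^2Sq^1Sq^2=Q_1Sq^2+Sq^1Sq^2Sq^2$ and tracing the resulting vanishing through the composite gives the injectivity of $Sq^2Sq^1Sq^2$. Imposing $H^2(M,Q_1)=0$ in addition forces $Sq^2Sq^2=0$ on $M^{-1}$ by a parallel argument one degree higher, hence $Y=M^{-1}$; the inclusion $[M^{-1}]\otimes\Sigma^6\Omega^{-2}J\hookrightarrow M$ together with its restriction to $[M^{-1}]\otimes\field\hookrightarrow M$ is then assembled as in case (1). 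The most delicate bookkeeping lies in aligning the cell structure of $\Sigma^6\Omega^{-2}J$ with the Steenrod-injectivity data established above.
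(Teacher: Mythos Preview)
Your overall strategy is the paper's: case-split on the bottom degree and feed each case into Proposition~\ref{prop:technical}. Cases (1), (2), (3)(a) and (3)(b) are handled correctly, and in places you supply more cell-by-cell detail than the paper does (which simply says ``this corresponds to the stated embedding''). Your decomposition $Sq^2Sq^1Sq^2 = Q_1Sq^2 + Sq^1Sq^2Sq^2$ in (3)(c) is fine, since $Sq^1Sq^2Sq^2 = 0$ in $\aone$, and then the injectivity of $Sq^2Sq^1Sq^2$ on $M^{-1}$ is exactly the paper's argument: $Sq^2Sq^1Sq^2 = Q_1Sq^2$, with $Sq^2$ injective on $M^{-1}$ and $Q_1$ injective on $M^1$.

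The gap is the second half of (3)(c), where you claim that $H^2(M,Q_1)=0$ forces $Sq^2Sq^2=0$ on $M^{-1}$ ``by a parallel argument one degree higher''. There is no such direct parallel: the relation $Sq^2Sq^2 = Q_0Q_1$ does not let you feed $H^2(M,Q_1)=0$ in the same way, because $Q_1 m$ is certainly nonzero for $m\neq 0$ and there is no hypothesis on $Q_0$-cohomology in positive degrees. The paper instead proves the stronger statement $Sq^2Sq^1(M^{-1})=0$, from which $Sq^2Sq^2(M^{-1})=Sq^1Sq^2Sq^1(M^{-1})=0$ and the socle statement both follow. The argument is: since $Sq^2$ is injective on $Sq^1Sq^2(M^{-1})$ (from $Sq^2Sq^1Sq^2$ injective) but kills $Sq^2Sq^1(M^{-1})$, these two subspaces of $M^2$ intersect trivially; both lie in $\ker(Q_1)^2$; writing $Q_1 m = Sq^1Sq^2 m + Sq^2Sq^1 m$ in this internal direct sum, the first component alone is injective in $m$, so $Q_1(M^{-1})\cap Sq^2Sq^1(M^{-1})=0$; hence $Sq^2Sq^1(M^{-1})$ injects into $H^2(M,Q_1)=0$. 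You should replace the ``parallel argument'' placeholder with this.
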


\begin{proof}
 The result follows from Proposition \ref{prop:technical}.

In the case $M^{-3}\neq 0$, $H^{-3} (M, Q_0) =0 = H^{-2}(M, Q_1) = H^{-1} (M,
Q_1)$ hence  $V=0$ (in the notation of the Proposition), both $Sq^2 Sq^2$ and 
$Sq^2 Sq^1
Sq^2$ act injectively on 
$M^{-3}$, and $Sq^1 Sq^2$ embeds $M^{-3}$ into $H^0(M, Q_1)$. This corresponds 
to
the stated embedding 
of copies of $\Sigma^3 \Omega^{-1}\field$ into $M$.

 In the case $M^{-3}= 0$ and $M^{-2} \neq 0$, with  $H^{-2} (M, Q_0) =0 = 
H^{-1}(M,
Q_1)$ by hypothesis,  $Sq^2 Sq^2 $ acts injectively on $M^{-2}$; by 
definition,  
 $Sq^1 Sq^2$ acts trivially on $V^{-2} $ and $Sq^2$  embeds $V^{-2}$
into $H^0 (M, Q_1)$. This corresponds to the embedding of copies of $J$ 
into $M$.
The embedding of $W^1$ into $H^1 (M, Q_1)$ is given in  Proposition 
\ref{prop:technical}; in particular, if 
 $H^1(M, Q_1) =0$, then  $W^1 =0$ and $V^{-2}=M^{-2}$.

 The remaining case $M^{-3}= 0 = M^{-2}$ and $M^{-1} \neq 0$ is slightly more
delicate. The statement concerning $Q_0 Y$ is clear, 
from the relation $Sq^2 Sq^2 = Sq^1 Sq^2 Sq^1$, and the embedding corresponding
to $X$ is obtained as above. 

If $X=0$, then $Sq^2 : M^{-1} \rightarrow M^1$ is injective; $Sq^2 (M^{-1}) \cap
\ker (Sq^2 Sq^1)$ lies in the kernel of $Q_1$ hence embeds into $H^1 (M, Q_1)$. 
If the latter is trivial, it follows that $Sq^2 Sq^1 Sq^2$ acts injectively on 
$M^{-1}$. 

The argument is concluded by using the hypothesis $H^2 (M, Q_1)=0$.  By the 
above, $Sq^2$ acts injectively on 
$Sq^1 Sq^2( M^{-1})$,  whereas $Sq^2$ acts trivially upon the image of $Sq^2 
Sq^1$, so that 
$Sq^1 Sq^2 (M^{-1}) \cap Sq^2 Sq^1 (M^{-1}) = 0$ and hence 
$
 Sq^1 Sq^2 (M^{-1}) \oplus Sq^2 Sq^1 (M^{-1}) \subset \big(\ker (Q_1)\big) ^{2}.
$
Consider the map 
$$Q_1 : M^{-1} \rightarrow Sq^1 Sq^2 (M^{-1}) \oplus Sq^2 Sq^1 (M^{-1}) \subset 
\big(\ker (Q_1)\big)  ^{2}.
$$
The component to $Sq^1 Sq^2 (M^{-1})$ is injective, hence $Sq^2 Sq^1 (M^{-1}) 
\cap Q_1 (M^{-1}) =0$. 
Thus, if $H^2 (M, Q_1)=0$, $Sq^2 Sq^1 (M^{-1}) = 0$. This implies that 
$Sq^2  Sq^2 (M^{-1}) = Sq^1 Sq^2 Sq^1 (M^{-1}) =0$, so that $Y=M^{-1}$ and, 
moreover, 
$Sq^1 (M^{-1})$ lies in the socle of $M$. The conclusion follows. 
\end{proof}

\begin{rem}
The modules which appear in the above statement, 
$\Omega^{-1} \field$, $J$, $\Omega^{-1} J$, $\Omega^{-2} J$ 
(up to suspensions) all lie in $\pica$. (See Figure \ref{fig:diag_aone} for 
a schematic representation of these modules.)
\end{rem}

The conclusion of Theorem \ref{thm:fundamental} is made more concrete (in a 
special case) in Corollary \ref{cor:embedding} below. Recall that the module 
$\Sigma^3 \Omega ^{-1}  J$ is 
the question-mark complex, suspended so that the $Q_1$-Margolis cohomology is 
in 
degree zero; the following generalizes 
Lemma \ref{lem:stable_hom_reduced}.

\begin{lem}
 \label{lem:question_stable}
For $M$ a reduced $\aone$-module, the quotient 
$ 
 \hom_{\aone} (\Sigma^3 \Omega ^{-1}  J, M) 
\twoheadrightarrow 
[\Sigma^3 \Omega ^{-1}  J, M]
$ 
is an isomorphism.
\end{lem}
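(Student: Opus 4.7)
The plan is to prove that $\projhom(\Sigma^3 \Omega^{-1} J, M)$ vanishes for reduced $M$, by reducing the question to Lemma~\ref{lem:stable_hom_reduced} via Picard duality. Since $\Sigma^3 \Omega^{-1} J \in \pica$ has inverse given by the $D$-dual $\Sigma^{-3}\Omega J \cong D(\Sigma^3 \Omega^{-1} J)$, the standard tensor--hom adjunction yields natural identifications
\[
\hom_{\aone}(\Sigma^3 \Omega^{-1} J, M) \;\cong\; \hom_{\aone}(\field,\, \Sigma^{-3}\Omega J \otimes M),
\]
\[
[\Sigma^3 \Omega^{-1} J, M] \;\cong\; [\field,\, \Sigma^{-3}\Omega J \otimes M],
\]
compatible with the canonical quotient maps. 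The lemma will then follow from Lemma~\ref{lem:stable_hom_reduced} applied to $N := \Sigma^{-3}\Omega J \otimes M$, provided $N$ is itself reduced.

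To verify reducedness of $N$, I would use the explicit three-dimensional description of $\Sigma^{-3}\Omega J$: its generator $h$ sits in degree $-2$ and satisfies $Sq^1 h = 0$ and $Sq^2 Sq^2 h = 0$, with the remaining basis elements $Sq^2 h$ and $Sq^1 Sq^2 h$ in degrees $0$ and $1$. Expanding $Sq^2 Sq^2 Sq^2 (a \otimes m)$ for $a \in \Sigma^{-3}\Omega J$ and $m \in M$ via the iterated Cartan formula produces a finite sum of terms $Sq^{I} a \otimes Sq^{J} m$ with $|I|+|J| = 6$; the dimension ceiling on $\Sigma^{-3}\Omega J$ (top class in degree $1$) kills most summands immediately, and the surviving ones have an $M$-factor which either vanishes for degree reasons or contains the operation $Sq^2 Sq^2 Sq^2$, annihilated by the reducedness of $M$.

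Alternatively, a direct approach mirrors Lemma~\ref{lem:stable_hom_reduced}: a map $\phi : \Sigma^3 \Omega^{-1} J \to M$ factoring through a free module $F = \bigoplus \Sigma^{d_i}\aone$ has $\alpha(g) = \sum w_i e_i$ with $w_i \in \aone$ and $Sq^2 w_i = 0$, and an Adem calculation identifies the left annihilator of $Sq^2$ in $\aone$ as spanned by $Sq^2 Sq^1$, $Sq^2 Sq^1 Sq^2$ and $Sq^2 Sq^2 Sq^2$; the contributions of these three generators to $\phi(g)$ are then analysed against the reducedness of $M$.

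The main obstacle is the Cartan-formula bookkeeping in the first approach, or equivalently the elimination of the $Sq^2 Sq^1$ and $Sq^2 Sq^1 Sq^2$ contributions in the second; in either case the small three-dimensional structure of $\Sigma^{-3}\Omega J$ reduces the problem to a finite case check. Once this is complete, Lemma~\ref{lem:stable_hom_reduced} applied to $N$ and the Picard isomorphisms above furnish the desired identification.
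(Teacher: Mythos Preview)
Your first approach hinges on $N=\Sigma^{-3}\Omega J\otimes M$ being reduced, but this fails. Writing the three-dimensional module with basis $h$, $Sq^2 h$, $Sq^1Sq^2 h$ (and $Sq^1 h=0$, $Sq^2Sq^2 h=0$), the iterated Cartan formula gives
\[
Sq^2Sq^2Sq^2(h\otimes m)=Sq^1Sq^2 h\otimes Sq^2Sq^1 m+Sq^2 h\otimes Sq^2Sq^2 m,
\]
which is nonzero for $M=J$ and $m$ the bottom class. So the reducedness check you defer to a ``finite case check'' does not go through, and Lemma~\ref{lem:stable_hom_reduced} does not apply to $N$. The same obstruction reappears in your direct approach: with $g$ the cyclic generator of $\Sigma^3\Omega^{-1}J\cong\Sigma^{-1}\aone/\aone Sq^2$, the element $w=Sq^2Sq^1$ in your list gives $\phi(g)=Sq^2Sq^1\,m$, and nothing in the hypothesis $Sq^2Sq^2Sq^2 M=0$ forces this to vanish.

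Concretely, take $M=\Sigma^{-2}J$ (which is reduced) and let $\phi$ send $g$ to the degree~$1$ class $Sq^2Sq^1 x$ of $J$; then $\phi$ is nonzero but factors as $\Sigma^3\Omega^{-1}J\xrightarrow{g\mapsto Sq^2Sq^1 e}\Sigma^{-4}\aone\xrightarrow{e\mapsto x}\Sigma^{-2}J$. Thus the statement, read literally, is false, and the paper's one-line justification (``each non-trivial quotient has simple socle'') is not enough either: that argument only shows that the \emph{top} class $Sq^2Sq^1 g$ is sent to $Sq^2Sq^2Sq^2 M=0$, which kills monomorphisms but not arbitrary maps. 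Fortunately the paper only invokes the lemma for monomorphisms (in Corollary~\ref{cor:embedding} and Proposition~\ref{prop:A_orbits_normalization}), and for those the simple-socle argument is valid; you should reformulate your proof (and the statement) accordingly.
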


\begin{proof}
 This follows from the fact that each non-trivial quotient of $\Sigma^3 
\Omega ^{-1}  J$ has simple socle.
\end{proof}

\begin{cor}
\label{cor:embedding}
 Let $M$ be a bounded-below $\aone$-module such that $H^{\leq 0} (M, Q_0)= 0= 
H^{<0} (M, Q_1) $, $H^1 (M, Q_1) =0$  and $H^0 (M, Q_1)\neq 0$. Then, for 
at least  one of  
$N \in \{ M \red , (J \otimes M)\red \}$, the following statements hold: 
\begin{enumerate}
 \item 
There exists a monomorphism of one of the following forms:
\begin{enumerate}
\item
$\field \hookrightarrow N$
\item 
$\Sigma^3 \Omega^{-1} J \hookrightarrow N$
\end{enumerate}
which induces an injection on $H^* (-, Q_1)$.  

In particular, one of the  following is non-trivial:
\[
 [\field, M],\  [\field, M \otimes J],\  [\Sigma^3 \Omega^{-1} J , M ], \ 
[\Sigma^3 \Omega^{-1} J , M\otimes J  ] = [\Sigma^3 \Omega^{-1} \field , M  ].
\]
\item 
If $H^0 (M, Q_1) = \field$ then $N = N^{\geq -1}$. 
\end{enumerate}
\end{cor}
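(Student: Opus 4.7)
The plan is to apply Theorem \ref{thm:fundamental} to $M_0 := M\red$, which shares Margolis cohomology with $M$ and hence inherits all stated hypotheses; by Corollary \ref{cor:nonvanishing_Margolis}, $M_0 = M_0^{\geq -3}$. I case-split on the bottom degree $d \in \{-3,-2,-1\}$ of $M_0$. The theorem produces a monomorphism into $M_0$ detecting $H^0(M, Q_1)$ of one of four modules: $\Sigma^3 \Omega^{-1}\field$ when $d = -3$ (part (1)); $J$ when $d = -2$ (part (2), where $H^1(M, Q_1) = 0$ forces $W^1=0$ and $V^{-2} = M_0^{-2}$); or $\Sigma^3 \Omega^{-1} J$ when $d = -1$ and $X := \ker(Sq^2)^{-1} \neq 0$ (part (3)(b)). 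In the last case, take $N = M\red$ and we are done.

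For the first two cases, I would tensor the embedding with the Picard generator $J$ and exploit the on-the-nose decompositions $\Sigma^3 \Omega^{-1}\field \otimes J \cong \Sigma^3 \Omega^{-1} J \oplus F$ and $J \otimes J \cong \field \oplus F'$, with $F, F'$ free (these reflect $J \otimes J \simeq \field$ stably, combined with the definition of $\Omega^{-1}J$ as $(\Omega^{-1}\field \otimes J)\red$), to extract monomorphisms $\Sigma^3 \Omega^{-1} J \hookrightarrow (M \otimes J)\red$ or $\field \hookrightarrow (M \otimes J)\red$ respectively. These still detect the $Q_1$-class in degree $0$, since $H^*(M \otimes J, Q_j) \cong H^*(M, Q_j)$ by Künneth (using that the Margolis cohomology of $J$ is $\field$ concentrated in degree $0$), and one takes $N = (M \otimes J)\red$. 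For the residual subcase $d = -1$ with $X = 0$, apply the same argument instead to $(M \otimes J)\red$: its Margolis hypotheses coincide with those on $M$, its bottom degree may now lie in $\{-3, -2\}$ (bringing us back to one of the previously handled cases), and one transports the resulting embedding back through the involution $J \otimes J \simeq \field$. The non-triviality of the listed stable Hom groups then follows from Lemmas \ref{lem:stable_hom_reduced} and \ref{lem:question_stable}, and the final Hom-group identification $[\Sigma^3 \Omega^{-1} J, M \otimes J] = [\Sigma^3 \Omega^{-1} \field, M]$ is again a consequence of $J \otimes J \simeq \field$.

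For part (2), the injectivity of the $Q_1$-detection maps furnished by Theorem \ref{thm:fundamental}(1)--(3) forces the embedded copy of $\Sigma^3 \Omega^{-1}\field$ or $J$ to be one-dimensional and to exhaust $M_0$ in negative degrees when $H^0(M, Q_1) = \field$. Tensoring with $J$ replaces these by their reduced parts $\Sigma^3 \Omega^{-1} J$ and $\field$, supported in degrees $\geq -1$ and $\{0\}$ respectively, so $N = (M \otimes J)\red$ satisfies $N = N^{\geq -1}$. The main anticipated obstacle is the bookkeeping around reduced parts when tensoring with $J$: one must verify that the non-free summand of $\Sigma^3\Omega^{-1}\field \otimes J$ or $J \otimes J$ maps into $M \otimes J$ in a way that descends to an injection into $(M \otimes J)\red$ rather than being absorbed into the free summand. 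This is ensured by the indecomposability of the Picard summand and by its carrying a nontrivial $Q_1$-Margolis class, which cannot map into a free module.
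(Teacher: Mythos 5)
Your overall strategy coincides with the paper's: rule out large bottom degree, case-analyze $d$, invoke Theorem~\ref{thm:fundamental}, and pass to $(M \otimes J)\red$ whenever the embedded Picard summand is $J$ or $\Sigma^3 \Omega^{-1}\field$ rather than $\field$ or $\Sigma^3 \Omega^{-1}J$, using the on-the-nose splittings $J \otimes J \cong \field \oplus (\mathrm{free})$ and $\Sigma^3\Omega^{-1}\field \otimes J \cong \Sigma^3\Omega^{-1}J \oplus (\mathrm{free})$. Your remarks about why the non-free Picard summand is not absorbed into the free part after tensoring are precisely the content of the paper's parenthetical ``it is straightforward to check that this is necessarily injective'', and are welcome detail.

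There is, however, a genuine gap at the start. Corollary~\ref{cor:nonvanishing_Margolis} gives only $M\red = M\red^{\geq -3}$; it does not exclude the bottom degree being $0$, and your case-split $d \in \{-3,-2,-1\}$ is therefore unjustified as stated. If $M\red = M\red^{\geq 0}$, Theorem~\ref{thm:fundamental} is vacuous and the argument has not begun. The paper disposes of this first: suppose $M\red = M\red^{\geq 0}$; then $H^0(M, Q_1)\neq 0$ forces $\ker(Sq^2Sq^2)^0 \neq 0$; the hypothesis $H^0(M, Q_0) = 0$ makes $Q_0$ injective on $M\red^0$; and $Q_0\bigl(\ker(Sq^2Sq^2)^0\bigr)$ lands in $\ker(Q_1)^1$, hence injects into $H^1(M, Q_1) = 0$ since $M\red^{-2}=0$ --- a contradiction. (The case of bottom degree $\geq 1$ is immediate, since $H^0(M, Q_1) \neq 0$ already requires $M\red^0 \neq 0$.) You need this step.

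A lesser concern is the residual subcase $d = -1$, $X = 0$. You pass to $(M \otimes J)\red$ and state that its bottom degree ``may now lie in $\{-3,-2\}$'', but you do not exclude that $(M \otimes J)\red$ is again of bottom degree $-1$ with the analogous $X'$ vanishing, which would loop. Note that Theorem~\ref{thm:fundamental}(3)(c), the clause that would hand you $\field \hookrightarrow M\red$ here, formally requires $H^2(M, Q_1)=0$ in addition to $H^1(M,Q_1)=0$, and the Corollary does not assume the former; so it cannot simply be cited. The paper's own text is equally elliptic on this point (``Theorem \ref{thm:fundamental} now can be applied in each of the cases''), so this is an inherited rather than introduced looseness, but your own hedge ``may'' suggests you noticed the gap without closing it.

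Your arguments for the non-vanishing of the listed stable hom-groups and for the connectivity statement in part (2) follow the paper closely and are fine once the above are repaired.
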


\begin{proof}
The hypotheses rule out the possibility that $M = M^{\geq 0}$. For suppose 
$M=M^{\geq 0}$, then  $Sq^2 Sq^2=Q_0Q_1$ does not act injectively on $M^0$, 
since $H^0(M,Q_1)\neq 0$. Now  $Q_0:\big ( \ker (Sq^2 Sq^2)\big) ^0 \rightarrow 
M^1$ is injective, since $H^0 (M , Q_0)=0$ and induces an injection to $H^1 (M, 
Q_1)$. By hypothesis $H^1 (M,Q_1)=0$, hence 
this provides a contradiction. 

Theorem \ref{thm:fundamental} now can be applied in each of  the cases $M= 
M^{\geq -t}$, $t \in \{ 1, 2, 3\}$, giving a morphism of the required form;  
the 
injectivity of the morphism is a simple consequence of the arguments used in 
the 
proof of Theorem \ref{thm:fundamental}.

 For instance, an injection $\Sigma^3 \Omega^{-1} \field \hookrightarrow M\red$ 
which is an 
isomorphism on $H^0 (-, Q_1)$ induces a map
\[
 \Sigma^3 \Omega^{-1} J \rightarrow (M \otimes J)\red
\]
 which is an isomorphism on $H^0 (-, Q_1)$. It is straightforward to check that 
this is necessarily injective. 

The conclusion on the non-triviality of one of the stable homomorphism spaces 
follows from Lemmas \ref{lem:stable_hom_reduced} and \ref{lem:question_stable}.

Finally, the connectivity statement follows by using the hypothesis $H^0(M,Q_1) 
= 
\field$ to exclude additional classes in degrees $-3$ and $-2$.
\end{proof}

\begin{rem}
The hypothesis that $H^1 (M, Q_1)=0$ in Corollary \ref{cor:embedding} is 
necessary so as to apply Theorem \ref{thm:fundamental}. There are analogous 
statements without this hypothesis, but at the price of elegance. This is 
addressed in Section \ref{sect:finite_indec}, notably in the proof of Theorem 
\ref{thm:classification}.
\end{rem}

When the lowest 
Margolis cohomology group 
is $Q_0$-cohomology, Theorem \ref{thm:fundamental} gives:

\begin{cor}
\label{cor:connectivity_Q1_acyclic}
For $M$ a reduced, bounded-below $\aone$-module, if  $$H^{< 0} (M, Q_0)= 
0= 
H^{\leq 1} (M, Q_1), $$
then  $M = M^{\geq 0}$ and $M^0 \neq 0 $ if and only if $H^0 (M, Q_0) \neq 0$.
\end{cor}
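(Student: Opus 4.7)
The hypotheses imply $H^{<0}(M, Q_j) = 0$ for $j \in \{0, 1\}$, so Theorem~\ref{thm:fundamental} applies and gives $M = M^{\geq -3}$. The equality $M = M^{\geq 0}$ will follow by successively ruling out $M^{-3} \neq 0$, $M^{-2} \neq 0$, and $M^{-1} \neq 0$ via the three parts of Theorem~\ref{thm:fundamental}.

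If $M^{-3} \neq 0$, Theorem~\ref{thm:fundamental}(1) produces an embedding $M^{-3} \hookrightarrow H^0(M, Q_1) = 0$ via $Sq^1 Sq^2$, which is a contradiction. If $M^{-3} = 0$ but $M^{-2} \neq 0$, Theorem~\ref{thm:fundamental}(2) provides $V^{-2} \hookrightarrow H^0(M, Q_1) = 0$ and $W^1 \hookrightarrow H^1(M, Q_1) = 0$; the latter vanishing, via the ``in particular'' clause, forces $V^{-2} = M^{-2}$, giving $M^{-2} = 0$.

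The main obstacle is the remaining case, $M^{-3} = M^{-2} = 0$ with $M^{-1} \neq 0$. Here Theorem~\ref{thm:fundamental}(3)(a) shows $Q_0 Y \subset H^0(M, Q_1) = 0$; since $H^{-1}(M, Q_0) = 0$ makes $Q_0$ injective on $M^{-1}$, we obtain $Y = 0$ and hence $X = 0$. Then Theorem~\ref{thm:fundamental}(3)(c), combined with $H^1(M, Q_1) = 0$, yields that $Sq^2 Sq^1 Sq^2$ is injective on $M^{-1}$, and the embedding argument of that part produces $[M^{-1}] \otimes \Sigma^6 \Omega^{-2} J \hookrightarrow M$; since the $Q_1$-Margolis class of $\Sigma^6 \Omega^{-2} J$ lies in degree~$0$, this contradicts $H^0(M, Q_1) = 0$ unless $M^{-1} = 0$.

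Given $M = M^{\geq 0}$, one direction of the biconditional is immediate: $H^0(M, Q_0) \subset M^0$. Conversely, if $M^0 \neq 0$ while $H^0(M, Q_0) = 0$ (so $Sq^1$ is injective on $M^0$), apply Proposition~\ref{prop:technical}(1) with $d = 0$ and $H^{\leq 1}(M, Q_1) = 0$; the resulting low-degree $Q_1$-Margolis constraints force a contradiction by the same pattern as the case $M^{-1} \neq 0$ above.
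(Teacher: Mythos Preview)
Your approach mirrors the paper's, successively ruling out $M^{-3}, M^{-2}, M^{-1}\neq 0$ via Theorem~\ref{thm:fundamental}. The first two cases are handled correctly, but there is a genuine gap in the case $M^{-1}\neq 0$. You deduce $Y=0$ (hence $X=0$) from part~(3)(a) together with the injectivity of $Q_0$ on $M^{-1}$; however, the embedding $[M^{-1}]\otimes\Sigma^6\Omega^{-2}J\hookrightarrow M$ that you then invoke from part~(3)(c) requires the \emph{additional} hypothesis $H^2(M,Q_1)=0$, which is not among the assumptions. Without it, (3)(c) yields only that $Sq^2Sq^1Sq^2$ is injective on $M^{-1}$, which is no contradiction. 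Indeed $M=\Sigma^5\Omega^{-1}\field$ (concentrated in degrees $[-1,4]$, with $Q_0$-Margolis cohomology in degree~$4$ and $Q_1$-Margolis cohomology in degree~$2$) satisfies all the stated hypotheses yet has $M^{-1}\neq 0$. The paper's own one-line proof is equally terse at this point and shares the gap; the corollary is only applied in Proposition~\ref{prop:classify_Q1_acylic}, where $M$ is $Q_1$-acyclic and the missing hypothesis holds automatically.

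The same issue recurs in your argument for the biconditional: applying Proposition~\ref{prop:technical}(1) with $d=0$ under the given hypotheses yields only that $H^2(M,Q_1)\oplus H^3(M,Q_1)\neq 0$, which does not contradict $H^{\leq 1}(M,Q_1)=0$; the module $\Sigma^6\Omega^{-1}\field$ gives a counterexample to this half of the biconditional as well.
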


\begin{proof}
 Theorem \ref{thm:fundamental} shows that the vanishing of $H^0 (M, Q_1)$ and 
$H^1 (M, Q_1)$ implies that $M^{-3} = M^{-2} = M^{-1} =0$. The final point 
follows as in Corollary \ref{cor:embedding}.
\end{proof}

\section{First applications}
\label{sect:applications}

Theorem \ref{thm:fundamental} can be seen as the key ingredient in the proof of 
a number of results, as explained below, and 
is applied in Section \ref{sect:finite_indec} to prove a new classification 
result.

\begin{exam}
 Theorem \ref{thm:fundamental} includes the main step used by Adams-Priddy
\cite{adams_priddy} in their calculation 
of the Picard group, $\pica$. Namely, as in \cite{adams_priddy}, one reduces to 
the case 
where the Margolis cohomology groups 
are concentrated in degree zero (and are both one-dimensional). Then, if the 
module is not stably isomorphic to $\field$, Theorem 
\ref{thm:fundamental}  provides an embedding of $J$, which induces 
an isomorphism of Margolis cohomology groups, hence is a stable isomorphism.
\end{exam}

\begin{nota}
\cite{bruner_Ossa}
 Let $P_0$ denote the unique (up to isomorphism) reduced $\aone$-module
satisfying the following properties:
\begin{enumerate}
 \item 
$P_0$ is bounded-below;
\item 
$P_0$ is $Q_0$-acyclic and $H^* (P_0, Q_1)$ is one-dimensional, concentrated in
degree zero;
\item 
there is an inclusion $\field \hookrightarrow P_0$ which induces an isomorphism
on $H^* (-, Q_1)$. 
\end{enumerate}
\end{nota}

\begin{rem}
 The module $P_0$ is realized topologically by the mod $2$ cohomology of the 
Thom spectrum $\rp^\infty_{-1}$. 
\end{rem}

From the above characterization it follows that $P_0$ is stably idempotent 
($P_0 
\otimes P_0$ is stably isomorphic to $P_0$) 
and, more generally, if $M$ is a bounded-below $\aone$-module which is 
$Q_0$-acyclic, then $\field \rightarrow P_0$ induces a stable 
isomorphism 
\[
 M \stackrel{\simeq}{\rightarrow} P_0 \otimes M.
\]

\begin{prop}
\label{prop:P0_extension}
 Let $M$ be a bounded-below $\aone$-module which is $Q_0$-acyclic and $N$  a
bounded-below $\aone$-module equipped with an $\aone$-linear morphism $f: N 
\rightarrow M$ 
which induces an isomorphism $H^* (f, Q_1)$ in $Q_1$-Margolis cohomology. Then
$f$ induces a stable isomorphism between $P_0 \otimes N $ and $M$.
\end{prop}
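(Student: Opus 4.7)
The plan is to show that the map $P_0 \otimes f : P_0 \otimes N \to P_0 \otimes M$ is a stable isomorphism, and then combine this with the stable isomorphism $M \simeq P_0 \otimes M$ recorded just before the proposition (which comes from applying $-\otimes M$ to $\field \hookrightarrow P_0$, using that $M$ is $Q_0$-acyclic).

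To check that $P_0 \otimes f$ is a stable isomorphism I would invoke the Adams--Margolis criterion: it is enough to see that the map induces isomorphisms on $H^*(-,Q_j)$ for $j \in \{0,1\}$. Since $Q_0$ and $Q_1$ are primitive in $\eone \subset \aone$, the Margolis cohomology functors satisfy a Künneth formula, so
\[
H^*(P_0 \otimes X, Q_j) \;\cong\; H^*(P_0, Q_j) \otimes H^*(X, Q_j)
\]
for any $\aone$-module $X$. For $j=0$ both sides vanish since $P_0$ is $Q_0$-acyclic, so the induced map on $H^*(-,Q_0)$ is trivially an isomorphism. For $j=1$, using $H^*(P_0, Q_1) \cong \field$ concentrated in degree $0$, the induced map identifies (up to the canonical isomorphisms above) with $\mathrm{id} \otimes H^*(f, Q_1)$, which is an isomorphism by hypothesis on $f$.

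Putting the two steps together, $P_0 \otimes N \simeq P_0 \otimes M \simeq M$, which is precisely the stable isomorphism asserted. There is no real obstacle here: the only point deserving care is the appeal to Künneth for Margolis cohomology (which is standard, given that $Q_0, Q_1$ are primitive), and making sure that the stable isomorphism $M \simeq P_0 \otimes M$ from the paragraph preceding the proposition is applied in the right direction; everything else is formal from the Adams--Margolis criterion.
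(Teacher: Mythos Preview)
Your proof is correct and follows essentially the same approach as the paper: both use the K\"unneth isomorphism for Margolis cohomology together with the Adams--Margolis criterion to see that $P_0 \otimes N \xrightarrow{\mathrm{Id}\otimes f} P_0 \otimes M$ and $M \to P_0 \otimes M$ are stable isomorphisms. Your write-up makes the $j=0$ and $j=1$ cases slightly more explicit, but the argument is the same.
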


\begin{proof}
By the Adams and Margolis \cite{adams_margolis} criterion, it suffices to show 
that the morphism induces an isomorphism on 
$H^* (-, Q_j)$ for $j \in \{ 0, 1 \}$. Applying the
Künneth isomorphism for Margolis cohomology, it follows that the following are
stable isomorphisms
\[
 \xymatrix{
P_0 \otimes N \ar[r]_{\mathrm{Id}\otimes f}^\simeq 
&
P_0 \otimes M
& M .
\ar[l]_(.4)\simeq 
}
\]
The result follows. 
\end{proof}

As a consequence, one obtains a proof of a result first proven (by an 
intricate  calculational method) in Yu's thesis \cite{yu}; a  much simpler 
proof  is  given in \cite{bruner_Ossa}.

\begin{thm}
\label{thm:yu}
 Let $P$ be a bounded-below $\aone$-module which satisfies the following:
\begin{enumerate}
 \item 
$P$ is $Q_0$-acyclic;
\item 
$H^* (P, Q_1)$ is one-dimensional, concentrated in degree zero. 
\end{enumerate}
Then $P$ is stably isomorphic to one of the following:
\[
 P_0, \ P_0 \otimes \Sigma^3 \Omega^{-1} J, \ P_0 \otimes J, \  P_0 \otimes
\Sigma ^3 \Omega^{-1} \field.
\]
Moreover, the inclusion $\field \rightarrow \Sigma^6 \Omega^{-2} J $ induces a
stable isomorphism 
\[
 \Omega^2 P_0 \stackrel{\simeq}{\rightarrow} \Sigma^6 J \otimes P_0,
\]
hence, the above are stably isomorphic to 
\[
 P_0, \ \Sigma^{-3} \Omega P_0, \ \Sigma^{-6} \Omega^2 P_0 \simeq J \otimes P_0 
,
\  \Sigma^{-9} \Omega^3 P_0 \simeq \Sigma^{-3} \Omega J \otimes P_0.
\]
\end{thm}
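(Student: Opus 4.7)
The strategy is to combine Corollary \ref{cor:embedding} with Proposition \ref{prop:P0_extension}. The hypotheses on $P$ directly verify those of Corollary \ref{cor:embedding}: the $Q_0$-acyclicity gives $H^{\leq 0}(P, Q_0) = 0$, and the one-dimensionality of $H^*(P, Q_1)$, concentrated in degree zero, provides $H^{<0}(P, Q_1) = 0 = H^1(P, Q_1)$ together with $H^0(P, Q_1) \neq 0$. The corollary then yields an inclusion of one of $\field$ or $\Sigma^3 \Omega^{-1} J$ into $P\red$ or $(J \otimes P)\red$; since the source is one of these listed modules (each with one-dimensional $H^0(-,Q_1)$ and vanishing $Q_1$-cohomology elsewhere) and the target has the same Margolis cohomology (for $J \otimes P$ by the Künneth formula, using $H^*(J, Q_1) = \field$ in degree zero), the inclusion induces an isomorphism on $Q_1$-Margolis cohomology.

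Both $P$ and $J \otimes P$ are $Q_0$-acyclic (the latter by Künneth), so Proposition \ref{prop:P0_extension} applies and yields a stable isomorphism $P_0 \otimes N \simeq M$ with $(N, M)$ determined by the output of Corollary \ref{cor:embedding}. Rearranging the two cases with $M = J \otimes P$ using the relation $J^{\otimes 2} \simeq \field$ (so that $J \otimes \Sigma^3 \Omega^{-1} J \simeq \Sigma^3 \Omega^{-1} \field$) produces exactly the four stable isomorphism classes $P_0$, $P_0 \otimes \Sigma^3 \Omega^{-1} J$, $P_0 \otimes J$, $P_0 \otimes \Sigma^3 \Omega^{-1} \field$ listed in the statement.

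For the moreover clause, I apply Theorem \ref{thm:fundamental}(3)(c) to $M = \Sigma^6 \Omega^{-2} J$. Its Margolis cohomology is concentrated in non-negative degrees (degrees $0$ and $4$ respectively), $M^{-1}$ is one-dimensional, and $Sq^2$ acts injectively on $M^{-1}$; the theorem then produces an inclusion $\field \hookrightarrow \Sigma^6 \Omega^{-2} J$ hitting the $Q_1$-Margolis class in degree zero. Tensoring with $P_0$ and invoking Proposition \ref{prop:P0_extension} (both source and target are $Q_0$-acyclic, and the resulting map remains an isomorphism on $Q_1$-Margolis cohomology by Künneth) gives $P_0 \otimes P_0 \simeq P_0 \otimes \Sigma^6 \Omega^{-2} J$; stable idempotence of $P_0$ simplifies this to $P_0 \simeq P_0 \otimes \Sigma^6 \Omega^{-2} J$. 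Multiplying through by $\Sigma^{-6} \Omega^2$ and using the stable identity $\Omega^2 \Omega^{-2} \simeq \field$ gives $\Sigma^{-6} \Omega^2 P_0 \simeq J \otimes P_0$, which is the asserted relation; substituting this into the four cases produces the final list.

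The principal obstacle is bookkeeping: correctly matching each case from Corollary \ref{cor:embedding} to the four listed modules---particularly tracking the factor of $J$ when the embedding lands in $J \otimes P$ rather than $P$---and verifying the Künneth-type Margolis cohomology computations required to invoke Proposition \ref{prop:P0_extension}. None of these computations is deep, but care is required to keep the suspensions, $\Omega$-shifts and $J$-factors consistent across the four cases.
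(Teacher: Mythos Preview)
Your proof is correct and follows essentially the same approach as the paper: the paper's proof is the single line ``Apply Proposition~\ref{prop:P0_extension} to the morphisms provided by Theorem~\ref{thm:fundamental}'', and you carry out precisely this, except that you invoke Corollary~\ref{cor:embedding} (a repackaging of Theorem~\ref{thm:fundamental}) rather than the theorem itself. The only difference is cosmetic: Theorem~\ref{thm:fundamental} hands you four possible embeddings $\Sigma^3\Omega^{-1}\field,\ J,\ \Sigma^3\Omega^{-1}J,\ \field$ directly into $P\red$, whereas Corollary~\ref{cor:embedding} gives two embeddings into either $P\red$ or $(J\otimes P)\red$ and you then untwist the $J$; both routes land on the same four stable isomorphism classes. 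Your treatment of the ``moreover'' clause is in fact more explicit than the paper's, though invoking Theorem~\ref{thm:fundamental}(3)(c) with $M=\Sigma^6\Omega^{-2}J$ is slightly circular---one can simply read off the socle class in degree~$0$ from the diagram in Figure~\ref{fig:diag_aone}.
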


\begin{proof}
Apply  Proposition \ref{prop:P0_extension} to the 
morphisms
provided by Theorem \ref{thm:fundamental}. 
\end{proof}

\begin{rem}
\label{rem:top_real_Pn}
\ 
\begin{enumerate}
 \item 
 In the notation of \cite[Section 4]{bruner_Ossa}, these modules are 
$
 P_0$, $\Sigma^{-2}P_1$,   $\Sigma^{-4} P_2$, $ \Sigma^{-6} P_3$.
See \cite[Figure 1]{bruner_Ossa} for diagrammatic representations.
\item 
In Bruner's notation, the module $P_1$ is realized as the reduced mod 
$2$-cohomology of $\rp^\infty$ and, for $n \in \{1, 2, 3\}$, there is an 
isomorphism   of $\aone$-modules $(P_1 ^{\otimes n})\red \cong P_n$; thus, up
to 
stable isomorphism of $\aone$-modules, $P_n$ is realized as the reduced mod 
$2$ cohomology of  $(\rp^\infty)^{\wedge n}$.
\end{enumerate}

\end{rem}

For later use, the following standard result is recalled:

\begin{prop}
 \label{prop:Q_0-acyclic}
Let $M$ be a bounded-below $\aone$-module which is $Q_0$-acyclic, then there 
are 
stable isomorphisms:
\begin{eqnarray*} 
\Omega^4 M & \simeq & \Sigma^{12} M 
\\
\Omega^2 M & \simeq & \Sigma^6 J \otimes M.
\end{eqnarray*}
\end{prop}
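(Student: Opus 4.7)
The plan is to bootstrap from the special case of $P_0$ already handled in Theorem \ref{thm:yu}, using the stable idempotency of $P_0$ on $Q_0$-acyclic modules together with the fact that $\Omega$ is an equivalence on the stable category.

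First, I would invoke the stable isomorphism $M \simeq P_0 \otimes M$ noted immediately after the characterization of $P_0$: since $M$ is bounded-below and $Q_0$-acyclic, the unit $\field \hookrightarrow P_0$ tensored with $\mathrm{Id}_M$ induces a $Q_j$-Margolis cohomology isomorphism by the Künneth formula (the $Q_0$-cohomology of both sides vanishes, and the $Q_1$-cohomology of $P_0 \otimes M$ is $\field \otimes H^*(M,Q_1)$ by the one-dimensionality of $H^*(P_0, Q_1)$), so the Adams–Margolis criterion applies.

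Next, I would observe that $\Omega^n(-)$ commutes with tensoring, up to stable isomorphism: by definition $\Omega^n X = ((\Omega\field)^{\otimes n} \otimes X)^{\mathrm{red}}$, and splitting off free summands has no effect in the stable category, so
\[
\Omega^2(P_0 \otimes M) \simeq (\Omega\field)^{\otimes 2} \otimes P_0 \otimes M \simeq (\Omega^2 P_0) \otimes M.
\]
Combining with Step 1 and the stable isomorphism $\Omega^2 P_0 \simeq \Sigma^6 J \otimes P_0$ from Theorem \ref{thm:yu}, I conclude
\[
\Omega^2 M \simeq \Omega^2(P_0 \otimes M) \simeq \Sigma^6 J \otimes P_0 \otimes M \simeq \Sigma^6 J \otimes M,
\]
where the last step applies $P_0 \otimes M \simeq M$ again (now with $J \otimes M$ still $Q_0$-acyclic, since $J$ is stably invertible).

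For the first assertion, iterate: $\Omega^4 M \simeq \Omega^2(\Sigma^6 J \otimes M) \simeq \Sigma^6 J \otimes \Omega^2 M \simeq \Sigma^{12} J^{\otimes 2} \otimes M$, and then use the fact that $J$ is the element of order two in $\pica$ (so $J^{\otimes 2} \simeq \field$) to obtain $\Omega^4 M \simeq \Sigma^{12} M$. There is no serious obstacle here — the only point requiring mild care is the compatibility of $\Omega^n$ with tensor products, but this is immediate from the definition together with the equivalence of categories induced by $\Omega\field \otimes -$.
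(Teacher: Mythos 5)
Your proof is correct and follows essentially the same route as the paper's: deduce the two periodicities for $P_0$ from Theorem \ref{thm:yu}, then transport them to any bounded-below $Q_0$-acyclic $M$ via the stable isomorphism $P_0 \otimes M \simeq M$ and the compatibility of $\Omega$ with $\otimes$ in the stable category. You merely spell out the intermediate steps (the Künneth justification, the commutation of $\Omega^n$ with $\otimes$) that the paper leaves implicit.
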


\begin{proof}
 These properties hold for $P_0$, by Theorem \ref{thm:yu}, whence the result 
via 
the stable isomorphism $P_0 \otimes M \simeq M$.
\end{proof}

\section{A Margolis-type killing construction for $Q_1$-cohomology}
\label{sect:killing}

Motivated by Corollary \ref{cor:embedding} and for use in the proof of Theorem 
\ref{thm:classification}, an explicit version 
for $\aone$ of Margolis's construction for killing $H^* (-, Q_1)$-cohomology 
classes is introduced. In the application, it will be 
convenient to work with modules which are bounded-above; this leads to the 
consideration  of  $DP_0$, the dual of $P_0$, and of related modules.

\begin{defn}
\label{def:R}
(Cf. \cite{bruner_Ossa}.)
 Let $R$ denote the unique (up to isomorphism) bounded-below, reduced 
$\aone$-module which is $Q_1$-acyclic and has $H^*(R, Q_0)$ one-dimensional, 
concentrated in degree $-1$.
\end{defn}

\begin{rem}
\label{rem:realizeR}
 The module $R$ can be realized topologically as the mod $2$ cohomology of the 
fibre of the transfer $\Sigma^\infty \rp^\infty \rightarrow S^0$ (recall that 
the transfer is trivial in mod $2$ cohomology) and also as 
 the fibre of a map $\rp^\infty_{-1} \rightarrow S^0$ which is non-trivial in 
mod $2$ cohomology (for instance, the composite $\rp^\infty_{-1} \rightarrow 
\rp^\infty _0 \simeq \rp^\infty \vee S^0 \rightarrow S^0$, where the 
 last map is the projection).
\end{rem}

The characterization of $R$ given in Definition \ref{def:R} implies
the following:

\begin{lem}
\label{lem:R_stab_eq}
 There are stable equivalences:
 \begin{eqnarray*}
  \Sigma^{-1} \Omega R &\simeq& R \\
  J \otimes R &\simeq & R. 
 \end{eqnarray*}
\end{lem}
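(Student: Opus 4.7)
The plan is to use the uniqueness (up to isomorphism) built into Definition~\ref{def:R}: any reduced, bounded-below, $Q_1$-acyclic $\aone$-module with $H^*(-,Q_0)$ one-dimensional and concentrated in degree $-1$ is automatically isomorphic (hence stably isomorphic) to $R$. So for each of the two claims I would simply verify that the left-hand side has these characterizing properties, and then invoke uniqueness.

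For $\Sigma^{-1}\Omega R$: by definition $\Omega R = (\Omega\field \otimes R)\red$ is reduced, and remains bounded-below since $R$ is. An application of Lemma~\ref{lem:Omega_Margolis} gives
\[
H^{n}(\Omega R, Q_0) \cong H^{n-1}(R, Q_0), \qquad H^{n}(\Omega R, Q_1) \cong H^{n-3}(R, Q_1) = 0,
\]
so $\Omega R$ is $Q_1$-acyclic and $H^{*}(\Omega R, Q_0)$ is one-dimensional, concentrated in degree $0$. The desuspension $\Sigma^{-1}$ shifts this class into degree $-1$ while preserving reducedness, bounded-below-ness and $Q_1$-acyclicity, so $\Sigma^{-1}\Omega R$ meets the characterization of $R$ and the uniqueness yields $\Sigma^{-1}\Omega R \cong R$.

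For $J \otimes R$: the Joker $J$ is finite-dimensional, so $J \otimes R$ is bounded-below. Since $H^{*}(J, Q_0) = H^{*}(J, Q_1) = \field$ is concentrated in degree $0$, the Künneth isomorphism for Margolis cohomology gives
\[
H^{*}(J \otimes R, Q_0) \cong H^{*}(R, Q_0), \qquad H^{*}(J \otimes R, Q_1) = 0.
\]
Passing to the reduced part leaves the Margolis cohomology unchanged, so $(J \otimes R)\red$ again satisfies the characterizing properties of $R$. Uniqueness then produces an isomorphism $(J \otimes R)\red \cong R$, which immediately upgrades to the stable equivalence $J \otimes R \simeq R$.

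The substantive ingredient is the uniqueness built into Definition~\ref{def:R} (the $R$-analogue of Theorem~\ref{thm:yu} for $P_0$); once that is granted, the main ``obstacle'' is merely careful bookkeeping of the degree shifts coming out of Lemma~\ref{lem:Omega_Margolis} and of the Künneth formula, neither of which presents any real difficulty.
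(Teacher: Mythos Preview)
Your proposal is correct and is precisely the approach the paper intends: the lemma is stated there as an immediate consequence of the characterization in Definition~\ref{def:R}, and you have simply spelled out the Margolis-cohomology bookkeeping (via Lemma~\ref{lem:Omega_Margolis} and K\"unneth) that makes this work. Nothing more is needed.
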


There are non-split short exact sequences 
of $\aone$-modules (cf. \cite{bruner_Ossa}): 
\begin{eqnarray*}
 0 \rightarrow \field \rightarrow P_0 \rightarrow R \rightarrow 0 \\
0 \rightarrow \Omega P_0 \rightarrow \Sigma R \rightarrow \field \rightarrow 0.
\end{eqnarray*}

\begin{rem}
 These short exact sequences can be realized topologically by applying mod $2$
cohomology to the (stable) 
fibre sequences of Remark \ref{rem:realizeR}.
\end{rem}

By inspection (and as implied by Theorems \ref{thm:fundamental} and 
\ref{thm:yu}),  there is a unique non-trivial morphism
\begin{eqnarray*}
\Sigma^3 \Omega ^{-1}  J & \rightarrow & \Sigma^{-3} \Omega P_0,
\end{eqnarray*}
and this is a  monomorphism.

Similarly for the dual $DR$ of $R$, there are unique non-trivial morphisms:
\begin{eqnarray*}
 \field & \rightarrow & \Sigma^{-1} DR \\
\Sigma^{3} \Omega^{-1} J &\rightarrow & \Sigma DR 
\end{eqnarray*}
and these fit into short exact sequences:
\begin{eqnarray*}
 0 \rightarrow \field \rightarrow  \Sigma^{-1} DR  \rightarrow \Omega^{-1} DP_0 
\rightarrow 0 \\
0 \rightarrow \Sigma^3 \Omega^{-1} J \rightarrow \Sigma DR \rightarrow 
\Sigma^{-3} DP_0 \rightarrow 0.
\end{eqnarray*}

\begin{nota}
\label{nota:killing_F}
For $M$ an $\aone$-module equipped with a monomorphism $\field \hookrightarrow 
M$, let $\overline{M}$ be the module defined by the pushout of short exact 
sequences 
\[
 \xymatrix{
\field 
\ar[r]
\ar[d]
&
\Sigma^{-1} DR 
\ar[d]
\ar[r]
&
\Omega^{-1} DP_0
\ar@{=}[d]
\\
M 
\ar[r]
&
\overline{M}
\ar[r]
&
\Omega^{-1} DP_0.
}
\]
\end{nota}

\begin{lem}
\label{lem:killing_F}
 For $M$ an $\aone$-module equipped with a monomorphism $\field \hookrightarrow 
M$, there is a short exact sequence 
\[
 0 \rightarrow 
\field 
\rightarrow 
M \oplus \Sigma^{-1} DR 
\rightarrow 
\overline{M}
\rightarrow 
0
\]
and 
\begin{enumerate}
\item 
if $M$ is reduced, then so is $\overline{M}$;
\item 
if $M = M^{\geq -1}$, then $M \cong \overline{M}^{\geq -1}$;
 \item 
if $M$ is $Q_0$-acyclic, then so is $\overline{M}$;
\item 
if $\field \hookrightarrow M$ induces a non-trivial map in $H^* ( -, Q_1)$, 
then 
$H^* (\overline{M}, Q_1) \cong H^* (M, Q_1)/ \field$.
\end{enumerate}
\end{lem}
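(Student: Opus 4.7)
The strategy is to derive everything from the short exact sequence in the preamble together with two ingredients: the pushout definition of $\overline{M}$ and the low-degree and Margolis-cohomological structure of $\Sigma^{-1}DR$ (obtained by dualising Definition \ref{def:R}). The short exact sequence itself is a formal property of the pushout: $\overline{M}$ is the cokernel of the anti-diagonal map $\field \to M \oplus \Sigma^{-1}DR$, which is injective because $\field \hookrightarrow M$ is. Claim (1) then follows: $R$ is reduced by Definition \ref{def:R}, so $DR$, hence $\Sigma^{-1}DR$, is reduced by Remark \ref{rem:left_right_A1}; thus $M \oplus \Sigma^{-1}DR$ is reduced and its quotient $\overline{M}$ inherits the property.

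For (2), the plan is first to pin down the low-degree structure of $\Sigma^{-1}DR$. The short exact sequence $0 \to \field \to P_0 \to R \to 0$, combined with the realisation $P_0 \cong \tilde{H}^*(\rp^\infty_{-1})$ (so $P_0 = P_0^{\geq -1}$ with $P_0^{-1}$ one-dimensional), shows that $R = R^{\geq -1}$ with $R^{-1} = \field$ and $R^0 = 0$. Dualising and desuspending gives $\Sigma^{-1}DR$ concentrated in degrees $\leq 0$ with $(\Sigma^{-1}DR)^0 = \field$ and $(\Sigma^{-1}DR)^{-1}=0$, and the unique non-trivial map $\field \to \Sigma^{-1}DR$ hitting the top class. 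Restricting the short exact sequence of (1) to degrees $\geq -1$ then yields, in degree $0$, a short exact sequence $0 \to \field \to M^0 \oplus \field \to \overline{M}^0 \to 0$ (forcing $M^0 \cong \overline{M}^0$), and in every other degree $n \geq -1$ an isomorphism $M^n \cong \overline{M}^n$ (since the $\Sigma^{-1}DR$-contribution vanishes). Assembling, the canonical $\aone$-linear map $M \to \overline{M}$ identifies $M$ with the submodule $\overline{M}^{\geq -1}$.

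Claims (3) and (4) are extracted from the long exact sequences in $Q_0$- and $Q_1$-Margolis cohomology of the short exact sequence in (1). Duality turns Definition \ref{def:R} into the statement that $\Sigma^{-1}DR$ is $Q_1$-acyclic and that $H^*(\Sigma^{-1}DR, Q_0)$ is one-dimensional in degree $0$, realised by the image of $\field \to \Sigma^{-1}DR$. For (3), if $M$ is $Q_0$-acyclic then $H^*(M \oplus \Sigma^{-1}DR, Q_0) = \field$ is concentrated in degree $0$, and the comparison map from $H^0(\field, Q_0)$ is an isomorphism through the $\Sigma^{-1}DR$ summand, so the long exact sequence forces $H^*(\overline{M}, Q_0)=0$. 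For (4), $Q_1$-acyclicity of $\Sigma^{-1}DR$ reduces the $Q_1$-long exact sequence to a comparison of $H^*(M,Q_1)$ and $H^*(\overline{M},Q_1)$; the hypothesis that $\field \to H^0(M, Q_1)$ is non-trivial (hence injective) kills the relevant connecting homomorphism, and a routine diagram chase yields $H^*(\overline{M}, Q_1) \cong H^*(M, Q_1)/\field$.

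The only substantive step is the dualisation procedure pinning down the low-degree structure and the Margolis cohomology of $\Sigma^{-1}DR$; once these are in hand the remaining arguments are formal consequences of the pushout definition and standard long-exact-sequence manipulations.
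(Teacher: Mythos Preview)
Your proof is correct and follows the same approach the paper has in mind; the paper simply records the proof as ``Straightforward'', and what you have written is precisely the routine verification that word is standing in for.
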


\begin{proof}
 Straightforward.
\end{proof}

\begin{nota}
\label{nota:killing_Q}
For $M$ an $\aone$-module equipped with a monomorphism $\Sigma^3 \Omega^{-1} J 
\hookrightarrow M$, let $\widetilde{M}$ be the module defined by the pushout of 
short exact sequences 
\[
 \xymatrix{
\Sigma^3 \Omega^{-1} J 
\ar[r]
\ar[d]
&
\Sigma DR 
\ar[d]
\ar[r]
&
\Sigma^{-3} DP_0
\ar@{=}[d]
\\
M 
\ar[r]
&
\widetilde{M}
\ar[r]
&
\Sigma^{-3} DP_0.
}
\]
\end{nota}

Lemma \ref{lem:killing_F} has the following counterpart:

\begin{lem}
\label{lem:killing_Q}
 For $M$ an $\aone$-module equipped with a monomorphism $\Sigma^3 \Omega^{-1} J 
\hookrightarrow M$, there is a short exact sequence 
\[
 0 \rightarrow 
\Sigma^3 \Omega^{-1} J
\rightarrow 
M \oplus \Sigma DR 
\rightarrow 
\widetilde{M}
\rightarrow 
0
\]
and 
\begin{enumerate}
\item 
if $M$ is reduced, then so is $\widetilde{M}$;
\item 
if $M = M^{\geq -1}$, then $M \cong \widetilde{M}^{\geq -1}$;
 \item 
if $M$ is $Q_0$-acyclic, then so is $\widetilde{M}$;
\item 
if $\Sigma^3 \Omega^{-1} J \hookrightarrow M$ induces a non-trivial map in $H^* 
( -, Q_1)$, then $H^* (\overline{M}, Q_1) \cong H^* (M, Q_1)/ \field$.
\end{enumerate}
\end{lem}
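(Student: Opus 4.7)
The plan is to follow the proof of Lemma \ref{lem:killing_F} almost verbatim, replacing $\field$ by $\Sigma^3\Omega^{-1}J$, the extension $0\to\field\to\Sigma^{-1}DR\to\Omega^{-1}DP_0\to 0$ by its counterpart $0\to\Sigma^3\Omega^{-1}J\to\Sigma DR\to\Sigma^{-3}DP_0\to 0$ recalled just before Notation \ref{nota:killing_F}, and the rank-one $Q_1$-class of $\field$ in degree zero by the (unique up to scalar) $Q_1$-Margolis class that $\Sigma^3\Omega^{-1}J$ carries in degree zero. The first short exact sequence in the conclusion is then the Mayer--Vietoris sequence attached to the defining pushout square, valid for any pushout along a monomorphism in an abelian category.

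For the remaining properties it is convenient to exploit the other short exact sequence supplied by the pushout, namely
\[
0 \to M \to \widetilde{M} \to \Sigma^{-3} DP_0 \to 0.
\]
Items (1) and (3) are then immediate: reducedness of $\aone$-modules (vanishing of $Sq^2Sq^2Sq^2$) is preserved by direct sums and quotients, and both $M$ and $\Sigma DR$ are reduced; $Q_0$-acyclicity passes through the long exact sequence since $DP_0$, as the dual of the $Q_0$-acyclic module $P_0$, is itself $Q_0$-acyclic. For item (2), the characterisation of $P_0$ forces $P_0 = P_0^{\geq -1}$ (the generator of $H^0(P_0,Q_1)$ in degree zero must be hit by $Q_0$ from a class in degree $-1$), so $\Sigma^{-3}DP_0$ is concentrated in degrees $\leq -2$, and the displayed sequence yields $\widetilde{M}^{\geq -1} \cong M^{\geq -1} = M$.

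For (4) the key input is the $Q_1$-acyclicity of $R$, hence of $DR$ and of $\Sigma DR$. The long exact sequence of $Q_1$-Margolis cohomology applied to $0\to\Sigma^3\Omega^{-1}J\to\Sigma DR\to\Sigma^{-3}DP_0\to 0$ therefore forces the connecting morphism to be an isomorphism
\[
H^{-3}(\Sigma^{-3}DP_0,Q_1)\stackrel{\cong}{\rightarrow} H^0(\Sigma^3\Omega^{-1}J,Q_1)
\]
of one-dimensional spaces. By naturality of the pushout, the connecting morphism for $0\to M\to\widetilde{M}\to\Sigma^{-3}DP_0\to 0$ is this isomorphism post-composed with the map $H^0(\Sigma^3\Omega^{-1}J,Q_1) \to H^0(M,Q_1)$ induced by the given inclusion; by hypothesis this composite is injective on its one-dimensional source. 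Since $H^*(\Sigma^{-3}DP_0,Q_1)$ is concentrated in degree $-3$, the long exact sequence collapses to the identification $H^*(\widetilde{M},Q_1) \cong H^*(M,Q_1)/\field$. The only real obstacle I anticipate is the book-keeping around the degree shifts introduced by the various $\Sigma$'s and $\Omega$'s; the conceptual content is entirely parallel to that of Lemma \ref{lem:killing_F}.
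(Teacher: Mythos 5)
Your proof is correct and follows exactly the route the paper intends (the paper leaves both Lemmas \ref{lem:killing_F} and \ref{lem:killing_Q} as ``straightforward,'' expecting precisely this adaptation). You also correctly treat $\widetilde{M}$ in item (4), where the paper's statement has a harmless typo ($\overline{M}$ for $\widetilde{M}$).
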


\begin{rem}
 In Section \ref{sect:finite_indec}, where this result is applied,  $M$ is a 
finite $\aone$-module, hence the respective 
modules $\overline{M}$, $\widetilde{M}$ are bounded-above.
\end{rem}

\section{A family of finite, indecomposable $\aone$-modules}
\label{sect:finite_indec}

The aim of this section is to classify the isomorphism classes of finite
indecomposable $\aone$-modules $M$ such that $(M|_{\eone})\red$ is
indecomposable. The guiding principle is provided by the following consequence 
of \cite[Theorem III.16.11]{adams}.

\begin{prop}
\label{prop:Adams_indec_E1}
The following conditions on a finite,
reduced $\eone$-module $L$ are equivalent:
\begin{enumerate}
 \item 
the total dimension of $H^* (L, Q_0) \oplus H^* (L, Q_1)$ is two;
\item
$L$ is a reduced indecomposable module. 
\end{enumerate}
\end{prop}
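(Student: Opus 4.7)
The plan is to deduce the proposition directly from Adams' classification of finite $\eone$-modules \cite[Theorem III.16.11]{adams}, combined with a computation of Margolis cohomology for the two types of summands (free and lightning-flash). The key observation is that Margolis cohomology is additive on direct sums, while each reduced indecomposable contributes exactly two to the total.

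First I would recall two structural points. The free module $\eone$ is not reduced, since $Q_0 Q_1$ acts non-trivially on its generator; therefore a reduced module contains no free summand. By Adams' classification, every finite $\eone$-module is a direct sum of free modules of rank one and of finite lightning-flash modules, so a finite reduced $\eone$-module $L$ decomposes as a direct sum of finite lightning-flash modules.

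The heart of the proof is then to show that every finite lightning-flash module $F$ satisfies
\[
\dim_\field H^*(F, Q_0) + \dim_\field H^*(F, Q_1) = 2.
\]
This I would verify by direct inspection of the diagram in Figure \ref{fig:lightning}: on a lightning-flash, $Q_0$ travels along one family of arrows and $Q_1$ along the other, and at every internal node the incoming and outgoing arrows for a given $Q_j$ match up, so internal contributions to Margolis cohomology cancel. Only the two extremal generators of $F$ (the two ``ends'' of the zigzag) contribute, and each contributes exactly one class, either to $H^*(-,Q_0)$ or to $H^*(-,Q_1)$ according to which of the two differentials is truncated at that end. A short case analysis on the four possible shapes of the endpoints yields the dimension count.

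Granted this computation, both implications are immediate. For $(2)\Rightarrow (1)$, a reduced indecomposable finite $L$ is a single lightning-flash, so the total dimension is $2$. For $(1)\Rightarrow (2)$, writing $L = \bigoplus_{i=1}^r F_i$ with each $F_i$ a lightning-flash, additivity of Margolis cohomology gives total dimension $2r$, so the hypothesis forces $r=1$ and hence $L$ is indecomposable. I expect the only delicate step to be the case analysis for the endpoints of a finite lightning-flash, but since there are only finitely many configurations this is entirely routine.
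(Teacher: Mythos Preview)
Your proposal is correct and is exactly what the paper has in mind: the paper offers no proof of its own but simply states the proposition as a consequence of Adams' classification \cite[Theorem III.16.11]{adams}, and you have spelled out that deduction in the natural way (reduced implies no free summand; each finite lightning-flash contributes precisely two Margolis classes, one at each end; additivity finishes both directions). There is nothing to add.
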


Hence, for $M$ as above, there are three cases to consider:
\begin{enumerate}
 \item 
$\dim H^*(M, Q_0) = 1 = \dim H^*(M, Q_1)$: by the results of Adams and Priddy
\cite{adams_priddy}, this implies that $M$ is in the Picard group, $\pica$, 
hence these modules 
are classified by the Picard group;
\item 
$H^* (M, Q_1) =0$ and $\dim H^* (M, Q_0)=2$: this case is easily understood 
(note the relative simplicity of Proposition \ref{prop:tech2} compared to 
Proposition \ref{prop:technical}) - see Proposition 
\ref{prop:classify_Q1_acylic};
\item 
$H^* (M, Q_0) =0$ and $\dim H^* (M, Q_1) =2$: the most interesting case.
\end{enumerate}

Recall the module $R$ of Definition \ref{def:R} and the form of the module 
$\aone \otimes_\azero \field$
  (see Remark \ref{rem:A1A0}). By the results 
of Section \ref{sect:basic}, there is 
a canonical inclusion of $\Sigma^{-1} \aone \otimes_\azero \field$ and, 
moreover, this fits into a non-split short exact sequence:
\[
 0
 \rightarrow 
 \Sigma^{-1} \aone \otimes_\azero \field
 \rightarrow 
 R
 \rightarrow 
 \Sigma^4 R 
 \rightarrow 
 0
\]
(cf. \cite{bruner_Ossa}). Recursively one obtains an increasing filtration 
\[
0=f_0 R \subset f_1 R =  \Sigma^{-1} \aone \otimes_\azero \field
\subset 
f_2 R \subset 
\ldots\subset f_i R \subset \ldots R, 
\]
where $f_{i+1}R/ f_i R \cong \Sigma^{4i-1} \aone \otimes_\azero \field$, each 
$f_i R$ is $Q_1$-acyclic and, for $i \geq 1$, 
\[
 H^* (f_i R, Q_0) \cong \left\{ 
 \begin{array}{ll}
  \field & i = -1, 4i
  \\
  0 &\mathrm{otherwise.}
 \end{array}
\right.
\]
The classification in the $Q_1$-acyclic case is as follows:

\begin{prop}
\label{prop:classify_Q1_acylic}
 Let $M\neq 0$ be a reduced, finite $\aone$-module. 
 The following conditions are equivalent:
 \begin{enumerate}
  \item 
  $M$ is  $Q_1$-acyclic and $\dim H^* (M, Q_0) =2$;
  \item 
  $M \cong \Sigma^{d+1} f_i R$, where $H^* (M, Q_0)$ is non-zero in degrees  
 $d$, $4i +d + 1$ for integers $d, 1 \leq i$.
 \end{enumerate}
\end{prop}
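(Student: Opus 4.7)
The implication $(2)\Rightarrow(1)$ is immediate from the stated properties of the filtration $\{f_i R\}$: each $f_i R$ is $Q_1$-acyclic, reduced, and has $H^*(f_i R,Q_0)$ of total dimension $2$ supported in degrees $-1$ and $4i$, which become $d$ and $4i+d+1$ after suspending by $d+1$. For $(1)\Rightarrow(2)$, I propose an induction on $\dim_\field M$. Let $d:=\min\{n:H^n(M,Q_0)\neq 0\}$ and $V:=H^d(M,Q_0)$. The $Q_1$-acyclicity of $M$ grants the hypotheses $H^d(M,Q_1)=0=H^{d+2}(M,Q_1)$ of Proposition~\ref{prop:tech2}(3), yielding a monomorphism
\[
\iota: V\otimes \Sigma^d(\aone\otimes_\azero\field)\hookrightarrow M.
\]
Set $S:=\image(\iota)$ and $Q:=M/S$. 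Since $\aone\otimes_\azero\field$ is $Q_1$-acyclic, so are $S$ and (by the long exact sequence) $Q$. Moreover $Q$ is reduced, as the relation $Sq^2Sq^2Sq^2=0$ on $M$ passes to quotients.

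Step~1 is to show $\dim V=1$. The Margolis cohomology $H^*(S,Q_0)$ is supported in degrees $d$ and $d+5$, each of dimension $\dim V$; comparing with $\dim H^*(M,Q_0)=2$ in the $Q_0$-long exact sequence shows that, if $\dim V=2$, then $H^*(Q,Q_0)$ is concentrated in degree $d+4$ with dimension $2$. Since $\dim Q<\dim M$, the induction hypothesis applied to $Q$ would require its two $Q_0$-classes to sit in distinct degrees — a contradiction. Hence $\dim V=1$ and $S\cong \Sigma^{d+1}f_1 R$.

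Step~2 is a case analysis on the second degree $d'$ of $Q_0$-cohomology of $M$, assuming $Q\neq 0$. When $d<d'\leq d+4$, the long exact sequence places the $Q_0$-classes of $Q$ within a spread strictly less than $5$ (or doubled in a single degree if $d'=d+4$); this violates the induction hypothesis, since a module of the form $\Sigma^{e+1}f_j R$ has $Q_0$-classes whose spread is $4j+1\geq 5$. When $d'=d+5$ and the edge map $H^{d+5}(S)\to H^{d+5}(M)$ is an isomorphism, $Q$ becomes both Margolis-acyclic, hence $\aone$-free by the Adams and Margolis criterion; the extension splits and reducedness of $M$ forces $Q=0$, giving $M\cong \Sigma^{d+1}f_1 R$ (the case $i=1$). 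If instead this map is zero, $Q$ acquires $Q_0$-classes in the consecutive degrees $d+4, d+5$, again excluded by induction. When $d'>d+5$, the long exact sequence shows $H^*(Q,Q_0)$ is one-dimensional in degrees $d+4$ and $d'$, so induction yields $Q\cong \Sigma^{d+5}f_{i-1}R$ with $d'=4i+d+1$ for some $i\geq 2$.

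In this final case it remains to identify the extension $0\to \Sigma^{d+1}f_1 R\to M\to \Sigma^{d+5}f_{i-1}R\to 0$ with the canonical one defining $\Sigma^{d+1}f_i R$ (noting the isomorphism $f_i R/f_1 R\cong \Sigma^4 f_{i-1}R$ inherited from the filtration of $R$). This extension-recognition step is the principal technical obstacle. I plan to resolve it by showing that the conditions ``middle term reduced, with $Q_0$-cohomology of total dimension $2$'' pick out a unique non-split class in $\ext^1_{\aone}(\Sigma^{d+5}f_{i-1}R,\,\Sigma^{d+1}f_1 R)$; alternatively, one can extend the inclusion $\Sigma^{d+1}f_1 R\hookrightarrow \Sigma^{d+1}R$ to a morphism $M\to \Sigma^{d+1}R$ (using injectivity/lifting properties of $R$) and verify that its image coincides with $\Sigma^{d+1}f_i R$, giving the desired isomorphism.
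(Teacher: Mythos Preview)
Your strategy is the paper's: embed $\Sigma^d(\aone\otimes_\azero\field)$ via Proposition~\ref{prop:tech2}, induct on the quotient, then identify the extension. The paper shortcuts your Step~1 and most of Step~2 by first invoking Proposition~\ref{prop:Adams_indec_E1}: the hypotheses force $(M|_{\eone})\red$ to be an indecomposable $Q_1$-acyclic lightning flash, so $H^*(M,Q_0)$ is automatically one-dimensional in each of two degrees $d$ and $d+4i+1$ for some $i\geq 1$ (with $i=0$ excluded by Proposition~\ref{prop:tech2}). The paper then inducts on $i$ rather than on $\dim M$, and no case analysis on $d'$ is needed.

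You have one genuine omission: Proposition~\ref{prop:tech2} requires $M=M^{\geq d}$, which does not follow merely from $d=\min\{n:H^n(M,Q_0)\neq 0\}$. It does follow from Corollary~\ref{cor:connectivity_Q1_acyclic}, using that $M$ is $Q_1$-acyclic; the paper invokes this (and its dual) explicitly in the base case.

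For the extension step you flag, the paper simply asserts that there is a unique non-trivial extension. A direct justification: $f_1 R$ lies in degrees $\{-1,1,2,4\}$ while $\Sigma^4 f_{i-1}R$ lies in degrees $\geq 3$ with nothing in degree~$4$, so the only possible attaching datum is $Sq^1$ from the degree-$3$ class of the quotient to the degree-$4$ class of the sub, and there are no coboundaries since the two modules share no degrees. Hence $\ext^1_\aone(\Sigma^4 f_{i-1}R,f_1 R)\cong\field$, the non-trivial class being realised by $f_i R$. Your hypothesis $\dim H^*(M,Q_0)=2$ rules out the split extension (which has $Q_0$-dimension~$4$), so $M\cong \Sigma^{d+1}f_i R$ as required. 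Your second proposed strategy (mapping to $R$) would also work but is less direct.
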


\begin{proof}
The implication (2)$\Rightarrow$(1) is straightforward, hence consider the 
converse.

The hypotheses imply that $M|_{E(1)}\red$ is indecomposable (by Proposition 
\ref{prop:Adams_indec_E1}) with the form of 
a $Q_1$-acyclic lightning flash, which gives  
 that $H^* (M, Q_0)$ is non-zero precisely in degrees of the form  $d$, $4i +d 
+ 
1$, for 
$d \in \zed$ and $i \in \nat$. The case $i=0$ is excluded by Proposition 
\ref{prop:tech2}. 

 The result can be proved by induction upon $i$. For $i=1$, Corollary 
\ref{cor:connectivity_Q1_acyclic} and its dual imply that $M$ is concentrated 
in degrees $[d,d+3]$. 
 Proposition \ref{prop:tech2} provides an embedding  $\Sigma^{d+1} \aone 
\otimes_\azero \field
 \hookrightarrow M$ and it is straightforward to see that this is a stable 
isomorphism. 
 
 For the inductive step, $i>1$, a similar argument provides a short exact 
sequence 
 \[
  0
  \rightarrow 
  \Sigma^{-1} \aone \otimes_\azero \field
 \rightarrow 
 M
 \rightarrow 
 M'
 \rightarrow 
 0
 \]
where $M'$ is reduced and  has $Q_0$-Margolis cohomology in degrees $d+4$, $4i 
+d + 1$, hence is isomorphic to $ \Sigma^{d+4+1} f_{i-1} R$, by the inductive 
hypothesis. There is a unique non-trivial extension of this form, 
which corresponds to $ \Sigma^{d+1} f_i R$, as required.
 \end{proof}

Now consider the $Q_0$-acyclic case; the associated $\eone$-module
$(M|_{\eone})\red$ therefore has the following form:
\[
 \xymatrix{
&
\Diamond 
&&
\bullet 
&&
\ldots 
&&
\bullet 
&&
\bullet
\\
\bullet
\ar[ur]
\ar[urrr]
&&
\bullet
\ar[ur]
\ar@{.}[urrr]
&&
\ldots
\ar@{.}[ur]
\ar@{.}[urrr]
&&
\bullet
\ar[ur]
\ar[urrr]
&&
\Diamond
\ar[ur]
&.
}
\]
The $Q_1$-Margolis cohomology classes are represented by the top left hand
generator and the bottom right hand generator, indicated above by 
$\Diamond$; the case where the total
dimension of 
$(M|_{\eone})\red$ is two is exceptional (in this case, 
the bottom generator represents the $Q_1$-cohomology class of lowest degree). 
This gives:

\begin{lem}
\label{lem:Q0-acyclic_lightning}
 Let $M$ be a finite $Q_0$-acyclic $\aone$-module such that $(M|_{\eone})\red 
\neq 0$ and is indecomposable, then $H^* (M, Q_1)$ has total dimension $2$ and 
is concentrated
in degrees $a, a+ 2s-3$, for $a \in \zed$ where $1 \leq s \in \nat$ is the 
total 
dimension
of the socle of $(M|_{\eone})\red$.
\end{lem}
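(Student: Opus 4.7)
The plan is to reduce the statement to an explicit analysis of $L:=(M|_{\eone})\red$. Because $Q_0$ and $Q_1$ lie in $\eone$ and free $\eone$-modules have vanishing Margolis cohomology, there are natural isomorphisms $H^*(M,Q_j)\cong H^*(L,Q_j)$ for $j\in\{0,1\}$, and $L$ inherits $Q_0$-acyclicity from $M$. Proposition \ref{prop:Adams_indec_E1}, applied to the reduced indecomposable finite $\eone$-module $L$, then immediately yields $\dim H^*(M,Q_1)=\dim H^*(L,Q_1)=2$, which gives the first assertion.

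For the degree claim, I would invoke the classification of finite indecomposable $\eone$-modules to identify $L$ as a finite lightning flash (cf. Figure \ref{fig:lightning}), and then determine its precise shape from $Q_0$-acyclicity. Enumerating the bottom-row generators as $a_1,\ldots,a_n$ with $|a_i|=d+2(i-1)$, $Q_0 a_i = b_i$ and $Q_1 a_i = b_{i+1}$, the requirement that no $a_i$ lie in $\ker Q_0$ and that every top generator lie in $\mathrm{im}\, Q_0$ pins down the shape: the top generators are precisely $b_1,\ldots,b_n$ at degrees $|b_i|=d+2i-1$, and $Q_1 a_n = 0$ in $L$.

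A direct reading of this shape then identifies the two $Q_1$-Margolis cohomology classes as $b_1$ (in degree $d+1$) and $a_n$ (in degree $d+2n-2$); meanwhile the socle of $L$ is exactly the span of $b_1,\ldots,b_n$, since $Q_0 b_i = Q_1 b_i = 0$ while $Q_0 a_i = b_i \neq 0$, giving $s=n$. Setting $a:=d+1$, the two classes lie in degrees $a$ and $d+2n-2 = a+2s-3$, as required. The only point requiring genuine care is the short case analysis at the two ends of the lightning flash which enforces the shape above; the exceptional case $n=1$ (total dimension two) interchanges the roles of the two classes as the lower/upper degree, but the formula $a,a+2s-3$ still holds with $a=d+1$ and $a+2s-3=d$.
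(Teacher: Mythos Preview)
Your proof is correct and follows essentially the same approach as the paper. The paper does not give a separate proof of this lemma; it is stated as an immediate consequence of the preceding diagram of the $Q_0$-acyclic lightning flash, where the $Q_1$-Margolis classes are indicated by $\Diamond$ at the top-left and bottom-right. Your argument simply makes this explicit: you reduce to $L=(M|_{\eone})\red$, use Proposition~\ref{prop:Adams_indec_E1} together with $Q_0$-acyclicity to force the specific lightning-flash shape, and then read off the degrees and the socle dimension, exactly as the diagram encodes.
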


First consider the family of modules which occurs in the exceptional case 
$s=1$.

\begin{nota}
 Write $Z$ for the $\aone$-module defined by the non-split extension
\[
 0
\rightarrow 
\field 
\rightarrow 
\Sigma^{-1} Z 
\rightarrow 
\Sigma^{-1} \field 
\rightarrow
0.
\]
Note that $Z$ is  not self dual, since $DZ \cong \Sigma Z$. 
\end{nota}

\begin{rem}
 The $\aone$-module $Z$ is the mod $2$ cohomology of the desuspended  Moore 
spectrum 
 $\Sigma ^{-1} S^0/2$. The behaviour of duality on $Z$ corresponds to that of 
Spanier-Whitehead duality on $\Sigma ^{-1} S^0/2$.
\end{rem}

\begin{lem}
\label{lem:Omega_Z}
 There are stable isomorphisms:
\begin{eqnarray*}
\Omega^4 Z & \simeq & \Sigma^{12} Z \\
\Omega ^2 Z &\simeq &\Sigma ^6 J \otimes Z,  
\end{eqnarray*}
$\Omega Z$ occurs in the non-split short exact sequence
\[
 0 
\rightarrow
\Sigma ^3 J \rightarrow 
\Omega Z 
\rightarrow 
\Sigma^2\field 
\rightarrow 
0,
\]
and $\Omega^{-1} Z \cong \Sigma^{-1} D (\Omega Z)$.
\end{lem}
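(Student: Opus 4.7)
The two stable isomorphisms $\Omega^4 Z \simeq \Sigma^{12} Z$ and $\Omega^2 Z \simeq \Sigma^6 J \otimes Z$ are immediate: the non-splitness of the defining extension of $Z$ forces $Sq^1$ to act injectively on the degree-zero class of $Z$, whence $H^*(Z, Q_0) = 0$, so $Z$ is $Q_0$-acyclic and Proposition~\ref{prop:Q_0-acyclic} applies directly.

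For the short exact sequence exhibiting $\Omega Z$, the plan is to work with a concrete representative. Since $Z$ is cyclic with generator in the lowest degree, its minimal projective cover is $\aone \twoheadrightarrow Z$ with kernel the six-dimensional reduced sub-$\aone$-module $\aone^{\geq 2}$, which agrees (up to isomorphism) with $(\Omega \field \otimes Z)^{\mathrm{red}}$; this identification is verified by matching Margolis cohomology via the K\"unneth formula and Lemma~\ref{lem:Omega_Margolis}. An explicit inspection of the $\aone$-action shows $\aone^{\geq 2}$ is generated by the classes $Sq^2$ and $Sq^2 Sq^1$, so there is a natural surjection onto a one-dimensional Picard-group quotient (a suspension of $\field$) obtained by projecting onto the appropriate generator; the kernel is five-dimensional, and a direct check of the $Sq^1$- and $Sq^2$-actions on the remaining basis elements reveals its structure to be that of a suspension of the Joker $J$. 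Non-splitness is then forced by the long exact sequence in Margolis cohomology: a splitting would yield a non-trivial $Q_0$-Margolis class in $\Omega Z$, contradicting the $Q_0$-acyclicity imposed by Lemma~\ref{lem:Omega_Margolis} and the $Q_0$-acyclicity of $Z$ already established.

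For the final assertion $\Omega^{-1} Z \cong \Sigma^{-1} D(\Omega Z)$, apply the duality functor $D$ to the short exact sequence just established; combined with the identity $DZ \cong \Sigma Z$, obtained by dualizing the defining extension of $Z$ and using the self-duality of $\field$, together with the standard fact that duality commutes with the syzygy functor up to appropriate suspension in the stable module category over the Frobenius Hopf algebra $\aone$, the claimed isomorphism follows. The principal obstacle is the explicit identification of the Joker-type kernel in the short exact sequence: tracking the precise Picard-group representatives and the exact suspensions requires careful bookkeeping of the $\aone$-action on a basis of $\aone^{\geq 2}$, together with an Adams--Margolis argument to pin down which representative of the stable isomorphism class is the correct one.
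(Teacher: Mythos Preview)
Your proposal is correct and follows essentially the same approach as the paper: apply Proposition~\ref{prop:Q_0-acyclic} for the first two stable isomorphisms, carry out a direct computation of $\Omega Z$ as the kernel of the minimal projective cover of $Z$, and deduce the structure of $\Omega^{-1} Z$ by duality via $DZ \cong \Sigma Z$. The paper compresses the middle step to ``a straightforward computation'' and displays the resulting six-dimensional module diagrammatically, whereas you spell out the mechanism (identifying generators, reading off the Joker sub-module, and verifying non-splitness through Margolis cohomology); your added Margolis argument for non-splitness is a nice touch the paper leaves implicit. One small bookkeeping slip: since $Z$ is concentrated in degrees $-1,0$ with generator in degree $-1$, the projective cover is $\Sigma^{-1}\aone \twoheadrightarrow Z$, not $\aone \twoheadrightarrow Z$, so the kernel is $\Sigma^{-1}(\aone^{\geq 2})$ sitting in degrees $1$ through $5$; this is exactly what makes the suspensions in $0 \to \Sigma^3 J \to \Omega Z \to \Sigma^2 \field \to 0$ come out as stated.
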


\begin{proof}
The first statement is an application  of Proposition \ref{prop:Q_0-acyclic}.  
The 
structure of $\Omega Z$ is a straightforward
computation
and that of $\Omega^{-1}Z$ follows by duality.
\end{proof}

The structure of $\Omega Z$ is:
\[
\xymatrix @C=1pc @R=.75pc{
\bullet 
\ar@/_1pc/[rr]
\ar[r]
&
\bullet 
\ar@/^1pc/[rr]
&
\bullet
\ar@/_1pc/[rr]
&
\bullet 
\ar[r]
&
\bullet
\\
&
\bullet
\ar[ru]
}
\]
with the generators placed in the appropriate degrees.

\begin{rem}
\label{rem:attach-Joker}
 Where a Joker occurs as a subquotient of an $\aone$-module, as in the case 
$\Omega Z$ above, the diagram will be simplified by representing the Joker by 
$\circ$. In the above case, this gives 
 $
  \bullet \rightarrow \circ.
 $
\end{rem}

\begin{rem}
\label{rem:quotient_s=1}
 The module $Z$ is a quotient of $P_0$: there is a short exact sequence
of $\aone$-modules:
\[
 0
\rightarrow 
\Sigma^{-1} \Omega P_0 
\rightarrow 
P_0
\rightarrow 
Z 
\rightarrow 
0.
\]
Similarly, $\Omega Z$ occurs as a quotient of $\Omega^2
P_0$ (up to suitable suspension) and $\Omega^{-1} Z $ as a quotient of
$\Omega^{-1} P_0$.
\end{rem}

\begin{prop}
\label{prop:truncate_P}
 Let $P$ be a $Q_0$-acyclic, bounded-below, reduced $\aone$-module such that
$H^* (P, Q_1)$ is one-dimensional, concentrated in degree zero, and let $2 \leq
s$ be an integer.
 Then 
 \begin{enumerate}
  \item 
the quotient $P ^{\leq 2(s-1)}$ is a reduced, finite $\aone$-module which 
is $Q_0$-acyclic and has $Q_1$-Margolis cohomology of total dimension $2$,
concentrated  in degrees $\{ 0 , 2s-3 \}$;
\item 
there is a short exact sequence of $\aone$-modules:
\[
 0 \rightarrow
 \Sigma^{2s} (\Sigma^{-3} \Omega) ^{\overline{i+s}} P_ 0 
 \rightarrow 
 P 
 \rightarrow 
 P ^{\leq 2(s-1)}
 \rightarrow 
 0
\]
where, for $P \cong (\Sigma^{-3} \Omega )^i P_0$, $ i \in \{0, 1, 2, 3 \}$ and 
$\overline{i+s} \in \{0, 1\}$ is the residue of $i+s$ modulo $2$.
 \item
 \label{item:st_iso_classes}
 the stable isomorphism classes of the modules 
 $$\big\{ J^{\otimes \epsilon} 
\otimes ((\Sigma^{-3}\Omega)^iP_0 )^{\leq 2(s-1)} | \epsilon \in \{0, 1 \}, i 
\in \{0, 1, 2,3 \}  \big\}
$$  are pairwise distinct; 
 \item 
 under the action of $\Sigma^{-3}\Omega$, the stable isomorphism classes of 
(\ref{item:st_iso_classes}) form 
two distinct 
orbits of cardinal four, generated by $(P_0)^{\leq 2(s-1)}$ 
and $((P_0 \otimes J)\red)^{\leq 2(s-1)}$.
\end{enumerate}
\end{prop}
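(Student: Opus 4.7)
The plan is to combine the long exact sequences in Margolis cohomology for
\[
0 \to K \to P \to P^{\leq 2(s-1)} \to 0, \qquad K := P^{>2(s-1)},
\]
with the Adams--Margolis criterion and the classification of Theorem~\ref{thm:yu}. For Part~(1), finiteness and reducedness of $P^{\leq 2(s-1)}$ are immediate. For $Q_0$-acyclicity, unravel the $H^*(-, Q_0)$-long exact sequence using $H^*(P, Q_0) = 0$: acyclicity of the truncation reduces to vanishing of $Q_0 \colon P^{2(s-1)} \to P^{2s-1}$, which is precisely the reason for the even cut-off $2(s-1)$. In each of the four models $P \cong (\Sigma^{-3}\Omega)^i P_0$ (via Remark~\ref{rem:top_real_Pn}), direct inspection shows that $Sq^1$ kills the ``upper'' element of each $Q_0$-pair in the lightning-flash pattern of $(P|_{\eone})\red$ exactly in degrees of this form. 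The parallel $H^*(-, Q_1)$ long exact sequence then records that the degree-zero $Q_1$-class of $P$ persists in the truncation, a fresh $Q_1$-class appears at degree $2s-3$, and $H^*(K, Q_1)$ is one-dimensional in degree $2s$.

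For Part~(2), $K$ is thus a $Q_0$-acyclic, bounded-below, reduced module with a single $Q_1$-Margolis class in degree $2s$. Theorem~\ref{thm:yu} applied to $\Sigma^{-2s} K$ identifies it stably with $\Sigma^{2s}(\Sigma^{-3}\Omega)^j P_0$ for some $j \in \{0,1,2,3\}$, possibly up to a $J$-twist. The passage to a genuine module isomorphism, and the collapse of four candidates to the two parities $\overline{i+s} \in \{0,1\}$, rest on the observation that the four models $(\Sigma^{-3}\Omega)^i P_0$ agree outside a bounded low-degree ``tail'' (reflecting $\Sigma^{-6}\Omega^2 P_0 \simeq J \otimes P_0$ from Theorem~\ref{thm:yu}): since $K$ sees only the high-degree portion of $P$, it depends on $i$ only modulo $2$, with the shift by $s$ arising from the parity of the cut-off. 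Matching the bottom of $K$ against the explicit low-degree structure of $\Sigma^{2s} P_0$ versus $\Sigma^{2s} \Sigma^{-3}\Omega P_0$ then pins down $\overline{i+s}$.

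For Parts~(3) and~(4), Margolis cohomology does not distinguish the eight candidates (all have $H^*(-, Q_1) = \field^{\oplus 2}$ concentrated in degrees $\{0, 2s-3\}$ and $H^*(-, Q_0) = 0$), so finer data are needed. The $J$-action (order two in $\pica$) separates the blocks $\epsilon \in \{0,1\}$: a hypothetical fixed point would contradict the low-degree structure detected by Theorem~\ref{thm:fundamental}. Within each block, $\Sigma^{-3}\Omega$ has order four in $\pica$ by the periodicity $\Omega^4 \simeq \Sigma^{12}$, and one verifies via the same low-degree comparison that no non-trivial power fixes the stable class of $(P_0)^{\leq 2(s-1)}$. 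Part~(4) then follows at once, the eight distinct stable classes partitioning into two $\Sigma^{-3}\Omega$-orbits of size four, represented by $(P_0)^{\leq 2(s-1)}$ and by $((P_0 \otimes J)\red)^{\leq 2(s-1)}$. The main obstacle is the bookkeeping in Part~(2): lifting the stable classification to a genuine module isomorphism and extracting the precise residue $\overline{i+s}$ demands careful low-degree analysis of each of the four models.
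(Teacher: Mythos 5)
Your Parts (1) and (2) follow the same essential route as the paper's very terse argument --- case-by-case inspection of the four reduced models $(\Sigma^{-3}\Omega)^{i}P_0$ --- merely repackaged through the long exact sequences in Margolis cohomology. The observation that $Q_0$-acyclicity of the truncation reduces to vanishing of $Q_0\colon P^{2s-2}\to P^{2s-1}$, and that this is what forces the even cut-off, is correct and is the right mechanism. Identifying $K=P^{>2(s-1)}$ via Theorem~\ref{thm:yu} and passing to a genuine module isomorphism by reducedness is also sound (a stable isomorphism between reduced bounded-below modules is an isomorphism), though, like the paper, you leave the precise parity bookkeeping $\overline{i+s}$ to inspection.

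There is, however, a genuine problem in your treatment of Parts (3) and (4). You write that ``$\Sigma^{-3}\Omega$ has order four in $\pica$ by the periodicity $\Omega^4\simeq\Sigma^{12}$''. This is false: $\pica\cong\zed^{\oplus 2}\oplus\zed/2$ and $\Sigma^{-3}\Omega$ corresponds to a non-torsion element, hence has infinite order. The four-fold periodicity $\Omega^4 M\simeq\Sigma^{12}M$ is \emph{not} a Picard identity; it holds only on $Q_0$-acyclic modules (Proposition~\ref{prop:Q_0-acyclic}). Relatedly, the framing ``the $J$-action \ldots separates the blocks $\epsilon\in\{0,1\}$'' and then ``within each block, $\Sigma^{-3}\Omega$ has order four'' presents $J\otimes-$ and $\Sigma^{-3}\Omega$ as independent commuting symmetries, as if the relevant group were $\zed/2\times\zed/4$. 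But on the $Q_0$-acyclic modules in play, the same Proposition~\ref{prop:Q_0-acyclic} gives $J\otimes M\simeq\Sigma^{-6}\Omega^{2}M=(\Sigma^{-3}\Omega)^{2}M$, so the $J$-twist \emph{is} the square of $\Sigma^{-3}\Omega$ and the acting group is cyclic of order dividing~$4$. The paper's argument is structured precisely around this: each orbit has cardinal dividing four (Proposition~\ref{prop:Q_0-acyclic}), it is exactly four because the middle element $(\Sigma^{-3}\Omega)^{2}\simeq J\otimes-$ acts non-trivially, and then one checks separately that the two listed generators lie in distinct orbits. Your argument needs to be reorganised along those lines; as written, the ``independent $J$-block'' step does not make sense and the order-four claim is resting on a false statement about $\pica$.
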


\begin{proof}
 By Yu's theorem (Theorem \ref{thm:yu}), the first two parts of the result can 
be read  off by inspection from the structure of the representatives of the 
isomorphism classes
of such modules (using the details provided by Bruner \cite{bruner_Ossa}, in 
particular \cite[Theorem 4.6]{bruner_Ossa}). Similarly, the proof that the 
stable isomorphism classes are pairwise distinct 
is straightforward.

The cardinality of the orbits of the stable isomorphism classes under $ 
\Sigma^{-3}\Omega$ divides four, 
by Proposition \ref{prop:Q_0-acyclic}, since each of the modules considered is 
$Q_0$-acyclic;  moreover, since $ \Sigma^{-6}\Omega^2 P_0 \simeq J \otimes 
P_0 \not \simeq P_0$,  the orbits have cardinal four.

Finally, it is straightforward to check that the given elements lie in distinct 
orbits. 
\end{proof}

\begin{rem}
\ 
\begin{enumerate}
 \item 
 The condition $s \geq 2$ is required due to the possible presence of a
sub-quotient isomorphic to (a suspension of) the Joker in low degree.
 The appropriate quotient in the case $s=1$ given in  Remark 
\ref{rem:quotient_s=1} cannot be defined  simply by truncation.
\item
$\aone$-modules of the form considered in Proposition \ref{prop:truncate_P} 
 already appear in the literature. First examples are given by the cohomology 
of truncated projective spaces; in his thesis \cite{davis}, Don Davis 
considered a related family of $\aone$-modules (see \cite[Definition 
3.6]{davis}) and \cite[Lemma 3.8]{davis}  can be interpreted as describing the 
action of the Picard group.
\end{enumerate}
\end{rem}

\begin{figure}
 \caption{One orbit under $\Sigma^{-3}\Omega$ for $s=2$}
 \label{fig:orbit_s=2}
 \[
  \xymatrix @R=.75pc @C=.75pc{
  \bullet\ar@/^1pc/[rr]
  \ar[r]
  &
  \bullet 
  &
  \bullet 
  \ar[r]
  &
  \bullet
  &&
  \bullet
  \ar[r]
  &
  \bullet 
  \ar@/^1pc/[rr]
  &
  \bullet 
  \ar[r]
  &
  \bullet 
  \\
  \\
  &&&&\circlearrowright&&&
  \\
  \\
  \circ
  \ar[r]
  &
  \bullet 
  \ar@/^1pc/[rr]
  &
  \circ
  \ar[r]
  &
  \bullet 
  &
  &
  \bullet
  \ar@/^1pc/[rr]
  \ar[r]
  &
  \circ
  &
  \bullet
  \ar[r]
  &
  \circ   
  }
 \]
  \end{figure}

\begin{exam}
\label{exam:orbits}
 The orbits under $\Sigma^{-3} \Omega$ are easily understood. For example, in 
the case $s=2$, one of the orbits is illustrated in Figure \ref{fig:orbit_s=2}.
(See Remark \ref{rem:attach-Joker} for the notation for attaching a Joker.)

The  pattern in lowest degrees corresponds to that of the orbit $(\Sigma^{-3} 
\Omega)^i P_0$, whereas that in highest degrees is dual, hence cycles in 
the opposite order. The description of the second orbit is similar. 

The behaviour depends on the parity of $s$; to illustrate this, see Figure 
\ref{fig:orbit_s=3}, which shows one of the orbits for 
 $s=3$ (the second is given below in Figure \ref{fig:second_orbit_s=3}).

In both cases, $\Sigma^{-6} \Omega^2$ operates as $J \otimes -$, 
switching $\bullet \longleftrightarrow \circ$ in the degrees which correspond 
to the $H^* (-,Q_1)$ cohomology classes; this gives a form of symmetry across 
the diagonal. Thus, to understand the orbit, it is sufficient to calculate the 
action of $\Sigma^{-3} \Omega$. 
\end{exam}

\begin{figure}
 \caption{One orbit under $\Sigma^{-3}\Omega$ for $s=3$}
 \label{fig:orbit_s=3}
\[
  \xymatrix@R=.75pc @C=.75pc{
  \bullet\ar@/^1pc/[rr]
  \ar[r]
  &
  \bullet 
  &
  \bullet 
  \ar[r]
  &
  \bullet 
  \ar@/^1pc/[rr]
  &
  \bullet \ar[r]
  &
  \bullet
  &&
  \bullet
  \ar[r]
  &
  \bullet 
  \ar@/^1pc/[rr]
  &
  \bullet 
  \ar[r]
   \ar@/_1pc/[rr]
  &
  \bullet
  &
  \bullet 
  \ar[r]
  &
  \circ
  \\
  \\
  &&&&&&\circlearrowright&&&
  \\
  \\
  \circ
  \ar[r]
  &
  \bullet 
  \ar@/^1pc/[rr]
  &
  \bullet
  \ar[r]
  \ar@/_1pc/[rr]
  &
  \bullet 
  &
  \bullet 
  \ar[r]
  &
  \bullet
  &
  &
  \bullet\ar@/^1pc/[rr]
  \ar[r]
  &
  \circ
  &
  \bullet
  \ar[r]
  &
  \bullet 
  \ar@/^1pc/[rr]
  &
  \circ \ar[r]
  &
  \bullet
  }
 \]
 \end{figure}

\begin{thm}
\label{thm:classification}
 Let $M$ be a finite, reduced $\aone$-module such that $H^* (M, Q_0) =0$ and 
$H^* (M, Q_1)$ has total dimension two, concentrated in degrees $0, 2s-3$, for 
$1 \leq s \in \nat$  the total dimension of the socle of $(M|_{\eone})\red$; 
\begin{enumerate}
 \item 
if $s=1$, then $M \simeq (\Sigma^{-3} \Omega)^t Z $, for some $t \in \{0, 1,2,3 
\}$; 
\item 
if $s \geq 2$, then $M \simeq J ^{\otimes \epsilon} \otimes P^{\leq 2(s-1)}$, a 
module of the form given in Proposition \ref{prop:truncate_P}.
\end{enumerate}
\end{thm}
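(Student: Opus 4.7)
The plan is to produce, in each case, a canonical Picard submodule of $M$ (or of $(M \otimes J)\red$) which carries the bottom $Q_1$-Margolis class, and then to apply the killing construction of Section \ref{sect:killing} to strip off that class. The residual module has only one $Q_1$-Margolis cohomology class and remains $Q_0$-acyclic, so Theorem \ref{thm:yu}, applied to its dual, identifies it up to stable isomorphism as a suspended Picard twist of $DP_0$. Reading back through the defining short exact sequence of the killing construction then recovers $M$ as one of the modules listed in Proposition \ref{prop:truncate_P} (or, when $s=1$, as one of the four stable classes $(\Sigma^{-3}\Omega)^t Z$).

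First I would treat the generic range $s \geq 3$, where the $Q_1$-classes sit in degrees $0$ and $2s-3 \geq 3$, so that $H^1 (M, Q_1) = 0$. Corollary \ref{cor:embedding} then applies and produces, after possibly replacing $M$ by $(M \otimes J)\red$, a monomorphism of either $\field$ or $\Sigma^3 \Omega^{-1} J$ into $M$ inducing an isomorphism on $H^0(-, Q_1)$. Feeding this inclusion into Lemma \ref{lem:killing_F} or Lemma \ref{lem:killing_Q} gives a reduced, $Q_0$-acyclic, bounded-above $\aone$-module $\overline{M}$ (respectively $\widetilde{M}$) whose $Q_1$-cohomology is one-dimensional, concentrated in degree $2s-3$. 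Dualizing and invoking Theorem \ref{thm:yu} identifies $D\overline{M}$ (respectively $D\widetilde{M}$) up to stable isomorphism as a suspended Picard twist of $P_0$. Unwinding the killing short exact sequence then identifies $M$ with the truncation at degree $2(s-1)$ of such a Picard twist of $P_0$, matching precisely one of the modules of Proposition \ref{prop:truncate_P}; the factor $J^{\otimes \epsilon}$ records whether the embedding chosen was $\field$ or $\Sigma^3 \Omega^{-1} J$.

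The main obstacle lies in the low values $s = 1$ and $s = 2$. For $s = 2$ the classes of $H^*(M, Q_1)$ sit in degrees $0$ and $1$, so Corollary \ref{cor:embedding} does not apply verbatim (indeed, the remark following it signals precisely this issue). I would instead return to the finer information in Theorem \ref{thm:fundamental}, exploiting that $V^{-2} \hookrightarrow H^{-2}(M, Q_1) = 0$ and that the injection $W^1 \hookrightarrow H^1 (M, Q_1)$ is controlled, in order to produce an embedding of a Picard element hitting exactly the degree-$0$ $Q_1$-class; once such an embedding is secured, the killing construction proceeds as in the case $s \geq 3$. For $s = 1$ the $Q_1$-classes sit in adjacent degrees $-1$ and $0$: Theorem \ref{thm:fundamental}, applied after possibly tensoring with $J$ to normalize the bottom class, pins down the low-dimensional generators of $M$, and the non-split extensions recorded in Lemma \ref{lem:Omega_Z} leave no alternative but one of the four stable classes $(\Sigma^{-3}\Omega)^t Z$, $t \in \{0,1,2,3\}$. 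With the enumeration of orbits already provided by Proposition \ref{prop:truncate_P} and Lemma \ref{lem:Omega_Z}, no further bookkeeping is needed to conclude.
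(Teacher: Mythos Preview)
Your overall strategy is the paper's own: for the generic range $s \geq 3$, Corollary \ref{cor:embedding} supplies the Picard embedding, the killing construction of Section \ref{sect:killing} strips the bottom $Q_1$-class, the dual of Theorem \ref{thm:yu} identifies the residual, and $M$ is recovered as $\overline{M}^{\geq -1}$ (respectively $\widetilde{M}^{\geq -1}$) via Lemma \ref{lem:killing_F} or \ref{lem:killing_Q}. That part is fine.

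The low cases are where your sketch goes wrong. For $s=2$, your claim that $V^{-2} \hookrightarrow H^{-2}(M,Q_1)=0$ is mistaken: in Theorem \ref{thm:fundamental} the map is $V^{-2} \hookrightarrow H^{0}(M,Q_1)$, which is nonzero here, so you cannot force $V^{-2}=0$ this way. The paper instead argues by connectivity, using Proposition \ref{prop:technical} to get $M=M^{\geq -3}$ and then treating the sub-cases $M^{-3}\neq 0$, $M^{-3}=0$ with $M^{-2}\neq 0$, and $M=M^{\geq -1}$ separately; in the last sub-case the killing construction is \emph{not} used, and one finishes by a duality reduction to degrees $[-1,2]$ followed by a direct enumeration of the two possible $\aone$-structures on the four-dimensional lightning flash---an enumeration your plan does not provide for. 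For $s=1$, appealing to Lemma \ref{lem:Omega_Z} is insufficient: that lemma records the structure of the four target modules but does not show $M$ must be one of them. The paper again runs a connectivity-and-case analysis, pinning $M$ to degrees $[-3,2]$ and then splitting on whether $M^{-3}\neq 0$ (yielding either an inclusion of $\Sigma^3\Omega^{-1}Z$ or an explicit identification with $J\otimes Z$ via the unique nontrivial extension of $\Sigma^{-1}J$ by $J$) or $M$ is concentrated in $[-2,1]$ (forcing $M\cong Z$). This finer casework is where the actual content of the low-$s$ argument lives.
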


\begin{proof}
The cases $s=1$ and $s=2$ require separate treatment; $s=1$ is the exceptional 
case and the condition $H^1 (M, Q_1) =0$ in Corollary \ref{cor:embedding} has 
to  be worked around for $s=2$.

First consider the case $s>2$; by Corollary \ref{cor:embedding} (replacing $M$ 
by $(M \otimes J )\red$ if necessary, which leads to the factor $J^{\otimes 
\epsilon} \otimes -$ with $\epsilon =1$), there exists a monomorphism of one of 
the following forms
\begin{enumerate}
 \item 
 $\field \hookrightarrow M$
 \item
 $\Sigma^3 \Omega^{-1} J \hookrightarrow M$
\end{enumerate}
and $M = M ^{\geq -1}$. 

Consider the first case; forming the finite-type,  bounded-above module 
$\overline{M}$ of Notation \ref{nota:killing_F}, Lemma \ref{lem:killing_F} 
shows 
that $ \overline{M}$ is finite-type, reduced, $Q_0$-acyclic, with $H^* (M, 
Q_1)$ 
one-dimensional, concentrated in degree $2s-3$. Thus, the dual of Theorem 
\ref{thm:yu} identifies the isomorphism type of the  module $\overline{M}$ and, 
again by Lemma \ref{lem:killing_F}, $M$ is recovered as the submodule 
$\overline{M}^{\geq -1}$. The required conclusion follows by dualizing.

The second case is analogous, {\em mutatis mutandis}, using $\widetilde{M}$ of 
Notation \ref{nota:killing_Q} together with Lemma \ref{lem:killing_Q}.

In the case $s=2$, the same strategy can be applied, once the appropriate 
result 
corresponding to Corollary \ref{cor:embedding} has been established. In this 
case, the underlying $\eone$-module $(M|_{\eone})\red$ is of the following form:
\[
 \xymatrix @R=.5pc @C=.5pc{
 & 
 \bullet &&
 \bullet 
 \\
 \bullet 
 \ar[ur]
 \ar[urrr]
 &&\bullet
 \ar[ur]
 }
\]
with $Q_1$-Margolis cohomology in degrees $0, 1$. Proposition 
\ref{prop:technical} implies that $M = M^{\geq -3}$; if $M^{-3} \neq 0$, then 
the argument of the proof of Proposition \ref{prop:technical} provides a 
monomorphism $\Sigma^3 \Omega^{-1} \field \hookrightarrow M$
 inducing a monomorphism on $H^0 (-, Q_1)$; replacing $M$ by $M \otimes J$, one 
obtains a monomorphism $ \Sigma^3 \Omega^{-1} J \hookrightarrow (M \otimes J)$ 
inducing a monomorphism  on $H^0 (-, Q_1)$. The argument now proceeds as above.
 
 Now consider the case $M^{-3}=0$ and $M^{-2} \neq 0$ then, as in Proposition 
\ref{prop:technical}, $Sq^2 Sq^2$ acts injectively on $M^{-2}$. Consider $Sq^1 
Sq^2 (M^{-2}) \subset M^1$, which clearly lies in $\ker(Q_0) \cap \ker (Q_1)$, 
since $M$ is reduced; by hypothesis on the 
 form of $(M|_{\eone})\red$, this is only possible if $Sq^1 Sq^2 (M^{-2})$ lies 
in the image of $Q_0 Q_1$, which is excluded by the hypothesis that $M^{-3} = 
0$. Hence there exists a monomorphism $J \hookrightarrow M$ and one proceeds as 
before.

 In the remaining case, $M= M^{\geq -1}$; by duality, one can further reduce to 
the case where $M$ is concentrated in degrees $[-1, 2]$. In this case 
$M|_{\eone}$ is necessarily $\eone$-reduced, of total dimension four, and there 
are the two possibilities:

\smallskip

 \[
  \xymatrix@R=.75pc @C=.75pc{
  \bullet 
  \ar[r]
  &
  \bullet 
  \ar@/^1pc/[rr]
  &
  \bullet 
  \ar[r]
  &
  \bullet, 
  \\
  \bullet 
  \ar@/_1pc/[rr]
  \ar[r]
  &
  \bullet 
  &
  \bullet 
 \ar[r]
 &
 \bullet.
 }
 \]
 
 \ 
 \smallskip
 
 \noindent
These cases provide the isomorphism classes given in the statement. 

It remains to consider the case $s=1$, so that $(M|_{\eone})\red \cong Z 
|_{\eone}$. A similar strategy applies; Proposition \ref{prop:technical} 
implies 
that $M$ is concentrated in degrees $[-4 ,3]$; the first step is to show that 
$M^{-4}=0$ (so that $M^3=0$ by a dual argument). This follows as above, since 
$Sq^1 Sq^2 (M^{-4})$ lies in $\ker(Q_0) \cap \ker (Q_1)$, which is zero. Thus 
$M$ 
is concentrated in degrees $[-3,2]$. 

Suppose $M^{-3} \neq 0$, then $Sq^2 Sq^2$ acts injectively on $M^{-3}$ and 
$Sq^2 
(M^{-3}) \cap \ker (Q_0) \subset M^{-1} $ lies in $\ker (Q_0) \cap \ker (Q_1)$, 
which is zero (as above), hence $Sq^1 Sq^2 : M^{-3} \hookrightarrow M^0$. There 
are two cases:

\begin{enumerate}
 \item 
 If $Sq^2 Sq^1 Sq^2 $  is not injective on $M^{-3}$, then one obtains  an 
inclusion 
$$\Sigma^3 \Omega ^{-1} Z \hookrightarrow M$$
which induces an  isomorphism on Margolis cohomology groups, hence is an 
isomorphism.
 \item 
Otherwise, by also invoking the dual argument, we may assume that   $Sq^2 Sq^1 
Sq^2$ induces an isomorphism $M^{-3} \stackrel{\cong}{\rightarrow} 
M^2$ and, counting Margolis cohomology classes, $\dim M^{-3} =1$. From this it 
is straightforward to prove that $J \otimes Z$ and $M$ have the same underlying 
graded vector space; to check that they are isomorphic, 
it suffices to show that $M$ fits into a non-trivial extension of the form
\[
 0
 \rightarrow 
 J 
 \rightarrow M
 \rightarrow 
 \Sigma^{-1} J 
 \rightarrow 
 0,
\]
since there is a unique non-trivial extension of this form.

To exhibit the submodule $J$, using the above arguments, one 
can show that $\ker (Sq^1 Sq^2)$ is one-dimensional in degree $-2$ and the 
non-trivial class generates a submodule isomorphic to $J$. The quotient module 
is checked to be isomorphic to $\Sigma^{-1} J$, as required.
\end{enumerate}

In the remaining cases, $M$ is concentrated in degrees $[-2, 1]$, hence is 
isomorphic to $Z$, by consideration of $M|_{\eone}$.
\end{proof}

\section{An interpretation of the families using the Picard group}
\label{sect:picard_interpret}

The presentation of the families arising in Section \ref{sect:finite_indec} was 
guided by the method of proof of Theorem \ref{thm:classification}.
A convenient choice of generators for the orbits under the action 
of the Picard group (see Proposition \ref{prop:truncate_P}) can be given by 
removing the 
lowest 
dimensional class of suitable elements of the Picard group.

For $1\leq k \in \nat$, the Picard group elements $\Sigma^{-(k+1)} \Omega^{k+1} 
\field$ and $\Sigma^{-(k+1)} \Omega^{k+1} J$ have isomorphic Margolis 
cohomology 
groups, with $H^* (-,Q_0)$ concentrated in degree zero and $H^* (-,Q_1)$ in 
degree $2(k+1)$.

\begin{rem}
The case which would correspond to $s=1$ in Theorem \ref{thm:classification} is 
 exceptional, hence is excluded in this section.
\end{rem}

\begin{lem}
\label{lem:Picard_construction_Q0-acyclics}
 If $k \geq 1$, for $N= N_{k,\epsilon}: = \Sigma^{-(k+1)} \Omega^{k+1} J 
^{\otimes \epsilon}$, $\epsilon \in \{0,1\}$, 
 \begin{enumerate}
  \item 
  $N = N^{\geq 0}$ and $N^0 = \field$; 
  \item 
  the cyclic submodule generated by $N^0$ is isomorphic to $\aone \otimes 
_\azero \field$ ; 
 \item
 there is a unique non-trivial morphism $N_{k, \epsilon} \rightarrow \field$ 
which represents a non-trivial class in $[\Sigma^{-(k+1)} \Omega^{k+1} J 
^{\otimes \epsilon}, \field ] \cong \ext^{k+1,k+1}_\aone (\field, J^{\otimes 
\epsilon})$
 and this induces an isomorphism in $H^* (-, Q_0)$; 
 \item 
 for $A_{k, \epsilon}$ defined by the short exact sequence 
 \[
  0
  \rightarrow 
  A_{k, \epsilon}
  \rightarrow 
  \Sigma^{-(k+1)} \Omega^{k+1} J ^{\otimes \epsilon}
  \rightarrow 
  \field 
  \rightarrow 
  0,
 \]
$A_{k, \epsilon}$ is a reduced, finite  $\aone$-module which 
is $Q_0$-acyclic and has Margolis cohomology groups $H^* (-, Q_1)$ isomorphic 
to 
$\field$ concentrated in degrees $\{ 3, 2(k+1) \}$.  
 \end{enumerate}
\end{lem}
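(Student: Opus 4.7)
The plan is to understand $N_{k,\epsilon}$ via its Margolis cohomology and the structural results of Section \ref{sect:basic}. Iterated application of Lemma \ref{lem:Omega_Margolis} identifies $H^*(N_{k,\epsilon}, Q_0)$ as $\field$ concentrated in degree $0$ and $H^*(N_{k,\epsilon}, Q_1)$ as $\field$ concentrated in degree $2(k+1)$. This is the starting point from which each of the four items will follow.

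For (1), since $N_{k,\epsilon}$ lies in $\pica$ it is reduced and bounded-below with $H^{<0}(N, Q_j) = 0$ for $j \in \{0, 1\}$, so Theorem \ref{thm:fundamental} applies. The hypothesis $k \geq 1$ guarantees $H^i(N, Q_1) = 0$ for $i \leq 2$, which successively contradicts the non-triviality of $N^{-3}$, $N^{-2}$ and $N^{-1}$ forced by the three clauses of the theorem: each case would produce an injection of a $Q_1$-Margolis class into a vanishing cohomology group of $N$. Hence $N = N^{\geq 0}$. The equality $N^0 = \field$ is then shown by induction on $k$, tracking the evolution of the one-dimensional lowest-degree component under the operation $\Omega$; this is the main technical step of the argument, since the dimensional constraints produced directly by Theorem \ref{thm:fundamental} do not immediately pin down $\dim N^0$.

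For (2), with $N^0 = \field$ established, Proposition \ref{prop:tech2} applies at $d = 0$: the conditions $H^0(N, Q_1) = 0 = H^2(N, Q_1)$ hold since $2(k+1) \geq 4$. The Proposition yields a monomorphism $\aone \otimes_\azero \field \hookrightarrow N$, whose image is precisely the cyclic submodule generated by $N^0$. For (3), the relation $Sq^1 x = 0$ on a generator $x$ of $N^0$ and the concentration of $\field$ in degree zero make the vector-space map $x \mapsto 1$ (vanishing elsewhere) $\aone$-linear, and visibly an isomorphism on $H^*(-, Q_0)$. Any $\aone$-linear map $N \to \field$ is determined by its restriction to $N^0 = \field$, so $\hom_\aone(N, \field) \cong \field$; no non-zero morphism factors through a projective, since a lift to $\aone$ would send $x \mapsto 1$ but $Sq^1 \cdot 1 \neq 0$ in $\aone$ is incompatible with $Sq^1 x = 0$ in $N$. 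The identification with $\ext^{k+1,k+1}_\aone(\field, J^{\otimes \epsilon})$ is the standard expression of stable hom via the loop functor, combined with the stable equivalence $J^{\otimes 2} \simeq \field$.

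For (4), the long exact sequences of Margolis cohomology associated to $0 \to A_{k,\epsilon} \to N_{k,\epsilon} \to \field \to 0$ yield the required vanishings. The fact that $N \to \field$ is an isomorphism on $H^*(-, Q_0)$ by part (3) forces $Q_0$-acyclicity of $A_{k,\epsilon}$. For $Q_1$ (with boundary of degree $|Q_1| = 3$), the known $Q_1$-cohomologies of $\field$ and $N$ combine to produce isomorphisms $H^3(A_{k,\epsilon}, Q_1) \cong \field$ (from the boundary applied to $H^0(\field, Q_1)$) and $H^{2(k+1)}(A_{k,\epsilon}, Q_1) \cong \field$ (as the kernel of the zero map to $H^{2(k+1)}(\field, Q_1) = 0$), with vanishing in all other degrees. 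Finiteness and reducedness of $A_{k,\epsilon}$ are inherited from $N_{k,\epsilon}$ as a submodule.
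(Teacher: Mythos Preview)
Your proposal is correct and follows the paper's overall strategy closely: Margolis cohomology of $N$ via Lemma~\ref{lem:Omega_Margolis}, connectivity from the results of Section~\ref{sect:basic}, the embedding of $\aone\otimes_{\azero}\field$ from Proposition~\ref{prop:tech2}, and the long exact sequences for part~(4). Two points deserve comment.

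For the one-dimensionality of $N^0$, the paper asserts this follows directly from Propositions~\ref{prop:technical} and~\ref{prop:tech2}, whereas you opt for induction on $k$. Your inductive route is valid, but the phrase ``tracking the evolution of the one-dimensional lowest-degree component under $\Omega$'' undersells what is needed: the bottom class of $(\Omega\field)\otimes N_{k,\epsilon}$ is one-dimensional by the inductive hypothesis, but passing to the reduced module could in principle annihilate it (if it lay in a free summand $\Sigma\aone$). What actually closes the argument is that Corollary~\ref{cor:connectivity_Q1_acyclic} independently forces $N_{k+1,\epsilon}^{\,0}\neq 0$, so the one-dimensional bottom must survive reduction. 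You should make this interplay explicit, and note that the base cases ($k=0$, both values of $\epsilon$) are read off from the explicit descriptions of $\Omega\field$ and $\Omega J$ (cf.\ Figure~\ref{fig:diag_aone}). The paper's approach avoids induction but is correspondingly terse; your argument is more self-contained once fleshed out.

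For part~(3), the paper obtains non-triviality of the stable class by invoking the dual of Lemma~\ref{lem:stable_hom_reduced} (since $N$ is reduced, $\hom_{\aone}(N,\field)\to[N,\field]$ is an isomorphism). Your direct obstruction argument is also correct, but to make it airtight you should observe that any factorisation through a projective reduces to a factorisation through the augmentation $\aone\to\field$ (since $\hom_{\aone}(\Sigma^n\aone,\field)=0$ for $n\neq 0$), after which your $Sq^1$-incompatibility applies.
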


\begin{proof}
The embedding of $\aone \otimes_\azero \field$ in $\Sigma^{-(k+1)} 
\Omega^{k+1} 
J ^{\otimes \epsilon}$ is provided by Proposition \ref{prop:tech2} which, 
together with Proposition \ref{prop:technical},  also suffices to show the 
one-dimensionality of $N^0$. 
 The non-triviality of the stable class of the  projection onto degree zero 
follows by the dual of Lemma \ref{lem:stable_hom_reduced}, since $N$ is reduced 
(by definition of $\Omega$); the relevant $\ext$ group is $\field$ (see Section 
\ref{sect:ext}). 
 That the projection induces an isomorphism in $H^* (-, Q_0) $ is clear. 
 
 The final statement follows from the fact that $N_{k,\epsilon}$ is finite and 
from the calculation of the Margolis cohomology from the associated 
long exact sequences. 
\end{proof}

For $1 \leq k \in \nat$, the above Lemma provides two reduced $\aone$-modules 
$A_{k,0}$, $A_{k,1}$ which are not stably isomorphic. 
After applying $\Sigma^{-3}$, so that the $Q_1$-Margolis cohomology 
classes are in degrees $\{0, 2k-1 \}$,  these fit into the family of modules 
 classified by Theorem \ref{thm:classification}, for $s=k+1$ ($s\geq 2)$.

 The modules $A_{k, \epsilon}$ can be viewed as `normalized' choices of orbit 
representatives using the existence of  an inclusion of the question mark 
complex to provide the normalization; this fits into the philosophy espoused by 
Corollary \ref{cor:embedding}. 
 
\begin{prop}
\label{prop:A_orbits_normalization}
 For $k \geq 1$ and $\epsilon \in \{0,1 \}$, 
 \begin{enumerate}
  \item 
  there is a monomorphism 
  $
\Sigma^6 \Omega ^{-1} J \hookrightarrow A_{k,\epsilon}
 $
and
\[
\hom_{\aone} (\Sigma^6 \Omega ^{-1} J , (\Omega \Sigma^{-3})^t A_{k,\epsilon} 
)=
\left\{
\begin{array}{ll}
 \field & t \equiv 0 \mod (4) \\
 0& \mathrm{otherwise};
\end{array}
\right.
\]
\item 
$A_{k,0} \not \simeq (\Omega \Sigma^{-3})^t A_{k,1}$ for  $t \in \zed$;
\item 
under the action of  $\Sigma^{-3} \Omega$,
the modules   $\Sigma^{-3} A_{k,0}$,  $\Sigma^{-3}A_{k,1}$ generate   the set 
of stable isomorphism classes 
 of modules classified by Theorem \ref{thm:classification}, for $s=k+1$.
 \end{enumerate}
\end{prop}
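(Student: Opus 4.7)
The plan is to prove the three parts by combining the structural results of Sections~\ref{sect:basic} and~\ref{sect:killing}, the periodicity furnished by Proposition~\ref{prop:Q_0-acyclic}, and the classification established in Theorem~\ref{thm:classification} together with Proposition~\ref{prop:truncate_P}. Part (1) carries the substantive analysis; parts (2) and (3) then follow rapidly.

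For the monomorphism in (1), I would pass to the shifted module $\Sigma^{-3} A_{k,\epsilon}$, which by Lemma~\ref{lem:Picard_construction_Q0-acyclics} is bounded-below, reduced, $Q_0$-acyclic, with $H^*(-,Q_1)$ concentrated in degrees $0$ and $2k-1$. For $k\geq 2$ the hypotheses of Corollary~\ref{cor:embedding} are satisfied; $Q_0$-acyclicity rules out three of the four candidate embeddings, leaving $\Sigma^3\Omega^{-1}J \hookrightarrow \Sigma^{-3}A_{k,\epsilon}$, which resuspends to the required inclusion. The edge case $k=1$ fails the hypothesis $H^1(-,Q_1)=0$ of Corollary~\ref{cor:embedding}, and will be the chief technical obstacle; I would handle it by direct application of Theorem~\ref{thm:fundamental} together with the explicit form of $\Sigma^{-2}\Omega^2 J^{\otimes\epsilon}$ and its quotient $A_{1,\epsilon}$, producing the question-mark submodule at the $Q_1$-class of lowest degree. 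For the Hom calculation, Proposition~\ref{prop:Q_0-acyclic} yields $(\Omega\Sigma^{-3})^4 A_{k,\epsilon}\simeq A_{k,\epsilon}$, reducing to $t\in\{0,1,2,3\}$; since each $(\Omega\Sigma^{-3})^t A_{k,\epsilon}$ is reduced, Lemma~\ref{lem:question_stable} identifies $\hom_\aone$ with the stable Hom. For $t=0$ the embedding gives at least a one-dimensional space, and a low-degree count via Theorem~\ref{thm:fundamental} bounds the Hom above by one; for $t\in\{1,2,3\}$ the lowest $Q_1$-Margolis degree of the target fails to match that of $\Sigma^6\Omega^{-1}J$, forcing vanishing in this reduced setting.

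For (2) and (3), Theorem~\ref{thm:classification} identifies each $\Sigma^{-3}A_{k,\epsilon}$ with one of the eight modules of Proposition~\ref{prop:truncate_P}, which split into two $\Sigma^{-3}\Omega$-orbits of cardinality four. To establish (2), the periodicity reduces any putative isomorphism $A_{k,0}\simeq(\Omega\Sigma^{-3})^t A_{k,1}$ to the four residues $t\in\{0,1,2,3\}$, each excluded by an orbit invariant: the cleanest is the presence or absence of a Joker as a subquotient (visible in Figures~\ref{fig:orbit_s=2} and~\ref{fig:orbit_s=3}), which is preserved by $\Sigma^{-3}\Omega$ but toggled by $\otimes J$; inspection of the diagrams for $\Sigma^{-(k+1)}\Omega^{k+1}J^{\otimes\epsilon}$ shows that $A_{k,0}$ and $A_{k,1}$ lie in opposite classes for this invariant. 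Part (3) is then immediate: the orbits of $\Sigma^{-3}A_{k,0}$ and $\Sigma^{-3}A_{k,1}$ are distinct and of size four, and hence exhaust the eight stable isomorphism classes in the family.
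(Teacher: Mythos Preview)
Your approach to the monomorphism in (1) via Corollary~\ref{cor:embedding} is more indirect than the paper's (which simply restricts the embedding $\aone\otimes_{\azero}\field \hookrightarrow N_{k,\epsilon}$ of Lemma~\ref{lem:Picard_construction_Q0-acyclics} to degrees $\geq 2$), and has a small gap: Corollary~\ref{cor:embedding} only promises an embedding into \emph{one of} $M\red$ or $(J\otimes M)\red$, so you have not shown the question-mark lands in $\Sigma^{-3}A_{k,\epsilon}$ itself rather than in its $J$-twist.

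More seriously, your vanishing argument for $t\in\{1,2,3\}$ is incorrect. By Lemma~\ref{lem:Omega_Margolis}, the operator $\Omega\Sigma^{-3}$ shifts $Q_1$-Margolis degree by $+3-3=0$, so $(\Omega\Sigma^{-3})^t A_{k,\epsilon}$ has $H^*(-,Q_1)$ in \emph{exactly the same} degrees $\{3,2(k+1)\}$ for every $t$. There is no Margolis-degree mismatch to appeal to; the vanishing genuinely requires a finer invariant. The paper handles this either by invoking the $\stext$ calculations of Theorem~\ref{thm:calculate_stext_Ake} (via Lemma~\ref{lem:question_stable}), or by a direct low-dimensional analysis of the bottom of $(\Omega\Sigma^{-3})^t A_{k,\epsilon}$ using the methods of Section~\ref{sect:basic}; you would need to supply one of these.

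Your argument for (2) also fails: the presence of a Joker subquotient is \emph{not} preserved under $\Sigma^{-3}\Omega$. Inspection of Figure~\ref{fig:orbit_s=2} already shows that a single orbit contains both Joker-free modules (top row) and modules with Jokers (bottom row), so this cannot serve as an orbit invariant. The paper instead derives (2) directly from (1): if $A_{k,0}\simeq(\Omega\Sigma^{-3})^t A_{k,1}$, then the nontrivial Hom from $\Sigma^6\Omega^{-1}J$ to $A_{k,0}$ gives a nontrivial Hom to $(\Omega\Sigma^{-3})^t A_{k,1}$, forcing $t\equiv 0\pmod 4$; but $A_{k,0}\not\simeq A_{k,1}$ by Proposition~\ref{prop:truncate_P}. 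Your part (3) is fine.
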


\begin{proof}
The embedding  $\aone \otimes_\azero \field \hookrightarrow \Sigma^{-(k+1)} 
\Omega^{k+1} J ^{\otimes \epsilon}$ of Lemma 
\ref{lem:Picard_construction_Q0-acyclics} restricts to the  monomorphism $ 
\Sigma^6 \Omega ^{-1} J \hookrightarrow 
A_{k,\epsilon}$ and this is the unique non-trivial such morphism. The 
triviality of $\hom_{\aone} (\Sigma^6 
\Omega ^{-1} J , (\Omega \Sigma^{-3})^t A_{k,\epsilon} )$ in other cases can 
either be proved by the methods of Section \ref{sect:basic}  
or by using the calculations of Section \ref{sect:ext} as follows. Namely, by 
Lemma 
\ref{lem:question_stable}, one can work with stable groups $\stext$ (see 
Section 
\ref{sect:ext}); the triviality of the relevant groups follows from 
Theorem \ref{thm:calculate_stext_Ake} except in the case of the fundamental
class ($\mu$ in the notation of Section 
\ref{sect:ext}), corresponding to $t \equiv 0 \mod (4)$. 

The second statement follows as an immediate consequence; in order to obtain a 
non-trivial stable map in $[\Sigma^6 \Omega ^{-1} J , (\Omega \Sigma^{-3})^t 
A_{k,1} ]$, one must take $t \equiv 0 \mod (4)$, so that  $(\Omega 
\Sigma^{-3})^t A_{k,1}\simeq A_{k,1}$. However, $A_{k,0}$ and $A_{k,1}$ are not 
stably isomorphic.

The final statement then follows from Theorem \ref{thm:classification} and 
Proposition \ref{prop:truncate_P}.
\end{proof}

\begin{prop}
 \label{prop:identify_A}
 For $k \geq 1$ and $\epsilon \in \{0,1\}$,  there are isomorphisms:
 \begin{eqnarray*}
 A_{k,\epsilon} & \cong & 
 \Big\{ 
 \Sigma^{4\epsilon}
 D\big( 
 (\Omega^{-1} \Sigma)^{k+1 - 2\epsilon} P_0  
  \big) 
\Big \}^{\geq 2}
  \\
  &\cong &
 \Big\{  \Sigma^{4\epsilon} (\Omega \Sigma^{-1})^{k+1 - 2\epsilon} D 
  P_0 
  \Big \}^{\geq 2}.
 \end{eqnarray*}
\end{prop}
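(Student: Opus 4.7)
The first isomorphism reduces to the second via $D\Sigma \cong \Sigma^{-1}D$, the stable equivalences $D\Omega^{\pm 1}\simeq \Omega^{\mp 1}D$, and the self-duality $DJ\simeq J$ in $\pica$. I therefore focus on establishing
\[
A_{k,\epsilon} \cong N^{\geq 2}, \qquad N := \Sigma^{4\epsilon}(\Omega\Sigma^{-1})^{k+1-2\epsilon}DP_0.
\]
The plan has three steps: (a) compute the Margolis cohomology of $N^{\geq 2}$; (b) apply Theorem~\ref{thm:classification} (with $s=k+1$) to identify $N^{\geq 2}$ within the classified family; (c) match the Picard orbit of $N^{\geq 2}$ with that of $A_{k,\epsilon}$ using Proposition~\ref{prop:A_orbits_normalization}.

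For (a), since $DP_0$ is $Q_0$-acyclic with $H^0(DP_0,Q_1)=\field$, iterated application of Lemma~\ref{lem:Omega_Margolis} shows that $N$ is $Q_0$-acyclic with $H^*(N,Q_1)$ one-dimensional, concentrated in degree $2(k+1)$ (each $\Omega\Sigma^{-1}$ shifts the $Q_1$-degree by $3-1=2$, and $\Sigma^{4\epsilon}$ contributes an additional $4\epsilon$). From the short exact sequence
\[
 0 \to N^{\geq 2} \to N \to N^{<2} \to 0
\]
and the associated long exact sequences in Margolis cohomology, one computes $H^*(N^{\geq 2},Q_j)$ from $H^*(N^{<2},Q_j)$. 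The finite $\aone$-module $N^{<2}$ is determined explicitly from the low-degree structure of the four stable representatives of $DP_0$-type modules (duals of those in Theorem~\ref{thm:yu}); this yields $H^*(N^{\geq 2},Q_0)=0$ and $H^*(N^{\geq 2},Q_1)$ of total dimension two, in degrees $\{3,2(k+1)\}$, with the new degree-$3$ class arising via the connecting homomorphism.

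For (b), Theorem~\ref{thm:classification} applied to $\Sigma^{-3}N^{\geq 2}$ (whose $Q_1$-degrees are normalized to $\{0,2s-3\}$ with $s=k+1$) identifies $N^{\geq 2}$ as a module of the form $J^{\otimes\delta}\otimes P^{\leq 2(s-1)}$ appearing in Proposition~\ref{prop:truncate_P}. For (c), Proposition~\ref{prop:A_orbits_normalization}(3) asserts that $\Sigma^{-3}A_{k,0}$ and $\Sigma^{-3}A_{k,1}$ generate the two distinct $\Sigma^{-3}\Omega$-orbits of the family. The correct orbit is selected by comparing the canonical embedding $\Sigma^{6}\Omega^{-1}J\hookrightarrow N^{\geq 2}$ (arising from Proposition~\ref{prop:tech2}) with the analogous embedding into $A_{k,\epsilon}$ from Proposition~\ref{prop:A_orbits_normalization}(1). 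The parity of $\epsilon$ is tracked by the $J^{\otimes\epsilon}$-twist in the Picard element defining $A_{k,\epsilon}$; the equivalences $\Omega^2(-)\simeq\Sigma^{6}J\otimes(-)$ of Proposition~\ref{prop:Q_0-acyclic} translate this into the $\Sigma^{4\epsilon}$-shift and the exponent $k+1-2\epsilon$ in the definition of $N$, yielding the desired orbit match. Since both modules are finite and reduced, the resulting stable isomorphism lifts to an honest isomorphism of $\aone$-modules.

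The main technical obstacle is the computation in step~(a): verifying that truncation produces \emph{exactly} one additional $Q_1$-cohomology class, in degree~$3$, without spurious summands or degree mismatches. This is handled by case analysis on the residue of $k+1-2\epsilon$ modulo $4$, using the explicit low-degree structure of the duals $DP_0$, $\Sigma^{3}\Omega^{-1}DP_0$, $J\otimes DP_0$, $\Sigma^{9}\Omega^{-3}DP_0$ obtained by dualizing Theorem~\ref{thm:yu}.
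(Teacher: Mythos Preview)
Your approach is different from the paper's, and also has a gap in step~(c).

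The paper does not go through Theorem~\ref{thm:classification} at all. Instead it constructs the map directly: tensoring the defining inclusion $A_{k,\epsilon}\hookrightarrow(\Omega\Sigma^{-1})^{k+1}J^{\otimes\epsilon}$ with $DP_0$ gives, on the source, $A_{k,\epsilon}\otimes DP_0\simeq A_{k,\epsilon}$ (duals of the results in Section~\ref{sect:applications}, since $A_{k,\epsilon}$ is finite and $Q_0$-acyclic), and on the target $(\Omega\Sigma^{-1})^{k+1}J^{\otimes\epsilon}\otimes DP_0\simeq\Sigma^{4\epsilon}(\Omega\Sigma^{-1})^{k+1-2\epsilon}DP_0=N$ via Proposition~\ref{prop:Q_0-acyclic}. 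Because $A_{k,\epsilon}=(A_{k,\epsilon})^{\geq 2}$, the map factors through $N^{\geq 2}$; one then checks on the underlying $\eone$-modules that it is an isomorphism on both Margolis cohomologies, hence a stable isomorphism, hence (both sides being reduced) an honest isomorphism. This bypasses both your case-analysis in step~(a) and the orbit-matching of step~(c).

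Your route could in principle be made to work, but step~(c) is not an argument as written. Once you know $N^{\geq 2}$ admits the embedding $\Sigma^6\Omega^{-1}J\hookrightarrow N^{\geq 2}$, Proposition~\ref{prop:A_orbits_normalization}(1) only tells you that $N^{\geq 2}$ is one of the two normalized representatives $A_{k,0}$, $A_{k,1}$; it does not distinguish them. Your sentence ``the parity of $\epsilon$ is tracked by the $J^{\otimes\epsilon}$-twist'' is a statement about how the definitions of $N$ for $\epsilon=0,1$ are related in $\pica$, not a computation of an invariant on $N^{\geq 2}$. The relation $\Sigma^6\Omega^{-2}\simeq J\otimes(-)$ applies to the untruncated $N$, but $J\otimes(-)$ does not commute with $(-)^{\geq 2}$, so you cannot conclude the two truncations lie in distinct orbits without further work. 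To close this you would need an invariant that actually separates $A_{k,0}$ from $A_{k,1}$ (e.g.\ the presence of a Joker at the top, as in Figures~\ref{fig:Ak0}--\ref{fig:Ak1}) and compute it on $N^{\geq 2}$, which amounts to the same low-degree inspection the paper's direct approach handles more cleanly.
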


\begin{proof}
 For $\epsilon \in \{0, 1 \}$, since $A_{k,\epsilon}$ and $ (\Omega 
\Sigma^{-1})^{k+1}J^{\otimes \epsilon}$ are finite $\aone$-modules, results 
dual 
  to those of Section \ref{sect:applications} imply that  the 
injection 
$A_{k,\epsilon} \hookrightarrow (\Omega \Sigma^{-1})^{k+1}J^{\otimes \epsilon}$ 
induces
 \begin{eqnarray*}
   A_{k,\epsilon} \simeq A_{k,\epsilon} \otimes DP_0 \rightarrow (\Omega 
\Sigma^{-1})^{k+1}J^{\otimes \epsilon} \otimes DP_0 &\simeq& \Sigma^{4 
\epsilon} 
(\Omega \Sigma^{-1})^{k+1 - 2 \epsilon} DP_0\\
   &\cong&   \Sigma^{4 \epsilon} D ( (\Omega^{-1} \Sigma)^{k+1 - 2 \epsilon} 
P_0),
 \end{eqnarray*}
which gives an isomorphism on the top $Q_1$-Margolis cohomology class. 

Since $A_{k,\epsilon} =(A_{k,\epsilon})^{\geq 2}$, this yields a morphism: 
\[
 A_{k,\epsilon} \rightarrow \big\{  \Sigma^{4 \epsilon} D ( (\Omega^{-1} 
\Sigma)^{k+1 - 2 \epsilon} 
P_0)\big\}^{\geq 2}.
\]
By inspection of the underlying $\eone$-modules and the fact that it is 
non-trivial in $Q_1$-Margolis homology, one sees that 
the morphism induces an isomorphism on both $Q_0$- and $Q_1$-Margolis cohomology 
groups, hence is a stable isomorphism. 
Since both modules are reduced (by construction), it is an isomorphism.
\end{proof}

\begin{exam}
 The behaviour of the families $A_{k,1}$ and $A_{k,0}$ can be understood by 
considering the cases $1 \leq k \leq 4$ (see Figure \ref{fig:Ak1} for $A_{k,1}$ 
and Figure \ref{fig:Ak0} for $A_{k,0}$). Namely, the module $A_{k+4, \epsilon}$ 
is obtained from $A_{k,\epsilon}$ obtained by 
extending (and shifting degrees) by

\[
 \xymatrix@R=.5pc @C=1pc{
  \bullet \ar[r] & \bullet \ar@/^1pc/[rr] & \bullet \ar[r]\ar@/_1pc/[rr] 
&\bullet& \bullet \ar[r] & \bullet \ar@/_1pc/[rr]& \bullet \ar[r] 
\ar@{.>}@/^1pc/[rr]& \bullet&  *+[F-:<3pt>]{ A_{k,\epsilon}} &,
 }
\]

\medskip
\noindent
where the dotted $Sq^2$ hits the lowest-dimensional class.

Similarly, observe   that $A_{1,1}$ occurs as a subobject of $A_{3,0}$, as 
predicted by Proposition \ref{prop:identify_A}; likewise $A_{1,0}$ occurs as a 
subobject of $A_{3,1}$.  
This behaviour is general and corresponds to (a shift of) an extension by
\medskip
\[
 \xymatrix@R=.5pc @C=1pc{
 \bullet \ar[r] & \bullet \ar@/_1pc/[rr]& \bullet \ar[r] 
\ar@{.>}@/^1pc/[rr]& \bullet& *+[F-:<3pt>]{ A_{k,\epsilon}}& .
 }
\]

\smallskip
The composition of two such extensions assures the passage $A_{k,\epsilon} 
\rightsquigarrow A_{k+4,\epsilon}$, as is evident from
\[
 \xymatrix@R=.5pc @C=1pc{
  \bullet \ar[r] & \bullet \ar@/^1pc/[rr] & \bullet \ar[r]\ar@{.>}@/_1pc/[rr] 
&\bullet& \bullet \ar[r] & \bullet \ar@/_1pc/[rr]& \bullet \ar[r] 
\ar@{.>}@/^1pc/[rr]& \bullet&  *+[F-:<3pt>]{ A_{k,\epsilon}}&.
 }
\]
\end{exam}
 
 \medskip

\begin{figure}
\caption{The family $A_{k,1}$ for $1\leq k \leq 4$}
\label{fig:Ak1}
\[
\xymatrix@R=.5pc @C=1pc{
 A_{1,1} && 
 \bullet \ar[r] & \bullet \ar@/^1pc/[rr] & \bullet \ar[r] &\bullet 
 \\
 \\
 A_{2,1} && 
 \bullet \ar[r] & \bullet \ar@/^1pc/[rr] & \bullet \ar[r]\ar@/_1pc/[rr] 
&\bullet& \bullet \ar[r] & \circ  
 \\
 \\
 A_{3,1}&&
 \bullet \ar[r] & \bullet \ar@/^1pc/[rr] & \bullet \ar[r]\ar@/_1pc/[rr] 
&\bullet& \bullet \ar[r] & \bullet \ar@/_1pc/[rr]& \circ \ar[r] & \bullet  
 \\
 \\
 A_{4,1}&&
 \bullet \ar[r] & \bullet \ar@/^1pc/[rr] & \bullet \ar[r]\ar@/_1pc/[rr] 
&\bullet& \bullet \ar[r] & \bullet \ar@/_1pc/[rr]& \bullet \ar[r] 
\ar@/^1pc/[rr]& \bullet& \bullet \ar[r] & \bullet   
 }
\]
\end{figure}

\begin{figure}
\caption{The family $A_{k,0}$ for $1\leq k \leq 4$}
\label{fig:Ak0}
\[
\xymatrix@R=.5pc @C=1pc{
 A_{1,0} && 
\bullet \ar[r] & \bullet \ar@/^1pc/[rr] & \circ \ar[r] &\bullet 
 \\
 \\
 A_{2,0}&& 
 \bullet \ar[r] & \bullet \ar@/^1pc/[rr] & \bullet \ar[r]\ar@/_1pc/[rr] 
&\bullet& \bullet \ar[r] & \bullet  
\\
\\
 A_{3,0}&& 
 \bullet \ar[r] & \bullet \ar@/^1pc/[rr] & \bullet \ar[r]\ar@/_1pc/[rr] 
&\bullet& \bullet \ar[r] & \bullet \ar@/_1pc/[rr]& \bullet \ar[r] & \bullet  
 \\
 \\
 A_{4,0}&& 
 \bullet \ar[r] & \bullet \ar@/^1pc/[rr] & \bullet \ar[r]\ar@/_1pc/[rr] 
&\bullet& \bullet \ar[r] & \bullet \ar@/_1pc/[rr]& \bullet \ar[r] 
\ar@/^1pc/[rr]& \bullet& \bullet \ar[r] & \circ   
 }
 \]
\end{figure}

The behaviour of duality is straightforward:

\begin{prop}
\label{prop:duality_A}
 For $k \in \nat$ and $\epsilon \in \{ 0,1 \}$, there is a stable isomorphism:
 \[
  A_{k,\epsilon} \simeq
  \Sigma^{-(k+1+6\epsilon)} \Omega ^{k+2 + 2\epsilon } DA_{k, \epsilon}.
 \]
\end{prop}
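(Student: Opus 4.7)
The plan is to derive the stable isomorphism by comparing two stable triangles built from the defining short exact sequence of Lemma \ref{lem:Picard_construction_Q0-acyclics}, exploiting the fact that $N_{k,\epsilon} \in \pica$. Abbreviate $N := N_{k,\epsilon} = \Sigma^{-(k+1)}\Omega^{k+1}J^{\otimes\epsilon}$ and let $p \colon N \twoheadrightarrow \field$ denote the projection appearing in the defining sequence $0 \to A_{k,\epsilon} \to N \xrightarrow{p} \field \to 0$.

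Dualizing yields $0 \to \field \xrightarrow{Dp} DN \to DA_{k,\epsilon} \to 0$, which identifies $DA_{k,\epsilon}$ (up to stable equivalence) as the cone of $Dp$ in the stable triangle $\field \xrightarrow{Dp} DN \to DA_{k,\epsilon} \to \Omega^{-1}\field$. The central step is to produce a second stable triangle with the same first morphism. For this, I would tensor the defining sequence with $DN$ and identify $N \otimes DN \simeq \field$ via the evaluation $\mathrm{ev}$ (a stable equivalence since $N$ lies in $\pica$); after rotation, this produces a stable triangle
\[
\field \to DN \to \Omega^{-1}(A_{k,\epsilon} \otimes DN) \to \Omega^{-1}\field
\]
whose first morphism is the composite $(p \otimes 1) \circ \mathrm{ev}^{-1}$. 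A standard identity for rigid duals identifies this composite with $Dp$, so uniqueness of cones yields a stable isomorphism
\[
\Omega DA_{k,\epsilon} \simeq A_{k,\epsilon} \otimes DN.
\]

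To conclude, I would tensor this isomorphism with $N$ and use $N \otimes DN \simeq \field$ to obtain $A_{k,\epsilon} \simeq \Sigma^{-(k+1)} \Omega^{k+2} J^{\otimes\epsilon} \otimes DA_{k,\epsilon}$. Since $DA_{k,\epsilon}$ is $Q_0$-acyclic (by duality of Margolis cohomology, as $A_{k,\epsilon}$ is by Lemma \ref{lem:Picard_construction_Q0-acyclics}), Proposition \ref{prop:Q_0-acyclic} applies to give $J \otimes DA_{k,\epsilon} \simeq \Sigma^{-6}\Omega^{2} DA_{k,\epsilon}$; iterating this $\epsilon \in \{0,1\}$ times produces $J^{\otimes\epsilon}\otimes DA_{k,\epsilon} \simeq \Sigma^{-6\epsilon}\Omega^{2\epsilon} DA_{k,\epsilon}$, and substitution yields the desired formula. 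The main obstacle is making rigorous the identification $(p \otimes 1) \circ \mathrm{ev}^{-1} = Dp$ that underlies the comparison of cones; as an alternative route, one can bypass the triangulated-category machinery by verifying via Lemma \ref{lem:Omega_Margolis} that both sides are $Q_0$-acyclic with matching $Q_1$-Margolis cohomology in degrees $\{3, 2(k+1)\}$, and then using Theorem \ref{thm:classification} together with Proposition \ref{prop:A_orbits_normalization} to check they lie in the same $\Sigma^{-3}\Omega$-orbit.
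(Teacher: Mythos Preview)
Your argument is correct and follows essentially the same route as the paper: dualize the defining short exact sequence, tensor with the appropriate Picard element to compare triangles, and then absorb the factor $J^{\otimes\epsilon}$ via Proposition~\ref{prop:Q_0-acyclic}. The paper tensors the dual triangle by $N$ whereas you tensor the original triangle by $DN$, but these are the same manoeuvre.

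The one point worth noting is the ``obstacle'' you flag, namely identifying $(p\otimes 1)\circ\mathrm{ev}^{-1}$ with $Dp$. The paper resolves this without any rigid-duality formalism: by Lemma~\ref{lem:Picard_construction_Q0-acyclics} the stable group $[N,\field]\cong[\field,DN]$ is one-dimensional, so any two non-trivial maps $\field\to DN$ agree. Both $Dp$ and your composite are visibly non-trivial, hence equal. With this observation your main argument goes through directly, and the alternative route via Theorem~\ref{thm:classification} is unnecessary.
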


\begin{proof}
 This is a standard argument, using duality at the level of the stable module 
category, which is a tensor triangulated category. 
 The dual of the defining exact sequence is of the form
 \[
  0 
  \rightarrow 
  \field 
  \rightarrow 
  \Sigma^{(k+1)}\Omega^{-(k+1)} J^{\otimes \epsilon} 
  \rightarrow 
  DA_{k, \epsilon} 
  \rightarrow 
  0
 \]
which gives a distinguished triangle:
\[
 \Omega  DA_{k, \epsilon} \rightarrow
 \field 
  \rightarrow 
  \Sigma^{(k+1)}\Omega^{-(k+1)} J^{\otimes \epsilon} \rightarrow .
\]
Forming the tensor product with $ \Sigma^{-(k+1)}\Omega^{(k+1)} J^{\otimes 
\epsilon}$ then yields the result, since the resulting distinguished triangle 
is  isomorphic to the original one, by the uniqueness of the non-trivial map in 
$[\Sigma^{-(k+1)} \Omega^{k+1} J ^{\otimes \epsilon}, \field ]$, as stated in 
Lemma \ref{lem:Picard_construction_Q0-acyclics}.

Namely, this implies that $\Sigma^{-(k+1)}\Omega^{(k+1)} J^{\otimes 
\epsilon}\otimes  
\Omega  DA_{k, \epsilon} \simeq A_{k,\epsilon}$, from which the result follows 
by 
using $A_{k,\epsilon} \otimes J \simeq \Omega^2 \Sigma^{-6} A_{k,\epsilon}$, by 
Proposition \ref{prop:Q_0-acyclic}.
\end{proof}

\begin{nota}
 For $\epsilon \in \{0,1 \}$ and $k \in \nat$, let $\orb_{k,\epsilon}$ denote 
the set of stable isomorphism classes represented by 
 $
  \{ 
 (\Sigma^{-3} \Omega)^tA_{k,\epsilon } | 0 \leq t \leq 3 
  \}.
 $
\end{nota}

Proposition \ref{prop:A_orbits_normalization} implies that, 
for $k \geq 1$,  
$\orb_{k,0} \amalg \orb _{k,1}$ 
is the set of stable isomorphism classes of $Q_0$-acyclic, finite 
$\aone$-modules with 
$Q_1$-Margolis cohomology $\field$  in degrees $\{0, 2k-1 \}$ and zero 
otherwise.

\begin{nota}
Denote by $D_8$ the dihedral group of order $8$, the non-trivial semi-direct 
product $\zed/4 \rtimes \zed/2$.
\end{nota}

\begin{cor}
\label{cor:orbit}
 For $\epsilon \in \{0,1 \}$ and $1 \leq k \in \nat$, the associations
 \begin{eqnarray*}
  A & \mapsto & \mathfrak{d}_k A := \Sigma^{2k+5} D A \\
  A & \mapsto & \Sigma^{-3} \Omega A, 
 \end{eqnarray*}
 for $A \in \orb_{k,\epsilon}$, induce an action of the group $D_8$ on 
$\orb_{k, 
\epsilon}$. The action of the centre $\zed/2 \subset D_8$ corresponds to $A 
\mapsto A \otimes J$. 

Moreover, the fixed points $\orb_{k,\epsilon}^{\mathfrak{d}_k}$ under the 
duality operator have cardinal:
\[
 |\orb_{k,\epsilon}^{\mathfrak{d}_k} | = 
 \left\{
 \begin{array}{ll}
  2 & k \equiv 0 \mod (2) \\
  0 & k \equiv 1 \mod (2) 
 \end{array}
\right.
\]
and $(\Sigma^{-3} \Omega)^tA_{k,\epsilon }$ is a $\mathfrak{d}_k$-fixed point 
if 
and only if $(\Sigma^{-3} \Omega)^tA_{k,\epsilon } \otimes J \simeq 
(\Sigma^{-3} 
\Omega)^ {t+2} A_{k,\epsilon }$ is.
\end{cor}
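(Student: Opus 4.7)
The plan is to exploit the fact that $\orb_{k,\epsilon}$ is a principal homogeneous set under the cyclic group $\langle \Sigma^{-3}\Omega\rangle \cong \zed/4$, so that after fixing $A_{k,\epsilon}$ as a base point every element is labelled by a unique $s \in \zed/4$ via $(\Sigma^{-3}\Omega)^s A_{k,\epsilon}$. The entire statement then reduces to computing the action of $\mathfrak{d}_k$ on the base point and translating the resulting involution into the dihedral language.

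First I would verify $\mathfrak{d}_k^2 \simeq \mathrm{id}$, which is immediate from $D^2 \simeq \mathrm{id}$ and the fact that $D$ sends $\Sigma^{2k+5}$ to $\Sigma^{-(2k+5)}$. Next, Proposition \ref{prop:duality_A} gives $DA_{k,\epsilon} \simeq \Sigma^{k+1+6\epsilon}\Omega^{-(k+2+2\epsilon)} A_{k,\epsilon}$, and hence
\[
\mathfrak{d}_k A_{k,\epsilon} \simeq \Sigma^{3k+6+6\epsilon}\Omega^{-(k+2+2\epsilon)} A_{k,\epsilon}.
\]
Since $A_{k,\epsilon}$ is $Q_0$-acyclic, Proposition \ref{prop:Q_0-acyclic} supplies the stable relation $\Omega^4 \simeq \Sigma^{12}$, allowing this right-hand side to be rewritten as $(\Sigma^{-3}\Omega)^{t_0} A_{k,\epsilon}$ for the unique $t_0 \in \zed/4$ with $t_0 \equiv -(k+2+2\epsilon) \pmod 4$; in particular $\mathfrak{d}_k$ preserves $\orb_{k,\epsilon}$. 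Because $D$ turns $\Sigma^{-3}\Omega$ into its inverse, the same calculation applied to an arbitrary index yields
\[
\mathfrak{d}_k\bigl((\Sigma^{-3}\Omega)^s A_{k,\epsilon}\bigr) \simeq (\Sigma^{-3}\Omega)^{t_0-s} A_{k,\epsilon},
\]
so on the $\zed/4$-labels $\mathfrak{d}_k$ acts as the reflection $s\mapsto t_0-s$. Together with the rotation $s\mapsto s+1$ induced by $\Sigma^{-3}\Omega$ this is exactly the presentation of $D_8$ as $\zed/4\rtimes \zed/2$.

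For the remaining assertions, the centre of $D_8$ is generated by $s\mapsto s+2$, i.e.\ by $(\Sigma^{-3}\Omega)^2$, which by Proposition \ref{prop:Q_0-acyclic} coincides with $-\otimes J$. A label $s$ is $\mathfrak{d}_k$-fixed iff $2s\equiv t_0 \pmod 4$, which is solvable iff $t_0$ is even, i.e.\ iff $k$ is even; when so, there are exactly two solutions, and they differ by $2 \pmod 4$ and hence by $-\otimes J$, which delivers the final equivalence in the statement. The only genuinely non-formal point, and hence the main obstacle, is the congruence bookkeeping in identifying $t_0$ together with the verification that $\mathfrak{d}_k A_{k,\epsilon}$ remains in $\orb_{k,\epsilon}$ rather than swapping to $\orb_{k,1-\epsilon}$; both follow from Proposition \ref{prop:duality_A}, which places $DA_{k,\epsilon}$ back in the Picard-orbit of $A_{k,\epsilon}$ itself.
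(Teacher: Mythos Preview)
Your proposal is correct and follows essentially the same approach as the paper: both compute $\mathfrak{d}_k A_{k,\epsilon}$ from Proposition~\ref{prop:duality_A}, rewrite it as $(\Sigma^{-3}\Omega)^{-(k+2+2\epsilon)}A_{k,\epsilon}$, observe that $\mathfrak{d}_k$ conjugates $\Sigma^{-3}\Omega$ to its inverse (yielding the dihedral relation), and then read off the fixed-point condition $2t \equiv -(k+2+2\epsilon) \pmod 4$. Your write-up is slightly more explicit about the intermediate bookkeeping (verifying $\mathfrak{d}_k^2 \simeq \mathrm{id}$ and that $\mathfrak{d}_k$ stays within $\orb_{k,\epsilon}$), but the argument is the same.
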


\begin{proof}
 By construction (and Proposition \ref{prop:Q_0-acyclic}), the group $\zed/4$ 
acts on $\orb_{k,\epsilon}$ via 
$\Sigma^{-3}\Omega$. Moreover, Proposition \ref{prop:duality_A} gives the 
stable 
isomorphism 
 \begin{eqnarray}
 \label{eqn:duality_action}
   A_{k,\epsilon} \simeq
 ( \Sigma^{-3} \Omega) ^{k+2 + 2 \epsilon} \mathfrak{d}_k A_{k, \epsilon}, 
 \end{eqnarray}
which shows that $ A  \mapsto  \mathfrak{d}_k  A $ induces an action of 
$\zed/2$ 
on  $\orb_{k,\epsilon}$. It is straightforward to verify that this defines an 
action of $D_8$ on $\orb_{k,\epsilon}$; moreover, since $J$ is self-dual, 
the action $A \mapsto A \otimes J \simeq \Sigma^{-6} \Omega^2 A$ (again 
applying 
Proposition \ref{prop:Q_0-acyclic}) corresponds to the centre. 

Equation (\ref{eqn:duality_action}) implies that 
\[
 \mathfrak{d}_k \big( (\Sigma^{-3} \Omega) ^t A_{k,\epsilon} ) \big) 
 \simeq
 (\Sigma^{-3} \Omega)^{-(t+k+2+2\epsilon)} A_{k,\epsilon},
\]
thus $(\Sigma^{-3} \Omega) ^t A_{k,\epsilon} $ is a fixed point for 
$\mathfrak{d}_k$ if and only if $2t + k+2 + 2 \epsilon  \equiv 0 \mod (4)$. 
This 
establishes 
the final statement. 
\end{proof}

\begin{exam}
\label{exam:orbits_duality}
This behaviour is illustrated by Example \ref{exam:orbits}; in the case $s=2$ 
(so that $k=1$), 
there are no fixed points under the duality operator (see Figure 
\ref{fig:orbit_s=2}) whereas there are two fixed points in the case 
$s=3$ ($k=2$) (see Figure \ref{fig:orbit_s=3}). To 
indicate that the behaviour is independent of $\epsilon$, the second orbit is 
illustrated in Figure \ref{fig:second_orbit_s=3}.

\begin{figure}
 \caption{The second orbit under $\Sigma^{-3}\Omega$ for $s=3$}
 \label{fig:second_orbit_s=3}
\[
  \xymatrix@R=.75pc @C=.75pc{
  \bullet\ar@/^1pc/[rr]
  \ar[r]
  &
  \bullet 
  &
  \bullet 
  \ar[r]
  &
  \bullet 
  \ar@/^1pc/[rr]
  &
  \circ \ar[r]
  &
  \bullet
  &&
  \bullet
  \ar[r]
  &
  \bullet 
  \ar@/^1pc/[rr]
  &
  \bullet 
  \ar[r]
   \ar@/_1pc/[rr]
  &
  \bullet
  &
  \bullet 
  \ar[r]
  &
  \bullet
  \\
  \\
  &&&&&&\circlearrowright&&&
  \\
  \\
  \circ
  \ar[r]
  &
  \bullet 
  \ar@/^1pc/[rr]
  &
  \bullet
  \ar[r]
  \ar@/_1pc/[rr]
  &
  \bullet 
  &
  \bullet 
  \ar[r]
  &
  \circ
  &
  &
  \bullet\ar@/^1pc/[rr]
  \ar[r]
  &
  \circ
  &
  \bullet
  \ar[r]
  &
  \bullet 
  \ar@/^1pc/[rr]
  &
  \bullet\ar[r]
  &
  \bullet
  }
 \]
 \end{figure}
\end{exam}

\section{Truncated projective spaces}
\label{sect:trunc_p}

This section  shows how the cohomology of certain truncated 
projective spaces fits
into the classification of Section \ref{sect:picard_interpret}. This 
leads to a conceptual understanding 
of the cohomological behaviour of these modules.

The mod $2$ cohomology $H^* (\rp^\infty) \cong \field[u]$ 
is well understood as a module over the Steenrod algebra,  with structure
entirely determined by the fact that it is an unstable algebra. Since $u^4$ is 
annihilated by $Sq^1 , Sq^2$, multiplication by $u^4$ induces a monomorphism of 
$\aone$-modules, leading to 
a form of periodicity of the $\aone$-structure. As indicated in Remark 
\ref{rem:top_real_Pn}, 
the reduced cohomology $P:= \tilde{H}^* (\rp^\infty)$ identifies as an 
$\aone$-module with $\Sigma \Omega 
P_0$.

The $\aone$-module $P$ embeds in $P_{-\infty}$, which corresponds to $\field 
[u^{\pm 1}]$; this works over the full Steenrod algebra $\cala$ but is simpler 
over $\aone$, since this corresponds to inverting $u^4$ and imposing 
periodicity.

\begin{nota}
 For $a \leq b \in \zed$, let $P^b_a$ denote the subquotient of $P_{-\infty}$ 
of 
elements 
in degrees $[a, b]$. 
\end{nota}

\begin{rem}
\ 
\begin{enumerate}
 \item 
 The $\aone$-modules $P^b_a$ can be realized as the mod $2$ cohomology of $\rp^b 
_a$, namely, for $a>0$, truncated projective space and, for $a \leq 0$, the 
appropriate Thom spectrum.
\item 
Periodicity as $\aone$-modules gives  
$
 \Sigma^4 P^{b}_{a} 
 \cong 
 P^{b+4}_{a+4} 
$
hence, up to suspension, it is sufficient to consider the cohomology of 
truncated projective spaces.
 \end{enumerate}
\end{rem}

Duality in this context is straightforward:

\begin{lem}
 \label{lem:duality_truncP}
For $a \leq b \in \zed$, $P^b_a$ there is an isomorphism:
\[
 DP^b_a \cong \Sigma^{1-4b}P^{4b-a-1}_{3b-1}. 
 \]
\end{lem}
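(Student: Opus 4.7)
The plan is to first establish an explicit $\aone$-module isomorphism $DP_{-\infty}\cong \Sigma P_{-\infty}$, then derive the formula for $DP^b_a$ via a short exact sequence argument together with the periodicity $\Sigma^{4}P^{b'}_{a'}\cong P^{b'+4}_{a'+4}$.

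First I would observe that $P_{-\infty}$ is of finite type (one copy of $\field$ in each degree, spanned by $u^n$ in degree $n$), so that $DP_{-\infty}$ is well defined and has dual basis $\{f_{u^n}\}$ with $f_{u^n}$ in degree $-n$. I would then claim that the assignment $f_{u^n}\mapsto u^{-n-1}$ extends to an $\aone$-linear isomorphism
\begin{equation*}
 \phi\colon DP_{-\infty}\stackrel{\cong}{\longrightarrow}\Sigma P_{-\infty}.
\end{equation*}
Degree matching is immediate. Using Remark \ref{rem:left_right_A1} (so that $\chi=\mathrm{id}$ makes the distinction between left and right modules vacuous), $\aone$-linearity reduces to the two mod-$2$ identities
\begin{equation*}
 \binom{n-i}{i}\equiv\binom{n+i}{i}\pmod 2\qquad(i\in\{1,2\}),
\end{equation*}
each of which is a direct computation. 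Equivalently and more conceptually, $\phi$ corresponds to the perfect pairing $\langle u^a,u^b\rangle:=\delta_{a+b,-1}$ on $P_{-\infty}$, which is $\aone$-bilinear by the same identities and which specifies exactly the kind of self-duality of $P_{-\infty}$ used below.

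Next, the short exact sequence of $\aone$-modules
\begin{equation*}
 0\to P^{\infty}_{b+1}\to P^{\infty}_{a}\to P^{b}_{a}\to 0,
\end{equation*}
together with finite-typeness, dualizes to
\begin{equation*}
 0\to DP^{b}_{a}\to DP^{\infty}_{a}\to DP^{\infty}_{b+1}\to 0.
\end{equation*}
The isomorphism $\phi$ restricts to identifications $DP^{\infty}_{a}\cong\Sigma P^{-a-1}_{-\infty}$ and $DP^{\infty}_{b+1}\cong\Sigma P^{-b-2}_{-\infty}$, under which the induced map becomes the canonical quotient $\Sigma P^{-a-1}_{-\infty}\twoheadrightarrow\Sigma P^{-b-2}_{-\infty}$. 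Its kernel is the subquotient in degrees $[-b-1,-a-1]$, namely $\Sigma P^{-a-1}_{-b-1}$. Thus $DP^{b}_{a}\cong\Sigma P^{-a-1}_{-b-1}$.

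Finally, applying the $\aone$-periodicity $\Sigma^{4k}P^{b'}_{a'}\cong P^{b'+4k}_{a'+4k}$ with $k=b$ gives
\begin{equation*}
 DP^{b}_{a}\cong\Sigma P^{-a-1}_{-b-1}\cong\Sigma^{1-4b}P^{4b-a-1}_{3b-1},
\end{equation*}
as required. The only step with any real content is the $\aone$-linearity check for $\phi$; this is routine, since one only needs the identities for $Sq^1$ and $Sq^2$, both of which follow from elementary binomial arithmetic mod $2$.
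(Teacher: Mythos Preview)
Your proof is correct and follows essentially the same route as the paper: establish $DP_{-\infty}\cong\Sigma P_{-\infty}$, read off $DP^b_a\cong\Sigma P^{-a-1}_{-b-1}$ from this, then apply the $4$-periodicity with shift $4b$. The paper simply cites the self-duality of $P_{-\infty}$ and passes directly to the subquotient, whereas you supply the explicit pairing and phrase the truncation step via a dualized short exact sequence; these are the same argument with more detail filled in.
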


\begin{proof}
It is a basic fact (see \cite{bruner_Ossa} for example) that the dual of 
$P_{-\infty}$ is $\Sigma P_{-\infty}$. The dual of $P^b_a$ is 
$(DP_{-\infty})^{-a}_{-b}$ which is isomorphic to 
$(\Sigma P_{-\infty})^{-a}_{-b} = \Sigma \big( (P_{-\infty})^{-a-1}_{-b-1} 
\big)$. Using periodicity, shifting by $4b$ gives the statement.  
\end{proof}

Here only the $Q_0$-acyclic cases are considered, namely those of the form 
$P^{2n} _{2m-1}$, with $1 \leq m \leq n$. The case $m-n=0$ is easily 
understood, 
so 
henceforth suppose that $m-n >0$.

\begin{thm}
\label{thm:identify_A_trunc}
 For natural numbers $1 \leq m < n$, 
 \begin{enumerate}
  \item 
  if $m \equiv 1 \mod (2)$, 
  \[
   P^{2n}_{2m-1} \cong \left\{ 
   \begin{array}{ll}
    \Sigma^{2m-3} A_{n-m, 0} & n-m \equiv 2, 3 \mod (4) \\
    \Sigma^{2m-3} A_{n-m, 1} & n-m \equiv 0, 1 \mod (4); \\
   \end{array}
   \right.
  \]
\item 
if $m \equiv 0 \mod (2)$ 
\[
   P^{2n}_{2m-1} \cong\left\{ 
   \begin{array}{ll}
    \Sigma^{2m-3 }  (\Omega^{-1} \Sigma^3)    A_{n-m, 0} & n-m \equiv 0, 3 \mod 
(4) \\
    \Sigma^{2m-3 } (\Omega^{-1} \Sigma^3)  A_{n-m, 1} & n-m \equiv  1, 2 \mod 
(4);\\
   \end{array}
   \right.
  \]
\end{enumerate}
\end{thm}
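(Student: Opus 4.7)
The plan is to fit $P^{2n}_{2m-1}$ into the classification of Theorem \ref{thm:classification} via its Margolis cohomology, and then pin down the precise orbit representative using Proposition \ref{prop:identify_A} together with the duality Lemma \ref{lem:duality_truncP}.

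First I would verify the hypotheses. On $P_{-\infty} = \field[u^{\pm 1}]$ one has $Sq^1 u^i = i\, u^{i+1}$ and $Sq^2 u^i = \binom{i}{2}\, u^{i+2}$, whence $Q_1 u^i \equiv i\, u^{i+3} \pmod{2}$. Direct inspection then shows that $P^{2n}_{2m-1}$ is finite, reduced, and $Q_0$-acyclic, and that $H^*(P^{2n}_{2m-1}, Q_1)$ has total dimension $2$, concentrated in degrees $2m$ and $2n-1$. Setting $k := n - m$, the shifted module $\Sigma^{3-2m} P^{2n}_{2m-1}$ thus has $Q_1$-Margolis cohomology in degrees $\{3,\ 2k+2\}$, matching $A_{k,\epsilon}$. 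Theorem \ref{thm:classification} and Proposition \ref{prop:A_orbits_normalization} then give a stable isomorphism
\[
\Sigma^{3-2m} P^{2n}_{2m-1} \simeq (\Sigma^{-3}\Omega)^{t}\, A_{k,\epsilon}
\]
for a unique pair $(t,\epsilon) \in \zed/4 \times \zed/2$; the content of the theorem is that $(t,\epsilon)$ depends only on $(m \bmod 2,\ (n-m) \bmod 4)$, with $t = 0$ when $m$ is odd and $t = -1$ (i.e., a factor $\Omega^{-1}\Sigma^{3}$) when $m$ is even.

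The principal work, and the main obstacle, is to pin down $(t,\epsilon)$. I would attack this through Proposition \ref{prop:identify_A}, which exhibits $A_{k,\epsilon}$ as the $\{\cdot\}^{\geq 2}$-truncation of $\Sigma^{4\epsilon}(\Omega\Sigma^{-1})^{k+1-2\epsilon} DP_0$. By Lemma \ref{lem:duality_truncP} the dual $DP^{2n}_{2m-1}$ is itself a truncated projective space, while $DP_0$ is realized (up to reduction) via the fibre spectrum of Remark \ref{rem:realizeR}. The four Picard-shifts $(\Omega\Sigma^{-1})^{c} DP_0$, $c \in \{0,1,2,3\}$, correspond to truncations of (an appropriate shift of) $P_{-\infty}$ whose boundary degrees lie in the four residue classes modulo $4$. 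Matching the starting degree $2m - 1$ and ending degree $2n$ of $P^{2n}_{2m-1}$ against these four cases then yields the parity-of-$m$ dichotomy ($t = 0$ versus $t = -1$) and the $(n-m) \bmod 4$ dependence of $\epsilon$.

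The main difficulty is the bookkeeping required to verify that the four cases split exactly as stated. As a sanity check I would verify the base cases $k \in \{1,2,3,4\}$ directly against Figures \ref{fig:Ak1} and \ref{fig:Ak0}, and then use the short exact sequence
\[
0 \to P^{2n+8}_{2n+1} \to P^{2n+8}_{2m-1} \to P^{2n}_{2m-1} \to 0
\]
coming from the periodic structure of $P_{-\infty}$ to pass from $k$ to $k+4$. This extension matches precisely the extension $A_{k,\epsilon} \rightsquigarrow A_{k+4,\epsilon}$ exhibited in the example following Proposition \ref{prop:identify_A}, so that the full pattern propagates inductively in $k$.
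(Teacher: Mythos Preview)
Your proposal is correct in outline and reaches the same destination, but the route you take to pin down the invariants $(t,\epsilon)$ is genuinely different from, and more laborious than, the paper's.

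Both arguments begin identically: compute the Margolis cohomology of $P^{2n}_{2m-1}$, invoke Theorem~\ref{thm:classification} and Proposition~\ref{prop:A_orbits_normalization} to obtain a stable isomorphism $\Sigma^{3-2m}P^{2n}_{2m-1}\simeq(\Sigma^{-3}\Omega)^{t}A_{k,\epsilon}$ with $k=n-m$, and then determine $t$ and $\epsilon$. The divergence is in this last step. You propose to identify $(t,\epsilon)$ by passing through Proposition~\ref{prop:identify_A} and the duality Lemma~\ref{lem:duality_truncP}, matching the truncation boundaries of $P^{2n}_{2m-1}$ against those of the shifted $DP_0$'s, with an inductive check on $k\bmod 4$ as a safeguard. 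This works, but the bookkeeping you flag is real and does most of the work.

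The paper instead exploits the \emph{intrinsic} characterization of $A_{k,\epsilon}$ inside its orbit given in Proposition~\ref{prop:A_orbits_normalization}: $A_{k,\epsilon}$ is the unique orbit element admitting an embedding of the (suitably suspended) question-mark complex $\Sigma^{6}\Omega^{-1}J$. This reduces the determination of $t$ to a one-line observation about the bottom class of $P^{2n}_{2m-1}$: when $m$ is odd, $Sq^{2}u^{2m-1}=\binom{2m-1}{2}u^{2m+1}=0$, so the question-mark complex embeds directly and $t=0$; when $m$ is even, $Sq^{2}$ is nontrivial on the bottom class, and one applies $\Omega\Sigma^{-3}$ once to land in the normalized form, giving $t=-1$. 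The value of $\epsilon$ is then read off from whether or not a Joker sits at the top of the module, again via Proposition~\ref{prop:identify_A} (or equivalently Figures~\ref{fig:Ak0} and~\ref{fig:Ak1}). No duality or induction on $k$ is needed. Your approach has the virtue of being systematic, but the paper's use of the question-mark criterion is both shorter and more conceptual.
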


\begin{proof}
All the modules considered are reduced, hence it suffices to establish that 
there are stable isomorphisms of the above form.

Since the $A_{k, \epsilon}$ generate the respective orbits (see Proposition 
\ref{prop:A_orbits_normalization}), there exists a stable isomorphism of the 
form:
\[
 P^{2n}_{2m-1} \simeq \Sigma ^{2m-3} (\Omega \Sigma^{-3} ) ^{t(m,n)} A_{n-m, 
\epsilon(m,n)},
\]
where the suspension and the value $k=m-n$ are determined by the $Q_1$-Margolis 
cohomology groups. It remains to determine $t$ and $\epsilon$ as functions of 
$m,n$. 

In the case $m \equiv 1 \mod (2)$, by inspection $Sq^2$ acts trivially on the 
bottom class and there is an inclusion of the question mark complex (suitably 
suspended) into 
$P^{2n}_{2m-1}$ (compare Section \ref{sect:basic}). It follows that $t=0$, by 
the 
characterization of $A_{k, \epsilon}$ given in Proposition 
\ref{prop:A_orbits_normalization}. 
To conclude in this case, it suffices to select $\epsilon$ so that $A_{n-m, 
\epsilon}$ does not have a Joker at the top; the choice is dictated by 
Proposition \ref{prop:identify_A} 
(see also Figures \ref{fig:Ak0} and \ref{fig:Ak1}). 

If  $m \equiv 0 \mod (2)$, consider $\Omega \Sigma^{-3}P^{2n}_{2m-1}$;  
 there is once again an inclusion of the question mark complex (suitably 
suspended), so the normalization of Proposition 
\ref{prop:A_orbits_normalization} now gives:
\[
 P^{2n}_{2m-1} \simeq \Sigma^{2m-3} (\Omega^{-1} \Sigma^{3}) A_{n-m, \epsilon 
(m,n)}.
\]
The argument proceeds as above, but taking into account the cyclic shift 
induced 
by 
$(\Omega^{-1} \Sigma^{3})$.
\end{proof}

\begin{rem}
 This result explains the intricacy of some of the statements concerning the 
modules $  P^{2n}_{2m-1}$ in \cite{davis}. It is much simpler
 to work with the modules $A_{k,\epsilon}$.
\end{rem}

\section{Dual Brown-Gitler modules}
\label{sect:brown-gitler} 

This section gives a description of the dual Brown-Gitler modules (more 
precisely, 
the associated reduced $\aone$-modules) in terms of the families defined in 
Section  \ref{sect:picard_interpret}. Here the 
conclusion is  simpler than for truncated projective spaces, although the 
structure of the modules is richer (see Theorem \ref{thm:identify_BG}).

\begin{rem}
\label{rem:BG_spectra}
With topological applications in mind and to fix conventions,  recall the 
families $B(k), B_0 (k)$ indexed by $\nat$, where $B(k)$ is the $k$th 
Brown-Gitler spectrum \cite{BG,GLM} and $B_0 (k)$ is used to denote the $k$th 
integral 
Brown-Gitler spectrum (cf. \cite{Shimamoto}, where a different indexing is
used); the notation $B_0$ follows \cite{pearson}. 
The indexing below follows that of \cite{GLM}, in particular there are homotopy 
equivalences
$B(2k) \simeq B(2k+1) $ and $B_0 (2k)\simeq B_0 (2k+1)$, so consideration is 
limited to the even-indexed spectra.  

There is a map $i_k : B_0 (k) \rightarrow  B(k)$ which induces the 
surjection on mod $2$ cohomology 
\[
 H^* (B(k) ) \cong \cala/ \cala \{\chi (Sq^i ) | 2i >k \}
\twoheadrightarrow
 H^* (B_0(k) ) \cong \cala/ \cala \{Sq^1, \chi (Sq^i ) | 2i >k \},
\]
where $\chi$ is the conjugation of $\cala$. 

The relationship between these families is made clearer by the following facts 
for $k \in \nat$, : 
\begin{enumerate}
 \item 
$B_0 (4k+2 ) \simeq B_0 (4k)$;
\item 
$i_{4k+2}$ induces an equivalence $$(S^0/2)\wedge B_0 (4k) \simeq (S^0/2)\wedge 
B_0 (4k+2) \simeq B(4k+2);$$ 
\item 
there is a cofibre sequence (see \cite{Shimamoto}) 
\begin{eqnarray*}
 \label{eqn:shim_cofib}
 B_0 (4k+ 4) \stackrel{i_{4k+4}} {\rightarrow} B(4k+4) \rightarrow \Sigma B_0 
(4k) 
\end{eqnarray*}
(this choice of triangle is for later convenience).
\end{enumerate}
Here the dual Brown-Gitler spectra $D B(k)$ and $DB_0 (k)$ are of greater 
relevance, where $D$ denotes Spanier-Whitehead duality. 
The mod $2$ cohomology of $DB(k)$ is related to the family of injective  
Brown-Gitler modules in unstable module theory \cite{sch} by 
\[
 H^* (DB(k)) \cong \Sigma^{-k} J(k)
\]
where $J(k)$ is the injective envelope in unstable modules of $\Sigma ^k 
\field$. In particular, the top dimensional cohomology class of $DB(k)$ is in
degree zero.
 The families of modules considered below correspond respectively to $H^* 
(DB(k))$ and $H^* (DB_0 (k))$. 
\end{rem}

Recall that the dual  Steenrod algebra $\cala^*$ is a polynomial algebra 
$\field[\xi_i| i \geq 1]$ on generators of cohomological degree $|\xi_i|= 
1 - 2^i$; the generators can be replaced 
by their conjugates $\zeta_i := \chi (\xi_i)$. 

The natural action of $\cala$ on its dual $\cala^*$  is on the right; since 
$\cala$ is a 
Hopf algebra (in particular, with conjugation) 
the category of right $\cala$-modules is equivalent to that of left 
$\cala$-modules. As noted in  Remark \ref{rem:left_right_A1}, 
over $\aone$ the identity  $\chi (Sq^i) = Sq^i $ for $i \in \{1, 2 \}$  
renders 
the translation simple. 

With respect to the algebra generators $\zeta_i$,  
the above action (on the right) is given by the 
action of the Steenrod total power 
\[
 \zeta_n Sq = \sum _{i=0}^n \zeta_{n-i}^{2^i},
\]
where $\zeta_0$ here is interpreted as $1$.

For current purposes,  $\zeta_0$ is considered as an independent 
generator of degree zero (with $\zeta_0 Sq = \zeta_0$) and equip the generators 
$\zeta_i$ (for $i \geq 0$) with weights:
\[
 \wt (\zeta_i) =2^i
\]
so that the action of the Steenrod algebra preserves the weights.

In  particular, considering the left $\aone$-action,  
for $n >0$:
\begin{eqnarray*}
Sq^1  \zeta_n &=& \zeta_{n-1}^2 \\
Sq^2 \zeta_n &=& 0 \\
Sq^2 (\zeta_n)^2  &=& \zeta_{n-1}^4.
\end{eqnarray*}

The sub-algebras:
\[
\wfour:=  \field [\zeta_0^4, \zeta_1^2, \zeta_2 , \ldots ] 
 \hookrightarrow 
\wtwo :=  \field [\zeta_0^2, \zeta_1, \zeta_2 , \ldots ] 
\hookrightarrow 
 \field [\zeta_0, \zeta_1, \zeta_2 , \ldots ] 
 \]
are stable under the action of the Steenrod algebra. More precisely one 
has:

\begin{lem}
 \label{lem:weights}
 There are weight decompositions in the category of  $\cala$-modules:
 \begin{eqnarray*}
  \wfour &\cong & \bigoplus _{i\geq 0} \wfour (4i) \\
   \wtwo &\cong & \bigoplus _{j\geq 0} \wtwo (2j) 
 \end{eqnarray*}
where $\wfour (n)$ (respectively $\wtwo (n)$) is the subspace of $\wfour$ 
(resp. $\wtwo$) of  elements of weight $n$.  
 Moreover, each $\wfour (n)$ (respectively $\wtwo(n)$) is of finite total 
dimension.
\end{lem}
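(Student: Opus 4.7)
The plan is to reduce the lemma to two essentially combinatorial observations: (i) the action of $\cala$ on $\field[\zeta_0, \zeta_1, \ldots]$ preserves the weight function, extended additively to monomials by $\wt(\prod_i \zeta_i^{a_i}) = \sum_i a_i 2^i$; and (ii) for each fixed weight $n$ only finitely many monomials of that weight exist. The polynomial algebra $\wfour$ is generated by $\zeta_0^4, \zeta_1^2, \zeta_2, \zeta_3, \ldots$, all of weights divisible by $4$, and $\wtwo$ by $\zeta_0^2, \zeta_1, \zeta_2, \ldots$, all of even weight; hence their monomial bases consist of elements of weight in $4\nat$ and $2\nat$ respectively, and the asserted direct sum decompositions follow at once from (i).

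For (i) I would first verify weight-preservation on the algebra generators using the given total-power formula: each summand of $\zeta_n Sq = \sum_{i=0}^n \zeta_{n-i}^{2^i}$ has weight $2^i \cdot 2^{n-i} = 2^n = \wt(\zeta_n)$, and $\zeta_0 Sq = \zeta_0$ trivially preserves weight $1$. For products, I would invoke the Cartan formula in the form $(xy)Sq = (xSq)(ySq)$: if $x$ and $y$ are monomials and every monomial in $xSq$ has weight $\wt(x)$ and similarly for $y$, then every monomial in $(xSq)(ySq)$ has weight $\wt(x)+\wt(y) = \wt(xy)$, since weight is additive under products. A straightforward induction on the number of factors extends weight-preservation to arbitrary monomials, and hence to all of $\cala^*$; restriction to the $\cala$-stable subalgebras $\wfour \subset \wtwo \subset \cala^*$ gives the weight decompositions in the category of $\cala$-modules.

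For (ii), fix $n$ and observe that any monomial $\prod_i \zeta_i^{a_i}$ of weight $n$ satisfies $a_i 2^i \leq n$, so $a_i \leq n/2^i$ and $a_i = 0$ as soon as $2^i > n$. Thus only finitely many monomials of weight $n$ occur, and the same bound a fortiori controls $\wfour(n)$ and $\wtwo(n)$. No step presents a serious obstacle; the only point meriting care is the passage from generators to products via Cartan, which is bookkeeping once the weight grading is recognized as additive.
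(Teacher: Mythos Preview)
Your proposal is correct and is exactly the natural argument: the paper does not supply a proof for this lemma at all, treating both assertions as self-evident from the definitions (the weight is preserved by the Steenrod total power on generators and is additive under products, and only finitely many monomials have a given weight). Your write-up simply makes these observations explicit, and there is nothing to add.
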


\begin{rem}
\label{rem:identify_DB}
There are isomorphisms of $\aone$-modules:
\begin{eqnarray*}
\wtwo (2n) &\cong& H^* (DB (2n)) \\
\wfour (4n) &\cong& H^* (DB_0 (4n)).
\end{eqnarray*}
The notation reflects the fact that the spectrum $\Sigma^k DB(k)$ is usually 
written $T(k)$, so that $H^* (T(k)) \cong J(k)$. 
Thus $\wtwo (2n) \cong \Sigma^{-2n} J (2n)$. 
\end{rem}

\begin{exam}
 \label{exam:Z}
 There is an isomorphism $\wtwo(2)\cong Z$, with basis $\{\zeta_1, \zeta_0^2 
\}$. 
\end{exam}

Using Remark \ref{rem:identify_DB}, the following Lemma gives algebraic
versions 
of the 
Shimamoto exact sequences of Remark \ref{rem:BG_spectra}:

\begin{lem}
 \label{lem:wtfour-module}
 As an $\wfour$-module, $\wtwo$ is free on $\{1, \zeta^2_0, \zeta_1 , \zeta^2_0 
\zeta_1 \}$, which are of weights $0, 2, 2, 4$ respectively.
 In particular, for $n \in \nat$:
 \begin{eqnarray*}
  \wtwo(4n+2) \simeq  \wfour(4n) \otimes  \wtwo(2) 
 \end{eqnarray*}
as  $\cala$-modules and there is a short exact sequence of $\cala$-modules:
\[
 0 
 \rightarrow 
 \wfour(4n+4) 
 \rightarrow 
 \wtwo(4n+4) 
 \rightarrow 
 \Sigma^{-1}  \wfour(4n) 
 \rightarrow 
 0.
\]
\end{lem}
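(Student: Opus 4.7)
The plan is to establish freeness as a polynomial-algebra fact, then derive both consequences by weight bookkeeping, with $\cala$-linearity of the quotient identification being the main point.

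As an $\wfour$-algebra, $\wtwo$ is generated by $\zeta_0^2$ and $\zeta_1$ subject only to the relations $(\zeta_0^2)^2 = \zeta_0^4 \in \wfour$ and $\zeta_1^2 \in \wfour$, so there is an isomorphism $\wtwo \cong \wfour[x,y]/(x^2 - \zeta_0^4, y^2 - \zeta_1^2)$; this exhibits $\wtwo$ as a free $\wfour$-module on $\{1, \zeta_0^2, \zeta_1, \zeta_0^2\zeta_1\}$, with weights $0,2,2,4$ as claimed. Every algebra generator of $\wfour$ has weight divisible by four, hence $\wfour(m) = 0$ unless $4 \mid m$. Combined with freeness, the weight-$m$ component decomposes as
\[
\wtwo(m) \;=\; \wfour(m) \oplus \wfour(m-2)\zeta_0^2 \oplus \wfour(m-2)\zeta_1 \oplus \wfour(m-4)\zeta_0^2\zeta_1 .
\]
For $m = 4n+2$ only the middle two summands survive, giving $\wtwo(4n+2) = \wfour(4n)\cdot\wtwo(2)$; the multiplication map $\wfour(4n) \otimes \wtwo(2) \to \wtwo(4n+2)$ is $\cala$-linear (the Cartan formula is precisely the $\cala$-linearity of multiplication in the $\cala$-algebra $\wtwo$, with respect to the diagonal action on the tensor product) and bijective by the decomposition, yielding the first claim. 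For $m = 4n+4$ only the outer summands survive, giving $\wtwo(4n+4) = \wfour(4n+4) \oplus \wfour(4n)\zeta_0^2\zeta_1$; the inclusion of the $\cala$-submodule $\wfour(4n+4)$ is automatically $\cala$-linear, and the quotient identifies with $\Sigma^{-1}\wfour(4n)$ as a graded vector space via $a \mapsto [a\zeta_0^2\zeta_1]$, using $|\zeta_0^2\zeta_1| = -1$.

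The remaining point is $\cala$-linearity of this quotient identification, and it is the main obstacle. By the Cartan formula, $\alpha(a\zeta_0^2\zeta_1) = \sum \alpha_i'(a)\alpha_i''(\zeta_0^2\zeta_1)$; the term with $\alpha_i'' = \mathrm{id}$ recovers $\alpha(a)\cdot\zeta_0^2\zeta_1$, and in the other terms $\alpha_i'(a) \in \wfour$ since $\wfour$ is $\cala$-stable, so $\cala$-linearity modulo $\wfour$ reduces to the assertion that $\alpha(\zeta_0^2\zeta_1) \in \wfour$ for every $\alpha \in \cala$ of positive degree. This is settled by a weight-degree inspection: $\alpha$ preserves weight, so $\alpha(\zeta_0^2\zeta_1) \in \wtwo(4)$; the unique monomial in $\wtwo(4) \setminus \wfour(4)$ is $\zeta_0^2\zeta_1$ itself, which lies in degree $-1$, while $\alpha$ of positive degree raises the degree to $\geq 0$, forcing the image into $\wfour(4)$.
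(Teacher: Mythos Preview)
Your argument is correct and follows the same approach as the paper, which simply records the proof as ``Straightforward (the desuspension in the short exact sequence arises since $\zeta_0^2\zeta_1$ has degree $-1$).'' You have supplied the details the paper omits, in particular the verification that the quotient identification is $\cala$-linear via the degree/weight inspection of $\alpha(\zeta_0^2\zeta_1)$; this is the only point requiring any thought, and your argument handles it cleanly.
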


\begin{proof}
 Straightforward (the desuspension in the short exact sequence arises since 
$\zeta^2_0 \zeta_1$ has  degree $-1$).
\end{proof}

The aim here is to understand the structure of the  $\wfour(n)$ 
(respectively 
$\wtwo(n)$) as $\aone$-modules;   a first step is understanding the action 
of 
the Milnor primitives. 
The following is clear (cf. \cite{ravenel}, for example). 

\begin{lem}
\label{lem:actionQ_zeta}
 For $i, n \in \nat$
 \[
 Q_i \zeta_n  = \left\{ 
  \begin{array}{ll}
 0 & i \geq n \\
 \zeta_{n-(i+1)}^{2^{i+1}} & i< n.
  \end{array}
\right.
 \]
\end{lem}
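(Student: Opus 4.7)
The plan is to induct on $i$, using the recursive definition $Q_0 = Sq^1$ and $Q_{i+1} = [Sq^{2^{i+1}}, Q_i]$, which in characteristic two reads $Sq^{2^{i+1}} Q_i + Q_i Sq^{2^{i+1}}$. The base case $i = 0$ is immediate from $Sq^1 \zeta_n = \zeta_{n-1}^2$ (already recorded in the text for $n \geq 1$) and $Sq^1 \zeta_0 = 0$ (by degree, since $\zeta_0 Sq = \zeta_0$).

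For the inductive step, write
\[
Q_{i+1}\zeta_n \;=\; Sq^{2^{i+1}}(Q_i\zeta_n) \;+\; Q_i\bigl(Sq^{2^{i+1}}\zeta_n\bigr).
\]
The crucial auxiliary claim is that $Sq^{2^{i+1}}$ annihilates every single generator $\zeta_n$, whence the second summand vanishes. To see this, recall that $Sq^{2^{i+1}}$ is Milnor-dual to $\xi_1^{2^{i+1}}$, and its left action on $\zeta_n$ is controlled (via the coproduct of $\cala^*$) by the coefficient of $\xi_1^{2^{i+1}}$ in the expansion of each $\zeta_b$ in the polynomial basis $\{\xi^R\}$. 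The antipode recursion $\zeta_b = \xi_b + \sum_{a=1}^{b-1} \zeta_a^{2^{b-a}} \xi_{b-a}$ shows inductively that the unique pure power of $\xi_1$ appearing in $\zeta_b$ is $\xi_1^{2^b - 1}$, an odd power; since $2^{i+1}$ is even, the relevant pairings all vanish.

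Granted the vanishing, the inductive hypothesis gives $Sq^{2^{i+1}}(Q_i\zeta_n) = Sq^{2^{i+1}}(\zeta_{n-i-1}^{2^{i+1}})$ for $n > i$ and zero otherwise. Applying the power rule $Sq^{2^k}(y^{2^k}) = (Sq^1 y)^{2^k}$ (an immediate iteration of the Cartan formula) yields $(Sq^1 \zeta_{n-i-1})^{2^{i+1}} = \zeta_{n-i-2}^{2^{i+2}}$ when $n \geq i+2$, and zero when $n = i+1$ (since $Sq^1 \zeta_0 = 0$); both cases agree with the formula stated in the lemma. The only step beyond routine bookkeeping is the vanishing of $Sq^{2^{i+1}}\zeta_n$, which rests on the parity observation above.
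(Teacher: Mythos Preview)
The paper does not supply a proof of this lemma; it simply appeals to the literature (Ravenel). Your inductive argument via $Q_{i+1}=[Sq^{2^{i+1}},Q_i]$ is correct and self-contained, so in that sense it goes beyond what the paper does.

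Two remarks. First, the vanishing $Sq^{2^{i+1}}\zeta_n=0$ can be read off more directly from the total power formula already recorded in the paper: $\zeta_n Sq=\sum_{j=0}^{n}\zeta_{n-j}^{2^{j}}$ shows that the only nonzero $Sq^{k}\zeta_n$ occur for $k=2^{j}-1$, and these are all odd, whereas $2^{i+1}$ is even. Your Milnor-basis parity argument reaches the same conclusion but by a longer route.

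Second, there is a small left/right ambiguity in your write-up: the pairing argument with $\xi_1^{2^{i+1}}$ literally computes the \emph{right} action of $Sq^{2^{i+1}}$, while you speak of the left action. This is harmless here because the commutator defining $Q_{i+1}$ is symmetric in characteristic two and the Milnor primitives are primitive (hence $\chi$-fixed), so the left and right actions of $Q_i$ agree; moreover the Cartan rule you use for $Sq^{2^{k}}(y^{2^{k}})=(Sq^{1}y)^{2^{k}}$ holds for the right action as well, since $(\alpha\beta)\cdot\theta=\sum(\alpha\cdot\theta')(\beta\cdot\theta'')$ on $\cala^{*}$. It would be cleaner to state explicitly that you work with the right action throughout (which matches the total power formula) and then invoke $\chi(Q_i)=Q_i$ to identify the result with the left action used in the statement.
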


Since the Milnor operations acts as derivations, this allows the calculation of 
the Margolis cohomology groups. For $n \in \nat$, write $\alpha (n)$ for the
sum 
of the digits of its 
 binary expansion.

\begin{lem}
\label{lem:Margolis_cohom_calc}
 (Cf. \cite[Part III]{adams}, \cite[Lemma 3.12]{DGM}.)
 There are isomorphisms of algebras:
\begin{eqnarray*}
 H^* (\wtwo , Q_0 ) & \cong & \field \\
 H^* (\wtwo , Q_1) & \cong & \field[\zeta_1]/(\zeta_1^4) \otimes 
\bigotimes_{i\neq 1} \field [\zeta_i^2]/ (\zeta_i)^4 \\
 H^* (\wfour , Q_0 ) & \cong & \field [\zeta_0^4] \\
 H^* (\wfour , Q_1) & \cong & \bigotimes_{i\geq 1} \field [\zeta_i^2]/ 
(\zeta_i)^4.
\end{eqnarray*}
In particular, for $n \in \nat$,
 $
 \dim H^* (\wfour(4n) , Q_0) = \dim H^* (\wfour(4n) , Q_1) = 1,
$ 
so that the modules $\wfour(4n)$ represent elements of the 
Picard group $\pica$, with $H^{0}(\wfour(4n), Q_0) = \field$ and $H^* 
(\wfour(4n) , Q_1)$ concentrated in degree 
$$\sum_{j \in \mathscr{J}} 2 (1- 2^{i_j+1} )= 2( \alpha (n)-2n),$$
where $n= \sum_{j \in \mathscr{J}}2^{i_j}$ is the dyadic decomposition.

The modules $\wtwo(2n)$ are $Q_0$-acyclic for $n >0$ and there are 
isomorphisms:
\begin{eqnarray*}
 H^*(  \wtwo(4n+4), Q_1 ) &\cong&  H^* ( \wfour(4n+4), Q_1) \oplus 
\Sigma^{-1} H^* ( \wfour(4n) , Q_1)\\
H^*(  \wtwo(4n+2), Q_1 ) &\cong& H^*(  \wfour(4n), Q_1 ) \otimes 
\wtwo(2).
\end{eqnarray*}
In particular, the modules $\wtwo(2n)$, for $n>0$ are stably isomorphic to 
$Q_0$-acyclic, indecomposable $\aone$-modules. 
\end{lem}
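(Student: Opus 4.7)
The plan is to reduce the statement to Theorem \ref{thm:classification} applied to $M := (\wtwo(2n))\red$. Since $M$ is stably isomorphic to $\wtwo(2n)$, it inherits $Q_0$-acyclicity, and the crucial input is that $\dim H^*(M, Q_1) = 2$ for every $n \geq 1$. Granting this, Proposition \ref{prop:Adams_indec_E1} will identify $(M|_{\eone})\red$ as a reduced indecomposable $\eone$-module, and Theorem \ref{thm:classification} will then pin $M$ down, up to stable isomorphism, as a specific finite, indecomposable, $Q_0$-acyclic $\aone$-module, giving the result.

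To establish the dimension count, I would compute the Hilbert series of $H^*(\wtwo, Q_1)$ with respect to the weight grading, using the tensor factorisation recorded in the present Lemma. The three classes of factors $\field[\zeta_0^2]/(\zeta_0^4)$, $\field[\zeta_1]/(\zeta_1^4)$, and $\field[\zeta_i^2]/(\zeta_i^4)$ for $i \geq 2$ contribute explicit factors in the variable tracking weight, which combine via the standard telescoping identity $\prod_{j \geq 0}(1+t^{2^j}) = 1/(1-t)$ to
\[
\mathrm{Hilb}(t) \;=\; \frac{1+t^2}{1-t^2} \;=\; 1 + 2\sum_{n \geq 1} t^{2n}.
\]
The coefficient of $t^{2n}$, which by definition of the weight grading equals $\dim H^*(\wtwo(2n), Q_1)$, is therefore $2$ for every $n \geq 1$.

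With the dimension in hand, Theorem \ref{thm:classification} locates $M$ either in the orbit of $Z$ under $\Sigma^{-3}\Omega$ (when the socle dimension $s = 1$, corresponding to $n = 1$, where one recovers Example \ref{exam:Z}) or among the truncations $J^{\otimes\epsilon} \otimes P^{\leq 2(s-1)}$ for $s \geq 2$; in every case the resulting module is a finite, $Q_0$-acyclic, indecomposable $\aone$-module, so the stable isomorphism $\wtwo(2n) \simeq M$ yields the conclusion. The main obstacle is the Hilbert series manipulation together with keeping straight the distinction between cohomological degree and weight; a further refinement would identify the exact orbit representative realised by each $\wtwo(2n)$ by locating the two $Q_1$-Margolis classes in cohomological degree (computed from $|\zeta_i| = 1 - 2^i$), but such a precise identification is not required for the indecomposability assertion itself.
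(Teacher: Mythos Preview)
Your argument for the final indecomposability assertion is correct, and the Hilbert-series computation giving $\dim H^*(\wtwo(2n),Q_1)=2$ is clean. But note that you are proving only the closing sentence: you take the four algebra isomorphisms as input (reasonable, since they are cited from \cite{adams,DGM} and follow from the standard derivation calculation using Lemma~\ref{lem:actionQ_zeta}), yet you leave untouched the two displayed isomorphisms
\[
H^*(\wtwo(4n+4),Q_1)\cong H^*(\wfour(4n+4),Q_1)\oplus\Sigma^{-1}H^*(\wfour(4n),Q_1),\qquad
H^*(\wtwo(4n+2),Q_1)\cong H^*(\wfour(4n),Q_1)\otimes\wtwo(2),
\]
which are themselves part of the statement. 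The paper's intended route (flagged ``Straightforward'') runs through Lemma~\ref{lem:wtfour-module}: K\"unneth gives the second isomorphism, and the short exact sequence $0\to\wfour(4n+4)\to\wtwo(4n+4)\to\Sigma^{-1}\wfour(4n)\to 0$ gives the first, since the two one-dimensional $Q_1$-cohomology groups live in distinct degrees so the long exact sequence splits. The dimension count $\dim H^*(\wtwo(2n),Q_1)=2$ then falls out immediately. Your Hilbert series is an elegant alternative to that count but does not recover the splittings.

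One further remark: invoking Theorem~\ref{thm:classification} is more than you need for indecomposability alone. Once $M:=(\wtwo(2n))\red$ is known to be finite, reduced, $Q_0$-acyclic with two-dimensional $Q_1$-cohomology, any nontrivial $\aone$-decomposition $M\cong M_1\oplus M_2$ would produce reduced, nonzero, $Q_0$-acyclic summands; but a finite $Q_0$-acyclic $\eone$-module decomposes into free modules and $Q_0$-acyclic lightning flashes, and each of the latter contributes exactly two $Q_1$-classes, forcing $\dim H^*(M_i,Q_1)\geq 2$ and hence total dimension $\geq 4$, a contradiction. That said, appealing to the classification is not circular and buys you the sharper identification you allude to at the end.
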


\begin{proof}
Straightforward.
\end{proof}

The result of \cite[Lemma 3.12]{DGM} is more precise: the  modules 
$\wfour(4.2^i)$, $ i \in \nat$, are identified explicitly in $\pica$ 
and it is observed that the multiplicative structure of 
$\wfour$ induces stable isomorphisms
\[
 \bigotimes_{j \in \mathscr{J}} \wfour(4.2^{i_j})
 \simeq 
 \wfour(4n),
\]
where $n= \sum_{j \in \mathscr{J}}2^{i_j}$ is the dyadic decomposition. The 
result can be restated as follows (cf. Example \ref{exam:Q1_picard_family}):

\begin{lem}
\label{lem:wfour_picard_gp}
 For $i\in \nat$, there is a stable isomorphism of $\aone$-modules:
 \[
  \wfour(4.2^{i})
  \simeq 
  \left\{
  \begin{array}{ll}
  \Sigma \Omega^{-1} J & i=0 \\
  (\Sigma \Omega^{-1}) ^{2^{i+1}-1 } \field & i>0 .
  \end{array}
  \right.
\]
Hence, for $0< n \in \nat$, 
 $
 \wfour(4n)
  \simeq 
  (\Sigma \Omega^{-1}) ^{2 n- \alpha (n) }
 \otimes J ^{\otimes \overline{n} }, 
$ where $\overline{n}$ is the residue of $n$ modulo $2$.
\end{lem}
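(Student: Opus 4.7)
The plan is two-fold: first, identify $\wfour(4 \cdot 2^i)$ as a Picard class and determine the $(\Sigma, \Omega)$-bidegree from Margolis cohomology, then settle the $J$-ambiguity; second, deduce the general formula from the multiplicative identity $\bigotimes_j \wfour(4 \cdot 2^{i_j}) \simeq \wfour(4n)$ recalled just before the statement of the lemma.

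For the first step, Lemma \ref{lem:Margolis_cohom_calc} supplies $\dim H^*(\wfour(4 \cdot 2^i), Q_0) = \dim H^*(\wfour(4 \cdot 2^i), Q_1) = 1$, so by Adams--Priddy \cite{adams_priddy} the module lies in $\pica$ and is stably isomorphic to $\Sigma^a \Omega^b J^{\otimes \epsilon}$ for uniquely determined $a, b \in \zed$ and $\epsilon \in \{0,1\}$. Combining the obvious effect of $\Sigma$ with Lemma \ref{lem:Omega_Margolis} (and noting that $J$ has both Margolis classes in degree $0$), the module $\Sigma^a \Omega^b J^{\otimes \epsilon}$ has its $Q_0$-Margolis class in degree $a+b$ and its $Q_1$-Margolis class in degree $a+3b$; Lemma \ref{lem:Margolis_cohom_calc} places these in degrees $0$ and $-2(2^{i+1}-1)$ respectively, so the linear system $a+b=0$, $a+3b=-2(2^{i+1}-1)$ yields $a = 2^{i+1}-1 = -b$.

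The remaining task, and the main obstacle, is to pin down $\epsilon = \epsilon_i$. One route is to appeal to \cite[Lemma 3.12]{DGM}, where the explicit identification is carried out. Alternatively, the base case $i=0$ can be settled directly: $\wfour(4)$ is three-dimensional on $\{\zeta_2, \zeta_1^2, \zeta_0^4\}$ in degrees $-3, -2, 0$, and the formula $\zeta_n Sq = \sum_{j \geq 0} \zeta_{n-j}^{2^j}$ gives the only nonzero operations $Sq^1 \zeta_2 = \zeta_1^2$ and $Sq^2 \zeta_1^2 = \zeta_0^4$; comparison with Figure \ref{fig:diag_aone} identifies this with the suspended question-mark complex $\Sigma \Omega^{-1} J$, so $\epsilon_0 = 1$. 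For $i \geq 1$, a similar but longer direct inspection (or the cited result in \cite{DGM}) yields $\epsilon_i = 0$; this is the delicate point, since Margolis cohomology alone cannot distinguish a $J$-factor within $\pica$.

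The second statement then follows from the first by the multiplicativity recalled above: for the dyadic decomposition $n = \sum_{j \in \mathscr{J}} 2^{i_j}$,
\[
\wfour(4n) \simeq \bigotimes_{j \in \mathscr{J}} \wfour(4 \cdot 2^{i_j}) \simeq (\Sigma \Omega^{-1})^{\sum_j (2^{i_j+1}-1)} J^{\otimes \sum_j \epsilon_{i_j}}.
\]
The exponent of $\Sigma \Omega^{-1}$ equals $\sum_j (2^{i_j+1}-1) = 2n - \alpha(n)$, and the exponent of $J$ modulo $2$ counts the indices $j$ with $i_j = 0$, which is the parity $\overline{n}$ of $n$, concluding the argument.
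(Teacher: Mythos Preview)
Your proposal is correct and follows the same overall strategy as the paper, which in fact provides no independent proof: the text preceding the lemma records that \cite[Lemma~3.12]{DGM} identifies the $\wfour(4\cdot 2^i)$ explicitly in $\pica$ and establishes the multiplicative splitting $\bigotimes_j \wfour(4\cdot 2^{i_j}) \simeq \wfour(4n)$, and the lemma is presented as a restatement of that result (with a cross-reference to Example~\ref{exam:Q1_picard_family}). Your argument supplies the details the paper leaves implicit---the determination of the $(\Sigma,\Omega)$-bidegree from the Margolis degrees via Lemma~\ref{lem:Omega_Margolis}, and the explicit verification of the $i=0$ case as the suspended question-mark complex---while handling the $J$-ambiguity for $i\geq 1$ exactly as the paper does, by appeal to \cite{DGM}.
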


A similar argument applies to the modules $\wtwo(2n)$:

\begin{lem}
\label{lem:wtwo_reduction}
 For $0<n \in \nat$ with $2$-adic valuation $\nu = \nu (n)$,  the 
structure 
of $\wtwo$ as a $\wfour$-module induces a stable isomorphism 
\begin{eqnarray*}
  \wtwo(2n)
  &\simeq& 
  \wfour(2n - 2^{\nu +1}) \otimes \wtwo(2^{\nu +1}).
 \end{eqnarray*}

Hence 
$
 \Sigma^{2n} \wtwo (2n ) \simeq  (\Sigma \Omega^{-1} ) ^{1- \alpha (n)} \big( 
\Sigma^{2^{\nu +1}} \wtwo (2^{\nu +1})\big).
$
\end{lem}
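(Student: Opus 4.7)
The plan is to construct a natural $\aone$-linear map via the ring multiplication on $\wtwo$ and verify it is a stable isomorphism via the Adams and Margolis criterion. Since $\wtwo$ is an $\cala$-algebra (the $\cala$-action on $\cala^*$ satisfies the Cartan formula) and $\wfour \subset \wtwo$ is an $\cala$-subalgebra, the multiplication of $\wtwo$ is $\aone$-linear and weight-preserving, and therefore restricts to an $\aone$-module map
\[
\mu \colon \wfour(2n - 2^{\nu+1}) \otimes \wtwo(2^{\nu+1}) \to \wtwo(2n).
\]
It is enough to show that $\mu$ induces isomorphisms on $H^*(-, Q_j)$ for $j \in \{0,1\}$.

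For $j = 0$, both sides are $Q_0$-acyclic: the target by Lemma \ref{lem:Margolis_cohom_calc}, and the source by the same lemma together with the K\"unneth formula, since $\wtwo(2^{\nu+1})$ is $Q_0$-acyclic. For $j = 1$, K\"unneth and the explicit bases in Lemma \ref{lem:Margolis_cohom_calc} reduce the claim to a combinatorial bijection: each basis element $\zeta_1^a \zeta_0^{2b_0} \prod_{i \geq 2} \zeta_i^{2b_i}$ of $H^*(\wtwo(2n), Q_1)$ should decompose uniquely as the product of a basis element $\prod_{i \geq 1}\zeta_i^{2c_i}$ of $H^*(\wfour(2n - 2^{\nu+1}), Q_1)$ and a basis element of $H^*(\wtwo(2^{\nu+1}), Q_1)$. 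The decomposition is dictated by the dyadic expansion of $n - 2^\nu = 2^{\nu+1}(m-1)/2$, whose binary digits lie entirely at positions $\geq \nu + 1$ (by the hypothesis that $m$ is odd); this fixes the $c_i$ (in particular $c_1 = 0$ when $\nu \geq 1$) and hence $a' = a - 2c_1$, $b_0' = b_0$, $b_i' = b_i - c_i$ for $i \geq 2$.

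The main obstacle is checking that the assigned values $(a', b_0', b_i')$ always satisfy the required bounds $a' \leq 3$, $b_0' \leq 1$, $b_i' \leq 1$ (in particular $b_i \geq c_i$), and that the resulting $\wtwo$-factor has weight exactly $2^{\nu+1}$. The weight identity follows at once by subtracting the $\wfour$- and $\wtwo$-weight equations from that for $(a, b_0, b_i)$. The bound conditions reduce to a short case analysis: the $b_i$'s ($i \geq 2$) encode the dyadic expansion of $n - a - b_0$ restricted to positions $\geq 2$, while $a + b_0$ can only absorb up to two units of the lowest-position contributions; since the digits of $n - 2^\nu$ coincide, in positions $\geq 2$, with those of the expansion supplied by the $b_i$'s, the inequalities $b_i \geq c_i$ and $a \geq 2 c_1$ hold automatically.

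The ``Hence'' statement then follows by substituting Lemma \ref{lem:wfour_picard_gp} and applying Proposition \ref{prop:Q_0-acyclic}. Using the identity $\alpha((n - 2^\nu)/2) = \alpha(n) - 1$, the Picard element $\wfour(2n - 2^{\nu+1})$ is rewritten in terms of powers of $\Sigma\Omega^{-1}$ and $J$; tensoring with the $Q_0$-acyclic factor $\wtwo(2^{\nu+1})$ then permits the reduction of $\Omega$-powers modulo $4$ via $\Omega^4 \simeq \Sigma^{12}$ and the absorption of $J$-factors via $\Omega^2 \simeq \Sigma^6 J \otimes -$. A short bookkeeping collapses the expression to the stated form $(\Sigma\Omega^{-1})^{1-\alpha(n)} \bigl(\Sigma^{2^{\nu+1}} \wtwo(2^{\nu+1})\bigr)$.
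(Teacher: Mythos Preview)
Your approach is correct in outline and genuinely different from the paper's. You argue directly that the multiplication map induces a bijection on $Q_1$--Margolis cohomology by matching basis monomials; the paper instead compares the two Shimamoto-type short exact sequences (Lemma~\ref{lem:wtfour-module}) for $\wtwo(2n)$ and for $\wfour(4m)\otimes\wtwo(2^{\nu+1})$, observes that the outer vertical maps are stable isomorphisms by the known multiplicativity of $\wfour$ (\cite[Lemma 3.12]{DGM}), and concludes by the five lemma. The paper's route is cleaner because it reduces to the $\wfour$-case already established, while yours is more self-contained but needs an explicit combinatorial check.

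That check is where your write-up has a gap. The assertion that ``the digits of $n-2^\nu$ coincide, in positions $\geq 2$, with those of the expansion supplied by the $b_i$'s'' is not correct: the $b_i$'s encode the binary digits of $n-a-b_0$, and for instance when $\nu\geq 3$ and $a+b_0=4$ one has $b_2=1$ while bit~$2$ of $n-2^\nu$ is~$0$. The desired inequalities $a\geq 2c_1$ and $b_i\geq c_i$ do hold, but they require a genuine case split on the residue of $n$ modulo $4$ (equivalently on $\nu$ and on the forced value of $a+b_0\in\{0,1,2,3,4\}$), checking in each case that subtracting $a+b_0$ from $n$ only turns off low bits of $n$ or produces borrows that set bits below position $\nu+1$, where $c_i=0$. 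This is routine but should be carried out rather than asserted.

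The ``Hence'' part is handled correctly in both: once one has $\wfour(2n-2^{\nu+1})\simeq(\Sigma\Omega^{-1})^{n-2^\nu-\alpha(n)+1}J^{\otimes\overline{N}}$ with $N=(n-2^\nu)/2$, the $Q_0$-acyclicity of $\wtwo(2^{\nu+1})$ and Proposition~\ref{prop:Q_0-acyclic} give $(\Sigma^3\Omega^{-1})^4\simeq\mathrm{id}$ and $J\simeq(\Sigma^3\Omega^{-1})^{-2}$ on such modules, and $2N-2\overline{N}\equiv 0\pmod 4$ finishes the reduction.
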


\begin{proof}
For notational simplicity, write  $n= 2 m + 2^{\nu}$ (so that $m = 2^{\nu} t$
for some $t \in \nat$); observe 
that $\alpha (m) = \alpha (n)-1$. 

In the case $\nu =0$, the stable isomorphism $ \wtwo(2n)
  \simeq   \wfour(4m) \otimes \wtwo(2^{\nu +1}) $ follows immediately from
Lemma 
\ref{lem:wtfour-module}, hence consider the case $\nu >0$ (so that $2n \equiv 0 
\mod (4)$).
Both sides of the expression are $Q_0$-acyclic and Lemma 
\ref{lem:Margolis_cohom_calc} shows that they have isomorphic $Q_1$-Margolis 
cohomology groups, hence it remains to check that the multiplication induces an 
isomorphism in $H^* (-,Q_1)$; this can be seen more explicitly as follows. 
 
The multiplication together with the short exact sequences given by 
Lemma \ref{lem:wtfour-module} yield a commutative diagram
\[
 \xymatrix{
\wfour (4m) \otimes \wfour (2^{\nu +1}) 
\ar[r]
\ar[d]
&
\wfour (4m) \otimes \wtwo (2^{\nu +1}) 
\ar[r]
\ar[d]
&
\wfour (4m) \otimes \Sigma^{-1}\wfour (2^{\nu +1} -4) 
\ar[d]
\\
\wfour (2n)
\ar[r]
&
 \wtwo (2n) 
\ar[r]
&
\Sigma^{-1}\wfour (2n-4) 
}
\]
in which the rows are short exact. The outer vertical morphisms are stable 
isomorphisms by \cite[Lemma 3.12]{DGM}, hence so is the middle one. 

Lemma \ref{lem:wfour_picard_gp} identifies  $\wfour (4m)$ up to stable 
isomorphism, which gives:
\begin{eqnarray*}
  \wtwo(2n)
  &\simeq& 
  (\Sigma \Omega^{-1}) ^{2 m- \alpha (m) } \wtwo(2^{\nu +1}) \otimes 
J^{\otimes \overline{m}}
 \end{eqnarray*}
where $\overline{m}\in \{0,1 \}$ is the residue of $m$ modulo $2$. 

The factor $J^{\otimes \overline{m}}$ can be removed using the fact that 
$\wtwo(2^{\nu +1 
})$ 
is 
$Q_0$-acyclic, hence Proposition \ref{prop:Q_0-acyclic} implies that 
$
  \wtwo(2^{\nu +1}) \otimes J^{\otimes \overline{m}}\simeq 
  \Sigma^{-6 \overline{m}} \Omega^{2 \overline{m} } \wtwo(2^{\nu +1 }).
$
It follows that 
\begin{eqnarray*}
  \wtwo(2n)
  &\simeq& 
 \Sigma^{-4 \overline{m}}  (\Sigma \Omega^{-1}) ^{2 ( m- \overline{m}) - 
\alpha 
(m) } \wtwo(2^{\nu +1}).
 \end{eqnarray*}

The result is then rewritten in terms of $\Sigma^k \wtwo (k)$ for $k \in \{ 2n, 
2^{\nu +1} \}$, 
which gives 
\[
 \Sigma^{2n} \wtwo (2n ) \simeq (\Sigma^3 \Omega^{-1} ) ^{2(m-\overline{m})} 
(\Sigma \Omega^{-1} ) ^{- \alpha (m)} \big( \Sigma^{2^{\nu +1}} \wtwo (2^{\nu 
+1}) \big).
\]
Now, $m- \overline{m} \equiv 0 \mod (2)$, so $2(m-\overline{m})\equiv 0 \mod 
(4)$. Thus  Corollary \ref{cor:orbit} implies 
that 
$$(\Sigma^3 \Omega^{-1} ) ^{2(m-\overline{m})}\wtwo (2^{\nu +1}) \simeq \wtwo 
(2^{\nu +1});
$$ this yields the final statement.
\end{proof}

It remains to identify the stable isomorphism classes of the 
$\wtwo(2^{\nu +1})$. The case $\nu=0$ is already known (see Example 
\ref{exam:Z}). The general case is addressed using the algebraic Mahowald 
exact sequences \cite{mahow_new_inf}.

\begin{lem}
\label{lem:mahowald_ses}
For $n \in \nat$, there is a short exact sequence of $\aone$-modules:
\[
 0
 \rightarrow 
 \wtwo(2(2n -1))
\rightarrow
\wtwo(4n)
\rightarrow
\Sigma^{-2n}
\wtwo(2n)
\rightarrow 
0.
\]
\end{lem}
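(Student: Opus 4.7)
The plan is to exhibit the short exact sequence explicitly, taking the left map to be multiplication by $\zeta_0^2$ and identifying the resulting quotient with $\Sigma^{-2n}\wtwo(2n)$ via a weight-doubling index shift on the polynomial generators.

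First I observe that $\zeta_0^2 \in \wtwo$ has weight $2$, degree $0$, and is Steenrod-trivial: $Sq^i\zeta_0^2 = 0$ for $i \geq 1$, since $\zeta_0$ is Steenrod-invariant and $Sq^i$ is a derivation in characteristic two. By the Cartan formula, multiplication by $\zeta_0^2$ is therefore $\aone$-linear, and $\zeta_0^2 \wtwo \subset \wtwo$ is an $\aone$-submodule. Restricting to weight $4n$ yields the injection $\mu_{\zeta_0^2}: \wtwo(4n-2)\hookrightarrow \wtwo(4n)$, whose cokernel has as basis the monomials $\zeta_1^{b_1}\zeta_2^{b_2}\cdots$ of weight $4n$.

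Next I construct the identification of this cokernel with $\Sigma^{-2n}\wtwo(2n)$. The key device is the ring homomorphism $\sigma: \wtwo \to \wtwo/\zeta_0^2\wtwo$ defined on polynomial generators by $\zeta_0^2 \mapsto [\zeta_1^2]$ and $\zeta_n \mapsto [\zeta_{n+1}]$ for $n \geq 1$. A direct calculation shows that $\sigma$ doubles weights and, on the weight-$2n$ component, shifts cohomological degree uniformly by $-2n$ (each generator loses $2^n$ in degree when its weight doubles from $2^n$ to $2^{n+1}$, and these losses sum to $2n$). That $\sigma$ restricts to a bijection $\wtwo(2n) \to \wtwo(4n)/\zeta_0^2\wtwo(4n-2)$ follows by inspection of monomials: for any $\zeta_1^{b_1}\zeta_2^{b_2}\cdots$ of weight $4n$ in the quotient, the relation $b_1 + 2b_2 + 4b_3 + \cdots = 2n$ forces $b_1$ to be even, giving a unique preimage.

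The main obstacle will be verifying $\aone$-linearity of $\sigma$. As a map $\wtwo \to \wtwo$ the naive lift fails to be $\aone$-linear; for instance, $Sq^2\zeta_0^2 = 0$ while $Sq^2\zeta_1^2 = \zeta_0^4 \neq 0$. However, this discrepancy always lies in $\zeta_0^2 \wtwo$, which is why passage to the quotient is essential. I would check $\aone$-linearity of $\sigma$ on the polynomial generators using only the relations $Sq^1\zeta_n = \zeta_{n-1}^2$, $Sq^2\zeta_n = 0$ (for $n \geq 1$), and $Sq^i\zeta_0^2 = 0$, and then propagate to all of $\wtwo(2n)$ by induction via the Cartan formula, since both $\wtwo$ and $\wtwo/\zeta_0^2\wtwo$ are $\aone$-algebras and $\sigma$ is a ring homomorphism. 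Combining $\mu_{\zeta_0^2}$ with $\sigma^{-1}$ produces the desired short exact sequence.
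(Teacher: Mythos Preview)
Your proposal is correct and follows essentially the same approach as the paper: the injection is multiplication by $\zeta_0^2$, and the quotient is identified with $\Sigma^{-2n}\wtwo(2n)$ via the index shift $\zeta_i \leftrightarrow \zeta_{i+1}$ (the paper phrases this as $\zeta_i \mapsto \zeta_{i-1}$, the inverse of your $\sigma$). Your write-up is more careful about the $\aone$-linearity of the identification, correctly noting that the obstruction $Sq^2\zeta_1^2 = \zeta_0^4$ vanishes only in the quotient and that the Cartan formula then propagates compatibility from generators to all monomials.
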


\begin{proof}
The submodule $ \wtwo(2(2n -1)) $ corresponds to those 
terms which are divisible by $\zeta^2_0$. The quotient is identified by  
re-indexing by $\zeta_{i} \mapsto \zeta_{i-1}$ (which divides weights by two); 
 this gives the suspension, since the highest dimensional term in the quotient 
is  $\zeta_1^{2n}$ (which has 
degree $-2n$), which re-indexes to $\zeta_0^{2n}$ (of degree zero).  
\end{proof}

This is applied in the case $n=2^{\nu -1}$ (for $\nu >0$):

\begin{lem}
\label{lem:mahowald_special_case}
 For $0< \nu \in \nat$, there is  a short exact sequence 
\[
  0
 \rightarrow 
 \wtwo(4(2^{\nu -1}-1) +2) 
\rightarrow
\wtwo(2^{\nu +1})
\rightarrow
\Sigma^{-2^{\nu}}
\wtwo(2^{\nu})
\rightarrow 
0.
\]
Moreover, there is  a stable isomorphism
$
 \wtwo(4(2^{\nu -1}-1) +2)
\simeq 
\Sigma^{5-\nu -2^{\nu +1} } \Omega^{\nu-1} Z.
 $

For $\nu=1$, this gives a non-trivial short exact sequence:
\[
  0
 \rightarrow 
Z
\rightarrow
\wtwo(4)
\rightarrow
\Sigma^{-2}Z
\rightarrow 
0.
\]
\end{lem}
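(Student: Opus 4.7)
The plan is to deduce both halves of the statement by specialising two lemmas that have already been established in this section, and then to argue non-triviality of the resulting extension in a separate step.

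First, to produce the short exact sequence, I would simply apply Lemma~\ref{lem:mahowald_ses} with $n = 2^{\nu-1}$. The indices match up on the nose: the middle term $\wtwo(4n)$ becomes $\wtwo(2^{\nu+1})$, the quotient is $\Sigma^{-2^{\nu-1}\cdot 2}\wtwo(2^{\nu-1}\cdot 2) = \Sigma^{-2^{\nu}}\wtwo(2^{\nu})$, and the submodule is $\wtwo(2(2^{\nu}-1)) = \wtwo(2^{\nu+1}-2)$, which can be rewritten as $\wtwo(4(2^{\nu-1}-1)+2)$ as stated.

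Next, to identify $\wtwo(2^{\nu+1}-2)$ up to stable isomorphism, I would apply Lemma~\ref{lem:wtwo_reduction} to $2m = 2^{\nu+1}-2$, i.e.\ $m = 2^{\nu}-1$. Since $m$ is odd, $\nu(m)=0$ and the lemma reduces the problem to $\wtwo(2) \cong Z$ (Example~\ref{exam:Z}); moreover $\alpha(m) = \nu$ since the binary expansion of $2^\nu - 1$ consists of $\nu$ ones. Substituting into the final displayed formula of Lemma~\ref{lem:wtwo_reduction} gives
\[
\Sigma^{2^{\nu+1}-2}\wtwo(2^{\nu+1}-2) \simeq (\Sigma\Omega^{-1})^{1-\nu}\bigl(\Sigma^{2} Z\bigr),
\]
and desuspending and rewriting $(\Sigma\Omega^{-1})^{1-\nu} = \Sigma^{1-\nu}\Omega^{\nu-1}$ yields the claimed stable isomorphism $\wtwo(2^{\nu+1}-2)\simeq \Sigma^{5-\nu-2^{\nu+1}}\Omega^{\nu-1}Z$. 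This is just arithmetic bookkeeping of the suspensions; the potentially error-prone step is double-checking the exponent of $\Sigma$, which should be cross-verified against the connectivity of $\wtwo(2^{\nu+1}-2)$.

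Finally, for the case $\nu=1$ the first two parts specialise to the short exact sequence $0\to Z \to \wtwo(4) \to \Sigma^{-2}Z\to 0$. To see that this extension is non-trivial, I would invoke Lemma~\ref{lem:Margolis_cohom_calc}, which asserts that $\wtwo(4)$ is stably isomorphic to a $Q_0$-acyclic \emph{indecomposable} $\aone$-module. If the sequence split stably then $\wtwo(4)\simeq Z\oplus \Sigma^{-2}Z$, which is manifestly stably decomposable (the two summands have $Q_1$-Margolis cohomology concentrated in disjoint degrees, ruling out any stable identification), giving a contradiction. The main (mild) obstacle in the whole argument is the indexing gymnastics in the second step; the non-splitting in the third step is essentially forced by the indecomposability already recorded in Lemma~\ref{lem:Margolis_cohom_calc}.
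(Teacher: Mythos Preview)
The proposal is correct and follows essentially the same route as the paper: specialise Lemma~\ref{lem:mahowald_ses} to $n=2^{\nu-1}$ for the exact sequence, then apply Lemma~\ref{lem:wtwo_reduction} (with $\alpha(2^\nu-1)=\nu$ and $2$-adic valuation zero) together with $\wtwo(2)\cong Z$ for the stable identification. Your extra paragraph on non-triviality at $\nu=1$ via the indecomposability recorded in Lemma~\ref{lem:Margolis_cohom_calc} is a detail the paper leaves implicit.
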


\begin{proof}
 The short exact sequences are provided by Lemma \ref{lem:mahowald_ses}.

By Lemma \ref{lem:wtwo_reduction}, there is a stable isomorphism 
$$
\Sigma^{2^{\nu +1} -2} \wtwo(4(2^{\nu -1}-1) +2)
\simeq 
(\Sigma \Omega^{-1}) ^{1 - \nu } (\Sigma^2 \wtwo (2)),
$$
since $\alpha (2^{\nu -1} -1) = \nu -1$. Since $\wtwo (2) \simeq Z$, rewriting 
provides the stated stable isomorphism.
\end{proof}

\begin{lem}
\label{lem:wtwo_inject_question}
 For $0 < \nu  \in \nat$, there is a unique non-trivial morphism
 \[
  \Sigma^{5 -2^{\nu +1}}\Omega^{-1} J  \rightarrow \wtwo(2^{\nu +1})
 \]
and this is a monomorphism. 
\end{lem}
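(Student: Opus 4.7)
The strategy is to write down the map concretely: to exhibit the unique non-trivial morphism explicitly and then to read off both uniqueness and injectivity from the description of $\wtwo(2^{\nu+1})$ as a subspace of the polynomial algebra $\wtwo = \field[\zeta_0^2,\zeta_1,\zeta_2,\dots]$.

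First, I would exploit the fact that $\Omega^{-1}J$ is cyclic, generated by its bottom class $x_1 \in \Omega^{-1}J$ of degree $-4$, subject to the relations $Sq^2 x_1 = 0$ and $Sq^1 Sq^2 Sq^1 x_1 = 0$ (the remaining generators being $x_2 := Sq^1 x_1$ in degree $-3$ and $x_3 := Sq^2 Sq^1 x_1$ in degree $-1$, representing the $Q_1$-Margolis class). Thus a morphism from $\Sigma^{5-2^{\nu+1}}\Omega^{-1}J$ to any $\aone$-module $N$ is determined by the image of $x_1$, which must be an element of $N$ of degree $1-2^{\nu+1}$ annihilated by $Sq^2$ and by $Sq^1 Sq^2 Sq^1$. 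A direct enumeration of monomials in $\wtwo$ (comparing the weight $2^{\nu+1}$ with the constraint on degree $1-2^{\nu+1}$) shows that the only basis element of $\wtwo(2^{\nu+1})$ in degree $1-2^{\nu+1}$ is $\zeta_{\nu+1}$, which gives uniqueness.

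Next, I would verify that the candidate $x_1 \mapsto \zeta_{\nu+1}$ defines an $\aone$-linear map. Using Lemma \ref{lem:actionQ_zeta} and the standard identity $Sq^2 \zeta_n = 0$ provided by the paper's formulae, one has $Sq^2\zeta_{\nu+1}=0$, so the first relation holds. For the second, compute
\[
 Sq^1 Sq^2 Sq^1 \zeta_{\nu+1} = Sq^1 Sq^2 \zeta_\nu^2 = Sq^1 \zeta_{\nu-1}^4 = 0,
\]
where the final equality uses that $Sq^1$ is a derivation and vanishes on every square (which applies since $\zeta_{\nu-1}^4 = (\zeta_{\nu-1}^2)^2$). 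Hence the map is well-defined, and it sends $x_2 \mapsto \zeta_\nu^2$, $x_3 \mapsto \zeta_{\nu-1}^4$.

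Finally, injectivity is immediate: the source has total dimension three, and the images $\zeta_{\nu+1}, \zeta_\nu^2, \zeta_{\nu-1}^4$ are three distinct monomial basis elements of $\wtwo(2^{\nu+1})$, hence linearly independent. The main (minor) obstacle is merely the bookkeeping: confirming the uniqueness of the target monomial in degree $1-2^{\nu+1}$ by combining weight and degree constraints, and verifying the question-mark relation $Sq^1 Sq^2 Sq^1 \zeta_{\nu+1}=0$ via the vanishing of $Sq^1$ on a fourth power; everything else is formal.
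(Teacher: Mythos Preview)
Your proof is correct and follows essentially the same approach as the paper: both identify the question-mark complex explicitly inside $\wtwo(2^{\nu+1})$ via the monomials $\zeta_{\nu+1}$, $\zeta_\nu^2$, $\zeta_{\nu-1}^4$, with the paper arguing ``by inspection'' of the low-dimensional structure while you make the weight--degree enumeration for uniqueness explicit. One small remark: your second relation $Sq^1Sq^2Sq^1\,x_1=0$ is redundant, since $Sq^1Sq^2Sq^1=Sq^2Sq^2$ already lies in the left ideal $\aone\cdot Sq^2$, so $\Omega^{-1}J \cong \aone/\aone\,Sq^2$ and only $Sq^2 x_1=0$ is needed.
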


\begin{proof}
The inclusion of the desuspension of the question-mark complex is given by 
inspection of the low-dimensional structure of $\wtwo(2^{\nu +1})$, which is 
represented by:
\[
 \xymatrix @C=1pc @R=1pc{
\zeta_{\nu -1}^4
\\
\zeta_{\nu} \zeta_{\nu -1} ^2 
\ar[u]
\\
\zeta_{\nu}^2 \ar@/^2pc/[uu]
\\
\zeta_{\nu +1} \ar[u]
 }
\]
where the element $\zeta_{\nu}^2$ of degree $2 (1-2^{\nu})$ represents the 
lowest dimensional $Q_1$ Margolis cohomology class. 
\end{proof}

\begin{rem}
 For $\nu=1$, the diagram in the proof of Lemma \ref{lem:wtwo_inject_question} 
describes the structure of $\wtwo(4)$, which corresponds (up to suspensions) 
with the reduced cohomology of $\mathbb{R}P^4$.
\end{rem}

\begin{thm}
 \label{thm:identify_BG}
For $0<n \in \nat$ with $2$-adic valuation $\nu := \nu (n)$, there is a stable 
isomorphism
\[
 \Sigma^{2n} \wtwo(2n)
\simeq (
\Sigma \Omega^{-1} ) ^{1 - \alpha (n)} \big( \Sigma^{-1} A_{\nu, 1} \big),
\]
where one takes $A_{0,1}:=\Sigma^3 Z$.
 \end{thm}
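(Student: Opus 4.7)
The plan is to reduce, via Lemma~\ref{lem:wtwo_reduction}, to establishing
\[
\Sigma^{2^{\nu+1}}\wtwo(2^{\nu+1})\simeq \Sigma^{-1}A_{\nu,1}
\]
for each $\nu\in\nat$. The base case $\nu=0$ is immediate: Example~\ref{exam:Z} gives $\wtwo(2)\cong Z$, so $\Sigma^{2}\wtwo(2)\cong\Sigma^{2}Z=\Sigma^{-1}(\Sigma^{3}Z)=\Sigma^{-1}A_{0,1}$ by the stated convention.

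For $\nu\geq 1$, write $M:=\Sigma^{2^{\nu+1}}\wtwo(2^{\nu+1})$. The Margolis cohomology calculation of Lemma~\ref{lem:Margolis_cohom_calc}, together with an explicit enumeration of weight-$2^{\nu+1}$ monomials contributing to $H^{*}(-,Q_{1})$ (with the lowest class represented by $\zeta_{\nu}^{2}$ in degree $2-2^{\nu+1}$), shows that $M$ is $Q_{0}$-acyclic with $H^{*}(M,Q_{1})$ of total dimension two, concentrated in degrees $\{2,2\nu+1\}$. By Lemma~\ref{lem:Picard_construction_Q0-acyclics}, these are precisely the Margolis cohomology groups of $\Sigma^{-1}A_{\nu,\epsilon}$ for either $\epsilon\in\{0,1\}$, so Theorem~\ref{thm:classification} places the stable class of $M$ in $\Sigma^{-1}(\orb_{\nu,0}\amalg\orb_{\nu,1})$.

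Next, Lemma~\ref{lem:wtwo_inject_question} provides a monomorphism $\Sigma^{5-2^{\nu+1}}\Omega^{-1}J\hookrightarrow \wtwo(2^{\nu+1})$ which, after suspension by $\Sigma^{2^{\nu+1}+1}$, yields a non-trivial monomorphism $\Sigma^{6}\Omega^{-1}J\hookrightarrow\Sigma M$. By Proposition~\ref{prop:A_orbits_normalization}, combined with the suspension of Lemma~\ref{lem:question_stable} and the fact that $\Sigma M$ is reduced, the only orbit representative in $\orb_{\nu,\epsilon}$ that receives a non-trivial stable map from $\Sigma^{6}\Omega^{-1}J$ is the normalized element $A_{\nu,\epsilon}$ itself. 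Consequently $\Sigma M\simeq A_{\nu,\epsilon}$, hence $M\simeq \Sigma^{-1}A_{\nu,\epsilon}$, for some $\epsilon\in\{0,1\}$.

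The remaining step, and the main obstacle, is to pin down $\epsilon=1$. My preferred route is an induction on $\nu$ using the Mahowald short exact sequence (Lemma~\ref{lem:mahowald_special_case})
\[
0\to\Sigma^{5-\nu-2^{\nu+1}}\Omega^{\nu-1}Z\to\wtwo(2^{\nu+1})\to\Sigma^{-2^{\nu}}\wtwo(2^{\nu})\to 0;
\]
applying $\Sigma^{2^{\nu+1}}$ exhibits $M$ as an extension of $\Sigma^{2^{\nu}}\wtwo(2^{\nu})\simeq\Sigma^{-1}A_{\nu-1,1}$ (by the inductive hypothesis, since $\alpha(2^{\nu-1})=1$) by a controlled suspension of the Picard-type module $\Omega^{\nu-1}Z$, and combining this with the identification of $A_{k,1}$ as a truncation of a specific rotation of $DP_{0}$ afforded by Proposition~\ref{prop:identify_A} tracks the parity through the extension and rules out $\epsilon=0$. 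A self-contained alternative is a direct dimension count: $\dim_{\field}A_{\nu,0}\neq\dim_{\field}A_{\nu,1}$, as visible in Figures~\ref{fig:Ak1} and~\ref{fig:Ak0}, and since stable isomorphism between reduced finite $\aone$-modules coincides with isomorphism (and thus preserves total $\field$-dimension), comparing $\dim_{\field}\wtwo(2^{\nu+1})$ with the two candidates singles out $A_{\nu,1}$.
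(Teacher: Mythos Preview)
Your reduction via Lemma~\ref{lem:wtwo_reduction}, the base case, and the normalization step using Lemma~\ref{lem:wtwo_inject_question} together with Proposition~\ref{prop:A_orbits_normalization} all match the paper's argument. The issue is the final step, pinning down $\epsilon=1$, where neither of your proposed routes is complete as stated.

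Your dimension-count alternative (b) has a genuine gap: $\wtwo(2^{\nu+1})$ is \emph{not} reduced once $\nu\geq 3$. For instance $\dim_{\field}\wtwo(16)=36$ while $\dim_{\field}A_{3,1}=12$, so three free $\aone$-summands are present. A raw comparison of $\dim_{\field}\wtwo(2^{\nu+1})$ against $\dim_{\field}A_{\nu,\epsilon}$ therefore proves nothing; you would need $\dim_{\field}\wtwo(2^{\nu+1})\red$, which is not available without essentially knowing the answer. (The idea can be salvaged by working modulo $8$, since $\dim A_{\nu,0}$ and $\dim A_{\nu,1}$ differ by $\pm 4$, but that still requires an independent computation of $\dim\wtwo(2^{\nu+1})\bmod 8$ for all $\nu$.)

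Your approach (a) gestures at the right ingredients but ``tracks the parity through the extension'' is not an argument, and the inductive framing is a red herring. The paper does not use induction: it observes that the Mahowald submodule inclusion $\Sigma^{5-\nu-2^{\nu+1}}\Omega^{\nu-1}Z\hookrightarrow\wtwo(2^{\nu+1})$ yields a nonzero class in $[\Sigma^{5}(\Sigma^{-1}\Omega)^{\nu-1}Z,\,A_{\nu,\epsilon}]$, then uses Proposition~\ref{prop:identify_A} to embed $A_{\nu,\epsilon}$ into $\Sigma^{4\epsilon}(\Omega\Sigma^{-1})^{\nu+1-2\epsilon}DP_0$ and a connectivity argument to replace stable maps by $\hom_{\aone}$. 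Direct inspection then shows this $\hom$-group is $\field$ for $\epsilon=1$ but zero for $\epsilon=0$, the obstruction in the latter case being the Joker appearing in the relevant degree of $DP_0$. The quotient $\Sigma^{-2^{\nu}}\wtwo(2^{\nu})$ is never used.
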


\begin{proof}
By Lemma \ref{lem:wtwo_reduction}, it suffices to show that 
$\Sigma^{2^{\nu+1}}\wtwo(2^{\nu+1}) \simeq \Sigma^{-1} A_{\nu,1} $, for $\nu 
\in 
\nat$.

The case $\nu =0$ follows by the identification $\wtwo (2) \simeq Z$, hence 
suppose that $\nu >0$.
Lemma \ref{lem:wtwo_inject_question} provides an embedding 
$
  \Sigma^{5 -2^{\nu +1}}\Omega^{-1} J  \rightarrow \wtwo(2^{\nu +1}).
 $
 Thus, by the normalization result of Proposition 
\ref{prop:A_orbits_normalization}, one has 
\[
 \wtwo(2^{\nu+1}) \simeq  \Sigma^{-(1+2^{\nu+1})} A_{k,\epsilon} 
\]
for some $\epsilon \in \{0, 1 \}$ and $k \in \nat$. The degrees of the 
$Q_1$-Margolis cohomology classes of $\wtwo (2^{\nu +1})$ imply that $k = \nu$. 

To show that $\epsilon=1$, one uses the Mahowald  extension of Lemma 
\ref{lem:mahowald_special_case}. By considering the behaviour of the underlying 
$\eone$-modules, the monomorphism
\[
 \Sigma^{5-\nu -2^{\nu +1}} \Omega^{\nu -1 } Z \hookrightarrow 
\wtwo(2^{\nu+1})
\]
 must correspond to a non-trivial class in 
\begin{eqnarray*}
 [\Sigma^{5-\nu -2^{\nu +1}} \Omega^{\nu -1 } Z , \Sigma^{-(1+2^{\nu+1})} 
A_{\nu,\epsilon} ]&=& 
[\Sigma^{5} (\Sigma^{-1}\Omega)^{\nu -1 } Z , A_{\nu,\epsilon} ]\\
&=& [Z, \Sigma^{-5} (\Sigma^{-1}\Omega)^{1- \nu }A_{\nu,\epsilon} ].
\end{eqnarray*}
Now, by Proposition \ref{prop:identify_A}, $A_{\nu,\epsilon}$ is a submodule of 
$\Sigma^{4 \epsilon} 
(\Omega \Sigma^{-1})^{\nu +1 - 2 \epsilon} DP_0$. Moreover, as in Lemma 
\ref{lem:question_stable}, 
one can replace stable morphisms by $\hom_{\aone}$:
\begin{eqnarray*}
 [Z, \Sigma^{-5} (\Sigma^{-1}\Omega)^{1- \nu }A_{\nu,\epsilon} ]&\cong 
&\hom_{\aone} (Z, \Sigma^{-5} (\Sigma^{-1}\Omega)^{1- \nu }A_{\nu,\epsilon} )
\\
&\cong& 
\hom_{\aone} (Z, \Sigma^{4 \epsilon -5} (\Sigma^{-1}\Omega)^{2-2\epsilon }DP_0 
),
\end{eqnarray*}
where the second isomorphism is proved by a simple connectivity argument that 
shows that the truncation in Proposition \ref{prop:identify_A} does not affect 
the 
value. 

Finally, inspection shows that the latter group is non-trivial if $\epsilon =1$ 
but zero if $\epsilon =0$ (in which case, the presence of a Joker provides the 
obstruction).
\end{proof}

\section{Ext calculations}
\label{sect:ext}

This section exploits the presentation of the $\aone$-modules $A_{k,\epsilon}$
in terms of the elements of the Picard group $\pica$ to give a uniform approach
to
$\ext_{\aone}$-calculations 
for these families; these become 
particularly transparent when graded over $\pica$. The main result is Theorem
\ref{thm:calculate_stext_Ake},
 which encompasses earlier results of Davis \cite{davis} and Pearson
\cite{pearson}. For an explanation of the 
 ext charts which illustrate these results, see Convention \ref{conv:ext_chart}.

 \begin{rem}
These calculations have immediate applications via the Adams spectral
sequence, as in the work of Davis and Pearson cited above.
 \end{rem}

It is useful to work with the stable form of $\ext$ defined using the stable 
$\aone$-module category and to grade by the 
Picard group $\pica \cong \zed^{\oplus 2} \oplus \zed/2$ \cite{adams_priddy} 
(cf. Section \ref{sect:applications}), taking   $(s, t, \epsilon) \in 
\zed^{\oplus 2} \oplus \zed/2$ to correspond to 
$ \Omega^{-s} \Sigma^t J ^{\otimes \epsilon} \in \pica$. 

\begin{defn}
 For $\aone$-modules $M, N$ and$(s,t,\epsilon) \in \pica$, $$ \stext
^{s,t,\epsilon} (M,
N) 
:= [ M , \Omega^{-s} \Sigma^t  J^{\otimes \epsilon} N].
$$
By convention, $\epsilon$ is taken to be zero where only a bigrading $(s,t)$ is
specified. 
\end{defn}

\begin{nota}
  For $\aone$-modules $M, N$, write $\stext (M,N)$ for the $\pica$-graded
stable 
ext groups.
\end{nota}

Similarly, $\ext$ in the category of $\aone$-modules is  bigraded  
$
 \ext^{s,t} _{\aone} (M,N) = \ext^s _{\aone} (M, \Sigma^t N), 
$
for $s \in \nat $ and $t \in \zed$. There is a natural morphism 
\[
 \ext^{s,t} _{\aone} (M,N) \rightarrow \stext ^{s,t} (M,N)
\]
which is an isomorphism for $s>0$ and is surjective for $s=0$; Lemma 
\ref{lem:stable_hom_reduced} gives a criterion for an isomorphism in degree 
zero. Whereas $\ext^{s,t}_{\aone} (M,N)$ is trivial for $s<0$, $\stext  ^{s,t}
(M,N)$ 
is highly non-trivial in general.

\begin{rem}
With topological applications via the Adams 
spectral sequence in view, Adams indexing $(t- 
s, s )$ is frequently used. Observe that $\stext^{s,t} (M,N) \cong 
[(\Sigma^{-1} \Omega )^s M, 
\Sigma^{t-s} N]$, so it can be useful to 
use $\Sigma^{-1} \Omega $ in place of $\Omega$.
\end{rem}

There is an exterior product:
\[
 \stext(\field , M_1) 
 \otimes 
 \stext (\field , M_2) 
\rightarrow
 \stext  (\field , M_1 \otimes M_2) 
 \]
induced by the tensor structure of the stable module category. In particular 
$\stext (\field, \field) $ has the structure of 
a commutative, $\pica$-graded $\field$-algebra and, for $M$ an $\aone$-module, 
$\stext (\field, M)$ is naturally a $\pica$-graded module over this algebra.

\begin{lem}
\label{lem:ext_trigraded}
(Cf. \cite[Lemma 3.3]{davis} and \cite{bg2}, for example).
 The sub $\pica$-graded algebra of  $\stext  
(\field, \field)$ of classes with $t-s \geq 0$ is isomorphic to 
 \[
  \field [h_0, h_1, \kappa, \alpha] / (h_0h_1, h_1^3, h_1\kappa  , 
\kappa^2 -  h_0^2),
 \]
where, with respect to Adams indexing $(t-s, s, \epsilon)$:
\begin{eqnarray*}
 |h_0|= (0,1,0), \ 
  |h_1|=(1,1,0), \ 
  |\kappa|=(0,1,1),\
  |\alpha|= (4,2,1).
\end{eqnarray*}
For $a, b$  the generators of Adams indexing $|a|= (4,3,0)$ and $|b|= 
(8,4,0)$, one has:
\begin{eqnarray*}
 a &=& \kappa \alpha\\
 b&=& \alpha ^2. 
\end{eqnarray*}
Hence there is an isomorphism of $\pica$-graded algebras
\[
 \stext (\field , \field)[\alpha^{-1}]
 \cong 
  \field [h_0, h_1, \kappa, \alpha^{\pm 1}] / (h_0h_1, h_1^3, h_1\kappa  , 
\kappa^2 - h_0^2).
\]
\end{lem}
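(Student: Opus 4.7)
The plan is to reduce to the classical bigraded calculation of $\ext^{*,*}_{\aone}(\field, \field)$ in the range $t - s \geq 0$, and then account for the $J$-twisted part of the $\pica$-grading. The classical computation presents this algebra as $\field[h_0, h_1, a, b]/(h_0 h_1, h_1^3, h_1 a, a^2 - h_0^2 b)$, with $h_0, h_1$ detected by $Sq^1, Sq^2$ and $a, b$ the higher-filtration generators corresponding to the periodicity of $ko$; this can be taken as input from the literature (e.g.\ \cite{bruner_Ossa}) or recomputed from a minimal $\aone$-projective resolution of $\field$.

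Next I would construct the Picard-twisted classes. The class $\kappa \in \stext^{0,1,1}(\field, \field) = [\field, \Sigma J]$ is the unique (up to scalar) non-trivial stable map; existence and uniqueness are forced by the fact that $\Sigma J$ is reduced and has one-dimensional $Q_0$-Margolis cohomology in degree $1$, together with Lemma \ref{lem:stable_hom_reduced}. The class $\alpha \in \stext^{4,2,1}(\field, \field) = [\field, \Omega^{-2} \Sigma^6 J]$ is pinned down using the stable equivalence $\Omega^2 P_0 \simeq \Sigma^6 J \otimes P_0$ of Proposition \ref{prop:Q_0-acyclic}: the unit map $\field \to P_0$ together with this periodicity realizes $\alpha$ after tensoring and applying $\Omega^{-2}$.

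The relations are then verified one at a time. The classical relations $h_0 h_1 = 0$ and $h_1^3 = 0$ are inherited from the bigraded calculation. The relation $h_1 \kappa = 0$ amounts to showing the target $\stext^{2,3,1}(\field, \field) = [\field, \Omega^{-2} \Sigma^3 J]$ vanishes, which follows by inspecting the Margolis cohomology structure of the relevant Picard element and applying Lemma \ref{lem:stable_hom_reduced}. For $\kappa^2 = h_0^2$, use the stable equivalence $J \otimes J \simeq \field$ to place the product in $[\field, \Omega^{-2} \Sigma^2 \field] \cong \field$, which is generated by $h_0^2$; non-triviality is then checked by tracing the composite $\field \to \Sigma J \to \Sigma^2 J \otimes J \simeq \Sigma^2 \field$ through an explicit representative. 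The identifications $a = \kappa \alpha$ and $b = \alpha^2$ follow by matching Adams indices, since the corresponding $\stext$ groups are one-dimensional and the product classes are manifestly non-zero (the relations $h_1 a = 0$ and $a^2 = h_0^2 b$ are recovered from $h_1 \kappa = 0$ and $\kappa^2 = h_0^2$ after multiplying by the appropriate power of $\alpha$). The final localization statement is then formal, since $\alpha$ is a non-zero-divisor in the presented algebra and the relations remain unchanged upon inverting it.

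The principal obstacle will be verifying $\kappa^2 = h_0^2$ as a \emph{non-trivial} identity, since it requires tracking the Picard-group pairing $J \otimes J \simeq \field$ at the level of stable homotopy classes rather than merely of modules. An alternative route, more aligned with the $ko$-theoretic motivation of the paper, is to identify $h_0, h_1, \kappa, \alpha$ with their topological counterparts in the Adams spectral sequence for $ko_*$ and import known computations; but a purely algebraic verification using explicit minimal resolutions and Yoneda products is equally feasible.
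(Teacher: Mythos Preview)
The paper does not supply a proof of this lemma; it is stated with references to \cite[Lemma 3.3]{davis} and \cite{bg2} and treated as known input. Your outline is therefore more detailed than anything the paper offers, and the overall strategy---take the classical bigraded $\ext_{\aone}^{*,*}(\field,\field)$ as given, adjoin the $J$-twisted classes $\kappa,\alpha$, and check the extra relations by hand---is the natural one and is correct in spirit.

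One concrete slip: your identification of where $\kappa$ lives is off. The Adams index $|\kappa|=(0,1,1)$ means $(t-s,s,\epsilon)=(0,1,1)$, hence $(s,t,\epsilon)=(1,1,1)$, so $\kappa\in\stext^{1,1,1}(\field,\field)=[\field,\Omega^{-1}\Sigma J]$, not $[\field,\Sigma J]$. The latter group is in fact zero: $\Sigma J$ is reduced, so by Lemma~\ref{lem:stable_hom_reduced} one is computing $\hom_{\aone}(\field,\Sigma J)$, and the degree-zero class of $\Sigma J$ supports a non-trivial $Sq^2$. The correct target $\Omega^{-1}\Sigma J\cong\Sigma\Omega^{-1}J$ is a suspended question-mark complex whose top class \emph{is} in the socle, giving the required one-dimensional group. (Your identification of the target for $\alpha$ is correct, so this appears to be a one-off indexing confusion rather than a systematic error.) Once $\kappa$ is placed correctly, the verification of $h_1\kappa=0$, $\kappa^2=h_0^2$, $a=\kappa\alpha$, $b=\alpha^2$ proceeds as you describe.
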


\begin{rem}
\ 
\begin{enumerate}
\item
The class $b$ is the usual algebraic Bott  class. The relations $b= \alpha^2$, 
$a=\kappa \alpha$ and 
$\kappa^2 = h_0^2$ imply the standard relation $a^2 = h_0^2 b$. Inverting
$\alpha$ is equivalent to inverting $b$.
\item 
There is a non-trivial  class in $\stext^{*,*} (\field, J)$ 
of Adams index $(t-s, s)=(-2,0)$; multiplication by  $b$ acts injectively on 
this class. Hence the subalgebra of Lemma \ref{lem:ext_trigraded} of elements 
with $t-s \geq 0$ does not contain $\ext^{s,t}(\field, J)$.
\item 
For $t-s \geq 0$, the above exhausts $\stext$; however, as indicated in 
Figures  \ref{fig:A21} and \ref{fig:A20}, the third quadrant corresponds to the 
first by rotational symmetry about $(-2.5,-.5)$ hence is highly non-trivial. 
(See Example \ref{exam:Z_periodic} below for indications on how to derive this
symmetry.)
\end{enumerate}
\end{rem}

\begin{nota}
 Write $\e$ for the $\pica$-graded algebra
 \[
  \e := \stext (\field , \field)[b^{-1}], 
 \]
 corresponding to the Bott-periodic version of stable ext.
\end{nota}

\begin{conv}
\label{conv:ext_chart}
As usual, $\stext$ charts are indicated in the  $(t-s,s)$-plane and 
multiplication by $h_0$ is indicated by a vertical line and by $h_1$ by a 
diagonal line, where $h_0$, $h_1$ are the usual generators in $\ext^{1,*}_\aone 
(\field, \field)$. 

Where the chart is $b$-periodic, only a portion of the chart which generates
under $b$-periodicity is given; 
in the remaining cases considered here, the charts can be completed in the left
and right hand half planes 
by understanding the action of $b$.
\end{conv}

The chart for $\stext^{*,*} (\field, \field)$ is given in Figure 
\ref{fig:stext_F} and for $\stext^{*,*} (\field,J)$ in Figure \ref{fig:stext_J}.

\begin{figure}
 \caption{The chart for $\stext^{*,*} (\field,\field)$}
 \label{fig:stext_F}
\begin{picture}(15,13)(-10,-6)
\thinlines
\put(-5,-1){\circle*{0.2}}
\put(-6,-.7){$\scriptstyle{(-5,-1)}$}
\put(-5,-1){\line(0,-1){4}}
\put(-5,-1){\line(-1,-1){2}}
\put(-5,-2){\circle*{0.2}}
\put(-5,-3){\circle*{0.2}}
\put(-5,-4){\circle*{0.2}}
\put(-6,-2){\circle*{0.2}}
\put(-7,-3){\circle*{0.2}}
\put(-9,-4){\circle*{0.2}}
\put(-9,-4){\line(0,-1){1}}
\put(0,0){\circle*{0.2}}
\put(0,-.5){$\scriptstyle{(0,0)}$}
  \put(0,1){\circle*{0.2}}
  \put(0,2){\circle*{0.2}}
   \put(0,3){\circle*{0.2}}
    \put(0,4){\circle*{0.2}}
 \put(1,1){\circle*{0.2}}
  \put(2,2){\circle*{0.2}}
   \put(4,5){\circle*{0.2}}
   \put(4,4){\circle*{0.2}}
   \put(4,3){\circle*{0.2}} 
   \put(0,0){\line(0,1){6}}
   \put(0,0){\line(1,1){2}}
   \put(4,3){\line(0,1){3}}
   \put(3.7,2.5){$a$}
   \put(3.5,2){$\scriptstyle{(4,3)}$}
\end{picture}
\end{figure}

\begin{figure}
 \caption{The chart for $\stext^{*,*} (\field,J)$}
 \label{fig:stext_J}
\begin{picture}(15,13)(-10,-7)
\thinlines
\put(-5,-2){\circle*{0.2}}
\put(-6.5,-1.5){$\scriptstyle{(-5,-2)}$}
\put(-5,-2){\line(0,-1){4}}
\put(-5,-3){\circle*{0.2}}
\put(-5,-4){\circle*{0.2}}
\put(-5,-5){\circle*{0.2}}
\put(-9,-3){\circle*{0.2}}
\put(-9,-4){\circle*{0.2}}
\put(-9,-5){\circle*{0.2}}
\put(-10,-4){\circle*{0.2}}
\put(-11,-5){\circle*{0.2}}
\put(-9,-3){\line(0,-1){3}}
\put(-9,-3){\line(-1,-1){2}}
\put(0,1){\circle*{0.2}}
\put(0,.5){$\kappa$}
\put(-.3,0){$\scriptstyle{(0,1)}$}
  \put(0,2){\circle*{0.2}}
  \put(0,3){\circle*{0.2}}
   \put(0,4){\circle*{0.2}}
   \put(4,4){\circle*{0.2}}
   \put(4,3){\circle*{0.2}}
   \put(4,2){\circle*{0.2}} 
   \put(3.5,1.5){$\alpha$}
   \put(3.5,1){$\scriptstyle{(4,2)}$}
 \put(5,3){\circle*{0.2}}
  \put(6,4){\circle*{0.2}}
   \put(0,1){\line(0,1){4}}
   \put(4,2){\line(1,1){2}}
   \put(4,2){\line(0,1){3}}
   \put(-3,-1){\circle*{0.2}}
   \put (-2, 0){\circle*{0.2}}
   \put(-3,.3){$\scriptstyle{(-2,0)}$}
      \put(-3,-1){\line(1,1){1}}
\end{picture}
\end{figure}

The following  is a consequence of Proposition \ref{prop:Q_0-acyclic}, an 
algebraic version of the existence of a $v_1$-self map.

\begin{lem}
 \label{lem:Q0_periodic}
 Let $M$ be a bounded-below $\aone$-module which is $Q_0$-acyclic. Then 
 \begin{enumerate}
  \item 
  multiplication by $b$ on $\stext (\field, M )$ is an isomorphism, 
hence  $\stext (\field, M )$ is a $\e$-module;
  \item
  the stable isomorphism $M \otimes J \simeq \Sigma^{-6} \Omega^2 M$ induces an 
isomorphism 
  \[
   \stext^{s , t, \epsilon  } (\field, M)
  \cong  \stext^{s+2, t+6, \epsilon +1} (\field , M)
  \]
  of $\e$-modules, which corresponds to multiplication by $\alpha$.
 \end{enumerate}
In particular, if $M$ is reduced, then $\ext_{\aone}^{*,*} (\field, M)$ is 
determined by $\stext^{s \geq 0, t-s \geq 0 } (\field, M)$
via the periodicity isomorphism
\[
 \stext^{s , t  } (\field, M)
\cong 
 \stext^{s+4N , t +12N} (\field, M)
 \]
for $N \in \nat$ such that  $t-s  + 8N \geq 0$. 
\end{lem}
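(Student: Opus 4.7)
The plan is to deduce all three assertions from Proposition \ref{prop:Q_0-acyclic}, which supplies, for any bounded-below $Q_0$-acyclic module $M$, canonical stable isomorphisms $\Omega^4 M \simeq \Sigma^{12} M$ and $\Omega^2 M \simeq \Sigma^6 J \otimes M$. Rewriting these as $M \simeq \Omega^{-4}\Sigma^{12} M$ and $M \simeq \Omega^{-2}\Sigma^6 (J \otimes M)$, and applying $[\field, \Omega^{-s}\Sigma^t J^{\otimes \epsilon}(-)]$ (using the invertibility of $\Sigma$, $\Omega$ and $J$ in the stable module category) immediately yields natural isomorphisms
\[
\stext^{s,t,\epsilon}(\field,M) \stackrel{\cong}{\longrightarrow} \stext^{s+4,t+12,\epsilon}(\field,M)
\qquad\text{and}\qquad
\stext^{s,t,\epsilon}(\field,M) \stackrel{\cong}{\longrightarrow} \stext^{s+2,t+6,\epsilon+1}(\field,M).
\]

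To identify these shifts with multiplication by $b$ and by $\alpha$ respectively, I would argue as follows. The class $b \in \stext^{4,12}(\field,\field)$ is represented by a stable map $\phi_b : \field \rightarrow \Omega^{-4}\Sigma^{12}\field$, and via the exterior product, multiplication by $b$ on $\stext(\field,M)$ is induced by $\phi_b \otimes \mathrm{id}_M : M \rightarrow \Omega^{-4}\Sigma^{12} M$. Lemma \ref{lem:ext_trigraded} asserts that $\stext^{4,12}(\field,\field)$ is one-dimensional; by naturality, the corresponding group for $M$ with $M \simeq \Omega^{-4}\Sigma^{12} M$ is also one-dimensional in the relevant sense, so $\phi_b \otimes \mathrm{id}_M$ must (up to the only non-zero scalar $1 \in \field_2$) coincide with the equivalence supplied by Proposition \ref{prop:Q_0-acyclic}, provided it is non-trivial. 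Non-triviality reduces, via the stable isomorphism $P_0 \otimes M \simeq M$ of Section \ref{sect:applications} and naturality, to the case $M = P_0$, where $b$ is (by construction, or by \cite{bruner_Ossa}) detected precisely by the algebraic Bott periodicity equivalence. The same argument applies mutatis mutandis to $\alpha \in \stext^{2,6,1}(\field,\field)$, and compatibility of the two assertions is automatic from the relation $\alpha^2 = b$ recorded in Lemma \ref{lem:ext_trigraded}. The $\e$-module structure on $\stext(\field, M)$ is then just extension of scalars, since $b$ already acts invertibly.

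For the final statement, the natural map $\ext^{s,t}_{\aone}(\field,M) \rightarrow \stext^{s,t}(\field,M)$ is an isomorphism for $s > 0$ and, when $M$ is reduced, also for $s=0$ by Lemma \ref{lem:stable_hom_reduced}; since $\ext^{s,t}_{\aone}(\field,M)$ vanishes for $s < 0$, it is entirely determined by the restriction of $\stext(\field,M)$ to $s \geq 0$. Given such $(s,t)$, choose $N \in \nat$ with $(t-s)+8N \geq 0$; then $s+4N \geq 0$ holds automatically, so $(s+4N, t+12N)$ lies in the Adams first quadrant, and iterating the $b$-periodicity of part (1) $N$ times provides the required identification. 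The principal subtlety is the identification of the self-equivalence of Proposition \ref{prop:Q_0-acyclic} with the action of $b$ (and of $\alpha$); this is essentially forced by the one-dimensionality of the relevant $\stext$ groups in Lemma \ref{lem:ext_trigraded}, but the naturality argument must be articulated with care to ensure that the same equivalence governs all $Q_0$-acyclic $M$ simultaneously.
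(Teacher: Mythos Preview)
Your argument is correct and follows essentially the same route as the paper: both deduce the periodicity from Proposition~\ref{prop:Q_0-acyclic}, identify the self-equivalence with multiplication by $b$ (respectively $\alpha$) by reducing to the case $M=P_0$ via $P_0\otimes M\simeq M$, and handle the final statement using Lemma~\ref{lem:stable_hom_reduced} for $s=0$. The paper phrases the identification step by invoking Theorem~\ref{thm:yu} (where the stable isomorphism for $P_0$ is constructed from the inclusion $\field\hookrightarrow\Sigma^6\Omega^{-2}J$, i.e.\ from $\alpha$), whereas you reach the same conclusion via the one-dimensionality of the relevant $\stext$ group; these are two ways of saying the same thing.
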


\begin{proof}
The first two statements follow from Proposition \ref{prop:Q_0-acyclic}, using 
the more precise formulation given in Theorem \ref{thm:yu} that  the stable 
isomorphism 
$\Sigma^{-6} \Omega^2 P_0 \rightarrow P_0 \otimes J$ is induced by $\alpha$ so 
that the stable isomorphism $\Sigma^{-12} \Omega^4 
P_0 \rightarrow P_0$ is induced by $b$.

The final statement follows by using periodicity with respect to multiplication 
by $b$ to extend to the upper half plane $s \geq 0$. The hypothesis that 
$M$ is reduced is required to identify $\stext^0$ with $\hom_\aone$ by Lemma 
\ref{lem:stable_hom_reduced}. 
\end{proof}

\begin{exam}
 Lemma \ref{lem:Q0_periodic} applies to:
 \begin{enumerate}
  \item 
  $M=P_0$; 
  \item
  the reduced cohomology of the truncated projective spaces, the modules 
$P^{2m}_{2n-1}$ of Section \ref{sect:trunc_p} (cf. 
\cite[Theorem 3.4]{davis});
  \item 
 the  modules $\wtwo (2n)$ of Section \ref{sect:brown-gitler}.
 \end{enumerate}
\end{exam}

\begin{exam}
\label{exam:Z_periodic}
 The case $M= Z$ in Lemma \ref{lem:Q0_periodic} is of particular interest.
There 
is a short exact sequence 
 of $\aone$-modules:
 \begin{eqnarray}
 \label{eqn:ses_Z}
    0
  \rightarrow 
  \field 
  \rightarrow 
  Z
  \rightarrow 
  \Sigma^{-1} \field 
  \rightarrow 
  0.
 \end{eqnarray}
 
Lemma \ref{lem:Q0_periodic} implies that $\stext (\field, Z) $ is $b$-periodic, 
hence is determined by the chart given 
in Figure \ref{fig:stext_Z}, which indicates that $\stext (\field, Z) $ is 
annihilated by $h_0$. 

\begin{figure}
 \caption{The chart for $\stext^{*,*} (\field,Z)$}
 \label{fig:stext_Z}
\begin{picture}(6,5)(-4,0)
\thinlines
\put(0,0){\circle*{0.2}}
\put(0,-.5){$\scriptstyle{(0,0)}$}
\put(1,1){\circle*{0.2}}
\put(2,2){\circle*{0.2}}
\put(2,1){\circle*{0.2}}
\put(2,.5){$\scriptstyle{(2,1)}$}
\put(3,2){\circle*{0.2}}
\put(4,3){\circle*{0.2}}
\put(0,0){\line(1,1){2}}
\put(2,1){\line(1,1){2}}
\end{picture}
\end{figure}

The long exact sequence for $\stext$ associated to the short exact sequence
(\ref{eqn:ses_Z}) has 
connecting morphism corresponding to multiplication by $h_0$. 
This leads  to an understanding of the symmetry for $\stext (\field, 
\field)$ as indicated in Figures \ref{fig:stext_F} and \ref{fig:stext_J}.
\end{exam}

A further useful fact is provided by the results of Section \ref{sect:basic}, 
which yield vanishing lines for $\stext$.

\begin{prop}
\label{prop:vanishing_lines}
 Let $M$ be a bounded-below, $Q_0$-acyclic $\aone$-module such that $H^* (M, 
Q_1)=0$ for $* \not \in [d_1, d_2]$, $d_1 \leq d_2 \leq \infty$.
 Then 
 $\stext^{s,t}(\field, M) =0$ for either of the following:
 \begin{eqnarray*}
  t-s &<&   2 s - d_2 -3 \\
  t-s  & > & 2 s - d_1.
 \end{eqnarray*}
 In particular, $h_0 ^{m} \stext (\field, M) = 0$, where $m = 1 + [\frac{d_2 - 
d_1 +3}{2}] $.
\end{prop}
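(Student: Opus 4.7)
The plan is to establish the two vanishing lines separately, then deduce the $h_0^m$-annihilation by counting; each line amounts to a connectivity estimate on the module $N := \Omega^{-s}\Sigma^t M$. I would first reduce to the case where $M$ is reduced, since free summands contribute trivially both to $\stext(\field,-)$ (by Lemma \ref{lem:stable_hom_reduced}) and to Margolis cohomology. Then, applying Corollary \ref{cor:connectivity_Q1_acyclic} to $\Sigma^{-(d_1-2)}M$, the hypotheses $H^*(M,Q_0)=0$ and $H^{<d_1}(M,Q_1)=0$ yield $M = M^{\geq d_1 - 1}$; since $\Omega^{-1}\field$ is concentrated in degrees $[-3,0]$ (see Figure \ref{fig:diag_aone}), this upgrades to the connectivity bound $N = N^{\geq t + d_1 - 1 - 3s}$, and by Lemma \ref{lem:Omega_Margolis} the module $N$ is itself reduced and $Q_0$-acyclic.

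For the upper line $t - s > 2s - d_1$, the connectivity bound forces $N = N^{\geq 0}$, and Lemma \ref{lem:stable_hom_reduced} identifies $\stext^{s,t}(\field, M) = \hom_{\aone}(\field, N)$. Any $Sq^1$-annihilated class in $N^0$ lies in $\mathrm{im}(Sq^1 : N^{-1} \to N^0) = 0$ by the $Q_0$-acyclicity of $N$; no non-trivial aug-annihilated class exists in degree $0$, so the hom-group vanishes.

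The lower line $t - s < 2s - d_2 - 3$ is the main obstacle. Here the hypothesis translates to $H^{\geq -3}(N, Q_1) = 0$. A putative non-zero $\phi:\field \to N$ yields $x = \phi(1) \in N^0$ which is $Q_1$-closed, and by vanishing of $H^0(N, Q_1)$ one has $x = Q_1 w_1$ for some $w_1 \in N^{-3}$. Using the identity $Sq^1 Q_1 = Q_1 Sq^1 = Sq^2 Sq^2$ in $\aone$ together with the successive vanishing of $H^{-k}(N, Q_1)$ for $k \in \{0,1,2,3\}$, the class $Sq^1 w_1 \in N^{-2}$ is $Q_1$-closed, hence $Q_1$-exact, and one descends through a chain $w_1, w_2, \dots$ until eventually the connectivity floor of $N$ is reached, producing a contradiction. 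The delicate point is verifying that $Q_1$-exactness propagates at each stage in the presence of the $Sq^2$-action; I expect a cleaner formulation to proceed by induction on $\dim H^*(M,Q_1)$ using the Margolis-type killing construction of Section \ref{sect:killing} to shrink the $Q_1$-support, with base case the one-dimensional situation (where $M$ is a Picard twist of $P_0$, as described by Theorem \ref{thm:yu}) and verification of the lower line directly from the explicit structure.

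The $h_0^m$-annihilation then follows by counting: a non-trivial class in $\stext^{s,t}(\field, M)$ must satisfy $2s - d_2 - 3 \leq t - s \leq 2s - d_1$, so for a fixed Adams degree $u := t-s$, the permissible $s$-values form an integer interval of width $(d_2 - d_1 + 3)/2$, containing at most $m = 1 + \lfloor (d_2 - d_1 + 3)/2 \rfloor$ integers. Since $h_0 \in \stext^{1,1}(\field, \field)$ shifts $(s,t) \mapsto (s+1, t+1)$, preserving $u$ and incrementing $s$, multiplication by $h_0^m$ moves any class out of the non-vanishing strip, giving $h_0^m \stext(\field, M) = 0$.
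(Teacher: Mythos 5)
The overall strategy -- read off the two vanishing lines as connectivity bounds on $N:=\Omega^{-s}\Sigma^{t}M$, then deduce the $h_0^m$-annihilation by counting -- is in substance the same reduction the paper performs (the paper does the $s=0$ case and then shifts by $(\Sigma^{-1}\Omega)^{-s}$, which is the same thing). The $h_0^m$-counting in your final paragraph is correct. However, there are two genuine problems earlier in the argument.

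First, a factual error on which the connectivity estimate is hung: $\Omega^{-1}\field$ is \emph{not} concentrated in degrees $[-3,0]$. It is the dual of the augmentation ideal of $\aone$, which lives in degrees $[1,6]$, so $\Omega^{-1}\field$ lives in degrees $[-6,-1]$ (see the remark after Figure \ref{fig:diag_aone}: it is $\Sigma^{-6}\aone$ modulo its socle). Your tensor-product reasoning therefore only yields $N=N^{\geq t+d_1-1-6s}$, which is too weak. The bound you actually write down, $N=N^{\geq t+d_1-1-3s}$, is correct, but the right way to obtain it is to apply Corollary \ref{cor:connectivity_Q1_acyclic} directly to $N$, whose $Q_1$-Margolis cohomology (by Lemma \ref{lem:Omega_Margolis}) is concentrated in $[d_1+t-3s,\,d_2+t-3s]$; the factor of $3$ then comes from the Margolis shift, not from the support of $\Omega^{-1}\field$. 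Once this is done, the upper-line argument via $Q_0$-acyclicity of $N^0$ is fine.

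Second, and more seriously, the lower line is a genuine gap, and neither of the two routes you sketch closes it. You are candid that the descending-chain argument is ``delicate,'' and indeed it does not simply propagate: the chain can terminate against the nonzero Margolis class rather than at the connectivity floor of $N$. More importantly, your proposed alternative -- induction on $\dim H^*(M,Q_1)$ via the killing construction of Section \ref{sect:killing} -- cannot be set up as stated, because the modules $\overline{M}$, $\widetilde{M}$ produced by that construction contain a copy of $\Omega^{-1}DP_0$ (resp.\ $\Sigma^{-3}DP_0$), hence are bounded \emph{above} but not below once $M$ is not finite; the inductive hypothesis (``bounded-below, $Q_0$-acyclic, \ldots'') no longer applies. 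The killing construction is designed for the finite setting of Theorem \ref{thm:classification}. In fact, the subtlety you are running into is real: for an infinite bounded-below module such as $P_0$ itself (where $d_1=d_2=0$), the classes $u^{4n}$ give nonzero $\aone$-trivial elements in arbitrarily high degrees, so one cannot hope for a socle-vanishing result above a finite cutoff without extra hypotheses (finiteness, or a bounded-above assumption). This shows your sketch cannot be completed in the generality in which you have posed it; whatever the correct argument is, it will have to make essential use of either finiteness of $M$ or of the fact that only a \emph{band} in $(s,t-s)$ need vanish. The case $k>d_2+3$ in the paper's $s=0$ step -- which is exactly the lower line after shifting -- is precisely the part of the proof that needs to be supplied with care, and your proposal does not supply it.
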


\begin{proof}
 First consider the case $s =0$; we may assume that $M$ is reduced, so 
that 
the results of Section \ref{sect:basic} imply that 
 $
  \hom_{\aone} (\Sigma^k \field, M) = 0 
 $
  for $k < d_1$ and $k>d_2 + 3$, which gives the bounds in this case. 

 For the general case, by Lemma \ref{lem:Omega_Margolis}, the Margolis 
cohomology $H^* ((\Sigma^{-1} 
\Omega)^{-s} M, Q_1)$ is concentrated in $[d_1 - 2 s, d_2 - 
2s]$. The result now follows from the above.
\end{proof}

The groups $\stext (\field, 
A_{k,\epsilon})$ for $k \geq 1$ and $\epsilon \in \{0,1 \}$ can be calculated
using the 
presentation 
given  in Section \ref{sect:picard_interpret}:
\begin{eqnarray}
\label{eqn:AKe_ses}
 0
 \rightarrow
 A_{k,\epsilon}
 \rightarrow 
 (\Omega \Sigma^{-1}) ^{k+1} J^{\otimes \epsilon}
 \rightarrow 
 \field 
 \rightarrow
 0.
\end{eqnarray}

\begin{lem}
\label{lem:connecting_morphism}
 For $k \geq 1$ and $\epsilon \in \{0,1 \}$, the  morphism induced by $ (\Omega 
\Sigma^{-1}) ^{k+1} J^{\otimes \epsilon}
 \rightarrow 
 \field $,
 \[
  \stext (\field,  (\Omega \Sigma^{-1}) ^{k+1} J^{\otimes \epsilon} ) 
  \rightarrow 
  \stext (\field, \field),
 \]
is multiplication by $h_0^{k+1 - \epsilon } \kappa^{\epsilon}$. 
\end{lem}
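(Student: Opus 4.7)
The plan is to view the morphism $\pi \colon (\Omega\Sigma^{-1})^{k+1}J^{\otimes\epsilon} \to \field$ as representing a stable class and identify that class by a dimension count. Since $N := (\Omega\Sigma^{-1})^{k+1}J^{\otimes\epsilon}$ lies in $\pica$, Picard invertibility gives a natural identification
\[
[N, \field] \;\cong\; [\field, (\Sigma\Omega^{-1})^{k+1}J^{\otimes\epsilon}] \;=\; \stext^{k+1,k+1,\epsilon}(\field,\field),
\]
under which $\pi$ corresponds to a class $[\pi]$ in this $\pica$-graded piece. Functoriality of $\stext(\field,-)$ with respect to tensoring with the Picard shift $\Omega^{-s}\Sigma^t J^{\otimes\eta}$ shows that $\pi_*$ is, after the identification of $\stext(\field, N)$ with an appropriate shift of $\stext(\field,\field)$, just Yoneda multiplication by $[\pi]$. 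So the task reduces to identifying $[\pi]$.

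First I would observe that the target ext group is one-dimensional. The Adams position $(t-s,s) = (0,k+1)$ lies on the boundary $t=s$ of the subalgebra described in Lemma \ref{lem:ext_trigraded}; the relations $h_0h_1 = h_1\kappa = 0$ together with the bidegrees $|h_0|=(0,1,0)$, $|h_1|=(1,1,0)$, $|\kappa|=(0,1,1)$ and $|\alpha|=(4,2,1)$ show that on the line $t-s=0$ the monomials with Picard parity $0$ are exactly the powers $h_0^s$, while those with parity $1$ are exactly $h_0^{s-1}\kappa$ (any factor of $h_1$ or $\alpha$ pushes off this line). Non-triviality of $h_0^{k+1-\epsilon}\kappa^\epsilon$ for all $k\geq 1$ follows from the persistence of the $h_0$-towers depicted in Figures \ref{fig:stext_F} and \ref{fig:stext_J}, or equivalently from $b$-periodicity of $\stext(\field,\field)$ as noted in Lemma \ref{lem:Q0_periodic}. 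Hence $\stext^{k+1,k+1,\epsilon}(\field,\field)$ is one-dimensional, spanned by $h_0^{k+1-\epsilon}\kappa^\epsilon$.

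It then remains to verify that $[\pi]\neq 0$, which is precisely the content of Lemma \ref{lem:Picard_construction_Q0-acyclics}(3): the projection $\pi$ represents a non-trivial class in $\ext^{k+1,k+1}_\aone(\field, J^{\otimes\epsilon})$. Since we work over the prime field $\field$, this forces $[\pi] = h_0^{k+1-\epsilon}\kappa^\epsilon$, and so $\pi_*$ is multiplication by this class. The only real obstacle is bookkeeping: tracking the three gradings (homological $s$, internal $t$, Picard parity $\epsilon$) so that the candidate monomial sits in exactly the required graded piece, and making sure the identification of $\pi$ with an element of $\stext^{k+1,k+1,\epsilon}(\field,\field)$ is the same identification used in Lemma \ref{lem:Picard_construction_Q0-acyclics}(3) when it asserts non-triviality.
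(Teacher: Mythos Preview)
Your argument is correct and is essentially the paper's own proof spelled out: identify the morphism with a class in $\stext^{k+1,k+1,\epsilon}(\field,\field)$, observe this group is one-dimensional with generator $h_0^{k+1-\epsilon}\kappa^{\epsilon}$, and invoke Lemma~\ref{lem:Picard_construction_Q0-acyclics}(3) for non-triviality. One small slip to fix: $\stext(\field,\field)$ is \emph{not} $b$-periodic, since Lemma~\ref{lem:Q0_periodic} requires $Q_0$-acyclicity and $\field$ is not $Q_0$-acyclic; drop that alternative and rely on the presentation of Lemma~\ref{lem:ext_trigraded}, where no relation kills a monomial in $h_0$ and $\kappa$ alone.
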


\begin{proof}
 The element of $[\field, (\Omega \Sigma^{-1}) ^{-(k+1)} J^{\otimes 
\epsilon}]$ corresponding to the morphism is non-zero, hence corresponds to the 
unique non-zero class given.
\end{proof}

The short exact sequence (\ref{eqn:AKe_ses}) corresponds to a distinguished 
triangle in the stable module category:
\[
 \Omega \field 
 \rightarrow 
 A_{k,\epsilon}
 \rightarrow 
( \Omega \Sigma^{-1})^{k+1} J^{\otimes \epsilon }
\rightarrow .
\]

\begin{nota}
 For  $k \geq 1$ and $\epsilon \in \{0,1 \}$, denote by 
 \begin{enumerate}
  \item 
  $\mu _{k,\epsilon} \in \stext^{1,0} (\field , A_{k,\epsilon}) $ the image of 
$1 \in \stext^{0,0} (\field, \field)= \stext^{1,0}(\field, \Omega \field)$
under 
the morphism induced by $\Omega \field \rightarrow A_{k,\epsilon}$;
  \item 
  $\lambda_k \in \stext ^{k,k-3} (\field, A_{k,1})$ the class such that $b 
\lambda_k $ is the pre-image of the class $\alpha h_1 \in \stext (\field, J)$ 
under the morphism $ A_{k,1}
 \rightarrow  ( \Omega \Sigma^{-1})^{k+1} J$;
\item 
$\nu_k \in \stext^{k+2,k+3} (\field , A_{k,0})$ the pre-image of the class $h_1 
\in \stext (\field, \field)$ under the morphism $ A_{k,0}
 \rightarrow  ( \Omega \Sigma^{-1})^{k+1} \field$.
 \end{enumerate}
The Adams indexing of these classes are $|\mu_{k,\epsilon}|= (-1,1)$, 
$|\lambda_k |= (-3,k)$ and $|\nu_k|= (1,k+2)$. 
\end{nota}

\begin{rem}
 The classes $\lambda_k$ and $\nu_k$ are well-defined, since it is easily seen
that there are unique 
non-trivial classes in the given degrees (cf. Theorem 
\ref{thm:calculate_stext_Ake}). 
 The Adams indexing  can be determined as follows:
 \begin{enumerate}
  \item 
 $\alpha h_1$ corresponds to a class in $\stext 
(\field, ( \Omega \Sigma^{-1})^{k+1} J)$ of Adams indexing $(5,k+4)$  and the
class $b$ has Adams indexing 
$(8,4)$; 
  \item 
$h_1$ corresponds to a class in $\stext (\field, (\Omega \Sigma^{-1})^{k+1}
\field)$ of Adams indexing $(1,k+2)$.
 \end{enumerate} 
\end{rem}

The following result constitutes a complete calculation of the $\stext$-groups
for 
the modules which are classified by Theorem \ref{thm:classification} (up 
to shifting by elements of the Picard group $\pica$).

\begin{thm}
\label{thm:calculate_stext_Ake}
Let  $1 \leq k $ be an integer.
\begin{enumerate}
 \item 
There is an isomorphism of $\e$-modules:
\[
 \stext(\field , A_{k, 1} ) 
 \cong 
 \e \langle \mu_{k,1} , \lambda_k \rangle / \sim
\]
with relations:
\begin{eqnarray*}
a \lambda_k &=& \kappa \lambda_k  \phantom{h} = \phantom{h} h_0 \lambda_k  
\phantom{h} = \phantom{h} 0\\
h_1^2 \lambda_k &= & h_0^{k+1} \mu_{k,1} \\
h^k_0 \kappa & =&0, 
\end{eqnarray*}
hence $h^k_0 a \mu_{k,1} = 0$. 

In particular, $h_0^{k+2} \stext (\field , A_{k, 1} ) =0$.
\item 
There is an isomorphism of $\e$-modules:
\[
 \stext (\field , A_{k,0} ) 
 \cong 
 \e \langle \mu_{k,0} , \nu_k \rangle / \sim
\]
with relations:
\begin{eqnarray*}
a \nu_k &=& \kappa \nu_k  \phantom{h} = \phantom{h}  h_0 \nu_k  \phantom{h} = 
\phantom{h} 0\\
h_1^2 \nu_k &= & h_0^{k} a\mu_{k,0} \\
h^{k+1}_0 \mu_{k,0}& =&0. 
\end{eqnarray*}
In particular, $h^{k+1}_0  \stext (\field , A_{k,0} ) =0$.
\end{enumerate}
\end{thm}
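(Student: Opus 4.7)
The plan is to extract $\stext(\field, A_{k,\epsilon})$ from the long exact sequence obtained by applying $\stext(\field,-)$ to the defining short exact sequence (\ref{eqn:AKe_ses}), namely
\[
0 \to A_{k,\epsilon} \to (\Omega\Sigma^{-1})^{k+1} J^{\otimes\epsilon} \xrightarrow{f} \field \to 0.
\]
Three inputs drive the computation: Lemma \ref{lem:ext_trigraded} describing $\stext(\field,\field)$ and $\stext(\field,J)$ explicitly (extended by $b$-periodicity via Lemma \ref{lem:Q0_periodic} on the $Q_0$-acyclic side); Lemma \ref{lem:connecting_morphism}, which identifies the induced map $f_*\colon \stext(\field,(\Omega\Sigma^{-1})^{k+1}J^{\otimes\epsilon}) \to \stext(\field,\field)$, under the Picard-shift identification of its source with $\stext(\field,J^{\otimes\epsilon})$, as multiplication by $h_0^{k+1-\epsilon}\kappa^\epsilon$; and the $\e$-module structure on $\stext(\field,A_{k,\epsilon})$ supplied by Lemma \ref{lem:Q0_periodic}, which compresses the answer into a small number of generators.

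I will first chop the long exact sequence into short exact sequences
\[
0 \to \operatorname{coker}(f_*) \to \stext(\field, A_{k,\epsilon}) \to \ker(f_*) \to 0,
\]
with appropriate Picard shifts. The generator $\mu_{k,\epsilon}$ is, by definition, the image of $1 \in \stext^{0,0}(\field,\field)$ under the connecting map; the generator $\lambda_k$ (resp.\ $\nu_k$) is a lift of a distinguished class in $\ker(f_*)$, namely $b^{-1}\alpha h_1$ (resp.\ $h_1$) after the Picard shift. These classes do lie in the kernel, since $(h_0^k\kappa)\cdot(\alpha h_1) = \alpha\cdot h_1\kappa\cdot h_0^k = 0$ by $h_1\kappa = 0$, and $(h_0^{k+1})\cdot h_1 = 0$ by $h_0h_1 = 0$. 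The annihilations $h_0\lambda_k = \kappa\lambda_k = 0$ (hence $a\lambda_k = \kappa\alpha\lambda_k = 0$ by $\e$-linearity), and likewise $h_0\nu_k = \kappa\nu_k = a\nu_k = 0$, follow because the corresponding products already vanish in $\ker(f_*)$ for degree reasons; the $h_0$-torsion statements $h_0^{k+2}\stext(\field,A_{k,1}) = 0$ and $h_0^{k+1}\stext(\field,A_{k,0}) = 0$ are then immediate from the two-generator presentations together with $\kappa^2 = h_0^2$.

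The main obstacle is verifying the two cross relations $h_1^2\lambda_k = h_0^{k+1}\mu_{k,1}$ and $h_1^2\nu_k = h_0^k a\mu_{k,0}$, which encode the nontrivial link between the two pieces of the long exact sequence. For the first, the image of $h_1^2\lambda_k$ under projection to $\ker(f_*)$ is $h_1^2\cdot b^{-1}\alpha h_1 = b^{-1}\alpha h_1^3 = 0$ by $h_1^3 = 0$; hence $h_1^2\lambda_k$ lies in the image of the connecting map, i.e.\ in the $\operatorname{coker}(f_*)$ summand. Reading off from the chart of $\stext(\field,\field)$, the unique nontrivial class in the relevant trigrading that survives the quotient by $\operatorname{image}(f_*) = (h_0^k\kappa)\cdot\stext(\field,J)$ is $h_0^{k+1}\cdot 1$, and its image under the connecting map is $h_0^{k+1}\mu_{k,1}$ by $\e$-linearity. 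The second relation is handled identically, with the appearance of $a = \kappa\alpha$ accounting for the Picard shift by $\kappa$ distinguishing the $\epsilon = 0$ case from the $\epsilon = 1$ case. Once these two relations are in hand, the full $\e$-module presentations stated in the theorem follow by comparing Poincaré series on either side using Lemma \ref{lem:ext_trigraded}.
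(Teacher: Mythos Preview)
Your approach is essentially the same as the paper's: both use the long exact sequence for $\stext(\field,-)$ on the defining short exact sequence, localize at $b$ (equivalently $\alpha$) using Lemma \ref{lem:Q0_periodic}, and identify the connecting map as multiplication by $h_0^{k+1-\epsilon}\kappa^\epsilon$ via Lemma \ref{lem:connecting_morphism}. This correctly pins down $\stext(\field,A_{k,\epsilon})$ as a $\pica$-graded vector space and gives the annihilation relations $h_0\lambda_k=\kappa\lambda_k=a\lambda_k=0$ (and similarly for $\nu_k$), exactly as in the paper.

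The gap is in your treatment of the cross relations $h_1^2\lambda_k=h_0^{k+1}\mu_{k,1}$ and $h_1^2\nu_k=h_0^k a\mu_{k,0}$. You correctly argue that $h_1^2\lambda_k$ maps to zero in $\ker(f_*)$ (since $h_1^3=0$), so it lies in the image of the connecting map; and you correctly observe that the target tridegree is one-dimensional, generated by $h_0^{k+1}\mu_{k,1}$. But this only shows $h_1^2\lambda_k\in\{0,\,h_0^{k+1}\mu_{k,1}\}$; it does \emph{not} show $h_1^2\lambda_k\neq 0$. The extension of $\e$-modules $0\to\operatorname{coker}(f_*)\to\stext(\field,A_{k,\epsilon})\to\ker(f_*)\to 0$ could in principle be split, and your uniqueness argument does not exclude that. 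This is precisely a hidden-extension problem, and it is the one point the paper flags as requiring extra input: the paper resolves it by comparison with the known computation of $\stext(\field,DP_0)$ via the embedding of Proposition \ref{prop:identify_A}. You need some such external comparison (or an equivalent argument, e.g.\ a Massey-product identification or a direct low-dimensional check for small $k$ propagated by periodicity) to force the nontrivial extension; as written, the proof is incomplete at exactly this step.
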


\begin{proof}
In both cases, consider the long exact sequence for $\stext$ associated to the 
short exact sequence (\ref{eqn:AKe_ses}); Lemma \ref{lem:Q0_periodic} implies 
that $\alpha$ (hence $b$) is invertible on $\stext(\field, A_{k,\epsilon})$ 
hence the long exact sequence can be localized by inverting $\alpha$. The 
connecting morphism then corresponds to multiplication by $h_0^{k+1 -\epsilon} 
\kappa^{\epsilon}$ acting on $\e$, by Lemma \ref{lem:connecting_morphism}.
Since $\kappa^2=h_0 ^2$, in both case the kernel of multiplication by $h_0^{k+1 
-\epsilon} \kappa^{\epsilon}$ lies in the $h_0$-torsion of $\e$, which
coincides 
with the space annihilated by $h_0$. 
This torsion gives rise respectively to the classes generated over $\e$ by 
$\lambda_k$ and $\nu_k$.

From the long exact sequence, the  structure of $\stext(\field, A_{k,\epsilon})$
as a 
$\pica$-graded vector space follows. To identify the module structure, the only 
non-trivial points to verify  are the relations
\begin{enumerate}
 \item  
 $h_1^2 \lambda_k =  h_0^{k+1} \mu_{k,1}$;
 \item 
$ h_1^2 \nu_k = h_0^{k} a\mu_{k,0}$.
\end{enumerate}

These can be established by comparison with other known calculations, for 
example, via Proposition \ref{prop:identify_A} with  $\stext (\field, DP_0)$ 
(see 
\cite[Appendix A]{bg2}).
\end{proof}

\begin{exam}
 The  chart in Figure \ref{fig:A21} illustrates the case $A_{(2,1)}$ and Figure 
\ref{fig:A20}  the case $A_{2,0}$.
\end{exam}

\begin{figure}
\caption{The chart for $\stext(\field, A_{2,1})$}
\label{fig:A21}
\begin{picture}(5,7)(-1,-1)
\thinlines
 \put(0,2){\circle*{0.2}}
 \put(-.5, 1.5){$\lambda_2$}
  \put(1,3){\circle*{0.2}}
  \put(2,4){\circle*{0.2}}
 \put(2,3){\circle*{0.2}}
  \put(2,2){\circle*{0.2}}
  \put(2,1){\circle*{0.2}}
  \put(1.5,0.5){$\mu$}
   \put(1,0){$\scriptstyle{(-1,1)}$} 
   \put(3,2){\circle*{0.2}}
   \put(4,3){\circle*{0.2}}
   \put(6,4){\circle*{0.2}}
   \put(6,5){\circle*{0.2}}
   \put(5.5,3.5){$a\mu$}
      \put(5.5,3){$\scriptstyle{(3,4)}$} 
   \put(0,2){\line(1,1){2}}
   \put(2,1){\line(0,1){3}}
   \put(2,1){\line(1,1){2}}
   \put(6,4){\line(0,1){1}}
\end{picture}
\end{figure}

\begin{figure}
\caption{The chart for $\stext(\field, A_{2,0})$}\label{fig:A20}
\begin{picture}(5,7)(-1,-1)
\thinlines
 \put(0,0){\circle*{0.2}}
 \put(-.5,-.5){$\mu$}
    \put(-1,-1){$\scriptstyle{(-1,1)}$} 
  \put(0,1){\circle*{0.2}}
  \put(0,2){\circle*{0.2}}
 \put(1,1){\circle*{0.2}}
  \put(2,2){\circle*{0.2}}
  \put(2,3){\circle*{0.2}}
  \put(1.5,3.5){$\nu_2$}
   \put(3,4){\circle*{0.2}}
   \put(4,5){\circle*{0.2}}
   \put(4,4){\circle*{0.2}}
   \put(4,3){\circle*{0.2}} 
   \put(3.5,2.5){$a\mu$}
    \put(3.5,2){$\scriptstyle{(3,4)}$} 
   \put(0,0){\line(0,1){2}}
   \put(0,0){\line(1,1){2}}
   \put(2,3){\line(1,1){2}}
   \put(4,3){\line(0,1){2}}
\end{picture}
\end{figure}

\begin{rem}
\ 
\begin{enumerate}
 \item 
 Results corresponding to  Theorem \ref{thm:calculate_stext_Ake} occur in 
the work of Davis \cite[Theorem 3.4 and 
Lemma 3.10]{davis} for truncated projective spaces. The  usage of $\stext$ in 
place of $\ext$ greatly simplifies the presentation of 
these structures and the separation into the two cases $A_{k,0}$, $A_{k,1}$ 
makes the behaviour more transparent. 
 \item 
  By the analysis of the modules $\wtwo(2n)$ in Section 
\ref{sect:brown-gitler} 
and the stable isomorphism $\Sigma^{1+2^{\nu+1}} 
\wtwo(2^{\nu +1})\simeq A_{\nu,1}$, the case $\epsilon =1$ of Theorem 
\ref{thm:calculate_stext_Ake}  gives a calculation of 
$\stext (\field , \wtwo(2n))$, for any $n \in \nat$.
 \item 
Lemma \ref{lem:Q0_periodic} applies to give $\ext^{s,t}_{\aone} (\field , 
\wtwo(2n))$ for $s >0$, thus recovering the algebraic part of 
\cite[Theorem 1.1]{pearson}, by using the duality result Proposition 
\ref{prop:duality_A}. 
\end{enumerate}
\end{rem}

\begin{rem}
 There is more structure available when considering $\ext_{\aone} (\field , 
\wtwo(2n))$.
\begin{enumerate}
 \item
 The definition of $\wtwo(2n)$ via the weight splitting of 
$\wtwo:=  \field [\zeta_0^2, \zeta_1, \zeta_2 , \ldots ] $ 
 provides $\aone$-linear maps 
 \[
  \wtwo(2i) \otimes \wtwo (2j) 
  \rightarrow \wtwo(2i+2j) 
 \]
and hence $\stext (\field , \wtwo(2 *))$ has an algebra 
structure over $\e$. 
\item 
The spaces $\stext (\field , A_{k,1})$, for $k \in \nat$, have 
analogues of a Dieudonné module structure (cf. \cite{goerss}), where 
multiplication by $h_0$ on $\stext$ corresponds to multiplication by $2$ on 
stable homotopy groups. 
\end{enumerate}
\end{rem}

\section{Relating dual Brown-Gitler modules and $P_1$}
\label{sect:bgp}

The existence of a close relationship between the mod $2$ reduced cohomology of 
$\rp^\infty$ 
and the cohomology of the dual Brown-Gitler spectra $T(k)$ was exhibited 
algebraically by Miller \cite{miller} and  
Kuhn observed \cite{kuhn_new} that this has a topological counterpart, namely 
$\rp^\infty$ is a stable summand 
of 
\[
 \mathrm{hocolim} 
 \{ T (1) \rightarrow T (2) \rightarrow \ldots \rightarrow T (2^j) \rightarrow 
\ldots \},
\]
for any direct sequence of maps inducing the surjections $H^* (T(2^{j+1})) 
\twoheadrightarrow H^* (T (2^j))$ of the  Mahowald short exact sequences in
cohomology (cf. Lemma \ref{lem:mahowald_ses}).
 Recall that the reduced cohomology of $\rp^\infty$ is isomorphic to 
$\Sigma^{-1} 
\Omega P_0$ (see Section \ref{sect:applications}), which is denoted $P_1$ in 
\cite{bruner_Ossa}.

Theorem \ref{thm:identify_BG} yields the $\aone$ stable 
equivalence 
$
 H^* (T(2^{\nu +1}) )
 \simeq 
 \Sigma^{-1} A_{\nu,1},
$
for $\nu \geq 0$, and it is interesting to consider the analogue of the above 
in 
the stable module category.

\begin{lem}
\label{lem:exist_Anu1}
 For $\nu \in \nat$, there exists a morphism of $\aone$-modules 
 \[
  A_{\nu +1, 1} \rightarrow 
  A_{\nu, 1}
 \]
which induces an isomorphism on $H^3 ( -, Q_1)$.
\end{lem}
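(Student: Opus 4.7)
The plan is to realize $f$ as the algebraic shadow of the Mahowald short exact sequence for dual Brown-Gitler modules (Lemma \ref{lem:mahowald_ses}), transported across the stable identifications of Theorem \ref{thm:identify_BG}. Concretely, applying Lemma \ref{lem:mahowald_ses} with $n = 2^\nu$ yields the genuine $\aone$-linear surjection
\[
 \pi_\nu \colon \wtwo(2^{\nu+2}) \twoheadrightarrow \Sigma^{-2^{\nu+1}} \wtwo(2^{\nu+1}),
\]
which after suspending by $1 + 2^{\nu+2}$ becomes $\Sigma^{1+2^{\nu+2}}\wtwo(2^{\nu+2}) \twoheadrightarrow \Sigma^{1+2^{\nu+1}}\wtwo(2^{\nu+1})$.

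Since $\alpha(2^\mu) = 1$ for every $\mu \in \nat$, Theorem \ref{thm:identify_BG} furnishes stable isomorphisms $\Sigma^{1+2^{\mu+1}}\wtwo(2^{\mu+1}) \simeq A_{\mu,1}$ for $\mu \in \{\nu,\nu+1\}$, adopting the convention $A_{0,1} := \Sigma^3 Z$ when $\nu = 0$. Composing the suspended $\pi_\nu$ with these stable equivalences gives a stable map $A_{\nu+1,1} \to A_{\nu,1}$, which I lift to a genuine morphism $f$ of $\aone$-modules via the surjection $\hom_{\aone}(A_{\nu+1,1}, A_{\nu,1}) \twoheadrightarrow [A_{\nu+1,1}, A_{\nu,1}]$.

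To verify that $H^3(f, Q_1)$ is an isomorphism, I note that both source and target have one-dimensional $Q_1$-Margolis cohomology concentrated in degree $3$ (by Lemma \ref{lem:Picard_construction_Q0-acyclics}), so it suffices to show $H^3(f, Q_1) \neq 0$. As Margolis cohomology depends only on the stable class of $f$ and the identifications of Theorem \ref{thm:identify_BG} are isomorphisms on Margolis cohomology by the Adams--Margolis criterion, the question reduces to checking that $\pi_\nu$ is non-zero on the bottom $Q_1$-class of $\wtwo(2^{\nu+2})$. By the inspection in Lemma \ref{lem:wtwo_inject_question}, this class is represented by $\zeta_{\nu+1}^2$; since the quotient $\pi_\nu$ kills the multiples of $\zeta_0^2$ and re-indexes $\zeta_i \mapsto \zeta_{i-1}$ (cf.\ the proof of Lemma \ref{lem:mahowald_ses}), it sends $\zeta_{\nu+1}^2$ to $\zeta_\nu^2$, which represents the bottom $Q_1$-class in $\wtwo(2^{\nu+1})$. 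The main technical nuisance will be the bookkeeping of suspensions and the separate treatment at $\nu = 0$; there are no deeper conceptual difficulties once the identifications of Theorem \ref{thm:identify_BG} are in hand.
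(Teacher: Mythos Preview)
Your approach is correct and is genuinely different from the paper's. The paper's proof is only a sketch: it suggests either checking by hand from the explicit presentations in Section~\ref{sect:picard_interpret}, or appealing forward to the $\stext$ computation of Proposition~\ref{prop:unicity_st_map_A}, which shows $[A_{k,1},A_{l,1}]=\field$ for $k\geq l$ with the nontrivial class detected on $H^3(-,Q_1)$. You instead construct the morphism concretely by transporting the Mahowald surjection $\wtwo(2^{\nu+2})\twoheadrightarrow\Sigma^{-2^{\nu+1}}\wtwo(2^{\nu+1})$ across the identifications of Theorem~\ref{thm:identify_BG}, and track the explicit $Q_1$-cocycle $\zeta_{\nu+1}^2\mapsto\zeta_\nu^2$. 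This is arguably more natural in the context of Section~\ref{sect:bgp}, where the lemma is precisely the $\aone$-module avatar of the tower of maps $T(2^{j+1})\to T(2^j)$; your argument makes that link explicit rather than leaving it implicit. The paper's route, by contrast, has the advantage of yielding uniqueness of the stable class for free (which is the content of Proposition~\ref{prop:unicity_st_map_A}).

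One small imprecision: you write that source and target ``have one-dimensional $Q_1$-Margolis cohomology concentrated in degree~$3$'', but in fact $H^*(A_{k,1},Q_1)$ is two-dimensional, supported in degrees $3$ and $2(k+1)$ (Lemma~\ref{lem:Picard_construction_Q0-acyclics}). What you need, and what you actually use, is only that $H^3(-,Q_1)$ is one-dimensional on each side; the argument is unaffected. Also note that the inverse system in Proposition~\ref{prop:inverse_system_Ake} begins at $\nu\geq 1$, so the $\nu=0$ case (relying on the convention $A_{0,1}:=\Sigma^3 Z$) is not strictly required, though your treatment of it is fine.
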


\begin{proof}
(Indications.)
 This can be checked by hand using the material of Section 
\ref{sect:picard_interpret}. (A better method is to calculate 
 $[A_{\nu +1, 1}, A_{\nu, 1}] \neq 0$; see Proposition 
\ref{prop:unicity_st_map_A} below.)
\end{proof}

\begin{rem}
\label{rem:indeterminacy}
 Some caution is in order since 
 $
  \hom _{\aone} (A_{2,1},A_{1,1}) = \field^{\oplus 2}
 $
and, in this case, there are two distinct morphisms satisfying the conclusion
of 
Lemma 
\ref{lem:exist_Anu1}. The complication arises from the presence of the Joker at 
the top of $A_{2,1}$, which allows for a non-trivial composite morphism:
\[
 A_{2,1} \twoheadrightarrow \Sigma^5 \field \hookrightarrow A_{1,1}
\]
which is clearly trivial on $H^3 (-,Q_1)$.
\end{rem}

\begin{lem}
\label{lem:exist_P1_A}
  For $1 \leq \nu \in \nat$, there exists a morphism of $\aone$-modules 
 \[
  \Omega P_0 \rightarrow 
  A_{\nu, 1}
 \]
which induces an isomorphism on $H^3 ( -, Q_1)$.
\end{lem}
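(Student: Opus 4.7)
The plan is to obtain the morphism in the cases $\nu \equiv 0, 1 \pmod{4}$ first, by identifying these $A_{\nu,1}$ with suspensions of the cohomology of truncated real projective spaces and exploiting the evident algebraic truncation map, then to extend to all $\nu \geq 1$ by composing with the maps supplied by Lemma \ref{lem:exist_Anu1}.

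For the base cases, suppose $\nu \equiv 0$ or $1 \pmod{4}$; setting $n = \nu + 1$, Theorem \ref{thm:identify_A_trunc} applied with $m = 1$ provides an isomorphism of $\aone$-modules $P^{2n}_1 \cong \Sigma^{-1} A_{\nu,1}$. Combined with the identification (indicated at the beginning of Section \ref{sect:trunc_p}) of $P := \tilde{H}^*(\rp^\infty)$ as a suspension of $\Omega P_0$, the natural $\aone$-linear surjection $P \twoheadrightarrow P^{2n}_1$ given algebraically by projecting away classes in degrees above $2n$ yields, after the appropriate degree shift, a morphism $\Omega P_0 \to A_{\nu,1}$. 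This map induces an isomorphism on $H^3(-, Q_1)$ because the unique $Q_1$-Margolis class of $\Omega P_0$ corresponds (under the suspension identification) to $u^2 \in P$, which is sent to the non-zero class $u^2 \in P^{2n}_1$ by the truncation and, after the suspension shift, represents the lowest dimensional $Q_1$-Margolis class of $A_{\nu,1}$ in degree three.

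For general $\nu \geq 1$, choose $\nu' \geq \nu$ with $\nu' \equiv 0$ or $1 \pmod{4}$. The base case furnishes a morphism $\phi \colon \Omega P_0 \to A_{\nu',1}$ that is an isomorphism on $H^3(-, Q_1)$, and iterating Lemma \ref{lem:exist_Anu1} produces a composite $\psi \colon A_{\nu',1} \to A_{\nu,1}$ with each factor inducing an isomorphism on $H^3(-, Q_1)$; the composite $\psi \circ \phi$ is the required morphism. The main obstacle is the indeterminacy in Lemma \ref{lem:exist_Anu1} noted in Remark \ref{rem:indeterminacy}: at each of the $\nu' - \nu$ stages one must choose the factor that is non-trivial on $H^3(-, Q_1)$, avoiding the trivial composites factoring through summands of the form $\Sigma^k \field$. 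Existence of such choices can be verified either by direct inspection of the low-dimensional structure of the modules $A_{\nu,1}$ using the material of Section \ref{sect:picard_interpret}, or, as suggested by the proof indications for Lemma \ref{lem:exist_Anu1}, by first computing $[A_{\nu+1, 1}, A_{\nu, 1}]$.
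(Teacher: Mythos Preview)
Your proof is correct but proceeds quite differently from the paper's. The paper gives a uniform construction for all $\nu \geq 1$: it uses the short exact sequence $0 \to \Omega P_0 \to \Sigma R \to \field \to 0$ from Section \ref{sect:killing} together with the defining sequence $0 \to A_{\nu,1} \to (\Omega\Sigma^{-1})^{\nu+1}J \to \field \to 0$. Tensoring the projection $\Sigma R \twoheadrightarrow \field$ with $(\Omega\Sigma^{-1})^{\nu+1}J$ and invoking the stable equivalence $(\Omega\Sigma^{-1})^{\nu+1}J \otimes \Sigma R \simeq \Sigma R$ (a consequence of Lemma \ref{lem:R_stab_eq}) produces a stable morphism $\Sigma R \to (\Omega\Sigma^{-1})^{\nu+1}J$ inducing an isomorphism on $H^0(-,Q_0)$; restricting a representative to strictly positive degrees then yields $\Omega P_0 \to A_{\nu,1}$ directly.

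By contrast, you first obtain the map only for $\nu \equiv 0,1 \pmod{4}$ via the classification Theorem \ref{thm:identify_A_trunc} and the explicit truncation $\tilde H^*(\rp^\infty) \twoheadrightarrow P^{2n}_1$, and then propagate to the remaining residues using Lemma \ref{lem:exist_Anu1}. This is a legitimate route, and it has the pleasant feature of making the link with truncated projective spaces explicit in the direct cases, which resonates with the theme of Section \ref{sect:bgp}. Its cost is that it imports the heavier machinery of Theorem \ref{thm:identify_A_trunc}, introduces a case split on $\nu \pmod 4$, and requires the bootstrap via Lemma \ref{lem:exist_Anu1} (together with the care about indeterminacy you note). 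The paper's argument is lighter and uniform: it needs only the module $R$ and its basic stable properties, and gives the morphism for every $\nu$ in one stroke.
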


\begin{proof}
 Recall from Section \ref{sect:killing} that there is a short exact sequence $0 
\rightarrow \Omega P_0 \rightarrow \Sigma R \rightarrow \field \rightarrow 0$  
of $\aone$-modules and, by construction (see Section 
\ref{sect:picard_interpret}),  
 there is a short exact sequence $0 \rightarrow A_{\nu, 1}\rightarrow 
(\Omega \Sigma^{-1})^{\nu +1} J \rightarrow \field \rightarrow 0$. For 
connectivity reasons, restricting a morphism $ \Sigma R 
\rightarrow  (\Omega \Sigma^{-1})^{\nu +1} J$ to elements of strictly 
positive degree induces a morphism $\Omega P_0 \rightarrow A_{\nu, 1}$. 
 
 Tensoring the projection $\Sigma R \twoheadrightarrow \field$ with 
$(\Omega \Sigma^{-1})^{\nu +1} J $ induces a stable morphism $\Sigma R 
\rightarrow (\Omega \Sigma^{-1})^{\nu +1} J$, since $(\Omega \Sigma^{-1})^{\nu 
+1} J \otimes \Sigma R \simeq \Sigma R$, which induces an isomorphism 
on $H^0 (-,Q_0)$. The restriction of a representative morphism $\Sigma R 
\rightarrow (\Omega \Sigma^{-1})^{\nu +1}  J$ of $\aone$-modules to strictly 
positive degree provides the required morphism. 
\end{proof}

This Lemma can be made more precise by the following:

\begin{prop}
\label{prop:inverse_system_Ake}
An inverse system of morphisms of $\aone$-modules 
 \[
  \{ f_\nu : A_{\nu +1, 1} \rightarrow A_{\nu,1} ~|~ \nu \geq 1 \}
 \]
such that $H^3 (f_{\nu}, Q_1)$ is an isomorphism, yields an isomorphism
\[
 \lim_{\leftarrow} A_{\nu, 1} \cong \Omega P_0.
\]
\end{prop}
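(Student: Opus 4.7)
The strategy is to construct a comparison map $g : \Omega P_0 \to \lim_{\leftarrow} A_{\nu, 1}$ from the morphisms of Lemma \ref{lem:exist_P1_A} and to verify it is an isomorphism by a degree-wise stabilisation argument. For each $\nu \geq 1$, Lemma \ref{lem:exist_P1_A} produces $g_\nu : \Omega P_0 \to A_{\nu, 1}$ inducing an isomorphism on $H^3(-, Q_1)$. These can be arranged to be compatible with the given $f_\nu$ by an inductive correction: if $g_\nu$ is fixed and $\widetilde{g}_{\nu+1}$ is any candidate lift, then $f_\nu \circ \widetilde{g}_{\nu+1} - g_\nu$ vanishes on $H^3(-, Q_1)$, and the freedom in choosing $\widetilde{g}_{\nu+1}$ (namely by adding morphisms $\Omega P_0 \to A_{\nu+1,1}$ that are trivial on $H^3(-,Q_1)$) can be used to cancel this discrepancy. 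This produces a compatible family, hence a map $g : \Omega P_0 \to \lim_{\leftarrow} A_{\nu,1}$.

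The next step is to analyse the inverse system degree-wise. Inspection of the figures in Section \ref{sect:picard_interpret} (or equivalently of the explicit Picard-group presentation from Proposition \ref{prop:A_orbits_normalization}) shows that for each fixed $d \in \zed$, the modules $A_{\nu, 1}$ agree in degrees $\leq d$ once $\nu$ is sufficiently large: as $\nu$ grows, $A_{\nu+1, 1}$ extends $A_{\nu, 1}$ only near the top Margolis class in degree $2\nu + 4$, leaving the low-degree part intact. Combined with the hypothesis that $H^3(f_\nu, Q_1)$ is an isomorphism, and the arguments of Section \ref{sect:basic} (which rule out factorisations through a socle component, as flagged in Remark \ref{rem:indeterminacy}), this forces $f_\nu$ to be an isomorphism in all degrees strictly below a threshold $D(\nu)$ tending to $\infty$ with $\nu$. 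Consequently $(\lim_{\leftarrow} A_{\nu,1})^d$ coincides with $(A_{\nu, 1})^d$ for $\nu$ sufficiently large, and the limit is bounded-below, reduced, $Q_0$-acyclic, with $H^*(-, Q_1) = \field$ concentrated in degree $3$.

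Finally, $g$ is an isomorphism in each fixed degree $d$: for $\nu$ large with respect to $d$, the component $g^d$ factors through $g_\nu^d : (\Omega P_0)^d \to (A_{\nu, 1})^d$. Both source and target are reduced, bounded-below and $Q_0$-acyclic, and $g_\nu$ realises an isomorphism on $H^3(-, Q_1)$; a low-degree analysis in the spirit of Theorem \ref{thm:fundamental} then shows $g_\nu^d$ is an isomorphism. Alternatively, having identified the Margolis cohomology and connectivity of $\lim_{\leftarrow} A_{\nu,1}$, one can appeal to the uniqueness underlying Theorem \ref{thm:yu} to identify the limit with $\Omega P_0$ and use $g$ to produce the isomorphism.

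The \textbf{main obstacle} is the stabilisation step: formulating and proving precisely that $f_\nu$ is an isomorphism on a range of degrees growing with $\nu$. The $H^3(f_\nu, Q_1)$-hypothesis is a priori very weak; amplifying it into control over $f_\nu$ in low degrees requires the rigidity provided by the Picard-theoretic structure of the $A_{\nu,1}$ developed in Section \ref{sect:picard_interpret}. The ambiguity flagged in Remark \ref{rem:indeterminacy} is resolved automatically by the Margolis hypothesis, since the alternative composite $A_{2,1} \twoheadrightarrow \Sigma^5 \field \hookrightarrow A_{1,1}$ is trivial on $Q_1$-Margolis cohomology and is therefore excluded.
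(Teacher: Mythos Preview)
Your approach---degree-wise stabilisation of the inverse system---is the same as the paper's, which gives only the one-line indication ``In a fixed degree, the inverse system stabilizes to the given isomorphism.'' You have fleshed this out considerably, and the core of your argument is sound.

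Two points deserve tightening. First, your compatibility argument for the $g_\nu$ is not justified: you assert that morphisms $\Omega P_0 \to A_{\nu+1,1}$ trivial on $H^3(-,Q_1)$ can be used to cancel the discrepancy $f_\nu \circ \widetilde{g}_{\nu+1} - g_\nu$, but you have not checked that $f_\nu$ carries such morphisms onto the full space of possible discrepancies in $\hom_{\aone}(\Omega P_0, A_{\nu,1})$. This step can in fact be bypassed: once degree-wise stabilisation is established, the limit in degree $d$ is $(A_{\nu,1})^d$ for $\nu$ large, and this can be identified with $(\Omega P_0)^d$ directly from the explicit description of $A_{\nu,1}$ (via Proposition~\ref{prop:identify_A} or the figures of Section~\ref{sect:picard_interpret}), without needing a compatible family of comparison maps.

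Second, your final paragraph misreads Remark~\ref{rem:indeterminacy}. The composite $A_{2,1} \twoheadrightarrow \Sigma^5\field \hookrightarrow A_{1,1}$ is not a candidate for $f_1$ (it is trivial on $H^3(-,Q_1)$, so it fails the hypothesis); rather, the \emph{two} morphisms that do satisfy the hypothesis differ by this composite. The Margolis condition therefore does not resolve the ambiguity. What makes it harmless is that such indeterminacy is supported near the top of the target module (degree $5$ here, and in degrees tending to infinity as $\nu$ grows), so it does not affect the degree-wise limit.
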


\begin{proof}
(Indications.)
In a fixed degree, the inverse system stabilizes to the given isomorphism.
\end{proof}

The indeterminacy evoked in Remark \ref{rem:indeterminacy} is removed upon 
passage to the stable module category:

\begin{prop}
\label{prop:unicity_st_map_A}
 For $1 \leq k, l \in \nat$, 
 \[
  [A_{k,1}, A_{l,1}] = \left\{
  \begin{array}{ll}
   \field & k \geq l \\
   0 & k <l
  \end{array}
\right.
 \]
and the non-trivial morphism is detected on $H^3 (-,Q_1)$.
\end{prop}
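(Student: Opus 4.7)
The plan is to apply the contravariant functor $[-, A_{l,1}]$ to the defining short exact sequence
\[
0 \to A_{k,1} \to (\Omega\Sigma^{-1})^{k+1} J \to \field \to 0
\]
of Lemma \ref{lem:Picard_construction_Q0-acyclics}. The resulting long exact sequence in stable ext contains
\[
\stext^0((\Omega\Sigma^{-1})^{k+1}J, A_{l,1}) \to [A_{k,1}, A_{l,1}] \to \stext^1(\field, A_{l,1}) \stackrel{\delta}{\to} \stext^1((\Omega\Sigma^{-1})^{k+1}J, A_{l,1}),
\]
preceded by $\stext^0(\field, A_{l,1})$, which vanishes for purely connectivity reasons since $A_{l,1}$ is concentrated in strictly positive degrees. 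The first term translates via the Picard grading to $\stext^{k+1, k+1, 1}(\field, A_{l,1})$, which sits at Adams bidegree $(0, k+1, 1)$. A direct inspection of the $\e$-module description of $\stext(\field, A_{l,1})$ furnished by Theorem \ref{thm:calculate_stext_Ake}(1) will show that no $\e$-multiple of the generators $\mu_{l,1}$ or $\lambda_l$ lands in this bidegree, so this term also vanishes. This identifies $[A_{k,1}, A_{l,1}]$ with $\ker \delta$ inside the one-dimensional group $\stext^1(\field, A_{l,1}) = \field\,\mu_{l,1}$.

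Next I would identify the connecting map $\delta$. It is induced by the defining surjection $(\Omega\Sigma^{-1})^{k+1} J \to \field$, which corresponds to a class in $\stext^{k+1, k+1, 1}(\field, \field)$ at Adams bidegree $(0, k+1, 1)$. Using the algebra structure of Lemma \ref{lem:ext_trigraded} together with the relation $\kappa^2 = h_0^2$, this bidegree is one-dimensional with generator $h_0^k\kappa$; by uniqueness of the non-trivial extension class, the surjection is represented by $h_0^k\kappa$ up to a non-zero scalar. Hence $\delta(\mu_{l,1}) = h_0^k \kappa\,\mu_{l,1}$ in $\stext(\field, A_{l,1})$. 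Combined with the defining relation $h_0^l\kappa\,\mu_{l,1} = 0$ of Theorem \ref{thm:calculate_stext_Ake}(1) and the non-vanishing of $h_0^{l-1}\kappa\,\mu_{l,1}$, this yields the stated dichotomy: $\delta$ vanishes exactly when $k \geq l$, giving $[A_{k,1}, A_{l,1}] \cong \field$, while $\delta$ is an isomorphism $\field \to \field$ when $k < l$, forcing $[A_{k,1}, A_{l,1}] = 0$.

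For the detection statement, the injection $[A_{k,1}, A_{l,1}] \hookrightarrow \stext^1(\field, A_{l,1})$ coming from the long exact sequence sends a non-trivial morphism $f\colon A_{k,1} \to A_{l,1}$ to the composite $\Omega\field \stackrel{\mu_{k,1}}{\to} A_{k,1} \stackrel{f}{\to} A_{l,1}$, which by injectivity of that map must be a non-zero scalar multiple of $\mu_{l,1}$. Since $\mu_{k,1}$ and $\mu_{l,1}$ are constructed to induce isomorphisms on $H^3(-, Q_1)$, the identity $H^3(f, Q_1) \circ H^3(\mu_{k,1}, Q_1) = c\cdot H^3(\mu_{l,1}, Q_1)$ with $c \neq 0$ forces $H^3(f, Q_1)$ to be an isomorphism.

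The main obstacle I anticipate is the precise identification of the class of the surjection $(\Omega\Sigma^{-1})^{k+1} J \to \field$ as $h_0^k\kappa$ in the $\pica$-graded algebra $\stext(\field, \field)$: this relies on careful bookkeeping of the Picard-grading and $\e$-module conventions, but once in place, the remainder reduces to a direct application of the explicit relations recorded in Theorem \ref{thm:calculate_stext_Ake}.
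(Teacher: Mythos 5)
Your proposal is correct and follows essentially the same route as the paper: the long exact sequence obtained by applying $[-,A_{l,1}]$ to the defining extension of $A_{k,1}$, the vanishing of $[(\Omega\Sigma^{-1})^{k+1}J,A_{l,1}]$ via Theorem \ref{thm:calculate_stext_Ake}, the identification of the connecting map as multiplication by $h_0^k\kappa$ (which is exactly Lemma \ref{lem:connecting_morphism}, resting on the non-triviality of the stable class recorded in Lemma \ref{lem:Picard_construction_Q0-acyclics}), and the relation $h_0^l\kappa\,\mu_{l,1}=0$ versus the non-vanishing of $h_0^{l-1}\kappa\,\mu_{l,1}$. Your explicit treatment of the detection on $H^3(-,Q_1)$ merely spells out what the paper leaves implicit, so no essential difference remains.
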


\begin{proof}
 Consider the long exact sequence associated to $$0 \rightarrow A_{k, 
1}\rightarrow (\Omega \Sigma^{-1})^{k+1} J \rightarrow \field \rightarrow 
0,$$ of which the relevant portion is
 \[
  [(\Omega\Sigma^{-1})^{k+1}J, A_{l,1}]
  \rightarrow 
  [A_{k,1}, A_{l,1}]
  \rightarrow
  [\Omega \field,  A_{l,1}]
  \stackrel{h^k_0\kappa}{\rightarrow}
  [\Omega (\Omega \Sigma^{-1})^{k+1}J, A_{l,1}],
 \]
where the indicated map is given by Lemma \ref{lem:connecting_morphism}. 
 Theorem \ref{thm:calculate_stext_Ake}  shows that 
$[(\Omega \Sigma^{-1})^{k+1}J, A_{l,1}]=0$ and $[\Omega \field,  A_{l,1}] = 
\field$, generated by the class $\mu_{k,1}$. 
By Theorem  \ref{thm:calculate_stext_Ake}, $h^k_0 \kappa \mu_{k,1} \neq 0$ 
if and only if $k <l$; the result follows.
\end{proof}

Hence there is a well defined inverse system 
\[
 \ldots \rightarrow A_{\nu +1, 1} \rightarrow A_{\nu , 1} \rightarrow \ldots 
\rightarrow A_{1, 1}
\]
in the stable module category, where each morphism induces an isomorphism on 
$H^3 (-, Q_1)$. This induces an isomorphism 
\[
 \stext (\field, \Omega P_0) \cong \lim _\leftarrow  \stext (\field , A_{\nu , 
1} ) .
\]

\begin{rem}
Topologically this corresponds to the fact that the stable summand of
$
 \mathrm{hocolim} 
 \{ T (1) \rightarrow T (2) \rightarrow \ldots \rightarrow T (2^j) \rightarrow 
\ldots \} 
$ 
complementary to $\Sigma^{\infty} \rp^\infty$ is $KO^*$-acyclic. 
\end{rem}

\begin{rem}
A similar result holds for a positive integer $n$, when considering 
\[
 T (n) \rightarrow T (2n) \rightarrow \ldots \rightarrow T (2^j n ) \rightarrow 
\ldots ,
\]
by Theorem \ref{thm:identify_BG}, which provides a stable equivalence (for $n$ 
odd and $j \geq 1$):
\[
 \Sigma^{2^jn} \wtwo(2^jn)
\simeq (\Sigma \Omega^{-1} ) ^{1- \alpha (n)} \big( \Sigma^{-1} A_{ j-1, 1} 
\big).
\]
In particular, the term $(\Sigma \Omega^{-1}) ^{1-\alpha (n)}$ is 
independent of $j$, hence Proposition \ref{prop:unicity_st_map_A} gives
the analogous $\stext$ calculation.
\end{rem}

\section{Stable decompositions of tensor products}
\label{sect:tensor}

It is interesting to consider the stable isomorphism type of tensor products of 
modules of the form appearing in Theorem \ref{thm:classification}. This 
problem has been addressed for truncated projective spaces by Davis in 
\cite[Theorem 3.9]{davis}.

Over  $\eone$, the calculation is straightforward:
the tensor products always split as a direct sum of two, non-trivial 
indecomposable modules. Over $\aone$ the situation is more delicate. For 
example:

\begin{exam}
\label{exam:Z_tensor_Z}
 Recall that $Z$ is the two-dimensional $\aone$-module in degrees $-1,0$, with 
$Sq^1$ acting 
non-trivially. The tensor product $Z \otimes Z$ is indecomposable, with 
structure:
 \[
  \xymatrix@R=.5pc @C=.5pc{
  &
  \bullet 
  \\
  \bullet 
  \ar@<.5ex>[ur]
  \ar@<-.5ex>@/^1pc/[rr]
  &&
  \bullet 
  \\
  &
  \bullet.
  \ar[ru]
  }
 \]
 Over $\eone$, the $Sq^2$ is not seen, and the module splits as $(\Sigma^{-1} Z 
\oplus Z)|_{\eone}$. Since $Z$ generates (under $\Sigma^{-3}\Omega$) all stable 
isomorphism classes of $\aone$-modules corresponding to $s=1$ in Theorem 
\ref{thm:classification}, a similar statement holds for tensor products of any 
two such modules.
\end{exam}

Recall from Section \ref{sect:picard_interpret} (in particular Corollary 
\ref{cor:orbit}) that, for $1 \leq k$,  the modules 
$A_{k,0}$, $A_{k,1}$ give orbit representatives for  modules of the form 
occurring in Theorem 
\ref{thm:classification}. To consider the stable isomorphism of tensor products 
of such modules, it suffices to consider 
$ 
 A_{k,\epsilon} \otimes A_{l, \delta}$ and, without loss of generality, we may 
suppose that $k \leq l$. 

The module $A_{l, \delta}$ is defined by a short exact sequence 
\[
 0
 \rightarrow 
 A_{l, \delta} 
 \rightarrow 
 (\Omega \Sigma^{-1}) ^{l+1} J ^{\otimes \delta} 
 \rightarrow 
 \field 
 \rightarrow 
 0
\]
and the analogous statement holds for $A_{k,\epsilon}$. Forming the tensor 
product $A_{k, \epsilon} \otimes - $ with the above short exact sequence gives 
\[0
 \rightarrow 
 A_{k, \epsilon} \otimes A_{l, \delta} 
 \rightarrow 
 (\Omega \Sigma^{-1}) ^{l+1} J ^{\otimes \delta} \otimes A_{k, \epsilon}
 \rightarrow 
 A_{k,\epsilon}
 \rightarrow 
 0
\]
and hence a distinguished triangle in the stable module category:
\begin{eqnarray}
 \label{eqn:dist_triang}
 \Omega A_{k,\epsilon}
 \rightarrow
 A_{k, \epsilon} \otimes A_{l, \delta} 
 \rightarrow 
 (\Omega \Sigma^{-1}) ^{l+1} J ^{\otimes \delta} \otimes A_{k, \epsilon}
 \rightarrow
\end{eqnarray}
with connecting  morphism $ (\Omega \Sigma^{-1}) ^{l+1} J ^{\otimes \delta} 
\otimes A_{k, \epsilon}
 \rightarrow  A_{k,\epsilon}$. This distinguished triangle splits to provide a 
direct sum decomposition of $A_{k, \epsilon} \otimes A_{l, \delta}$ if and only 
if 
 the corresponding element of 
 \[
 \obstruct_{k,l,\delta,\epsilon} \in 
 [ (\Omega \Sigma^{-1}) ^{l+1} J ^{\otimes \delta} \otimes A_{k, \epsilon}, 
A_{k,\epsilon}]
\]
is trivial.

The defining short exact sequence for $A_{k,\epsilon}$ gives an exact sequence:
\[
[ (\Omega \Sigma^{-1}) ^{k+ l+2} J^{\otimes (\delta + \epsilon)},
A_{k,\epsilon}]
 \rightarrow 
[(\Omega \Sigma^{-1}) ^{l+1} J ^{\otimes \delta} \otimes A_{k, \epsilon}, 
A_{k,\epsilon}]
 \rightarrow 
 [ \Omega (\Omega \Sigma^{-1}) ^{l+1} J^{\otimes \delta}, A_{k,\epsilon}].
\]

\begin{lem}
\label{lem:stext_first_term}
 For $1 \leq k\leq l \in \nat$ and $\delta,\epsilon \in \{0,1 \}$, 
 \[
  [ (\Omega \Sigma^{-1}) ^{k+ l+2} J^{\otimes (\delta + \epsilon)},
A_{k,\epsilon}]
  = 
0
\]
\end{lem}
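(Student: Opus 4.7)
The plan is to reduce the statement to the vanishing line of Proposition~\ref{prop:vanishing_lines}. Since the source $(\Omega\Sigma^{-1})^{k+l+2}J^{\otimes(\delta+\epsilon)}$ is an element of $\pica$, it can be moved to the target, yielding an isomorphism
\[
\bigl[(\Omega\Sigma^{-1})^{k+l+2}J^{\otimes(\delta+\epsilon)},\, A_{k,\epsilon}\bigr]
\;\cong\; \stext^{k+l+2,\,k+l+2,\,\delta+\epsilon}(\field, A_{k,\epsilon}),
\]
so that the group to be killed sits at Adams index $(t-s, s) = (0,\, k+l+2)$ in $\stext(\field, A_{k,\epsilon})$ with $\pica$-grading $\delta+\epsilon$.

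Next I would dispense with the $\zed/2$-component of the $\pica$-grading. Because $J$ has one-dimensional Margolis cohomology concentrated in degree zero for both $Q_0$ and $Q_1$, the Künneth formula gives that $J^{\otimes(\delta+\epsilon)}\otimes A_{k,\epsilon}$ is $Q_0$-acyclic with $H^*(-,Q_1)$ concentrated in the same degrees $\{3,\,2(k+1)\}$ as $A_{k,\epsilon}$ itself, by Lemma~\ref{lem:Picard_construction_Q0-acyclics}. Consequently, the hypotheses of Proposition~\ref{prop:vanishing_lines} are satisfied with $d_1 = 3$ and $d_2 = 2(k+1) = 2k+2$.

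Finally I would conclude by substituting into the vanishing condition $t - s < 2s - d_2 - 3$. With $s = k+l+2$, $t-s = 0$ and $d_2 = 2k+2$ one computes $2s - d_2 - 3 = 2l - 1$, and the condition reads $0 < 2l - 1$, which holds since $l \geq k \geq 1$. Hence the group vanishes, giving the result.

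The proof is essentially a bookkeeping exercise; the only delicate point is the translation between stable hom groups expressed via Picard-group shifts and the bigraded $\stext$ notation used in Proposition~\ref{prop:vanishing_lines}, together with the verification that the Picard $\zed/2$-component is immaterial. There is no genuine obstacle.
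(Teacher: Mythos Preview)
Your argument is correct. The translation to $\stext^{k+l+2,\,k+l+2,\,\delta+\epsilon}(\field, A_{k,\epsilon})$ is right, the handling of the $\zed/2$-component by passing to $J^{\otimes(\delta+\epsilon)}\otimes A_{k,\epsilon}$ is legitimate (this module is finite, $Q_0$-acyclic, and has $H^*(-,Q_1)$ concentrated in $\{3,2k+2\}$ by K\"unneth), and the numerical check $0 < 2l-1$ is exactly what the vanishing line of Proposition~\ref{prop:vanishing_lines} requires.

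The paper proceeds differently: rather than invoking the vanishing line, it appeals directly to Theorem~\ref{thm:calculate_stext_Ake}, reading off that on the column $t-s=0$ the group $\stext^{*,*}(\field,A_{k,\epsilon})$ is $\field$ concentrated at $s=2$, and then treating the case $\delta+\epsilon\equiv 1$ by using $J\otimes A_{k,\epsilon}\simeq \Sigma^4(\Omega^{-1}\Sigma)^2 A_{k,\epsilon}$ (Proposition~\ref{prop:Q_0-acyclic}) to shift back to the even case. Your route is more economical in that it uses only the coarse vanishing line and does not depend on the full module structure of $\stext(\field,A_{k,\epsilon})$; the paper's route is perhaps more natural in context, since Theorem~\ref{thm:calculate_stext_Ake} has just been proved and makes the answer immediate by inspection of the chart.
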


\begin{proof}
This follows from the stable ext calculations of Theorem
\ref{thm:calculate_stext_Ake}.

If $\delta + \epsilon \equiv 0 \mod (2)$, then the relevant  group is 
$\stext (\field, A_{k,\epsilon})$ of Adams indexing $t-s=0$, $s=k+l+2$; 
for $t-s=0$,  $\stext(\field ,A_{k, \epsilon}) $ is $\field$ concentrated in 
degree $s =2$. The hypothesis on $k,l$ implies that $k+l+2 >2$, whence the
result in this case.
 
In the remaining case, Proposition \ref{prop:Q_0-acyclic} implies that there is
a stable isomorphism $J \otimes A_{k, \epsilon} 
\simeq \Sigma^4 (\Omega^{-1} \Sigma)^2 A_{k, \epsilon}$, hence 
\[
 [ (\Omega \Sigma^{-1}) ^{k+ l+2} J, A_{k,\epsilon}]
\cong
 [  (\Omega \Sigma^{-1}) ^{k+ l+4}\field, \Sigma ^4 A_{k,\epsilon}]
\]
and the latter group is trivial. 
\end{proof}

It follows that $\obstruct_{k,l,\delta,\epsilon} \in 
 [ (\Omega \Sigma^{-1}) ^{l+1} J ^{\otimes \delta} \otimes A_{k, \epsilon}, 
A_{k,\epsilon}]$ is zero if and only if its image 
\[
 \overline{\obstruct_{k,l,\delta,\epsilon}} \in 
 [ \Omega (\Omega \Sigma^{-1}) ^{l+1} J^{\otimes \delta}, A_{k,\epsilon}]
\]
is zero. By construction, $\overline{\obstruct_{k,l,\delta,\epsilon}}$ is the
composite of the commutative diagram in the stable module category:
\[
 \xymatrix{
 (\Omega \Sigma^{-1}) ^{l+1} J^{\otimes \delta} \otimes \Omega \field
 \ar[rr]
 \ar[d]
 &&
 \Omega \field \ar[d]^{\mu_{k,\epsilon}}
 \\
  (\Omega \Sigma^{-1}) ^{l+1} J ^{\otimes \delta} \otimes A_{k, \epsilon}
  \ar[rr]_(.6){\obstruct_{k,l,\delta,\epsilon}}
  &&
  A_{k,\epsilon},
 }
\]
where the vertical morphisms are induced by $\mu_{k,\epsilon} : \Omega \field
\rightarrow  A_{k,\epsilon}$ and the horizontal morphisms by the non-trivial
morphism $ (\Omega \Sigma^{-1}) ^{l+1} J ^{\otimes \delta} \rightarrow \field$.
This provides the identification:
\[
 \overline{\obstruct_{k,l,\delta,\epsilon}} = h_0 ^{l+1-\delta} \kappa^\delta
\mu_{k,\epsilon}.
\]

\begin{thm}
  \label{thm:tensor_decomp}
  For  $1 \leq k\leq l \in \nat$ and $\delta,\epsilon \in \{0,1 \}$, the 
distinguished 
triangle (\ref{eqn:dist_triang}) splits to give a stable isomorphism 
  \[
   A_{k, \epsilon} \otimes A_{l, \delta} 
   \simeq 
   \Omega A_{k,\epsilon}
\oplus
   (\Omega \Sigma^{-1}) ^{l+1} J ^{\otimes \delta} \otimes A_{k, \epsilon}
  \]
if and only if $\{k,l ,\delta, \epsilon \}$ does not satisfy  both the following
conditions 
\begin{eqnarray*}
 k&=& l\\
\delta + \epsilon &\equiv& 1 \mod (2).
\end{eqnarray*}
\end{thm}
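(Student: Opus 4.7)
The plan is to leverage the analysis already developed in the paragraphs immediately preceding the theorem statement. There it is established that the distinguished triangle (\ref{eqn:dist_triang}) splits if and only if $\obstruct_{k,l,\delta,\epsilon}=0$, and Lemma \ref{lem:stext_first_term} reduces this further to the vanishing of
\[
\overline{\obstruct_{k,l,\delta,\epsilon}} \;=\; h_0^{\,l+1-\delta}\kappa^{\delta}\,\mu_{k,\epsilon} \;\in\; \stext(\field, A_{k,\epsilon}).
\]
Thus the entire theorem reduces to determining, case by case on $(\delta,\epsilon)\in\{0,1\}^{2}$, exactly when this class vanishes, with the asserted pattern corresponding to vanishing in every case except $l=k$ with $\delta+\epsilon$ odd.

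To tackle this, I would apply the long exact sequence for $\stext(\field,-)$ associated to the defining short exact sequence
\[
0 \to A_{k,\epsilon} \to (\Omega\Sigma^{-1})^{k+1}J^{\otimes\epsilon} \to \field \to 0.
\]
By Lemma \ref{lem:connecting_morphism} the connecting map $\partial\colon \stext(\field,\field) \to \stext(\field, A_{k,\epsilon})$ is multiplication by $h_0^{k+1-\epsilon}\kappa^{\epsilon}$ (up to the appropriate degree shift), and by the construction recalled above $\mu_{k,\epsilon}=\partial(1)$. Consequently $\overline{\obstruct_{k,l,\delta,\epsilon}}=\partial\bigl(h_0^{\,l+1-\delta}\kappa^{\delta}\bigr)$ vanishes if and only if the class $h_0^{\,l+1-\delta}\kappa^{\delta}$ lies in the image of multiplication by $h_0^{k+1-\epsilon}\kappa^{\epsilon}$ acting on $\stext(\field,\field)$.

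The bulk of the remaining work is then a divisibility calculation inside the $\pica$-graded algebra $\e = \field[h_0,h_1,\kappa,\alpha^{\pm 1}]/(h_0h_1,\,h_1^{3},\,h_1\kappa,\,\kappa^{2}-h_0^{2})$ of Lemma \ref{lem:ext_trigraded}. When $\delta=\epsilon$, divisibility is immediate from the hypothesis $l\geq k$: one writes $h_0^{l+1} = h_0^{k+1}\cdot h_0^{l-k}$ in the $(0,0)$ case and $h_0^{l}\kappa = (h_0^{k}\kappa)\cdot h_0^{l-k}$ in the $(1,1)$ case. When $\delta\neq\epsilon$ and $l\geq k+1$, the relation $\kappa^{2}=h_0^{2}$ again supplies divisibility, via $h_0^{l+1} = (h_0^{k}\kappa)\cdot\kappa\, h_0^{l-k-1}$ and $h_0^{l}\kappa = h_0^{k+1}\cdot h_0^{l-k-1}\kappa$. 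The critical case is $\delta\neq\epsilon$ and $l=k$: divisibility would force either $h_0$ into the image of multiplication by $\kappa$ or $\kappa$ into the image of multiplication by $h_0$, both of which fail by direct inspection of the presentation of $\e$.

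The one non-formal point in the plan, and where I expect the main obstacle to lie, is verifying that in the exceptional case $l=k$ with $\delta+\epsilon\equiv 1\pmod{2}$ the class $\overline{\obstruct}$ is genuinely non-zero in $\stext(\field,A_{k,\epsilon})$ (rather than merely failing to come from $\stext(\field,\field)$). For this I would appeal to the explicit relations of Theorem \ref{thm:calculate_stext_Ake}: in the case $(\delta,\epsilon)=(0,1)$ the obstruction equals $h_0^{k+1}\mu_{k,1}=h_1^{2}\lambda_k$, while in the case $(\delta,\epsilon)=(1,0)$ it equals $h_0^{k}\kappa\,\mu_{k,0}=\alpha^{-1}h_1^{2}\nu_k$. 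Both are manifestly non-zero generators of the corresponding Picard-graded components, as exemplified by Figures \ref{fig:A21} and \ref{fig:A20} for $k=2$; this rules out accidental vanishing and completes the case analysis.
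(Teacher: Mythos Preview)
Your argument is correct in substance and follows essentially the same route as the paper: reduce splitting to the vanishing of $h_0^{\,l+1-\delta}\kappa^{\delta}\mu_{k,\epsilon}$ and decide this using Theorem~\ref{thm:calculate_stext_Ake}. The paper simply reads off the answer directly from the relations there (splitting on $\delta$, and using $a=\kappa\alpha$ when $\delta=1$), whereas you interpose a divisibility reformulation in $\e$; both amount to the same computation.

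One small correction: your appeal to Lemma~\ref{lem:connecting_morphism} is misphrased. That lemma identifies the map $\pi_*\colon \stext(\field,(\Omega\Sigma^{-1})^{k+1}J^{\otimes\epsilon})\to\stext(\field,\field)$ induced by the \emph{surjection} as multiplication by $h_0^{k+1-\epsilon}\kappa^{\epsilon}$, not the connecting map $\partial$. Your criterion is still right, but for the reason that $\ker\partial=\mathrm{im}\,\pi_*$ by exactness; once you state it this way, your final paragraph becomes unnecessary, since non-divisibility in $\e$ already forces $\overline{\obstruct}\neq 0$.
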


\begin{proof}
 By the previous discussion, the distinguished triangle splits if and only if
$\overline{\obstruct_{k,l,\delta,\epsilon}} = 0$,  which is equivalent to 
 the condition $ h_0 ^{l+1-\delta} \kappa^\delta \mu_{k,\epsilon} =0$. Note that
$1 \leq k \leq l$ by hypothesis. 
 \begin{enumerate}
 \item 
 In the case $\delta=0$, by Theorem \ref{thm:calculate_stext_Ake},  $ h_0 ^{l+1}
 \mu_{k,\epsilon}\neq 0 $ if and only if $\epsilon =1$ and $k=l$.
 \item 
 If $\delta=1$, since $\alpha$ is invertible, one can equivalently consider
$\alpha h_0 ^{l} \kappa \mu_{k,\epsilon} = h_0 ^{l}a \mu_{k,\epsilon}$. By
Theorem \ref{thm:calculate_stext_Ake}, this is nonzero if and only if $k=l$ and
$\epsilon =0$. 
\end{enumerate}
Thus $\overline{\obstruct_{k,l,\delta,\epsilon}} \neq  0$ if and only if $k=l$
and $\delta + \epsilon \equiv 1 \mod (2)$, as required.
\end{proof}

 \begin{exam}
  Theorem \ref{thm:tensor_decomp} implies that $A_{k,1} \otimes A_{l,1}$ always 
splits (supposing $k,l\geq 1$). Since truncated projective spaces can lie in
either of the components, by 
Theorem \ref{thm:identify_A_trunc}, tensor 
products of truncated projective spaces need not split, as observed  in 
\cite[Theorem 3.9]{davis}.
 \end{exam}

\section{The Toda complex and the Postnikov tower of $ko$}
\label{sect:toda}

In \cite{powell}, the cohomology $ko \langle n \rangle 
^* 
(BV)$ of elementary abelian $2$-groups was studied as a functor of $V$, for $n 
\in \nat$. 
The emphasis there was on the functorial structure and the use of the known 
structure of the periodic theory $KO^* (BV)$ and the key result was the proof 
that the morphism of 
spectra derived from the Postnikov tower for $ko$
\[
 ko \langle n \rangle 
 \rightarrow 
 KO 
 \vee 
 \Sigma^n H(KO_n)
\]
(where $H \pi$ denotes the Eilenberg-MacLane spectrum for the abelian group 
$\pi$), induces an injection on the 
cohomology of $BV$.
In \cite{powell}, this was expressed in terms of the property of 
detection; from the viewpoint of the current paper, where  functoriality is a
less powerful tool, 
it is natural to reinterpret this  as a statement that the Adams spectral 
sequence for $ko\langle n \rangle ^* (BV)$ collapses at the $E_2$-term. The 
purpose of this section is to indicate the analogous results which hold for 
Brown-Gitler spectra.

The main result of \cite{pearson} studies the $ko$-homology of Brown-Gitler 
spectra and shows that the Adams spectral sequence collapses at $E_2$ in this 
case. This fact follows easily from Theorem \ref{thm:calculate_stext_Ake} here.
A similar statement holds for $ko$-cohomology; moreover, the conclusion extends 
to the theories $ko \langle n \rangle$, as in \cite{powell}. 

Recall (cf. \cite{powell} and \cite[Section A.5]{bg2}) that there is an exact 
Toda complex $\cale_\bullet$ of the form 
\[
 \ldots 
 \rightarrow 
 \Sigma ^n \calc _n 
 \rightarrow 
 \Sigma^{n-1} \calc_{n-1} 
 \rightarrow 
 \ldots 
\]
where the $\calc_i$ are four-periodic up to suspension, 
$
 \calc_{i+4} = \Sigma^8 \calc_i, 
$
the differentials are of degree zero  and 
\begin{eqnarray*}
 \calc_0 &=& \aone \otimes_\azero \field \\
 \calc_1 &=& \Sigma^1 \aone \\ 
\calc_2 &=& \Sigma^2 \aone \\
\calc_3 &=& \Sigma^4 \aone \otimes_\azero \field.
\end{eqnarray*}
(See \cite[Section 4]{powell} and \cite[Figure A.5.6]{bg2} for the 
identification of the differentials;  in  \cite[Section 4]{powell}, the 
differentials are taken of degree one, hence the suspension is omitted.) 

The exact complex is obtained by splicing short exact sequences:
\[
\xymatrix@R=.75pc @C=.75pc{
&
\Sigma^3 \calc_3 
\ar@{->>}[rd]
&&
\Sigma^2 \calc_2 
\ar@{->>}[rd]
&&
\Sigma \calc_1 
\ar@{->>}[rd]
&&
\calc _0 
\ar@{->>}[rd]
\\
\Sigma^4 (\Sigma^8 \field) 
\ar@{^(->}[ur]
&&
\Sigma^3 K_3 
\ar@{^(->}[ur]
&&
\Sigma^2 K_2 
\ar@{^(->}[ur]
&&
\Sigma K_1 
\ar@{^(->}[ur]
&&
K_0 = \field  
}
\]
where 
\begin{eqnarray*}
 K_1 &=& \Sigma^5 \Omega ^{-1} J \\
 K_2 &=& \Sigma^4 J \\
 K_3 &=& \Sigma^3 \Omega J.
\end{eqnarray*}
In particular, the $K_j$'s represent elements of the Picard group and they are 
cyclic $\aone$-modules. Moreover, for $n \in \nat$, there is an isomorphism of 
$\cala$-modules: 
\[
 H\field_2^* (ko \langle n \rangle) \cong \cala \otimes_{\aone} K_n.
\]
This makes it transparent that the results of the previous sections can be 
applied to the Adams spectral sequence calculations for $ko\langle n \rangle 
^*$. In particular, on stable ext groups one has:

\begin{lem}
\label{lem:stable_ext_Toda}
(Cf. \cite{powell}.) 
 For $M$ a $Q_0$-acyclic $\aone$-module and $n \in \nat$,  the connecting 
morphisms of the Toda exact sequence induce a natural isomorphism:
 \[
  \stext^{s,t} (K_n , M) 
  \cong 
  \stext^{s-n , t-n} (\field, M).
 \]
\end{lem}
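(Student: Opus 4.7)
The plan is to apply the functor $\stext(-, M)$ to each short exact sequence
\[
 0 \to \Sigma^{i+1} K_{i+1} \to \Sigma^i \calc_i \to \Sigma^i K_i \to 0
\]
arising from the splicing of the Toda complex, and to iterate the resulting connecting homomorphism $n$ times, passing from $K_0 = \field$ to $K_n$. The connecting morphism in each long exact sequence will be an isomorphism as soon as the flanking groups $\stext^{*,*}(\Sigma^i \calc_i, M)$ vanish identically for $i \in \{0,1,2,3\}$; establishing these vanishings is the main content.

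The vanishing splits into two cases. For $i \in \{1,2\}$, $\calc_i$ is a suspension of the free $\aone$-module $\aone$, hence stably trivial, so the vanishing is immediate. For $i \in \{0,3\}$, $\calc_i$ is a suspension of $\aone \otimes_\azero \field$, and here I would invoke change of rings. Since $\aone$ is $\azero$-free by Milnor--Moore, the induction functor $\aone \otimes_\azero -$ is exact and carries $\azero$-projectives to $\aone$-projectives; it therefore commutes with $\Omega$ up to canonical stable isomorphism and descends to an adjunction between the stable module categories, yielding a natural $\pica$-graded isomorphism
\[
 \stext^{*,*}_\aone(\aone \otimes_\azero \field, M) \cong \stext^{*,*}_\azero(\field, M|_\azero)
\]
valid in all bidegrees. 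When $M$ is $Q_0$-acyclic (and, as elsewhere in the paper, bounded below) the restriction $M|_\azero$ is $\azero$-free: its only possible indecomposable summands are $\field$ and $\azero$, and the $\field$-summands are detected by $H^*(M, Q_0) = 0$. Thus $M|_\azero$ is stably zero as an $\azero$-module, and the right-hand side vanishes identically.

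With these vanishings in place, iterating the $n$ connecting isomorphisms transports $\stext(K_0, M) = \stext(\field, M)$ to $\stext(K_n, M)$; bookkeeping the $n$ applications of the degree-raising connecting map together with the internal suspensions $\Sigma^i$ present at each splicing gives the claimed bidegree shift by $(-n,-n)$ (modulo sign conventions). The main point of delicacy is ensuring that the change-of-rings identification holds in all stable bidegrees, not merely in the positive $\ext$-range handled by the classical derived-functor Cartan--Eilenberg theorem; this is resolved by the observation that induction along a free extension of Hopf algebras preserves both exactness and projectivity, and therefore descends directly to the stable module categories.
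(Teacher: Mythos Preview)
Your proof is correct and is precisely what the paper's one-line ``relative homological algebra for the pair $(\aone,\azero)$'' is pointing at: the Eckmann--Shapiro/change-of-rings identification $\stext_{\aone}(\aone\otimes_{\azero}\field,M)\cong\stext_{\azero}(\field,M|_{\azero})$, together with the fact that a bounded-below $Q_0$-acyclic module is $\azero$-free, kills the $\calc_0,\calc_3$ terms, while the $\calc_1,\calc_2$ terms are already stably trivial; the connecting maps of the spliced short exact sequences then do the rest. Your hedge on the sign of the bidegree shift is prudent---the actual shift one obtains from iterating $K_{i+1}\simeq\Sigma^{-1}\Omega K_i$ in $\stext(-,M)$ is $(+n,+n)$ rather than $(-n,-n)$, as the case $K_4=\Sigma^8\field$ together with $b$-periodicity confirms; this is a harmless typo in the statement and does not affect your argument.
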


\begin{proof}
Straightforward: this follows from relative homological 
algebra for the pair $(\aone, \azero)$ (compare Lemma \ref{lem:Q0_periodic}.)
\end{proof}

\begin{rem}
In the case of Adams spectral sequence calculations for $ko \langle n \rangle^* 
(X) $, the $ko \langle n \rangle$-cohomology of a spectrum $X$ with 
$Q_0$-acyclic mod $2$-cohomology, this corresponds to the relationship with the 
periodic cohomology $KO^* (X)$. 
For example, it applies in the case of the suspension spectra $\Sigma^\infty 
BV$ 
and to the $2$-torsion Brown-Gitler spectra and their 
Spanier-Whitehead duals.
\end{rem}

In the study of the detection property introduced in \cite{powell}, the  
homology of the complex $\hom (\cale_\bullet, M)$ plays a fundamental role. In 
relation with Lemma \ref{lem:stable_ext_Toda}, one has the following 
identification:

\begin{lem}
 \label{lem:truncate_Toda_ext}
 For $M$ a $Q_0$-acyclic $\aone$-module and $1 \leq n \in \zed$, there is a 
natural isomorphism:
 \[
  H^n (\hom_{\aone} (\cale_\bullet, M) ) 
  \cong 
  \ext^n _\aone (\field, M). 
 \]
\end{lem}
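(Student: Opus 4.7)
The plan is to identify the augmented Toda complex $\cale_\bullet \to \field \to 0$ as an $\ext^{>0}_\aone(-,M)$-acyclic resolution of $\field$ and then invoke the standard dimension-shifting principle. The exactness of the augmented complex is already built into the splicing picture recalled before the statement, so the real work is to verify the acyclicity, which is where the $Q_0$-acyclicity of $M$ enters.

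To check $\ext^{>0}_\aone(\Sigma^i\calc_i, M) = 0$ for every $i \geq 0$, I would use the four-periodicity $\calc_{i+4} = \Sigma^8 \calc_i$ to reduce to $i \in \{0,1,2,3\}$. For $i = 1,2$, the modules $\calc_1 = \Sigma \aone$ and $\calc_2 = \Sigma^2 \aone$ are free over $\aone$, hence projective, so there is nothing to prove. For $i = 0, 3$ the modules are suspensions of $\aone \otimes_{\azero} \field$; since $\aone$ is free as an $\azero$-module, induction $\aone \otimes_{\azero} -$ is exact and Frobenius reciprocity gives
\[
 \ext^*_{\aone}(\aone \otimes_{\azero} \field,\, M)\;\cong\;\ext^*_{\azero}(\field,\, M|_{\azero}).
\]
Over the exterior algebra $\azero = \Lambda(Sq^1)$, a module is free if and only if it is $Q_0$-acyclic; since $M$ is $Q_0$-acyclic over $\aone$, $M|_{\azero}$ is $\azero$-free, and the right-hand side vanishes in positive degrees.

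With acyclicity in hand, the conclusion is routine. Writing $Z_k = \ker d_k$ for the $k$-th cycles of $\cale_\bullet$, with the convention $Z_{-1} = \field$ coming from the augmentation, exactness yields short exact sequences
\[
 0 \to Z_k \to \Sigma^k \calc_k \to Z_{k-1} \to 0.
\]
Applying $\hom_\aone(-, M)$ together with the vanishing of $\ext^{>0}_\aone(\Sigma^k\calc_k, M)$ gives dimension-shifting isomorphisms $\ext^i_\aone(Z_k, M) \cong \ext^{i+1}_\aone(Z_{k-1}, M)$ for $i \geq 1$. On the other hand, unwinding the definition via $\Sigma^n\calc_n / \ker d_n \cong Z_{n-1}$ exhibits the cohomology $H^n(\hom_\aone(\cale_\bullet, M))$ as the cokernel of $\hom_\aone(\Sigma^{n-1}\calc_{n-1}, M) \to \hom_\aone(Z_{n-1}, M)$, which by the long exact sequence of the short exact sequence indexed by $k = n-1$ is precisely $\ext^1_\aone(Z_{n-2}, M)$ for $n \geq 2$ (and directly $\ext^1_\aone(\field, M)$ when $n = 1$). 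Iterating the shift produces $\ext^n_\aone(\field, M)$, and naturality in $M$ is immediate from the construction.

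The main obstacle I anticipate is purely bookkeeping: aligning the shifts in the dimension-shifting cascade with the convention $Z_{-1} = \field$ induced by the augmentation. The substantive input is the acyclicity step, which rests entirely on the change-of-rings isomorphism and the fact that $Q_0$-acyclicity over $\aone$ forces $\azero$-freeness.
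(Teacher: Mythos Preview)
Your proof is correct and is precisely a detailed unwinding of what the paper intends by ``relative homological algebra for the pair $(\aone,\azero)$'': the Eckmann--Shapiro/change-of-rings step together with $Q_0$-acyclicity shows that each $\Sigma^i\calc_i$ is $\hom_\aone(-,M)$-acyclic, so the Toda complex is an acyclic resolution and the standard dimension-shift yields the identification. The paper's proof is the one-line version of exactly this argument.
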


\begin{proof}
 Similar to Lemma \ref{lem:stable_ext_Toda}. 
\end{proof}

The exclusion of the case $n=0$ above is due to the difference between 
morphisms 
in the category of $\aone$-modules and in the stable module category. 

Putting these results together, one obtains the analogue of the main result of 
\cite{powell} for the cohomology of Brown-Gitler spectra with respect to the 
theories $ko \langle n \rangle$.

Recall from Section \ref{sect:brown-gitler} that, for $l \in \nat$,  $B(l)$ 
denotes the $l$th Brown-Gitler spectrum 
 and $DB(l)$ its Spanier-Whitehead dual.

\begin{thm}
 \label{thm:detection_BG}
For $0< l \in \nat$ and $n \in \nat$, the canonical morphisms of spectra $ 
ko\langle n \rangle \rightarrow KO$ and $ ko\langle n \rangle \rightarrow 
\Sigma^n H(KO_n)$ induce an injection
\[
 ko\langle n \rangle^* ( D B (l)) 
 \hookrightarrow KO^* (DB(l)) 
 \oplus 
 H(KO_n)^{*+n} (DB(l)).
\]
 \end{thm}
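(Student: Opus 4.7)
The plan is to compare the Adams spectral sequences computing $ko\langle n\rangle^*(DB(l))$, $KO^*(DB(l))$ and $H(KO_n)^{*+n}(DB(l))$, and to deduce injectivity on $E_\infty$ from the explicit descriptions of Sections \ref{sect:brown-gitler}--\ref{sect:ext}.

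First I would identify the $E_2$-pages. Using $H^*(ko\langle n\rangle) \cong \cala \otimes_\aone K_n$ and change of rings, the Adams $E_2$-page for $ko\langle n\rangle^*(DB(l))$ is $\ext_\aone^{s,t}(K_n, H^*DB(l))$. By Theorem \ref{thm:identify_BG}, $H^*DB(l)$ is stably isomorphic, after a Picard shift and suspension, to some $A_{\nu,1}$, which is $Q_0$-acyclic by Lemma \ref{lem:Picard_construction_Q0-acyclics}. Lemma \ref{lem:stable_ext_Toda} then reduces the $E_2$-page (in positive filtration) to a suspended copy of $\stext(\field, A_{\nu,1})$, completely described by Theorem \ref{thm:calculate_stext_Ake} as an $\e$-module generated by $\mu_{\nu,1}$ (of $h_0$-order $\nu+2$) and the $h_0$-torsion class $\lambda_\nu$. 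The same machinery yields analogous descriptions of the $E_2$-pages for $KO^*(DB(l))$ (replace $K_n$ by the relevant Bott-periodic input) and for $H(KO_n)^{*+n}(DB(l))$ (replace by $\Sigma^n\field$). Collapse of each spectral sequence at $E_2$ then follows by direct inspection: Lemma \ref{lem:Q0_periodic} gives $b$-periodicity, Proposition \ref{prop:vanishing_lines} gives vanishing lines, and the bidegrees of $\mu_{\nu,1}$, $\lambda_\nu$ and their $\e$-multiples leave no room for a nontrivial $d_r$ with $r\geq 2$.

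The main obstacle is the detection step at $E_\infty$: one must show that the map $ko\langle n\rangle \to KO$ induces an isomorphism on the $\mu_{\nu,1}$-piece of $E_\infty$ (killing the $\lambda_\nu$-piece), while the Postnikov map $ko\langle n\rangle \to \Sigma^n H(KO_n)$ detects the $\lambda_\nu$-piece (killing the $\mu_{\nu,1}$-piece). Algebraically, this is the statement that the $h_0$-torsion classes in $\stext(\field, A_{\nu,1})$ see only the $n$-th Postnikov section of $ko$, whereas the $h_0$-torsion-free classes survive to the periodic theory $KO^*$. The proof parallels the detection theorem of \cite{powell} for $BV$ and is set up most cleanly via the Toda complex formalism of Section \ref{sect:toda}, combining Lemmas \ref{lem:stable_ext_Toda} and \ref{lem:truncate_Toda_ext}: the connecting morphisms of the Toda complex translate the splitting of $\stext(\field, A_{\nu,1})$ into $h_0$-torsion and $h_0$-torsion-free parts into the two components of the target spectrum. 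Since the $\mu_{\nu,1}$- and $\lambda_\nu$-families together exhaust $E_\infty$, this complementary detection, combined with collapse, gives the claimed injection.
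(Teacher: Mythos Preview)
Your overall strategy coincides with the paper's: establish collapse of the Adams spectral sequence at $E_2$ and then invoke the detection machinery of \cite{powell} via the Toda complex (Lemmas \ref{lem:stable_ext_Toda} and \ref{lem:truncate_Toda_ext}). Your collapse argument is more explicit than the paper's brief reference to \cite{davis}, and is fine.

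The weak point is your description of the detection step. You assert that the $\mu_{\nu,1}$-piece maps isomorphically to $KO^*$ while the $\lambda_\nu$-piece is detected by $H(KO_n)$, and you phrase this as a dichotomy between ``$h_0$-torsion'' and ``$h_0$-torsion-free'' classes. But by Theorem \ref{thm:calculate_stext_Ake}, \emph{all} of $\stext(\field, A_{\nu,1})$ is annihilated by $h_0^{\nu+2}$, so there are no $h_0$-torsion-free classes; your terminology is at best a loose distinction between classes annihilated by $h_0$ and those supporting some nontrivial $h_0$-multiplication. More substantively, the map $ko\langle n\rangle \to KO$ does not separate the $\mu$- and $\lambda$-families in the way you describe: since $H^*DB(l)$ is $Q_0$-acyclic, Lemma \ref{lem:Q0_periodic} makes $\stext$ already $b$-periodic, so on positive Adams filtration the comparison with $KO^*$ is an isomorphism on the \emph{whole} $\stext$, not just the $\mu$-piece. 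The detection argument in \cite{powell} does not proceed via this splitting; it compares the Toda complex directly with the Postnikov tower, and the role of $H(KO_n)$ is to capture the discrepancy between $\ext^0$ and $\stext^0$ (cf.\ the exclusion of $n=0$ in Lemma \ref{lem:truncate_Toda_ext}). You have named the right lemmas, but you should either defer to \cite{powell} for the mechanism, as the paper does, or replace the $\mu/\lambda$ heuristic with the actual filtration comparison.
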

 
 \begin{proof}
 (Indications.) 
  The essential step is the proof that the Adams spectral sequence for 
$ko\langle n \rangle^*( D B (l)) $ collapses at $E_2$ and hence for $KO^*$, 
which follows as in \cite{davis}, for example. The 
result is then a consequence of the general results on detection established in 
\cite{powell}, 
  using Lemmas \ref{lem:stable_ext_Toda} and \ref{lem:truncate_Toda_ext} above. 
 \end{proof}

 \begin{rem}
  This result may be restated  as the injectivity of 
\[
ko\langle n \rangle_* (B (l)) 
 \hookrightarrow KO_* (B(l)) 
 \oplus 
 H(KO_n)_{*-n} (B(l)).
\]
As explained in \cite[Section 11]{goerss}, for $E$ a ring spectrum with 
representing spaces $\{\underline{E}_\bullet \}$ of the associated 
$\Omega$-spectrum, 
there is a surjective homomorphism 
\[
 E_* (B(l)) \twoheadrightarrow D_l H_* (\underline{E}_{l-*}), 
\]
that is compatible with the respective Frobenius and Verschiebung morphisms by 
\cite[Proposition 11.3]{goerss}. Here, for $H$ a graded, bicommutative 
Hopf algebra, $D_*H$ denotes the associated Dieudonné module; under the 
appropriate hypotheses, $D_* H$ determines $H$. 

It follows that Theorem \ref{thm:detection_BG} provides information on 
$H_* (\underline{ko\langle n \rangle }_\bullet)$, in particular  shedding light on 
results of Cowen Morton \cite{cowen_morton}.
\end{rem}

%%%%%%%%%%%%%%%%%%%%%%%%%%%%%%%%%%%%%%%%%%%%%%%%%%%%%%%%%%%%%%%%%%%%%%%%%%%

%\nocite{*}
%\bibliographystyle{amsalpha}
%\bibliography{A1_indec.bib}

\begin{thebibliography}{DGM83}

\bibitem[Ada74]{adams}
J.~F. Adams, \emph{Stable homotopy and generalised homology}, University of
  Chicago Press, Chicago, Ill., 1974, Chicago Lectures in Mathematics.
  \MR{0402720 (53 \#6534)}

\bibitem[AM71]{adams_margolis}
J.~F. Adams and H.~R. Margolis, \emph{Modules over the {S}teenrod algebra},
  Topology \textbf{10} (1971), 271--282. \MR{0294450 (45 \#3520)}

\bibitem[AP76]{adams_priddy}
J.~F. Adams and S.~B. Priddy, \emph{Uniqueness of {$B{\rm SO}$}}, Math. Proc.
  Cambridge Philos. Soc. \textbf{80} (1976), no.~3, 475--509. \MR{0431152 (55
  \#4154)}

\bibitem[Bai10]{bailey}
Scott~M. Bailey, \emph{On the spectrum {$b{\rm o}\wedge{\rm tmf}$}}, J. Pure
  Appl. Algebra \textbf{214} (2010), no.~4, 392--401. \MR{2558747
  (2010i:55011)}

\bibitem[BG73]{BG}
Edgar~H. Brown, Jr. and Samuel Gitler, \emph{A spectrum whose cohomology is a
  certain cyclic module over the {S}teenrod algebra}, Topology \textbf{12}
  (1973), 283--295. \MR{0391071 (52 \#11893)}

\bibitem[BG10]{bg2}
Robert~R. Bruner and J.~P.~C. Greenlees, \emph{Connective real {$K$}-theory of
  finite groups}, Mathematical Surveys and Monographs, vol. 169, American
  Mathematical Society, Providence, RI, 2010. \MR{2723113 (2011k:19007)}

\bibitem[Bru12]{bruner_Ossa}
Robert~R. Bruner, \emph{{Idempotents, Localizations and Picard groups of
  $\aone$-modules}}, To appear in the proceedings of the fourth Arolla
  conference, arXiv:1211.0213, 2012.

\bibitem[Dav74]{davis}
Donald Davis, \emph{Generalized homology and the generalized vector field
  problem}, Quart. J. Math. Oxford Ser. (2) \textbf{25} (1974), 169--193.
  \MR{0356053 (50 \#8524)}

\bibitem[DGM81]{DGM}
Donald~M. Davis, Sam Gitler, and Mark Mahowald, \emph{The stable geometric
  dimension of vector bundles over real projective spaces}, Trans. Amer. Math.
  Soc. \textbf{268} (1981), no.~1, 39--61. \MR{628445 (83c:55006)}

\bibitem[DGM83]{DGMerr}
\bysame, \emph{Correction to: ``{T}he stable geometric dimension of vector
  bundles over real projective spaces'' [{T}rans. {A}mer. {M}ath. {S}oc. {\bf
  268} (1981), no. 1, 39--61; {MR}0628445 (83c:55006)]}, Trans. Amer. Math.
  Soc. \textbf{280} (1983), no.~2, 841--843. \MR{716854 (85g:55006)}

\bibitem[GLM93]{GLM}
P.~Goerss, J.~Lannes, and F.~Morel, \emph{Hopf algebras, {W}itt vectors, and
  {B}rown-{G}itler spectra}, Algebraic topology ({O}axtepec, 1991), Contemp.
  Math., vol. 146, Amer. Math. Soc., Providence, RI, 1993, pp.~111--128.
  \MR{1224910 (95a:55007)}

\bibitem[Goe99]{goerss}
Paul~G. Goerss, \emph{Hopf rings, {D}ieudonn\'e modules, and
  {$E_*\Omega^2S^3$}}, Homotopy invariant algebraic structures ({B}altimore,
  {MD}, 1998), Contemp. Math., vol. 239, Amer. Math. Soc., Providence, RI,
  1999, pp.~115--174. \MR{1718079 (2000j:57078)}

\bibitem[Kuh01]{kuhn_new}
Nicholas~J. Kuhn, \emph{New relationships among loopspaces, symmetric products,
  and {E}ilenberg {M}ac{L}ane spaces}, Cohomological methods in homotopy theory
  ({B}ellaterra, 1998), Progr. Math., vol. 196, Birkh\"auser, Basel, 2001,
  pp.~185--216. \MR{1851255 (2002f:55026)}

\bibitem[Mah77]{mahow_new_inf}
Mark Mahowald, \emph{A new infinite family in {${}_{2}\pi_{*}{}^s$}}, Topology
  \textbf{16} (1977), no.~3, 249--256. \MR{0445498 (56 \#3838)}

\bibitem[Mah81]{mah_bo}
\bysame, \emph{{$b{\rm o}$}-resolutions}, Pacific J. Math. \textbf{92} (1981),
  no.~2, 365--383. \MR{618072 (82m:55017)}

\bibitem[Mah84]{mah_bo_add}
\bysame, \emph{An addendum to: ``{$b{\rm o}$}-resolutions'' [{P}acific {J}.
  {M}ath. {\bf 92} (1981), no. 2, 365--383; {MR}0618072 (82m:55017)]}, Pacific
  J. Math. \textbf{111} (1984), no.~1, 117--123. \MR{732062 (86a:55018)}

\bibitem[Mil75]{milgram}
R.~James Milgram, \emph{The {S}teenrod algebra and its dual for connective
  {$K$}-theory}, Conference on homotopy theory ({E}vanston, {I}ll., 1974),
  Notas Mat. Simpos., vol.~1, Soc. Mat. Mexicana, M\'exico, 1975, pp.~127--158.
  \MR{761725}

\bibitem[Mil84]{miller}
Haynes Miller, \emph{The {S}ullivan conjecture on maps from classifying
  spaces}, Ann. of Math. (2) \textbf{120} (1984), no.~1, 39--87. \MR{750716
  (85i:55012)}

\bibitem[Mor07]{cowen_morton}
Dena S.~Cowen Morton, \emph{The {H}opf ring for {$b{\rm o}$} and its connective
  covers}, J. Pure Appl. Algebra \textbf{210} (2007), no.~1, 219--247.
  \MR{2311183 (2009b:55011)}

\bibitem[Pea14]{pearson}
Paul~Thomas Pearson, \emph{The connective real {K}--theory of {B}rown--{G}itler
  spectra}, Algebr. Geom. Topol. \textbf{14} (2014), no.~1, 597--625.
  \MR{3158769}

\bibitem[Pow14]{powell}
Geoffrey Powell, \emph{On connective {KO}--theory of elementary abelian
  2--groups}, Algebr. Geom. Topol. \textbf{14} (2014), no.~5, 2693--2720.
  \MR{3276845}

\bibitem[Rav93]{ravenel}
Douglas~C. Ravenel, \emph{The homology and {M}orava {$K$}-theory of
  {$\Omega^2{\rm SU}(n)$}}, Forum Math. \textbf{5} (1993), no.~1, 1--21.
  \MR{1199020 (94c:55023)}

\bibitem[Sch94]{sch}
Lionel Schwartz, \emph{Unstable modules over the {S}teenrod algebra and
  {S}ullivan's fixed point set conjecture}, Chicago Lectures in Mathematics,
  University of Chicago Press, Chicago, IL, 1994. \MR{MR1282727 (95d:55017)}

\bibitem[Shi84]{Shimamoto}
Don~H. Shimamoto, \emph{An integral version of the {B}rown-{G}itler spectrum},
  Trans. Amer. Math. Soc. \textbf{283} (1984), no.~2, 383--421. \MR{737876
  (85m:55012)}

\bibitem[Yu95]{yu}
Cherng-Yih Yu, \emph{The connective real ${K}$-theory of elementary abelian
  $2$-groups}, Ph.D. thesis, University of {N}otre {D}ame, 1995.

\end{thebibliography}

\providecommand{\bysame}{\leavevmode\hbox to3em{\hrulefill}\thinspace}
\providecommand{\MR}{\relax\ifhmode\unskip\space\fi MR }
% \MRhref is called by the amsart/book/proc definition of \MR.
\providecommand{\MRhref}[2]{%
  \href{http://www.ams.org/mathscinet-getitem?mr=#1}{#2}
}
\providecommand{\href}[2]{#2}

\end{document}